\documentclass[11pt,reqno]{amsart}
\usepackage[text={160mm, 236mm},centering]{geometry}
\usepackage{float}
\usepackage{appendix}
\usepackage{graphicx}
\usepackage{chngpage}
\usepackage{multicol}
\usepackage{subcaption}
\usepackage{setspace}
\usepackage{multirow}
\usepackage[colorlinks=true, citecolor=blue, urlcolor=blue, linkcolor=blue]{hyperref}
\usepackage[all,cmtip]{xy}
\usepackage{amssymb}
\usepackage{tikz-cd}
\usepackage{amsmath}
\usepackage{relsize}
\usepackage{longtable}
\IfFileExists{pstricks.sty}{\usepackage{pstricks}}{}
\usepackage{color}
\usepackage{array,booktabs,colortbl}
\usepackage{ mathrsfs }
\usepackage[all]{xy}
\usepackage{mathabx,epsfig}

\theoremstyle{plain}
\newtheorem{thm}{Theorem}[section]
\newtheorem{theorem}[thm]{Theorem}

\newtheorem{lemma}[thm]{Lemma}
\newtheorem{corollary}[thm]{Corollary}
\newtheorem{proposition}[thm]{Proposition}
\theoremstyle{plain}

\theoremstyle{definition}
\newtheorem{remark}[thm]{Remark}

\newtheorem{definition}[thm]{Definition}

\newtheorem{example}[thm]{Example}

\numberwithin{equation}{section}

\newcommand{\romea}{\uppercase\expandafter{\romannumeral1}}
\newcommand{\romeb}{\uppercase\expandafter{\romannumeral2}}

\newcommand{\Pic}{{\rm Pic}}

\newcommand{\Aut}{{\rm Aut}}

\newcommand{\Diag}{{\rm diag}}

\newcommand{\GL}{{\rm GL}}

\newcommand{\PSp}{{\rm PSp}}

\newcommand{\PGL}{{\rm PGL}}
\newcommand{\PSL}{{\rm PSL}}

\newcommand{\SL}{{\rm SL}}

\newcommand{\Bir}{{\rm Bir}}

\newcommand{\C}{{\mathbb C}}

\renewcommand{\P}{{\mathbb P}}

\newcommand{\Z}{{\mathbb Z}}

\newcommand{\id}[2]{[#1,#2]}
\makeatletter
\@namedef{subjclassname@2020}{%
	\textup{2020} Mathematics Subject Classification}
\makeatother

\begin{document}
	
	\title{Automorphism groups of smooth quartic threefolds}
	
	\author{Zigang Zhu}
	
	\address{Center for Applied Mathematics and KL-AAGDM, Tianjin University, Weijin Road 92, Tianjin 300072, P.R. China}
	
	\email{zhzg0313@tju.edu.cn}
	
	\subjclass[2020]{Primary 14J50; Secondary 14J28, 14J45, 14J70}
	
	\begin{abstract}
		We classify finite groups of automorphisms of smooth quartic threefolds over $\C$. Every such group embeds into one of $17$ maximal groups, which we realize by explicit equations and matrix generators. Among finite subgroups of $\PGL(5,\C)$ preserving a smooth quartic threefold, there are exactly $24$ maximal ones up to conjugacy, and we use them to determine the $194$ groups that occur as full automorphism groups. As an application, we classify the finite groups of projective automorphisms of smooth quartic surfaces and show that there are exactly $12$ maximal ones up to conjugacy in $\PGL(4,\C)$.
	\end{abstract}
	\maketitle
	\setcounter{tocdepth}{1}
	\tableofcontents
	\section{Introduction}
	Symmetries of smooth projective hypersurfaces have been studied since the nineteenth century. Klein showed that the plane quartic $x_1^3x_2+x_2^3x_3+x_3^3x_1=0$ admits $168$ automorphisms~\cite{Kle79}, the maximum allowed by Hurwitz's bound in genus three, and the classification of automorphism groups of smooth cubic surfaces goes back to Segre~\cite{Seg42} (see also~\cite{Hos97}, \cite{Dol12}). For a smooth hypersurface $X\subset\P^{n+1}$ of degree $d\geq3$ with $(n,d)\neq(1,3),(2,4)$, the group $\Aut(X)$ is finite, and every automorphism of $X$ is induced by an element of $\PGL(n+2,\C)$ (see~\cite[Theorems~1 and~2]{MM63} for $n\geq2$ and~\cite[Theorem~2]{Cha78} for $n=1$). Determining the possible groups $\Aut(X)$ is thus a problem about finite linear groups preserving a form, going back to the classical work of Klein, Jordan, Blichfeldt, and Brauer \cite{Kle84}, \cite{Jor78}, \cite{Bli17}, \cite{Bra67}. For quartic surfaces, which are K3 surfaces, $\Aut(X)$ may be infinite (see~\cite{KOY26} for the Fermat quartic surface), and the finite groups of symplectic automorphisms of K3 surfaces are described by Mukai's celebrated theorem in terms of the Mathieu group $M_{23}$~\cite{Muk88}. In dimension at least three, complete classifications are more recent.
	Oguiso--Yu~\cite{OY19} developed a systematic method and gave a complete classification of finite groups acting faithfully on smooth quintic threefolds. Corresponding classifications were subsequently obtained for smooth cubic threefolds, with six maximal groups~\cite{WY20}, and for smooth cubic fourfolds and fivefolds~\cite{YYZ24} (see also~\cite{LZ22} for the classification of symplectic automorphism groups of smooth cubic fourfolds). Moreover, for every $n\geq1$ and $d\geq3$, the smooth hypersurfaces of degree $d$ in $\P^{n+1}$ with projective automorphism groups of largest order have been determined (\cite{YYZ25}, \cite{EL25}).
	
	In $\P^4$, smooth quartic threefolds sit between the two threefold
	cases just mentioned: cubic threefolds, which are Fano of index two,
	and quintic threefolds, which are Calabi--Yau. A fundamental feature of quartic threefolds is their birational rigidity. Building on ideas of Fano, Iskovskikh--Manin~\cite{IM71} proved that every birational map between smooth quartic threefolds is biregular (see also~\cite{IP99}, \cite{Che05}, \cite{Kol19}). In particular, smooth quartic threefolds are nonrational. Since some of them are unirational, they are among the first counterexamples to the L\"uroth problem, along with those found at about the same time by Clemens--Griffiths~\cite{CG72} and Artin--Mumford~\cite{AM72}. The work of Iskovskikh--Manin initiated the theory of birational (super)rigidity (see e.g. \cite{Puk98}, \cite{dF13}, \cite{dF16}). For a smooth quartic threefold $X$, birational superrigidity implies that the group $\Bir(X)$ of birational automorphisms of $X$ equals $\Aut(X)$, a finite subgroup of $\PGL(5,\C)$. This contrasts with the large birational automorphism groups of rational varieties (see e.g.~\cite{DI09}, \cite{Pro12}).
	
	It is therefore natural to ask which finite groups act faithfully on smooth quartic threefolds. Partial results are known: the possible prime orders of automorphisms and the cyclic groups admitting faithful actions were determined in~\cite{GAL13} and~\cite{Zhe22}, respectively. The Fermat quartic threefold has an automorphism group of largest possible order (\cite{YYZ25}, \cite{EL25}). 
	\begin{theorem}\label{thm:main}
		A finite group $G$ acts faithfully on a smooth quartic threefold if and only if $G$ is isomorphic to a subgroup of one of the following $17$ groups:
		\begin{table}[H]
			\centering
			\begin{tabular}{c l r@{\hspace{10pt}}c l r}
				\hline
				No.&group&order&No.&group&order\\
				\hline
				1&$C_4^4\rtimes S_5$&$30720$
				&10&$(C_4. A_4)\times C_9$&$432$\\
				2&$\PSL(2,7)\times(C_4. A_4)$&$8064$
				&11&$C_5\rtimes(C_{16}\rtimes C_4)$&$320$\\
				3&$C_4.(C_2^4\rtimes S_5)$&$7680$
				&12&$C_{61}\rtimes C_5$&$305$\\
				4&$\PSL(2,7)\times(C_4^2\rtimes C_2)$&$5376$
				&13&$C_9\times(C_4^2\rtimes C_2)$&$288$\\
				5&$(C_4. A_4)\times(C_4^2\rtimes S_3)$&$4608$
				&14&$C_8. S_4$&$192$\\
				6&$(C_4. A_4)^2\rtimes C_2$&$4608$
				&15&$C_{108}$&$108$\\
				7&$C_4.\PSL(2,7)$&$672$
				&16&$C_{81}$&$81$\\
				8&$\GL(2,5)$&$480$
				&17&$C_3\times D_{18}$&$54$\\
				9&$C_4. S_5$&$480$&&&\\
				\hline
			\end{tabular}
		\end{table}
		These are exactly the maximal groups under abstract subgroup inclusion.
	\end{theorem}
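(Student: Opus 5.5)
The proof has two directions, and it runs through finite subgroups of $\GL(5,\C)$. By \cite{MM63}, every automorphism of a smooth quartic threefold $X=\{F=0\}\subset\P^4$ is linear. So a finite $G$ acting faithfully on $X$ is a finite subgroup of $\PGL(5,\C)$ preserving $X$. I would lift $G$ to a finite group $\tilde G\subset\SL(5,\C)$ (or $\GL(5,\C)$). Then $F$ is a semi-invariant quartic: $g^*F=\chi(g)F$ for a character $\chi$. Replacing $\tilde G$ by the subgroup of elements fixing $F$, which still surjects onto $G$ after rescaling by fourth roots of unity, I may assume $F$ is invariant and $\tilde G\cap\C^*\subset\mu_4$.

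The classification then splits according to the structure of the representation $V=\C^5$ of $\tilde G$. The cases are: (a) $V$ is reducible; (b) $V$ is irreducible but imprimitive, i.e.\ monomial, since $5$ is prime and the only imprimitivity decomposition is into lines; (c) $V$ is primitive. In case (c) I would use Brauer's classification \cite{Bra67} of primitive subgroups of $\SL(5,\C)$. These are essentially $A_5,A_6,S_5,S_6,\PSL(2,11),\PSp(4,3)$ and the Heisenberg-type groups $C_5^{2}\rtimes \SL(2,5)$ and its subgroups, up to scalars. For each I would compute the invariant quartics and check smoothness. This yields the exotic entries, e.g.\ $C_{61}\rtimes C_5$ from a Klein-type cyclic quartic $\sum x_i^3x_{i+1}$, and excludes the others.

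In case (b), $\tilde G\subset (\C^*)^5\rtimes S_5$. The diagonal part preserving a smooth quartic is tightly constrained: smoothness forces each variable $x_i$ to appear in a monomial $x_i^3x_j$ or $x_i^4$. This is the standard Oguiso--Yu lemma \cite{OY19}. Hence the diagonal subgroup lies in $C_4^4$ after normalization, and the Fermat quartic gives No.~1. Subgroups that are monomial but not contained in that normalizer come from non-Fermat "chain/loop" quartics. These give groups like Nos.~11, 12, 15, 16, 17, and I would find them by enumerating the $S_5$-orbit types of the permutation image together with the compatible invariant monomials.

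In case (a), $V=V_1\oplus V_2$ with $\dim V_1\le 2$ or with more pieces. The key step is that $F$ restricted to the pieces must give smooth lower-dimensional forms. If $F=F_1(V_1)+F_2(V_2)$ splits, the group contains $\Aut(F_1)\times\Aut(F_2)$ modulo the $\mu_4$ identifications, e.g.\ Klein quartic $\PSL(2,7)$ times the binary form $x^4+y^4$ symmetries. This produces products like Nos.~2, 4, 5, 6, 10, 13. For non-split $F$, I would use the known groups of smooth plane quartics, quartic curves and binary quartics, together with the bound on the mixed terms. I would organize case (a) by $\dim V_1\in\{1,2\}$ and the induced action on $\P(V_2)$.

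For the converse, each of the 17 groups is realized by an explicit quartic and explicit matrix generators; smoothness is checked by the Jacobian criterion. Maximality is then a finite computation. I would compare orders and check non-embeddability of each listed group into the others, e.g.\ with GAP using the given generators.

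The main obstacle is exhaustiveness in the intransitive and monomial cases. There one must show that every invariant smooth quartic's full symmetry group lies in one of the 17 maximal groups, which needs a careful enumeration of smooth quartic "Delsarte-type" monomial configurations and their stabilizers. I would attack this by first classifying the maximal abelian diagonal subgroups, i.e.\ the possible cyclic and abelian groups, extending \cite{Zhe22}. I would then compute their normalizers inside the stabilizer of the corresponding quartic family.
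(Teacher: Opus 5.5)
Your converse and maximality steps (explicit invariant forms, smoothness checks, GAP comparisons) match the paper. The classification direction, however, breaks at case (a). An invariant decomposition $V=V_1\oplus V_2$ does not make $F$ split, nor does it make its restrictions smooth. For example, $G_{15}\cong C_{108}$ acts diagonally, so every coordinate subspace is invariant, yet $F_{15}$ restricts to $0$ on $\C e_1$ and to $x_1^3x_2$ on $\langle e_1,e_2\rangle$. A restriction $F|_U$ is smooth only once the part of $F$ of degree $1$ in the complementary variables vanishes (Lemma~\ref{lem:normal-derivatives}). The paper obtains this from an element $\Diag(-I_U,I_W)\in\Aut(F)$ (Corollary~\ref{cor:involution-split}) or from a normal elementary abelian $2$-subgroup (Lemma~\ref{lem:monomial-smooth-sum}). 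Producing such elements is the substance of every case, and your outline has no mechanism for it. Your case bookkeeping is also off:
$C_{61}\rtimes C_5$ is monomial (diagonal $C_{61}$ and a $5$-cycle), not primitive. In fact the primitive case contributes nothing: Brauer's list is eliminated by Theorem~\ref{thm:maximal-abelian} ($C_{11}$, $C_3^3$, $C_5^2$) and by the singularity of the $S_6$-invariant pencil \cite{CS16}.
The diagonal part need not lie in $C_4^4$ ($C_{61}$, $C_{81}$, $C_{108}$).
Nos.~11, 15, 16, 17 are reducible, so they belong to your case (a), not (b).

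More seriously, Nos.~3, 7, 8, 9 and 14 never arise in your plan. The first four come from $\C^5=\C^4\oplus\C$ with a primitive constituent on $\C^4$, i.e.\ from smooth quartic surfaces with primitive symmetry (Burnside's quartic, Edge's $\PSL(2,7)$-quartic, the spin $S_5$ surfaces). Your case (a) invokes only plane and binary quartics. For $\dim V_1=1$ you would need the groups acting on $\P(V_2)\cong\P^3$, where neither smoothness of $F|_{V_2}$ nor a classification of projective automorphism groups of smooth quartic surfaces is available; the paper deduces the latter afterwards (Theorem~\ref{thm:quartic-surfaces}). The paper's route is as follows. First, $Q_1=0$ by irreducibility. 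For every primitive $H\subset\PGL(4,\C)$ other than the standard $A_5$ and $S_5$, the inclusion diagrams of \cite{CS19} show that $-I_4$ lies in the derived subgroup of $\pi^{-1}(H)$. This forces $\Diag(-I_4,1)\in\Aut(F)$ and, after excluding $x_5^2Q_2$, gives $F=x_5^4+f$ with $f$ smooth, to which \cite{AOT24} applies. The standard $S_5$ is handled by Molien's formula instead, and there $F$ need not split.

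No.~14 comes from a form coupling two binary blocks with $A_4$ constituents, where $J$ is the graph of an isomorphism and $N\cong C_8$ (Lemma~\ref{lem:221}). Since $F_{14}$ vanishes identically on both planes, no argument via binary quartics on the pieces can see it.

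Finally, in the monomial case, which you rightly flag as the main obstacle, normalizers of maximal abelian diagonal groups do not control a monomial group whose diagonal part $D$ is small. For example, if $D=1$ and $\widetilde H\cong\Pi$ is transitive, the $D$-invariant quartics form a large family with infinite stabilizer. The paper instead:
uses that $3$-subgroups are abelian (Proposition~\ref{prop:p-subgroups-abelian});
splits $F$ along the involutions in the center of the largest normal $2$-subgroup;
reduces the case with no normal $2$-subgroup to $S\rtimes T$, with $S$ an abelian $3$-group and $|T|\le2$;
treats separately a central involution with eigenspaces of dimensions $4$ and $1$.
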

	Beyond determining which finite groups act faithfully, it is natural to ask how they act on $\P^4$. We deduce Theorem~\ref{thm:main} from a classification, up to conjugacy, of the finite subgroups of $\PGL(5,\C)$ that preserve a smooth quartic threefold. Let $G_1,\dots,G_{17}\subset\PGL(5,\C)$ be the groups of Example~\ref{ex:17maximal}, which realize the groups in Theorem~\ref{thm:main}, and let $G_{18},\dots,G_{24}$ be the groups of Example~\ref{ex:seven-projective-actions}.
	\begin{theorem}\label{thm:24-projective-actions}
		A finite subgroup of $\PGL(5,\C)$ preserves a smooth quartic threefold if and only if it is conjugate to a subgroup of one of $G_1,\dots,G_{24}$.
	\end{theorem}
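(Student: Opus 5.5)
The "if" direction is immediate from Examples~\ref{ex:17maximal} and~\ref{ex:seven-projective-actions}: each $G_i$ is exhibited together with an explicit smooth quartic $F_i$ it preserves, and any subgroup of a conjugate of $G_i$ preserves the corresponding translate of $\{F_i=0\}$. For the converse, let $G\subset\PGL(5,\C)$ be finite and preserve a smooth quartic $X=\{F=0\}$. I will pass to its preimage $\tilde G\subset\SL(5,\C)$, a central extension of $G$ by $\mu_5$. Since $F$ is $\tilde G$-semi-invariant, the real content is to find all finite linear groups in $\GL(5,\C)$ that semi-invariantly fix a smooth quartic form. I will organize the argument following Oguiso--Yu~\cite{OY19} and~\cite{WY20}, splitting $\tilde G$ according to how $V=\C^5$ decomposes as a representation.

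\emph{Case A: $\tilde G$ is primitive.} The finite primitive subgroups of $\SL(5,\C)$ are known (Brauer~\cite{Bra67}): modulo scalars they are $A_5$, $A_6$, $S_5$, $S_6$, $\PSL(2,11)$, $\PSp(4,3)$, and the Heisenberg-type groups $C_5^2\rtimes H$ with $H\subset\SL(2,5)$. For each one I compute the space of semi-invariant quartics, using Molien series or character inner products with $\mathrm{Sym}^4V$, and check whether a smooth member exists, using the Jacobian criterion at points with large stabilizers. I expect most of these to be excluded. Any survivor should lie inside the $S_5$ or $S_6$-type symmetric group of a Fermat-like quartic, hence inside $G_1$.

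\emph{Case B: $\tilde G$ is imprimitive but irreducible.} Here $V$ is monomial: it is induced from a character of an index-$5$ subgroup. The image in $S_5$ is a transitive subgroup, namely $C_5$, $D_5$, $F_{20}$, $A_5$ or $S_5$, and the diagonal part is a finite subgroup of the diagonal torus. Smoothness forces, for each $i$, a monomial $x_i^4$ or $x_i^3x_j$ in $F$. This gives the combinatorial constraint familiar from cyclic-group classifications such as~\cite{Zhe22}. It bounds the diagonal part to the automorphism groups of Fermat, Klein-type, and $x_1^3x_2+\dots+x_5^3x_1$-type quartics, and these should produce $G_1$, $G_{11}$, $G_{12}$, $G_{16}$ and related groups.

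\emph{Case C: $V$ is reducible.} Then $V$ splits, and by complete reducibility the possible dimension types are $4+1$, $3+2$, $3+1+1$, $2+2+1$, and so on. In each type I write $F$ as a polynomial in the blocks and apply smoothness, for instance requiring that the restriction to each invariant subspace be a smooth or at least suitably nondegenerate lower-dimensional form. This reduces the problem to known classifications of automorphisms of plane quartics (for example $\PSL(2,7)$ from the Klein quartic), binary forms (giving the $C_4.A_4$ factors from octahedral/tetrahedral binary quartics), and cyclic extensions, and it yields product-type maximal groups such as $G_2$, $G_4$, $G_5$, $G_6$ and $G_{10}$.

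The main obstacle will be Case C together with the conjugacy bookkeeping. It is not enough to know the abstract maximal groups. I must show that every $G$ found there lies in some $G_i$ up to conjugacy, which is where the extra projective actions $G_{18},\dots,G_{24}$ appear: abstractly they are subgroups of $G_1,\dots,G_{17}$ but not conjugate into them. I would handle this by first taking, in each case, the full stabilizer $\Aut(X)$ for the generic $X$ of each family of semi-invariant quartics, and then comparing fixed-point and eigenvalue data of elements to decide conjugacy in $\PGL(5,\C)$. I expect this computation, probably machine-assisted, to be the step most likely to hide errors.
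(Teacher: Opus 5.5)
Your ``if'' direction is fine. Cases A and B correspond to the paper's treatment of $l(\Aut(X))=(5)$ (Lemma~\ref{lem:primitive-5}) and of monomial groups with transitive permutation image (in the proof of Theorem~\ref{thm:monomial}). Two of your expectations there are wrong, though.
\begin{itemize}
\item A primitive group can never be conjugate into the monomial group $G_1$, so Case A must end with no survivors at all. The paper excludes $\PSL(2,11)$, $\PSp(4,3)$ and $C_5^2\rtimes H$ by Theorem~\ref{thm:maximal-abelian}, and excludes $S_5$, $A_6$, $S_6$ because every $S_6$-invariant quartic threefold is singular. Also, $A_5$ is not on Brauer's list, since its five-dimensional representation is monomial.
\item Of the $24$ groups, only $G_1$ and $G_{12}$ act irreducibly on $\C^5$. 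So $G_{11}$ (which fixes $\C e_5$) and $G_{16}$ (which is diagonal) do not come from Case B.
\end{itemize}

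The genuine gap is Case C. It has to produce $22$ of the $24$ groups, and you give no mechanism that works there.

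An invariant decomposition does not split $F$. Restrictions of $F$ to invariant subspaces need not be smooth, or even nonzero. For example, $G_{16}$ preserves $\C e_5$ and the hyperplane $x_5=0$, yet $F_{16}$ contains the mixed term $x_4^3x_5$, and $F_{16}|_{x_5=0}=x_1^3x_2+x_2^3x_3+x_3^3x_4$ is singular at $e_4$. Likewise, $G_{17}$ preserves $\langle e_1,e_2\rangle$, and $F_{17}$ vanishes identically there.

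The ``known classifications'' you want to reduce to do not exist in the form you need. Type $4+1$ with an imprimitive four-dimensional summand requires the finite groups preserving smooth quartic surfaces, which the paper obtains only as a consequence of this theorem (Theorem~\ref{thm:quartic-surfaces}). It also requires control of the mixed terms $x_5^2Q_2+x_5Q_3$. Moreover, the reducible monomial groups include $2$-groups of order up to $2^{11}$, for example a Sylow $2$-subgroup of $G_1$, so enumeration is hopeless.

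Here is what makes this case work in the paper.
\begin{itemize}
\item The irreducible summands are refined into blocks whose stabilizers act primitively. Only $A_4$, $\PSL(2,7)$ and four groups in dimension four occur as primitive constituents (Proposition~\ref{prop:possi-primitive-cons}).
\item That list is proved using the space $\mathcal C_F(U,W)$ of Lemma~\ref{lem:normal-derivatives}, and using commutators that produce elements such as $\Diag(-I_r,I_{5-r})$ in $\Aut(F)$. These split $F$ by Corollary~\ref{cor:involution-split}.
\item The $(2,2,1)$ case is handled by a $\mathcal Q$-invariant analysis (Lemma~\ref{lem:221}), which is where $G_{14}$ and $G_{24}$ come from.
\item For monomial groups, the paper uses the abelian classification via the monomials $x_i^3x_j$ (Theorem~\ref{thm:maximal-abelian}) and the fact that $3$-subgroups are abelian (Proposition~\ref{prop:p-subgroups-abelian}).
\item It then replaces $F$ by an invariant smooth sum in fewer variables, using a normal elementary abelian $2$-subgroup (Lemma~\ref{lem:monomial-smooth-sum}). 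The leftover $O_2$-free and central $(4,1)$-involution cases are treated in Lemma~\ref{lem:monomial-o2-free} and Proposition~\ref{prop:monomial-low-residue}.
\end{itemize}

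Finally, your last step computes $\Aut$ of generic members of ``each family''. That presupposes exactly the finite list of families that Case C was supposed to supply.
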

	None of these $24$ groups is conjugate to a subgroup of another (Lemma~\ref{lem:24-irredundant}).
	Consequently, $G_i=\Aut(X_i)$ for every smooth quartic threefold $X_i$ in Examples~\ref{ex:17maximal} and~\ref{ex:seven-projective-actions}. As abstract groups, $G_{18},\dots,G_{24}$ embed into $G_{10}$, $G_{15}$, or $G_{17}$; for instance, $G_{20}$, $G_{21}$, and $G_{22}$ are all cyclic of order $27$. 
	
	Our results are also related to the Jordan property of birational automorphism groups (see \cite{PS14}, \cite{PS17}). Recall that the {\it Jordan constant} $J(G)$ (resp.\ the {\it weak Jordan constant} $\overline J(G)$) of a group $G$ is the smallest integer $J$ such that every finite subgroup of $G$ contains a normal abelian (resp.\ an abelian) subgroup of index at most $J$ (see~\cite[Definition~1.2.1]{PS17}). Prokhorov--Shramov~\cite[Theorem~1.2.4]{PS17} proved that $\overline J(\Bir(Y))\leq10\,368$ for every rationally connected threefold $Y$ over a field of characteristic zero. For smooth hypersurfaces $X\subset\P^4$ of degree at least two, they obtained the sharper bound $\overline J(\Aut(X))\leq960$~\cite[Lemma~6.1.5]{PS17}. Since $\Bir(X)=\Aut(X)$, Theorem~\ref{thm:main} gives the following improvement for quartic threefolds.
	\begin{corollary}\label{cor:jordan}
	Let $X$ be a smooth quartic threefold. Then
	$$
	J(\Bir(X))\leq2016\qquad\text{and}\qquad\overline J(\Bir(X))\leq120.
	$$
	Both bounds are sharp, attained by the threefolds $X_2$ and $X_1$ of Example~\ref{ex:17maximal}, respectively.
	\end{corollary}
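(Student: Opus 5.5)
Since every finite subgroup of $\Bir(X)=\Aut(X)$ is a subgroup of $\Aut(X)$, and by Theorem~\ref{thm:main} $\Aut(X)$ embeds into one of the $17$ groups $G_1,\dots,G_{17}$, the plan is to bound $J$ and $\overline J$ over subgroups of these $17$ groups. Both constants need care under passage to subgroups. For $\overline J$ this is easy: if $A\subset G$ is abelian of index $m$ and $H\subset G$, then $A\cap H$ is abelian of index at most $m$ in $H$. For $J$ the same intersection argument works with a normal abelian subgroup $A\trianglelefteq G$, since $A\cap H$ is normal in $H$. Hence $J(\Bir(X))\le\max_i J(G_i)$ and $\overline J(\Bir(X))\le\max_i\overline J(G_i)$, where for a finite group $J(G_i)$ denotes the minimal index of a normal abelian subgroup, maximized over its subgroups. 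It therefore suffices to show that every subgroup of each $G_i$ has a normal abelian subgroup of index at most $2016$ and an abelian subgroup of index at most $120$. By the remark above, this reduces to exhibiting such subgroups in each $G_i$ itself.

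I would go through the table case by case. Some of the needed subgroups are read off directly:
\begin{itemize}
\item For $G_1=C_4^4\rtimes S_5$, the normal abelian subgroup $C_4^4$ has index $120$.
\item For $G_2=\PSL(2,7)\times(C_4.A_4)$, note that $C_4.A_4$ has a normal abelian subgroup of index at most $12$, and in fact of index $12$, namely its center $C_4$ if it is nonsplit like $\SL(2,3)\circ C_4$. The group $\PSL(2,7)$ has trivial normal abelian subgroups, so the index is $168\cdot 12=2016$. Its weak constant is $\le 168/7\cdot 12/\dots$; I will use $\PSL(2,7)\supset C_7$ of index $24$ and $C_4.A_4\supset C_4\times C_2$ or $C_8$ of index $\le 6$, giving at most $144$. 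This exceeds $120$, so a better abelian subgroup is needed.
\end{itemize}

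The main obstacle is exactly this verification of $\overline J\le120$ for groups like $G_2$, $G_4$, $G_5$, $G_6$. There I need the maximal order of an abelian subgroup. In $\PSL(2,7)$ the maximal abelian subgroups are $C_7$, $C_4$, $C_2^2$, $C_3$, so the minimal index is $24$. In $C_4.A_4$ of order $48$, I expect an abelian subgroup of order $8$ or $12$ (e.g. $C_4\times C_3$ if the extension contains $C_{12}$), giving index $4$. This yields $24\cdot4=96\le120$. For $G_6=(C_4.A_4)^2\rtimes C_2$ I get index $2\cdot 4\cdot 4=32$. For $G_3$, I expect the central $C_4$ together with the preimage of $C_2^4$ to be abelian or nearly so, giving index $\le120$.

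The remaining groups have order $\le 672$, and there one checks directly: for $C_4.\PSL(2,7)$ the center $C_4$ gives $J\le168$, and $C_4\cdot C_7$ gives $\overline J\le 24$. The other groups are solvable with large cyclic normal subgroups. I would do these checks, possibly by machine using the explicit generators of Example~\ref{ex:17maximal}.

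Sharpness needs a group attaining each bound:
\begin{itemize}
\item For $J=2016$, I would take $\Aut(X_2)=G_2$ itself. Its minimal normal abelian subgroup index is exactly $2016$, since any normal abelian subgroup projects to normal abelian subgroups of $\PSL(2,7)$ (trivial) and into the factor $C_4.A_4$, whose normal abelian subgroups have order $\le 4$.
\item For $\overline J=120$, I would use $G_1$. An abelian subgroup $A$ of $C_4^4\rtimes S_5$ satisfies $|A|\le 256$, which I would show via a bound $|A\cap C_4^4|\cdot|\text{image}|$: an abelian subgroup with image $B\subset S_5$ centralizes only $B$-fixed vectors in $C_4^4$, the sum-zero module. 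Checking each abelian $B\subseteq S_5$ should give index $\ge120$, with equality at $B=1$.
\end{itemize}
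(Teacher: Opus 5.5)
Your route is the paper's: reduce via Theorem~\ref{thm:main} to $G_1,\dots,G_{17}$, bound the (normal) abelian indices in each group, and get sharpness from $G_2$ and $G_1$. Your intersection argument, which shows that for a finite group it suffices to exhibit subgroups in the group itself, is correct. Both sharpness arguments are also sound: normal abelian subgroups of $\PSL(2,7)\times(C_4.A_4)$ lie in $1\times Z(C_4.A_4)\cong C_4$, and an abelian $A\subset C_4^4\rtimes S_5$ with image $B$ satisfies $|A|\le 4^{o(B)-1}|B|\le256$, where $o(B)$ is the number of $B$-orbits. The paper simply does the whole case check in GAP.

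The step that fails is $G_3$. Identify $G_3$ with $\Aut(h_3)$ and let $V\cong C_2^4$ be the normal subgroup of $G_{h_3}$. Its preimage in $\Aut(h_3)$ has order $64$ but is not abelian, which the paper itself uses in the proof of Proposition~\ref{prop:possi-primitive-cons}. For instance, $\Diag(1,1,-1,-1)$ and $P_{(1\,3)(2\,4)}$ anticommute, and the commutator pairing on $V$ is nondegenerate. Hence abelian subgroups of this preimage have order at most $16$, which gives only index $480$, not $\le120$.

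The witness you need is a different subgroup: the diagonal group $\{[\Diag(a,b,c,d,1)]\mid a^4=b^4=c^4=d^4=abcd=1\}\cong C_4^3$. It has order $64$ and index exactly $120$. There is no slack here. If $\widetilde A\subset\Aut(h_3)$ is abelian with images $A\subset G_{h_3}$ and $B\subset S_5$, then $A\cap V$ is isotropic and $B$-fixed. Moreover, elements of order $3$ such as $P_{(1\,2\,3)}$ fix no nontrivial element of $V$. Together these give $|\widetilde A|\le 4\cdot16$, so $\overline J(G_3)=120$ as well.

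You also never checked $J(G_3)\le2016$. It holds, but with little margin. $V$ is an irreducible $S_5$-module whose preimage is nonabelian, and $C_{G_{h_3}}(V)=V$. So the only normal abelian subgroups lie in the center $C_4$, and $J(G_3)=1920$.

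The other groups you left unchecked are routine:
\begin{itemize}
\item $G_4$: $C_7\times C_4^2$ has index $48$, and the normal $C_4^2$ has index $336$.
\item $G_5$: a diagonal abelian subgroup has index $24$.
\item $G_{10},\dots,G_{17}$: the normal diagonal subgroups suffice.
\end{itemize}
These are exactly the checks the paper delegates to GAP.
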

It is also natural to ask which groups occur as the full automorphism group of a smooth quartic threefold, and how this group varies in moduli. For smooth cubic threefolds and fourfolds, these questions were recently answered by Fu--Wang--Zheng~\cite{FWZ26}, who classified the full automorphism groups together with their projective actions and the corresponding families. Following their approach, we show in Section~\ref{sec:full-automorphism-groups} that exactly $194$ finite groups, including the trivial group, occur as $\Aut(X)$, realized by $311$ conjugacy classes of subgroups of $\PGL(5,\C)$ (Theorem~\ref{thm:full-groups}). In particular, the locus of smooth quartic threefolds with nontrivial automorphisms has exactly $7$ irreducible components and is the singular locus of the coarse moduli space, which has only terminal quotient singularities (Remarks~\ref{rem:nontrivial-locus} and~\ref{rem:terminal-moduli}).

Theorem~\ref{thm:24-projective-actions} also answers the corresponding question for smooth quartic surfaces. For a smooth quartic form $f$ in four variables, the K3 surface $V(f)\subset\P^3$ may have infinitely many automorphisms, but the group $G_f\subset\PGL(4,\C)$ of projective automorphisms of $V(f)$ is finite.  Oguiso~\cite{Ogu05} characterized the Fermat quartic surface among all K3 surfaces, by means of its
finite group symmetries. Brandhorst--Hofmann~\cite{BH23} classified finite automorphism actions on K3 surfaces up to deformation in terms of Hodge-theoretic data. The projective actions on smooth quartic surfaces, which correspond to invariant very ample polarizations of degree~$4$, are not listed there explicitly. For smooth quartic surfaces, Avila--Ortiz--Troncoso~\cite{AOT24} determined those invariant under the finite primitive subgroups of $\PGL(4,\C)$, and some special cases were studied earlier in~\cite{MPK16}, \cite{FMPK16}, \cite{Dol18}, and~\cite{PZ21}. To our knowledge, an explicit classification of the maximal finite projective groups preserving smooth quartic surfaces has not been given (see~\cite[Example~1.2]{Dol19} and~\cite[Section~1]{AOT24}). Since $G_f$ is the quotient of the centralizer of $[\Diag(I_4,i)]$ in $\Aut(V(f+x_5^4))$ by the subgroup generated by this element, Theorem~\ref{thm:24-projective-actions} yields such a classification. For $i\leq15$ with $i\neq2,10,12$ and for $i=23,24$, let $h_i=F_i(x_1,x_2,x_3,x_4,0)$, where $F_i$ is the form of Example~\ref{ex:17maximal} or~\ref{ex:seven-projective-actions}, with general $t_1,t_2$ for $i=23$.
\begin{theorem}\label{thm:quartic-surfaces}
	\begin{enumerate}
		\item A finite subgroup of $\PGL(4,\C)$ preserves a smooth quartic surface if and only if it is conjugate to a subgroup of one of the following $12$ groups $G_{h_i}$, none of which is conjugate to a subgroup of another:
		\begin{table}[H]
			\centering
			\begin{tabular}{c l r@{\hspace{10pt}}c l r}
				\hline
				$i$& structure of $G_{h_i}$&order&$i$& structure of $G_{h_i}$&order\\
				\hline
				1&$C_4^3\rtimes S_4$&$1536$&8&$S_5$&$120$\\
				3&$C_2^4\rtimes S_5$&$1920$&11&$C_4\times(C_5\rtimes C_4)$&$80$\\
				4&$C_4\times\PSL(2,7)$&$672$&13&$C_{36}$&$36$\\
				5&$C_4.(A_4\times D_8)$&$384$&15&$C_{27}$&$27$\\
				6&$C_4.(A_4^2\rtimes C_2)$&$1152$&23&$C_9$&$9$\\
				7&$\PSL(2,7)$&$168$&24&$C_2^2\rtimes C_9$&$36$\\
				\hline
			\end{tabular}
		\end{table}
		\item A finite group acts faithfully on a smooth quartic surface in $\P^3$ by projective automorphisms if and only if it is isomorphic to a subgroup of $G_{h_i}$ for some $i\in\{1,3,4,5,6,\allowbreak11,13,15,24\}$. None of these $9$ groups is isomorphic to a subgroup of another.
	\end{enumerate}
\end{theorem}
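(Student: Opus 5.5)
The plan is to deduce the statement from Theorem~\ref{thm:24-projective-actions} by the cyclic cover trick described just before it. Let $h$ be a smooth quartic form in $x_1,\dots,x_4$ and $G_h\subset\PGL(4,\C)$ its finite group of projective automorphisms. Put $F=h+x_5^4$; then $V(F)$ is a smooth quartic threefold, and $\sigma=[\Diag(I_4,i)]$ lies in $\Aut(V(F))$. I will check that the centralizer $C(\sigma)$ of $\sigma$ in $\Aut(V(F))$ consists exactly of classes of block matrices $\Diag(A,\lambda)$ with $A$ preserving $h$ up to scalar. The inclusion one way follows because elements commuting with $\sigma$ in $\PGL(5)$ preserve its eigenspace decomposition $\C^4\oplus\C$; the eigenvalue ratio $i$ cannot be swapped with another, since the eigenvalues are $1$ (multiplicity four) and $i$. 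Conversely, such a matrix can be rescaled so that $\lambda^4$ matches the scalar on $h$. Hence $C(\sigma)/\langle\sigma\rangle\cong G_h$.

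For the "if" direction of (1), I will exhibit the $12$ groups directly. For each listed $i$, $h_i=F_i(x_1,\dots,x_4,0)$ is smooth; this is verified from the explicit forms, using that $x_5$ enters $F_i$ as $x_5^4$ plus terms not spoiling smoothness of the hyperplane section. Then $G_{h_i}$ is computed as $C(\sigma)/\langle\sigma\rangle$ inside $G_i=\Aut(X_i)$, using the matrix generators of Examples~\ref{ex:17maximal} and~\ref{ex:seven-projective-actions}. For the "only if" direction, start from $H\subset G_h$. Its preimage $\tilde H\subset C(\sigma)\subset\Aut(V(F))$ is, by Theorem~\ref{thm:24-projective-actions}, conjugate by some $g$ into some $G_j$. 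Then $g\sigma g^{-1}\in G_j$ is an element of type $[\Diag(I_4,i)]$, i.e.\ a "pseudoreflection" of order $4$ whose fixed hyperplane cuts $X_j$ in a smooth quartic surface.

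The main obstacle is then a finite enumeration. For each $j\le 24$, list the $G_j$-conjugacy classes of elements of $G_j$ conjugate in $\PGL(5)$ to $\sigma$ (equivalently to $\sigma$ or $\sigma^{-1}$). For each such class, compute its centralizer modulo itself and the quartic surface cut out on the fixed hyperplane. I expect that after conjugating each class to $\Diag(I_4,i)$ one lands on one of the $h_i$ listed (the cases $i=2,10,12$ and $16$--$22$ contributing none, or only subgroups of listed ones). This gives $H$ conjugate into some $G_{h_i}$. I would carry this out with explicit computation in the given generators, likely computer-assisted. Irredundancy follows by comparing orders and structures, and for groups of equal or divisible orders (e.g.\ $i=4$ versus $7$, and $13,23,24$) by noting that non-conjugacy in $\PGL(4)$ lifts to non-conjugacy of the centralizers in Lemma~\ref{lem:24-irredundant}. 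More directly, one can compare character or eigenvalue data of the lifts to $\GL(4)$.

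Part (2) is then pure group theory on the table. Discard those $G_{h_i}$ abstractly isomorphic to subgroups of others: $\PSL(2,7)\subset C_4\times\PSL(2,7)$, $S_5\subset C_2^4\rtimes S_5$, and $C_9\subset C_{27}$. For the remaining nine, verify pairwise non-embedding via orders, and for candidates with divisible orders via Sylow structure or element orders (e.g.\ $C_{36}$ and $C_{27}$ cyclic of orders that appear in no other group, and $C_2^2\rtimes C_9$ nonabelian with an element of order $9$). This check can be confirmed with GAP.
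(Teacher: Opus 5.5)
Your overall route is the paper's: pass to $F=h+x_5^4$, identify $C_{G_F}(\delta)/\langle\delta\rangle$ with $G_h$, apply Theorem~\ref{thm:24-projective-actions}, enumerate the elements of each $G_j$ conjugate to $\delta^{\pm1}$, and finish with group comparisons in GAP. Part (2) is handled as in the paper. Two steps in part (1), however, are not right as written.

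\emph{The enumeration.} It does not land only on the twelve listed $h_i$. Elements conjugate to $\delta^{\pm1}$ occur in $G_j$ exactly for $j\in\{1,3,4,5,6,7,8,9,11,13,14,15,23,24\}$. The indices $2,10,12,16,\dots,22$ contribute nothing at all. The indices $j=9$ and $j=14$ contribute $G_{h_9}$ and $G_{h_{14}}$, which are not in the table. Your parenthetical attributes the ``subgroups of listed ones'' to the wrong indices and omits $9$ and $14$. Before you can claim the list of twelve is complete, you need to show that $G_{h_9}$ is conjugate to $G_{h_8}$ and that $G_{h_{14}}$ is conjugate to a subgroup of $G_{h_6}$.

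\emph{Irredundancy.} It cannot be deduced from Lemma~\ref{lem:24-irredundant}. That lemma compares the full groups $G_j$, whereas $C_{G_j}(\delta)\cong\Aut(h_j)$ is in general a proper subgroup of $G_j$. More importantly, the conjugacy class of $G_h$ in $\PGL(4,\C)$ does not determine $\Aut(h)$. Indeed, $\Aut(h)$ is the kernel of the character by which $\pi^{-1}(G_h)$ acts on $h$, and two forms with the same projective group can have different such characters.

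The pair $h_8,h_9$ shows this concretely. Both $G_8=C_{G_8}(\delta)\cong\GL(2,5)$ and $G_9=C_{G_9}(\delta)\cong C_4.S_5$ occur, and they are not even isomorphic. Yet $G_{h_8}$ and $G_{h_9}$ are conjugate, since both are the spin $S_5$. So no transfer principle between conjugacy in $\PGL(4,\C)$ and conjugacy of the centralizers in $\PGL(5,\C)$ is available, in either direction.

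The comparison therefore has to be done directly in $\PGL(4,\C)$, for instance via trace-preserving isomorphisms between the full preimages in $\SL(4,\C)$. This is your ``more direct'' fallback, and it is what the paper does in GAP. It is also needed for the $C_9$ versus $C_{27}$, $C_{36}$ cases. With these two corrections, your argument coincides with the paper's proof.
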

After this work was completed, we learned that Fu--Wang--Zheng~\cite{FWZ26b} have independently classified the full projective automorphism groups of smooth quartic surfaces by a different approach.

We now explain what is new in the proof. Theorem~\ref{thm:main} is the quartic counterpart of the classifications for quintic threefolds~\cite{OY19}, cubic threefolds~\cite{WY20}, and cubic fourfolds and fivefolds~\cite{YYZ24}. These classifications combine restrictions on subgroups of prime-power order with representation theory, smoothness conditions, and computations in GAP~\cite{GAP} to rule out candidate groups. In the quartic case, the diagonal symmetries of the Fermat quartic threefold already form the $2$-group $C_4^4$, and $6$ of the $17$ groups in Theorem~\ref{thm:main} have Sylow $2$-subgroups of order at least $2^7$, the largest being of order $2^{11}$ for $G_1$. There are more than ten million groups of order $2^9$ and almost $5\cdot10^{10}$ groups of order $2^{10}$, and the latter are not even in the Small Groups Library of GAP, so a direct enumeration of candidate $2$-groups is impractical. Instead, we work with the action of $\Aut(X)$ on $\P^4$ throughout and classify the projective actions directly (Theorem~\ref{thm:24-projective-actions}); Theorem~\ref{thm:main} then follows by comparing the groups $G_i$ as abstract groups. This finer classification is also the input for the results on quartic surfaces (Theorem~\ref{thm:quartic-surfaces}) and on full automorphism groups (Theorem~\ref{thm:full-groups}).

Since $\gcd(4,5)=1$, every subgroup of $\Aut(X)$ lifts isomorphically into $\Aut(F)$, where $X=V(F)$ (\cite{GLM23}). We decompose $\C^5$ into blocks permuted by this lift, with primitive block stabilizers. This framework of {\it primitive constituents} and {\it subdegree sequences} (Definition~\ref{def:primitive-constituents}) was developed in~\cite[Section~3]{YYZ25} to bound automorphism groups, and we refine it to classify the projective actions. Smoothness allows only $A_4$, $\PSL(2,7)$, and four abstract groups as primitive constituents of dimensions two, three, and four, respectively (Proposition~\ref{prop:possi-primitive-cons}). Hence the nonmonomial case splits into five cases by subdegree sequence, governed by binary quartics, the Klein quartic, and quartic surfaces with primitive symmetry groups (Theorem~\ref{thm:nonmonomial}). For instance, $G_3$, $G_7$, $G_8$, and $G_9$ come from such surfaces, including those of Burnside and Edge, and $G_{14}\cong C_8.S_4$ comes from a form coupling two binary blocks.

In the monomial case, $\Aut(X)$ is an extension of a diagonal group by a permutation group of degree at most five. After classifying the abelian groups (Theorem~\ref{thm:maximal-abelian}), we use normal elementary abelian $2$-subgroups to replace $F$ by an invariant smooth sum of forms in fewer variables (Lemma~\ref{lem:monomial-smooth-sum}). This lemma holds in any degree $d$ for abelian groups of exponent dividing $d$. Similarly, Proposition~\ref{prop:p-subgroups-abelian} shows that $p$-subgroups are abelian if $p\nmid d$ and there are fewer than $2p$ variables. In both the nonmonomial and the monomial case, we show directly that $\Aut(X)$ is conjugate to a subgroup of some $G_i$ (Theorems~\ref{thm:nonmonomial} and~\ref{thm:monomial}), and we deduce the main results in Section~\ref{sec:maximal-projective-actions}. Thus the proofs of Theorems~\ref{thm:main} and~\ref{thm:24-projective-actions} enumerate no candidate groups from the Small Groups Library, and use computer algebra only for explicit verifications and for comparing the groups $G_i$. The enumeration in Section~\ref{sec:full-automorphism-groups} is computer-aided, and Appendix~\ref{app:abelian-representations} is not needed for these proofs.
\subsection*{Acknowledgements}
I sincerely thank my advisor, Professor Xun Yu, for introducing me to this problem and for his valuable advice and insightful discussions.

\subsection*{AI disclosure} The author developed the main ideas and established the main results of this paper. The code for Section~\ref{sec:full-automorphism-groups} was originally written in a mixture of GAP~\cite{GAP}, SageMath~\cite{Sage}, and Mathematica~\cite{Wo}. OpenAI's GPT-6 Astra was used to identify the corresponding GAP functions, so that the code could be rewritten entirely in GAP. It also helped organize the output of the results file. After $X_8$, $X_9$, $X_{14}$, $X_{24}$, and $X_{17}(\lambda,\mu)$ were constructed from invariant forms of linear groups, GPT-6 Astra was used to find linear changes of coordinates giving the equations in Examples~\ref{ex:17maximal} and~\ref{ex:seven-projective-actions} with fewer monomials and simpler coefficients. The author also used GPT-6 Astra to help locate earlier results in the literature and to adjust the formatting of Table~\ref{tab:full-groups}, and used both GPT-6 Astra and Claude Opus 5.5 to polish the English. The author takes full responsibility for the content of this paper.
\section{Notation and conventions}\label{sec:notation}
In this section, we introduce notation and recall some preliminary facts about automorphism groups of smooth quartic threefolds over $\C$. For linear actions and homogeneous polynomials, we mainly follow the notation and conventions of \cite[Section~2]{YYZ25}, specialized to quartic threefolds.

\noindent{\bf (2.1)} Let $(e_1,\dots,e_5)$ be the standard basis of $\C^5$, and let $(x_1,\dots,x_5)$ be the dual coordinates. Matrices act on column vectors from the left and on polynomials from the right by
$$
(F,A)\longmapsto A(F):=
F\left(\sum_{i=1}^5a_{1i}x_i,\dots,
\sum_{i=1}^5a_{5i}x_i\right),
\qquad A=(a_{ij})\in\GL(5,\C).
$$
Thus $(A_1A_2)(F)=A_2(A_1(F))$ for $A_1,A_2\in\GL(5,\C)$. Two polynomials $F,F'\in\C[x_1,\dots,x_5]$ are called {\it isomorphic} if they lie in the same $\GL(5,\C)$-orbit. The same conventions apply to polynomials in fewer variables. We write $\xi_m:=e^{2\pi\sqrt{-1}/m}$ and $i:=\xi_4$, and $I_r$ for the $r\times r$ identity matrix.

\noindent{\bf (2.2)} We use the notions of {\it form} and {\it homogeneous polynomial} interchangeably. The automorphism group of a homogeneous polynomial $F\in\C[x_1,\dots,x_r]$ of degree $d\geq2$ is
$$
\Aut(F):=\{A\in\GL(r,\C)\mid A(F)=F\}.
$$
We say that $F$ is {\it smooth} if $(0,\dots,0)$ is the only common zero of $\partial F/\partial x_1,\dots,\partial F/\partial x_r$.
Let $\pi\colon\GL(r,\C)\to\PGL(r,\C)$ be the natural projection. For $A\in\GL(r,\C)$, we write $[A]:=\pi(A)$, and we set $G_F:=\pi(\Aut(F))$.
A subgroup $H\subset\GL(r,\C)$ (resp.\ $H\subset\PGL(r,\C)$) {\it preserves} $F$ if $H\subset\Aut(F)$ (resp.\ $H\subset G_F$); in this case, we call $F$ an {\it $H$-invariant form} of degree $d$. A subgroup $H\subset\GL(r,\C)$ (resp.\ $\PGL(r,\C)$) is {\it conjugate} to another such subgroup $H'$ if $gHg^{-1}=H'$ for some $g\in\GL(r,\C)$ (resp.\ $\PGL(r,\C)$). Note that $\Aut(A(F))=A^{-1}\Aut(F)A$ for $A\in\GL(r,\C)$.
A monomial $\mathfrak{m}$ of degree $d$ is {\it in} $F$, written $\mathfrak{m}\in F$, if its coefficient in $F$ is nonzero.
A smooth quartic form $F\in\C[x_1,\dots,x_5]$ defines a smooth quartic threefold $X_F:=V(F)\subset\P^4$. By Matsumura--Monsky \cite[Theorems~1--2]{MM63}, $\Aut(X_F)$ is finite and every automorphism of $X_F$ is induced by an element of $\PGL(5,\C)$; we therefore identify $\Aut(X_F)$ with $G_F$. 

\noindent{\bf (2.3)} As in \cite[Definitions~4.4 and~4.6]{OY19}, let $H\subset\PGL(r,\C)$ be a finite subgroup. A {\it lifting} (resp.\ an {\it $F$-lifting}) of $H$ is a subgroup $\widetilde H\subset\GL(r,\C)$ (resp.\ $\widetilde H\subset\Aut(F)$) such that $\pi$ maps $\widetilde H$ isomorphically onto $H$. We say that $H$ is {\it $F$-liftable} if it admits an $F$-lifting. By \cite[(1.1)]{YYZ25}, we have the central extension
$$
1\longrightarrow\langle\xi_dI_r\rangle\longrightarrow\Aut(F)\stackrel{\pi}{\longrightarrow}G_F\longrightarrow1.
$$
Thus $H\subset G_F$ is $F$-liftable if and only if this extension splits over $H$, and any two $F$-liftings of $H$ differ by a homomorphism $H\to\langle\xi_dI_r\rangle$. If $F$ is a smooth quartic form in five variables, then every subgroup of $\Aut(X_F)$ is $F$-liftable by \cite[Theorem~3.5]{GLM23}, since $\gcd(4,5)=1$. For such $F$, the group $\langle\widetilde H,\xi_4I_5\rangle=\pi^{-1}(H)\cap\Aut(F)$ does not depend on the choice of an $F$-lifting $\widetilde H$ of $H\subset G_F$.

\noindent{\bf (2.4)} For $F_1,\dots,F_k\in\C[x_1,\dots,x_r]$, we write $(F_1,\dots,F_k)$ for the ideal of $\C[x_1,\dots,x_r]$ generated by $F_1,\dots,F_k$, and $(F_1,\dots,F_k)^a$ for its $a$-th power, where $a$ is a positive integer.
We write $\Diag(A_1,\dots,A_k)$ for the block-diagonal matrix with diagonal blocks $A_1,\dots,A_k$.
For a permutation $\sigma$ of $\{1,\dots,r\}$, let $P_\sigma$ be the permutation matrix with $P_\sigma e_i=e_{\sigma(i)}$, so that $P_\sigma(x_i)=x_{\sigma^{-1}(i)}$. 

We denote by $C_m$ the cyclic group of order $m$, by $D_{2m}$ the dihedral group of order $2m$, by $S_m$ and $A_m$ the symmetric and alternating groups on $m$ letters, respectively, and by $Q_8$ the quaternion group of order $8$.
For a group $G$ and a subgroup $H\subset G$, we denote by $Z(G)$ the center of $G$, by $G'$ its derived subgroup, and by $C_G(H)$ the centralizer of $H$ in $G$.
We write $N\rtimes H$ for a semidirect product with normal subgroup $N$, and $N.H$ for a group that fits into a short exact sequence $1\to N\to N.H\to H\to1$.
Finally, $\id{n}{k}$ denotes the $k$-th group of order $n$ in the Small Groups Library of GAP \cite{GAP}.

\section{Examples and abelian groups}\label{ss:17maximal}
We begin with explicit examples realizing the groups in Theorems~\ref{thm:main} and~\ref{thm:24-projective-actions}, and then study the finite abelian groups acting faithfully on smooth quartic threefolds.
\subsection{The examples}
The following examples are listed in nonincreasing order of $|G_i|$.
\begin{example}\label{ex:17maximal}
	\begin{enumerate}
		\item
		The Fermat quartic threefold is $X_1=V(F_1)$, where $F_1=x_1^4+x_2^4+x_3^4+x_4^4+x_5^4$. Its automorphism group is $\Aut(X_1)=G_1\cong C_4^4\rtimes S_5$, of order $2^{11}\cdot3\cdot5=30720$, generated by $[\Diag(i,-i,1,1,1)]$, $[P_{(1\,2)}]$, and $[P_{(1\,2\,3\,4\,5)}]$ (see \cite[p.~147]{HS80} and \cite{Shi88}).
		\item
		Let $X_2=V(F_2)$, where $F_2=x_1^3x_2+x_2^3x_3+x_3^3x_1+x_4^3x_5+x_5^4$. Then $\Aut(X_2)$ contains a subgroup $G_2\cong\PSL(2,7)\times(C_4.A_4)$ of order $2^{7}\cdot3^{2}\cdot7=8064$.
		\item
		Let $X_3=V(F_3)$, where $F_3=x_1^4+x_2^4+x_3^4+x_4^4+12x_1x_2x_3x_4+x_5^4$. Then $\Aut(X_3)$ contains a subgroup $G_3\cong C_4.(C_2^4\rtimes S_5)$ of order $2^{9}\cdot3\cdot5=7680$.
		\item
		Let $X_4=V(F_4)$, where $F_4=x_1^3x_2+x_2^3x_3+x_3^3x_1+x_4^4+x_5^4$. Then $\Aut(X_4)$ contains a subgroup $G_4\cong\PSL(2,7)\times(C_4^2\rtimes C_2)$ of order $2^{8}\cdot3\cdot7=5376$.
		\item
		Let $X_5=V(F_5)$, where $F_5=x_1^3x_2+x_2^4+x_3^4+x_4^4+x_5^4$. Then $\Aut(X_5)$ contains a subgroup $G_5\cong(C_4.A_4)\times(C_4^2\rtimes S_3)$ of order $2^{9}\cdot3^{2}=4608$.
		\item
		Let $X_6=V(F_6)$, where $F_6=x_1^3x_2+x_2^4+x_3^3x_4+x_4^4+x_5^4$. Then $\Aut(X_6)$ contains a subgroup $G_6\cong(C_4.A_4)^2\rtimes C_2$ of order $2^{9}\cdot3^{2}=4608$.
		\item
		Let $X_7=V(F_7)$, where $F_7=2x_1^4+x_2^3x_3+x_3^3x_4+x_2x_4^3+6x_1x_2x_3x_4+x_5^4$. Then $\Aut(X_7)$ contains a subgroup $G_7\cong C_4.\PSL(2,7)$ ($\id{672}{1047}$) of order $2^{5}\cdot3\cdot7=672$.
		\item
		Let $X_8=V(F_8)$, where $F_8=(x_1^2+x_2^2)(x_1x_2+3x_3x_4)+4(x_1x_3^3+x_2x_4^3)+x_5^4$. Then $\Aut(X_8)$ contains a subgroup $G_8\cong\GL(2,5)$ ($\id{480}{218}$) of order $2^{5}\cdot3\cdot5=480$.
		\item
		Let $X_9=V(F_9)$, where $F_9=5(x_1^4+x_2^4)-6x_1^2x_2^2+4(x_1x_3^3+x_2x_4^3)-24x_1x_2x_3x_4+3x_3^2x_4^2+x_5^4$. Then $\Aut(X_9)$ contains a subgroup $G_9\cong C_4.S_5$ ($\id{480}{946}$) of order $2^{5}\cdot3\cdot5=480$.
		\item
		Let $X_{10}=V(F_{10})$, where $F_{10}=x_1^3x_2+x_2^4+x_3^3x_4+x_4^3x_5+x_5^4$. Then $\Aut(X_{10})$ contains a subgroup $G_{10}\cong(C_4.A_4)\times C_9$ ($\id{432}{329}$) of order $2^{4}\cdot3^{3}=432$.
		\item
		Let $X_{11}=V(F_{11})$, where $F_{11}=x_1^3x_2+x_2^3x_3+x_3^3x_4+x_4^3x_1+x_5^4$. Then $\Aut(X_{11})$ contains the subgroup
		$
		G_{11}=\bigl\langle[\Diag(\xi_{80},\xi_{80}^{77},\xi_{80}^{9},\xi_{80}^{53},1)],\ [P_{(1\,2\,3\,4)}]\bigr\rangle\cong C_5\rtimes(C_{16}\rtimes C_4)
		$
		($\id{320}{184}$), of order $2^{6}\cdot5=320$.
		\item
		The Klein quartic threefold is $X_{12}=V(F_{12})$, where $F_{12}=x_1^3x_2+x_2^3x_3+x_3^3x_4+x_4^3x_5+x_5^3x_1$. Its automorphism group is $\Aut(X_{12})=G_{12}\cong C_{61}\rtimes C_5$ ($\id{305}{1}$), of order $5\cdot61 = 305$, generated by $[\Diag(\xi_{61},\xi_{61}^{58},\xi_{61}^{9},\xi_{61}^{34},\xi_{61}^{20})]$ and $[P_{(1\,2\,3\,4\,5)}]$ (see \cite[Theorem~B]{GLMV24}).
		\item
		Let $X_{13}=V(F_{13})$, where $F_{13}=x_1^3x_2+x_2^3x_3+x_3^4+x_4^4+x_5^4$. Then $\Aut(X_{13})$ contains a subgroup $G_{13}\cong C_9\times(C_4^2\rtimes C_2)$ ($\id{288}{54}$) of order $2^{5}\cdot3^{2}=288$.
		\item
		Let $X_{14}=V(F_{14})$, where $F_{14}=x_1x_3(x_1^2+x_3^2)+x_2x_4(x_2^2+x_4^2)-\sqrt{3}\,i\,(x_1x_2-x_3x_4)(x_1x_4+x_2x_3)+x_5^4$. Then $\Aut(X_{14})$ contains the subgroup $G_{14}\cong C_8.S_4$ ($\id{192}{964}$) of order $2^{6}\cdot3=192$ generated by the classes of $\Diag\bigl(\tfrac{\xi_3}{\sqrt2}\bigl(\begin{smallmatrix}1&-i\\1&i\end{smallmatrix}\bigr),-\tfrac{\xi_3^2}{\sqrt2}\bigl(\begin{smallmatrix}1&-i\\1&i\end{smallmatrix}\bigr),1\bigr)$, $\Diag(\xi_8I_2,\xi_8^5I_2,1)$ and $P_{(1\,3)(2\,4)}\Diag(1,i,1,i,1)$.
		\item
		Let $X_{15}=V(F_{15})$, where $F_{15}=x_1^3x_2+x_2^3x_3+x_3^3x_4+x_4^4+x_5^4$. Then $\Aut(X_{15})$ contains the subgroup $G_{15}\cong C_{108}$ ($\id{108}{2}$) generated by $[\Diag(\xi_{108},\xi_{36}^{35},\xi_{12},-i,1)]$.
		\item
		Let $X_{16}=V(F_{16})$, where $F_{16}=x_1^3x_2+x_2^3x_3+x_3^3x_4+x_4^3x_5+x_5^4$. Its automorphism group is $\Aut(X_{16})=G_{16}\cong C_{81}$ ($\id{81}{1}$), generated by $[\Diag(\xi_{81},\xi_{27}^{26},\xi_{9},\xi_{3}^{2},1)]$ (see \cite[Proposition~3.2]{GLMV24}).
		\item
		For $\lambda,\mu\in\C$, let $X_{17}(\lambda,\mu)=V(F_{17}(\lambda,\mu))$, where
		$
		F_{17}(\lambda,\mu)=x_1^3x_3+x_2^3x_4+\lambda x_5^4+\mu x_3x_4x_5^2+x_3^2x_4^2+x_5(x_3^3+x_4^3).
		$
		This form is smooth if and only if $\lambda\bigl(16\lambda^2-8\lambda\mu^2+36\lambda\mu-27\lambda+\mu^4-\mu^3\bigr)\neq0$.
		For every such $(\lambda,\mu)$, $\Aut(X_{17}(\lambda,\mu))$ contains the subgroup
		$
		G_{17}=\bigl\langle[\Diag(\xi_3,\xi_3,1,1,1)],\ [P_{(1\,2)(3\,4)}],\ [\Diag(\xi_9,\xi_9^{8},\xi_3^{2},\xi_3,1)]\bigr\rangle\cong C_3\times D_{18}
		$
		($\id{54}{3}$), of order $2\cdot3^{3}=54$.
	\end{enumerate}
	For the form $x_1^3x_2+x_2^4$, which occurs (in suitable variables) in (2), (5), (6), and (10), we have
	\begin{equation}\label{eq:A4-generators}
		\Aut(x_1^3x_2+x_2^4)=\left\langle iI_2,\ \Diag(\xi_3,1),\frac{1}{\sqrt3}\begin{pmatrix}-1&2\\1&1\end{pmatrix}\right\rangle\cong C_4. A_4.
	\end{equation}

	For the Klein quartic form $x_1^3x_2+x_2^3x_3+x_3^3x_1$, which occurs in (2) and (4), we have
	\begin{equation}\label{eq:klein-curve-generators}
		\Aut(x_1^3x_2+x_2^3x_3+x_3^3x_1)=\left\langle iI_3, \Diag(\xi_7,\xi_7^4,\xi_7^2),-\frac1{i\sqrt7}
		\begin{pmatrix}
			\xi_7-\xi_7^{6} & \xi_7^2-\xi_7^{5} & \xi_7^4-\xi_7^{3}\\
			\xi_7^2-\xi_7^{5} & \xi_7^4-\xi_7^{3} & \xi_7-\xi_7^{6}\\
			\xi_7^4-\xi_7^{3} & \xi_7-\xi_7^{6} & \xi_7^2-\xi_7^{5}
		\end{pmatrix}\right\rangle,
	\end{equation}
	which is isomorphic to $C_4\times\PSL(2,7)$ (see \cite[Sections~1, 4--5]{Kle79}).

	In (2), (4)--(6), (10), and (13), $F_i$ is a sum of forms in disjoint sets of variables, and $G_i$ is generated by the automorphisms of these forms, given by \eqref{eq:A4-generators}, \eqref{eq:klein-curve-generators}, and diagonal matrices, and by the permutations of variables interchanging copies of the same form. In (3) and (7)--(9), $G_i=\{[\Diag(A,1)]\mid A\in\Aut(h_i)\}$, where $h_i=F_i|_{x_5=0}$ as before Theorem~\ref{thm:quartic-surfaces} and
		\allowdisplaybreaks
	\begin{align*}
		\Aut(h_3)&=\Biggl\langle \Diag(i,-i,1,1),\ P_{(1\,2\,3\,4)},\ \frac12\begin{pmatrix}1&1&1&1\\1&1&-1&-1\\1&-1&1&-1\\1&-1&-1&1\end{pmatrix}\Biggr\rangle,\\[1ex]
	\Aut(h_7)&=\Biggl\langle \Diag(1,\xi_7,\xi_7^4,\xi_7^2),\ \frac{1}{\sqrt7}\begin{pmatrix}1&1&1&1\\2&\xi_7+\xi_7^6&\xi_7^2+\xi_7^5&\xi_7^3+\xi_7^4\\2&\xi_7^2+\xi_7^5&\xi_7^3+\xi_7^4&\xi_7+\xi_7^6\\2&\xi_7^3+\xi_7^4&\xi_7+\xi_7^6&\xi_7^2+\xi_7^5\end{pmatrix}\Biggr\rangle,\\[1ex]
		\Aut(h_8)&=\Biggl\langle \Diag(1,1,\xi_3^2,\xi_3),\ \begin{pmatrix}0&\xi_8^5&0&0\\\xi_8&0&0&0\\0&0&0&\xi_8\\0&0&\xi_8^5&0\end{pmatrix},\ \frac{1}{2\sqrt3}\begin{pmatrix}\sqrt3&-1&2&2\sqrt3\\1&\sqrt3&2\sqrt3&-2\\-1&\sqrt3&0&2\\\sqrt3&1&-2&0\end{pmatrix}\Biggr\rangle,\\[1ex]
		\Aut(h_9)&=\Biggl\langle\Diag(-i,i,-i,i), \begin{pmatrix}0&1&0&0\\1&0&0&0\\0&0&0&\xi_3\\0&0&\xi_3^2&0\end{pmatrix},  \frac{1}{2\sqrt3}\begin{pmatrix}-1&\sqrt3&1&\sqrt3\\\sqrt3&1&\sqrt3&-1\\2&2\sqrt3&-2&0\\2\sqrt3&-2&0&2\end{pmatrix}\Biggr\rangle.
	\end{align*}
	The smoothness of $F_i$ and the inclusions $G_i\subset\Aut(X_i)$ can be verified with Mathematica \cite{Wo}.
\end{example}

The following $7$ groups are isomorphic to subgroups of $G_{10}$, $G_{15}$, or $G_{17}$, but each gives a maximal projective action (Lemma~\ref{lem:24-irredundant}).
\begin{example}\label{ex:seven-projective-actions}
	We continue the numbering of Example~\ref{ex:17maximal}. In (18) and (20)--(23), the $t_i$ are complex parameters.
	\begin{enumerate}
		\setcounter{enumi}{17}
		\item
		Let $X_{18}=V(F_{18})$, where $F_{18}=x_1^3x_4+x_2^3x_4+x_3^3x_5+x_4^3x_5+x_5^4+t_1x_3^2x_4^2+t_2x_3x_4x_5^2+t_3x_1^2x_2x_3+t_4x_1x_2^2x_5.
		$ Then  $G_{18}=\langle[\Diag(\xi_9,\xi_9^4,\xi_3,\xi_3^2,1)]\rangle\cong C_9$ is a subgroup of $\Aut(X_{18})$.
		\item
		Let $X_{19}=V(F_{19})$, where $F_{19}=x_1^4+x_1^2x_2^2+x_2^4+x_2x_3^3+x_3^2x_4x_5+x_1(x_4^3+x_5^3)+x_2(x_4^3-x_5^3)$. Then $\Aut(X_{19})$ contains the subgroup $G_{19}\cong C_3\times S_3$ ($\id{18}{3}$) of order $2\cdot3^2=18$ generated by $[\Diag(1,1,\xi_3,\xi_3^2,\xi_3^2)]$, $[\Diag(1,1,1,\xi_3,\xi_3^2)]$, and $[\Diag(1,-1,-1,1,1)P_{(4\,5)}]$.
		\item
		Let $X_{20}=V(F_{20})$, where $F_{20}=x_1^3x_2+x_2^3x_4+x_3^3x_5+x_4^3x_5+x_5^4+t_1x_3^2x_4^2+t_2x_3x_4x_5^2$. Then $[\Diag(\xi_{27},\xi_{9}^{8},\xi_{3}^{2},\xi_{3},1)]$ generates a subgroup $G_{20}\cong C_{27}$ of $\Aut(X_{20})$.
		\item
		Let $X_{21}=V(F_{21})$, where $F_{21}=x_1^3x_2+x_2^3x_4+x_3^3x_4+x_4^3x_5+x_5^4+t_1x_2x_3^2x_5$. Then $[\Diag(\xi_{27},\xi_{9}^{8},\xi_{9}^{5},\xi_{3},1)]$ generates a subgroup $G_{21}\cong C_{27}$ of $\Aut(X_{21})$.
		\item
		Let $X_{22}=V(F_{22})$, where $F_{22}=x_1^3x_3+x_2^3x_4+x_3^3x_4+x_4^3x_5+x_5^4+t_1x_2x_3^2x_5$. Then $[\Diag(\xi_{27},\xi_{9}^{2},\xi_{9}^{8},\xi_{3},1)]$ generates a subgroup $G_{22}\cong C_{27}$ of $\Aut(X_{22})$.
		\item
		Let $X_{23}=V(F_{23})$, where $F_{23}=x_1^3x_3+x_2^3x_4+x_3^3x_4+x_4^4+x_5^4+t_1x_2^2x_3^2+t_2x_2x_3x_4^2$. Then $[\Diag(\xi_{9}^{7},\xi_{3},\xi_{3}^{2},1,i)]$ generates a subgroup $G_{23}\cong C_{36}$ of $\Aut(X_{23})$.
		\item
		Let $X_{24}=V(F_{24})$, where $F_{24}=x_1^4+8x_1x_2^3+3x_1x_3^2x_4+x_2x_3^3-4x_2x_4^3+x_5^4$. Then $\Aut(X_{24})$ contains $G_{24}\cong C_{12}.A_4$ ($\id{144}{36}$) of order $2^4\cdot3^2=144$ generated by
		$[\Diag(1,\xi_3,\xi_9^2,\xi_9^5,1)]$, $[\Diag(-i,-i,-i,-i,1)]$ and $[\Diag(B,B,1)]$,
		where $B=\frac{1}{\xi_3-\xi_3^2}\begin{pmatrix}1&2\\1&-1\end{pmatrix}.$
	\end{enumerate}
	The forms $F_{19}$ and $F_{24}$ are smooth, and so are the other forms $F_i$ for general $t_i$. This and the inclusions $G_i\subset\Aut(X_i)$ can be verified with Mathematica \cite{Wo}.
\end{example}

\begin{remark}\label{rmk:classic-surf}
	Several of the forms above, restricted to the hyperplane $x_5=0$, define classical quartic surfaces.
	The surfaces $V(h_3)$ and $V(h_6)$ are known as Burnside's quartic and Schur's quartic, respectively (see \cite[Section~272, Exercise~6]{Bur11} and \cite{Sch82}), and $V(h_7)$ is projectively equivalent to the $\PSL(2,7)$-invariant quartic studied by Edge (\cite[Section~39]{Edge47}).
	Moreover, $V(h_4)$, $V(h_3)$, $V(h_1)$, and $V(h_6)$ are projectively equivalent to the quartic surfaces in \cite[Example~(0.4), n\textsuperscript{o}~1, 4, 5, and~7]{Muk88}, respectively.
\end{remark}

\subsection{Smoothness and cyclic actions}\label{sec:sm-cyc}
We first recall some smoothness criteria for quartic forms and known results on cyclic group actions on smooth quartic threefolds.
\begin{lemma}[{\cite[Lemma~3.2 and Proposition~3.3]{OY19}}]\label{lem:quartic-sm}
	Let $F\in\C[x_1,\dots,x_5]$ be a quartic form.
	Then $F$ is not smooth if one of the following conditions holds:
	\begin{enumerate}
		\item There exists $i$ such that $x_i^3x_j\notin F$ for all $1\leq j\leq5$.
		\item There exist distinct indices $p,q$ such that $F\in(x_p,x_q)$.
		\item There exist distinct indices $p,q,r$ such that $F\in(x_p)+(x_q,x_r)^2$.
	\end{enumerate}
\end{lemma}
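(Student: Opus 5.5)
In each case I will exhibit an explicit nonzero point of $\C^5$ at which all five partial derivatives of $F$ vanish; each condition hands us the coordinates to use.

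\emph{Condition (1).} Suppose $x_i^3x_j\notin F$ for all $j$. Then every monomial of $F$ has degree at most $2$ in $x_i$. I will look at the point $e_i$, where every coordinate other than $x_i$ is $0$. Consider $\partial F/\partial x_k$ at $e_i$. The only monomials that can contribute a nonzero value are those with $\partial\mathfrak m/\partial x_k$ a nonzero multiple of $x_i^3$. That forces $\mathfrak m=x_i^4$ when $k=i$, and $\mathfrak m=x_i^3x_k$ when $k\neq i$. Neither monomial occurs in $F$ (note $x_i^4$ is the case $j=i$). Hence all partials vanish at $e_i\neq0$, and $F$ is not smooth.

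\emph{Condition (2).} Suppose $F\in(x_p,x_q)$, and write $F=x_pA+x_qB$ with $A,B$ cubic forms. Each partial derivative of $F$ lies in $(x_p,x_q,A,B)$. This ideal is generated by four homogeneous elements in five variables, so its zero locus in $\C^5$ has dimension at least $1$. It therefore contains a nonzero point, at which all partials of $F$ vanish.

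\emph{Condition (3).} Suppose $F\in(x_p)+(x_q,x_r)^2$, and write $F=x_pA+Q$ with $A$ cubic and $Q\in(x_q,x_r)^2$. The partials of $Q$ lie in $(x_q,x_r)$. The partials of $x_pA$ lie in $(x_p,A)$, and indeed $\partial(x_pA)/\partial x_p=A+x_p\,\partial A/\partial x_p\in(x_p,A)$. So every $\partial F/\partial x_k$ lies in $(x_p,x_q,x_r,A)$. This ideal has four generators in five variables, so again there is a nonzero common zero, and $F$ is singular.

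There is no real obstacle here. The only point needing care is the dimension count $5-4\geq1$ for the affine cone, which is what guarantees a nonzero common zero in (2) and (3).
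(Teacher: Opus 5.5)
Your proof is correct: evaluating the partials at $e_i$ settles (1), and in (2) and (3) every partial derivative of $F$ lies in an ideal generated by four homogeneous forms of positive degree in five variables, whose zero locus in $\C^5$ contains $0$ and so has a component of dimension at least $1$, hence a nonzero point. The paper does not reprove the lemma but only cites \cite[Lemma~3.2 and Proposition~3.3]{OY19}, and your argument (the coordinate point for (1), a dimension count for the two ideal-membership conditions) is essentially the standard argument behind that reference.
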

\begin{theorem}[{\cite[Theorem~4.8]{Zhe22}}]
	\label{thm:cyclic}
	A finite cyclic group acts faithfully on some smooth quartic threefold if and only if it is isomorphic to a subgroup of one of
	$$
	C_{56},\quad C_{61},\quad C_{72},\quad C_{80},\quad
	C_{81},\quad C_{84},\quad C_{108}.
	$$
\end{theorem}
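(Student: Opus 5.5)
This statement is cited from \cite[Theorem~4.8]{Zhe22}; here is how I would reprove it. Every subgroup of $\Aut(X_F)$ is $F$-liftable by (2.3). So a cyclic group $C_m$ acting faithfully lifts to an element $A\in\Aut(F)$ of order $m$. After a change of coordinates, $A=\Diag(\xi_m^{a_1},\dots,\xi_m^{a_5})$, and then $F$ is a sum of $A$-invariant monomials.

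The "if" direction is by explicit examples. Example~\ref{ex:17maximal} already gives $C_{61}\subset G_{12}$, $C_{81}=G_{16}$ and $C_{108}=G_{15}$. For the remaining orders I would take diagonal automorphisms of suitable sums of Klein-type chains, Fermat terms and binary forms $x^3y+y^4$:
\begin{itemize}
\item $C_{56}$ from $x_1^3x_2+x_2^3x_3+x_3^3x_1+x_4^3x_5+x_5^4$, combining order $7$ on the Klein curve, order $8$ from the binary block, and the scalar $i$;
\item $C_{84}$ from the Klein curve plus a Fermat part;
\item $C_{80}$ from $G_{11}$;
\item $C_{72}$ from a chain $x_1^3x_2+x_2^3x_3+x_3^4$ together with $x_4^4+x_5^4$ and a binary piece.
\end{itemize}
Smoothness of each form follows from the standard chain and loop criteria.

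For the "only if" direction, I would use Lemma~\ref{lem:quartic-sm}(1). For each $i$ there is some $j=\sigma(i)$ with $x_i^3x_j\in F$, so $3a_i+a_{\sigma(i)}\equiv c \pmod m$, where $\xi_m^c$ is the character of $F$. After rescaling $A$ by a fourth root of unity (allowed, since $\gcd(4,5)=1$ lets us normalize the projective class), I may take $c=0$. The map $\sigma\colon\{1,\dots,5\}\to\{1,\dots,5\}$ decomposes into tails feeding into cycles. A cycle of length $k$ forces $((-3)^k-1)a\equiv0\pmod m$. The relevant values are:
\begin{itemize}
\item $k=1$: $4$;
\item $k=2$: $8$;
\item $k=3$: $28$;
\item $k=4$: $80$;
\item $k=5$: $244=4\cdot61$.
\end{itemize}
Along a tail, each step multiplies the admissible order by at most $3$. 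Hence every eigenvalue ratio has order dividing $3^t(3^k\pm1)$, and the order of the projective element divides the lcm of these over the components.

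The enumeration runs over all functional digraphs on $5$ vertices. For each configuration I would compute the maximal $m$ and then impose the further smoothness conditions (2) and (3) of Lemma~\ref{lem:quartic-sm}, which bite when two components have incompatible weights. This yields the list $56,61,72,80,81,84,108$ together with their divisors.

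The main obstacle is this last step. The lcm over several components can be larger than what actually occurs, for instance $7\cdot 9\cdot 4$ from a Klein triple plus a length-$2$ tail. Excluding such cases requires showing that the cross-monomials needed to kill singular points, via conditions (2) and (3), cannot all be invariant. I would handle this by a careful case analysis on pairs of components, checking for each candidate order whether a $\mu$-invariant monomial of the form $x_p^2x_qx_r$ or $x_px_q^3$ exists to make the form smooth.
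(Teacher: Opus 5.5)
The paper does not prove this statement; it quotes it from \cite[Theorem~4.8]{Zhe22} and uses it in Case~1 of the proof of Theorem~\ref{thm:maximal-abelian}. Your proposal therefore has to stand on its own, and as written it has two genuine gaps.

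\textbf{Wrong witnesses for $C_{56}$ and $C_{84}$.} In the \emph{if} direction, these two examples are swapped. For $F_2=K_3+T_2$ we have $G_2\cong\PSL(2,7)\times(C_4.A_4)$, where the second factor is $\Aut(T_2)$, of exponent $12$. So $G_2$ has no element of order $8$. On the diagonal, $x_4^3x_5+x_5^4$ only allows $\lambda_5^4=\lambda_4^{12}=1$, and no scalar helps. What $F_2$ realizes is $C_{84}=\langle[\Diag(\xi_7,\xi_7^4,\xi_7^2,\xi_{12},\xi_4^3)]\rangle$. Conversely, ``Klein plus Fermat'' is $F_4$, and $G_4\cong\PSL(2,7)\times(C_4^2\rtimes C_2)$ has no element of order $84$: the second factor is a $2$-group and $\PSL(2,7)$ has no element of order $21$. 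It realizes $C_{56}$ instead, because $x_4^4+x_5^4\cong K_2$ and $[\Diag(\xi_7,\xi_7^4,\xi_7^2,\xi_8,\xi_8^5)]$ preserves $K_3+K_2$. Your $C_{72}$ form $T_3+x_4^4+x_5^4=F_{13}$ does work, with the order-$8$ factor generated by $[\Diag(1,1,1,i,1)P_{(4\,5)}]$. There is, however, no room for an extra binary piece.

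\textbf{The \emph{only if} direction is not carried out.} ``This yields the list'' is asserted, not shown. The test you describe also points the wrong way. Lemma~\ref{lem:quartic-sm} only gives obstructions, so to exclude an order you must show that for \emph{every} admissible exponent vector all invariant monomials fall under (2) or (3). Looking for a cross-monomial that ``makes the form smooth'' does not do this.

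More seriously, the lcm bound plus smoothness cannot close the argument. For a $5$-cycle it allows $m=244$, and $A=\Diag(\xi_{244},\xi_{244}^{-3},\xi_{244}^{9},\xi_{244}^{-27},\xi_{244}^{81})$ preserves the smooth form $K_5$. It is excluded only because $A^{61}=iI_5$, while an $F$-lifting contains no nontrivial scalar. Your outline never uses this.

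With that fact, the argument of Case~1 in the proof of Theorem~\ref{thm:maximal-abelian} applies; it invokes \cite{Zhe22} only in its last step, and Case~2 is irrelevant for cyclic groups.
\begin{itemize}
\item The prime-to-$3$ part of $A$ is determined by the cycle coordinates. It embeds into the diagonal matrices of $\Aut(K)$ and meets $\langle iI_r\rangle$ trivially.
\item For a single cycle $K_\ell$ that group is cyclic of order divisible by $4$, so the prime-to-$3$ part has odd order.
\item The $3$-part has order dividing $3^{5-r}$.
\end{itemize}
Running through the partitions, every resulting cyclic group has order dividing one of the seven listed orders, with two exceptions:
\begin{itemize}
\item a $4$-cycle with one tail, where $m\mid 15$;
\item a $3$-cycle with two tail vertices, where $m\mid 63$ (and $9\mid m$ forces a tail of length two).
\end{itemize}
Use the cyclic symmetry of $K_4$ and $K_3$, and replace $A$ by a suitable generator of $\langle A\rangle$. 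Then $m=15$ and $m=63$ leave only $\Diag(\xi_5,\xi_5^2,\xi_5^4,\xi_5^3,\xi_5^3\xi_3)$ and $\Diag(\xi_7,\xi_7^4,\xi_7^2,\xi_7^2\xi_3^2,\xi_7^4\xi_9)$. Their invariant quartics lie in $(x_1)+(x_2,x_3)^2$ and $(x_1)+(x_2,x_5)^2$ respectively, so Lemma~\ref{lem:quartic-sm}(3) excludes both.

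A minor point: $c=0$ holds simply because $A\in\Aut(F)$. Multiplying by a fourth root of unity never changes the character of a quartic, so your justification via rescaling does nothing.
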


For the classification of noncyclic abelian groups, we also need to rule out the following groups.
\begin{lemma}\label{lem:abelian-obstruc-7}
	None of the following seven groups acts faithfully on a smooth quartic threefold:
	$$
	C_3^3,\quad C_9^2,\quad C_{27}\times C_3,\quad C_2^3\times C_3^2,\quad C_8\times C_3^2,\quad C_2^2\times C_9\times C_3,\quad C_7\times C_3^2.
	$$
\end{lemma}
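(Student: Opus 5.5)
We treat each of the seven groups separately. Suppose $A$ is one of them acting faithfully on $X_F$. By (2.3), $A$ has an $F$-lifting $\widetilde A\subset\Aut(F)$, which is abelian and can be diagonalized. We may thus assume $\widetilde A$ consists of diagonal matrices and every monomial in $F$ is $\widetilde A$-invariant. Each generator becomes a vector of characters on $x_1,\dots,x_5$. Since $\langle\widetilde A,\xi_4I_5\rangle$ maps onto $A$, we may normalize one coordinate, say $x_5$, to weight $0$ for chosen generators.

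The key constraint comes from Lemma~\ref{lem:quartic-sm}(1): for every $i$ there is a $j$ with $x_i^3x_j\in F$. This defines a map $\phi\colon\{1,\dots,5\}\to\{1,\dots,5\}$, $i\mapsto j$, such that for every character $\chi$ of $\widetilde A$ we have $3\chi(x_i)+\chi(x_{\phi(i)})=0$. The functional graph of $\phi$ is a union of cycles with trees attached. Each $\phi$-cycle of length $\ell$ forces $(( -3)^\ell-1)\chi(x_i)=0$ for $i$ on the cycle, i.e.\ the weights on the cycle are killed by $(-3)^\ell-1\in\{-4,8,-28,80,-244\}$ for $\ell=1,\dots,5$. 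For a tree vertex, the weight is determined by its image up to $3$-torsion. A $3$-part or $7$-part of the weights can therefore enter only through trees (factors of $3$), through $\ell=3$ ($28=4\cdot7$), or through $\ell=5$ ($244=4\cdot61$).

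We first bound the exponent-$3$ and $9$ parts. Since weights are tree-determined modulo the cycle, each tree edge contributes at most a factor $C_3$ of freedom. This suggests that the $3$-part of $\widetilde A$ (modulo scalars) has rank bounded by the number of tree vertices, with the structure of $\phi$ restricting the combinations. We then enumerate the possible functional graphs on five vertices up to isomorphism, a finite, small list. For each graph we compute the group of weight vectors satisfying the relations, modulo $\langle\xi_4I_5\rangle$. This gives an upper bound, and we check that none of $C_3^3$, $C_9^2$, $C_{27}\times C_3$, $C_2^3\times C_3^2$, $C_8\times C_3^2$, $C_2^2\times C_9\times C_3$, or $C_7\times C_3^2$ embeds in it.

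Where the bound from $\phi$ alone is insufficient, we use parts (2) and (3) of Lemma~\ref{lem:quartic-sm}. A group as large as $C_3^3$ would leave too few invariant monomials, forcing $F\in(x_p,x_q)$ or $F\in(x_p)+(x_q,x_r)^2$. Concretely, for each surviving configuration we list the invariant quartic monomials and show that one of the three criteria holds. For $C_7\times C_3^2$, the $7$-part forces a $3$-cycle Klein block on three variables with distinct $C_7$-weights. The remaining two variables must then carry $C_3^2$ modulo scalars, which a two-vertex configuration cannot supply; we verify this via (1) and (3).

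The main obstacle is the case analysis on the graphs of $\phi$ combined with the choice of which monomials $x_i^3x_j$ occur, since several choices of $\phi$ may coexist. To keep this manageable, we first reduce to minimal subgroups: $C_3^3$ and $C_9\times C_3$-type subgroups cover most cases. We then handle the $2$-primary groups only together with a $C_3^2$. Where hand enumeration becomes heavy, the finite check over weight vectors can be done by computer, as the paper does elsewhere.
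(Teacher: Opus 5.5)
You have the paper's setup: a diagonal $F$-lifting, the relations $\chi_{\phi(i)}=\chi_i^{-3}$ from Lemma~\ref{lem:quartic-sm}(1), then parts (2)--(3) applied to the invariant monomials. But the proposal never carries out the step that is the proof, and the shortcuts you offer for it fail.

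The bound from $\phi$ alone excludes only two of the seven groups. The $3$-part of the weight group of $\phi$ has order $3^t$, where $t$ is the number of non-cycle vertices. So $C_3^2$ (resp.\ $C_9\times C_3$) leaves at most $3$ (resp.\ $2$) cycle vertices, whose $2$-part modulo $\langle\xi_4I_5\rangle$ has $2$-rank at most $2$ (resp.\ $1$). This neatly disposes of $C_2^3\times C_3^2$ and $C_2^2\times C_9\times C_3$, more cleanly than the paper does.

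For the other five groups the group does embed:
\begin{itemize}
\item four non-cycle vertices mapped to one fixed point give $C_3^4$;
\item $x_1\mapsto x_2\mapsto x_5$, $x_3\mapsto x_4\mapsto x_5\mapsto x_5$ give $C_9^2$;
\item $x_1\mapsto x_2\mapsto x_3\mapsto x_4\mapsto x_4$ together with $x_5\mapsto x_4$ give $C_{27}\times C_3$;
\item a $2$-cycle, a fixed point and two vertices mapped into them give $C_8\times C_3^2$;
\item a $3$-cycle with two vertices mapped into it gives $C_7\times C_3^2$.
\end{itemize}
So everything rests on ``for each surviving configuration we list the invariant monomials,'' which you only announce.

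Your plan to tame this by passing to minimal subgroups cannot work. $C_9\times C_3$ does act faithfully; it lies in $G_{17}\cong C_3\times D_{18}$. No group on the list other than $C_3^3$ contains $C_3^3$. In fact every proper subgroup of each of the seven groups embeds in one of the $16$ groups of Theorem~\ref{thm:maximal-abelian}, so each is already a minimal obstruction.

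Nor can you normalize $x_5$ to weight $0$. Multiplying by powers of $\xi_4I_5$ only shifts weights by fourth roots of unity, and for $C_7\times C_3^2$ every coordinate has nontrivial $C_7$-weight.

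What is missing is the per-group normalization that pins down the invariant monomials, as the paper does.
\begin{itemize}
\item For $C_3^3$: the five weight vectors span $\mathbb F_3^3$, so three coordinates carry a basis. A quartic monomial in those three variables is invariant only if all exponents are divisible by $3$, which is impossible. Hence $F\in(x_4,x_5)$.
\item For $C_9^2$: coordinates with weights $(1,0)$ and $(0,1)$ force, via (1), coordinates with weights $(6,0)$ and $(0,6)$, and then one with $(0,0)$. After reordering, the invariant monomials are $x_1^3x_2$, $x_2^3x_5$, $x_3^3x_4$, $x_4^3x_5$, $x_5^4$, so $F\in(x_5)+(x_1,x_3)^2$.
\item For $C_7\times C_3^2$: contrary to your sentence, two vertices $x_4,x_5$ mapped into the $3$-cycle do carry $C_3^2$. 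What fails is smoothness. The invariant monomials are the three Klein monomials, one $x_4^3x_j$ and one $x_5^3x_k$ with $j,k$ on the cycle, so $F\in(x_j)+(x_{\phi(j)},x_5)^2$.
\end{itemize}
Analogous explicit computations are needed for $C_{27}\times C_3$ and $C_8\times C_3^2$. For the latter, part (3) applies both when the two non-cycle vertices attach to the same vertex and when they do not, but seeing this requires the explicit list of invariant monomials.
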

\begin{proof}
	Suppose that one of these groups $G$ acts faithfully on a smooth quartic threefold $X=V(F)$. By (2.3), $G$ has an $F$-lifting $\widetilde G\subset\Aut(F)$.
	Since $\widetilde G$ is abelian, we may assume that $\widetilde G$ consists of diagonal matrices and every monomial in $F$ is $\widetilde G$-invariant. For a diagonal matrix $A=\Diag(\xi_n^{s_1},\dots,\xi_n^{s_5})$,
	since $A(x_k)=\xi_n^{s_k}x_k$, the monomial $x_i^3x_j$ is $A$-invariant if and only if $3s_i+s_j\equiv0\mod n$.
	By Lemma~\ref{lem:quartic-sm}(1), for every $i$ there exists $j$ such that $x_i^3x_j\in F$, and this monomial is then $\widetilde G$-invariant.
	In most cases, we use these conditions to normalize the generators of $\widetilde G$ and to determine the $\widetilde G$-invariant quartic monomials, and then apply Lemma~\ref{lem:quartic-sm}(2) or~(3).
	We give the details for $C_9^2$ and $C_8\times C_3^2$. The remaining five groups are treated similarly, and for them we only indicate the main steps.
	
	{\it Case $G\cong C_9^2$.}
	Let $A_1=\Diag(\xi_9^{r_1},\dots,\xi_9^{r_5})$ and $A_2=\Diag(\xi_9^{s_1},\dots,\xi_9^{s_5})$ generate $\widetilde G$.
	Since $\widetilde G$ acts faithfully on $\C^5$, the pairs $(r_k,s_k)$, $1\leq k\leq5$, generate $(\Z/9)^2$, so two of them form a basis.
	Hence, after changing the generators and permuting the coordinates, we may assume $(r_1,s_1)=(1,0)$ and $(r_2,s_2)=(0,1)$.
	By Lemma~\ref{lem:quartic-sm}(1), applied to $x_1$ and $x_2$, two more coordinates have pairs $(6,0)$ and $(0,6)$, and, applied to these two coordinates, the fifth coordinate has pair $(0,0)$.
	Reordering the coordinates, we may take
	$$
	A_1=\Diag(\xi_9,\xi_3^{2},1,1,1),\qquad A_2=\Diag(1,1,\xi_9,\xi_3^{2},1).
	$$
	The $\widetilde G$-invariant quartic monomials are precisely $x_1^3x_2$, $x_2^3x_5$, $x_3^3x_4$, $x_4^3x_5$, and $x_5^4$.
	Thus $F\in(x_5)+(x_1,x_3)^2$, contradicting Lemma~\ref{lem:quartic-sm}(3).
	
	{\it Case $G\cong C_8\times C_3^2$.}
	Let $A_1,A_2,A_3$ be generators of $\widetilde G$ of orders $8,3,3$, respectively.
	By Lemma~\ref{lem:quartic-sm}(1), after permuting the coordinates and changing the generators, we may take
	$$
	A_1=\Diag(\xi_8,\xi_8^{5},\xi_8^{r_1},\xi_8^{r_2},\xi_8^{r_3}),\quad A_2=\Diag(1,1,\xi_3,1,\xi_3^{s_1}),\quad A_3=\Diag(1,1,1,\xi_3,\xi_3^{s_2}).
	$$
	If $r_1,r_2,r_3$ are all odd, then $A_1^4=-I_5$, contradicting $|\langle\pi(A_1)\rangle|=8$.
	Hence one of $r_i$ is even.
	Let $x_k$ be a coordinate at which the exponent of $A_1$ is even, and choose $j$ with $x_k^3x_j\in F$ by Lemma~\ref{lem:quartic-sm}(1).
	Invariance of $x_k^3x_j$ under $A_1$ forces the exponent of $A_1$ at $x_j$ to be even, and since $A_2,A_3$ have order $3$, invariance under them forces $A_2(x_j)=A_3(x_j)=x_j$.
	As the exponents of $A_1$ at $x_1$ and $x_2$ are odd, while $A_2(x_3)\neq x_3$ and $A_3(x_4)\neq x_4$, we get $j=5$.
	Thus $s_1=s_2=0$ and $r_3$ is even. Since $\xi_4I_5\in\Aut(F)$, replacing $A_1$ by a suitable odd power of $\xi_4^{-r_3/2}A_1$, we may assume $r_3=0$.
	
	Now $A_2=\Diag(1,1,\xi_3,1,1)$ and $A_3=\Diag(1,1,1,\xi_3,1)$, so a quartic monomial is invariant under $A_2$ and $A_3$ if and only if it is a monomial in $x_1,x_2,x_5$ or has the form $x_3^3x_j$ or $x_4^3x_j$ with $j\in\{1,2,5\}$.
	As the exponents of $A_1$ at $x_1,x_2,x_5$ are $1,5,0$, the $A_1$-invariant quartic monomials in $x_1,x_2,x_5$ are $x_1^3x_2$, $x_1x_2^3$, and $x_5^4$.
	Moreover, since these exponents are distinct, at most one monomial $x_3^3x_j$ with $j\in\{1,2,5\}$ is $A_1$-invariant, and likewise for $x_4$. By Lemma~\ref{lem:quartic-sm}(1) such monomials exist, which gives $r_1,r_2\in\{0,1,5\}$.
	If $r_1=r_2$, the same $j$ occurs for $x_3$ and $x_4$, and hence $F\in(x_5)+(x_1,x_2)^2$, $F\in(x_1)+(x_2,x_5)^2$, and $F\in(x_2)+(x_1,x_5)^2$ for $j=5$, $1$, and $2$, respectively,  contradicting Lemma~\ref{lem:quartic-sm}(3).
	Hence $r_1\neq r_2$.
	Replacing $A_1$ by $A_1^5$ and interchanging $x_1$ and $x_2$ if necessary, and then interchanging $x_3$ and $x_4$ together with $A_2$ and $A_3$ if necessary, we may assume $r_1=5$ and $r_2\in\{0,1\}$.
	Then the $\widetilde G$-invariant quartic monomials are exactly $x_1^3x_2$, $x_1x_2^3$, $x_3^3x_1$, $x_4^3x_j$, and $x_5^4$, where $j=5$ if $r_2=0$ and $j=2$ if $r_2=1$.
	In either case, $F\in(x_1)+(x_4,x_5)^2$, contradicting Lemma~\ref{lem:quartic-sm}(3).
	
	{\it Case $G\cong C_3^3$.}
	After permuting the coordinates and changing the generators, we may take the generators of $\widetilde G$ to be
	$
	\Diag(\xi_3,1,1,\xi_3^{a_1},\xi_3^{a_2})$, $\Diag(1,\xi_3,1,\xi_3^{a_3},\xi_3^{a_4})$,
	$\Diag(1,1,\xi_3,\xi_3^{a_5},\xi_3^{a_6})$,
	where $a_1,\dots,a_6\in\{0,1,2\}$.
	A quartic monomial in $x_1,x_2,x_3$ is invariant only if its degree in each of $x_1,x_2,x_3$ is divisible by $3$, which is impossible.
	Hence $F\in(x_4,x_5)$, contradicting Lemma~\ref{lem:quartic-sm}(2).
	
	{\it Case $G\cong C_{27}\times C_3$.}
	We may take the generators of $\widetilde G$ to be
	$\Diag(\xi_{27},\xi_9^8,\xi_3,1,\xi_{27}^b)$ and $\Diag(1,1,1,1,\xi_3)$,
	where $b\in\{0,1,6\}$. Applying Lemma~\ref{lem:quartic-sm}(1) to $x_5$ gives $b\equiv0,1,6\pmod9$, and multiplying the first generator by a power of the second changes $b$ by a multiple of $9$.
	For $b=0,1,6$, the $\widetilde G$-invariant quartic monomials show that $F\in(x_4)+(x_1,x_2)^2$, $F\in(x_2)+(x_3,x_4)^2$, and $F\in(x_3)+(x_1,x_4)^2$, respectively, contradicting Lemma~\ref{lem:quartic-sm}(3).
	
	{\it Case $G\cong C_2^3\times C_3^2$.}
	Let $\widetilde G_2$ and $\widetilde G_3$ be the Sylow $2$- and $3$-subgroups of $\widetilde G$.
	After permuting the coordinates and changing the generators, we may take the generators of $\widetilde G_3$ to be
	$\Diag(\xi_3,1,\xi_3^{a_1},\xi_3^{a_2},\xi_3^{a_3})$ and $\Diag(1,\xi_3,\xi_3^{a_4},\xi_3^{a_5},\xi_3^{a_6})$,
	where $a_1,\dots,a_6\in\{0,1,2\}$.
	By Lemma~\ref{lem:quartic-sm}(1), $x_1^3x_j,x_2^3x_k\in F$ for some $j,k$.
	Invariance under $\widetilde G_3$ shows that $\widetilde G_3$ fixes $x_j$ and $x_k$, so $j,k\in\{3,4,5\}$, and invariance under $\widetilde G_2$ shows that each element of $\widetilde G_2$, a diagonal matrix with entries $\pm1$, has equal entries at $x_1$ and $x_j$ and at $x_2$ and $x_k$.
	Such matrices form a group of order $8$ containing $-I_5$, whose image under $\pi$ has order $4$; this contradicts $\pi(\widetilde G_2)\cong C_2^3$.
	
	{\it Case $G\cong C_2^2\times C_9\times C_3$.}
	Let $\widetilde G_2$ be the Sylow $2$-subgroup of $\widetilde G$.
	Some element of order $9$ of $\widetilde G$ acts on some coordinate, say $x_1$, by $\xi_9$, and by Lemma~\ref{lem:quartic-sm}(1) we may number the coordinates so that $x_1^3x_2,x_2^3x_3\in F$.
	By invariance of these monomials, the elements of $\widetilde G_2$ have equal entries $\pm1$ at $x_1$, $x_2$, $x_3$.
	Hence, after changing the generators and multiplying some of them by $-1$, which keeps them in $\Aut(F)$, we may take the generators of $\widetilde G$ to be
	$\Diag(\xi_9,\xi_3^2,1,\xi_9^{a_1},\xi_9^{a_2})$, $\Diag(1,1,1,\xi_3^{a_3},\xi_3^{a_4})$,
	$\Diag(1,1,1,-1,1)$, and $\Diag(1,1,1,1,-1)$,
	where $a_1,a_2\in\{0,\dots,8\}$ and $a_3,a_4\in\{0,1,2\}$.
	By Lemma~\ref{lem:quartic-sm}(1) and invariance under the last two generators, $x_4^4,x_5^4\in F$, so $a_1=\cdots=a_4=0$ and the second generator is $I_5$, a contradiction.
	
	{\it Case $G\cong C_7\times C_3^2$.}
	We may assume that $\widetilde G$ is generated by
	$$
	\Diag(\xi_7,\xi_7^4,\xi_7^2,\xi_7,\xi_7^b),\quad\Diag(1,1,1,\xi_3,1),\quad\text{and}\quad\Diag(1,1,1,1,\xi_3),
	$$
	where $b\in\{1,2,4\}$ by Lemma~\ref{lem:quartic-sm}(1) applied to $x_5$.
	For each such $b$, the $\widetilde G$-invariant quartic monomials show that $F\in(x_2)+(x_3,x_5)^2$, contradicting Lemma~\ref{lem:quartic-sm}(3).
\end{proof}

\subsection{The abelian groups}
Many of the forms $F_i$ above are sums of the following forms. For $\ell\geq1$, we use
$$
K_\ell=\sum_{j=1}^{\ell}y_j^3y_{j+1}\quad(y_{\ell+1}:=y_1),
\qquad
T_\ell=\sum_{j=1}^{\ell-1}y_j^3y_{j+1}+y_\ell^4.
$$
In particular, $K_1=T_1=y_1^4$.
When we add such forms, we always take them in disjoint sets of variables. For example, $F_1=5K_1$, $F_{10}=T_2+T_3$, $F_{12}=K_5$, and $F_{16}=T_5$.
For a form $F$ in $r$ variables, the {\it projective diagonal group} of $F$ is
$$
G_F^{\rm d}:=\{[\Diag(\lambda_1,\dots,\lambda_r)]\mid\Diag(\lambda_1,\dots,\lambda_r)\in\Aut(F)\}\subset G_F.
$$

\begin{theorem}\label{thm:maximal-abelian}
	A finite abelian group acts faithfully on some smooth quartic threefold if and only if it is isomorphic to a subgroup of one of the following $16$ groups:
	$$
	\begin{gathered}
		C_4^4,\quad C_{12}\times C_4^2,\quad C_{36}\times C_4,\quad C_{12}^2,\quad C_8\times C_4^2,\quad C_{28}\times C_4,\quad C_{108},\quad C_{36}\times C_3,\\
		C_{24}\times C_4,\quad C_{84},\quad C_{81},\quad C_{80},\quad C_{72},\quad C_8^2,\quad C_{61},\quad C_{56}.
	\end{gathered}
	$$
\end{theorem}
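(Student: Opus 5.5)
Both directions of Theorem~\ref{thm:maximal-abelian} reduce to diagonal groups. By (2.3), any abelian $G$ acting faithfully on $X=V(F)$ has an $F$-lifting $\widetilde G\subset\Aut(F)$. After a change of coordinates, $\widetilde G$ is diagonal and every monomial of $F$ is $\widetilde G$-invariant. Hence $G\cong\pi(\widetilde G)\subset G_F^{\rm d}$.

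\emph{Existence.} We realize each of the $16$ groups as $G_F^{\rm d}$ (or a subgroup of it) for a smooth sum of the forms $K_\ell,T_\ell$ in disjoint variables. Such sums are smooth because each summand is. Some candidate assignments:
\begin{itemize}
\item $C_4^4$ from $5K_1=F_1$;
\item $C_{12}\times C_4^2$ from $T_2+3K_1$;
\item $C_{36}\times C_4$ from $T_3+2K_1$;
\item $C_{12}^2$ from $2T_2+K_1$;
\item $C_8\times C_4^2$ from $K_2+3K_1$;
\item $C_{28}\times C_4$ from $K_3+2K_1$;
\item $C_{108}$ from $T_4+K_1$ ($=F_{15}$);
\item $C_{36}\times C_3$ from $T_2+T_3=F_{10}$;
\item $C_{24}\times C_4$ from $K_2+T_2+K_1$;
\item $C_{84}$ from $K_3+T_2$;
\item $C_{81}$ from $T_5$;
\item $C_{80}$ from $K_4+K_1$;
\item $C_{72}$ from $K_2+T_3$;
\item $C_8^2$ from $2K_2+K_1$;
\item $C_{61}$ from $K_5$;
\item $C_{56}$ from $K_3+K_2$.
\end{itemize}
In each case we compute $G_F^{\rm d}$ by solving the congruences $3s_j+s_{j+1}\equiv0$ and $4s_\ell\equiv 0$ modulo $\langle\xi_4\rangle$. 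For instance, the diagonal group of $T_\ell$ is cyclic of order $4\cdot(-3)^{\ell-1}$ up to sign, i.e.\ $4\cdot3^{\ell-1}$; for $K_\ell$ it has order $4(3^\ell-(-1)^\ell)/\ldots$, giving orders $8,28,80,244$ (projectively $61$ for $\ell=5$). Quotienting by scalars then identifies the structures.

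\emph{Necessity.} Let $A=G^{\rm d}_F$, so $\widetilde A\ni\xi_4I_5$. For each $i$, Lemma~\ref{lem:quartic-sm}(1) gives an invariant monomial $x_i^3x_{\phi(i)}$, which defines a map $\phi\colon\{1,\dots,5\}\to\{1,\dots,5\}$. The diagonal group preserving all the monomials $x_i^3x_{\phi(i)}$ depends only on the functional graph of $\phi$: a disjoint union of cycles (loops giving $K_1$-type, longer cycles $K_\ell$) with trees attached. Thus $G$ embeds in the projective diagonal group $D_\phi$ of $\sum_i x_i^3x_{\phi(i)}$ (not necessarily smooth). There are finitely many functional graphs on $5$ vertices up to isomorphism. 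For each, we compute $D_\phi$ as an explicit abelian group, reading the exponents along trees as multiplication by $-3$.

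Whenever $D_\phi$ already embeds in one of the $16$ listed groups, we are done. Otherwise, we further use Lemma~\ref{lem:quartic-sm}(2),(3): the actual group $G$ must be a proper subgroup of $D_\phi$ admitting enough additional invariant monomials to avoid the conditions $F\in(x_p,x_q)$ and $F\in(x_p)+(x_q,x_r)^2$. A typical problematic graph is a tree in which two vertices map to the same target; there, extra invariant monomials are needed.

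To organize this step, we argue prime by prime on the abelian group. By Theorem~\ref{thm:cyclic}, cyclic subgroups have order dividing one of $56,61,72,80,81,84,108$, which bounds exponents. Lemma~\ref{lem:abelian-obstruc-7} excludes $C_3^3$, $C_9^2$, $C_{27}\times C_3$, $C_2^3\times C_3^2$, $C_8\times C_3^2$, $C_2^2\times C_9\times C_3$ and $C_7\times C_3^2$. It remains to bound the Sylow $2$-subgroup and show it is among $C_4^4$, $C_8\times C_4^2$, $C_8^2$, or $C_{16}\times C_4$-type groups occurring in $C_{80}$. We will also need to exclude $5$-, $7$- and $61$-parts combined with large $2$-parts; for example, one needs to rule out $C_5\times C_2^2$ beyond $C_{80}$, presumably via analogous diagonal arguments.

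\emph{Main obstacle.} The hardest step is showing that every abelian group satisfying all these local constraints embeds in one of the $16$ groups; the delicate cases are the mixed ones $C_{12}\times C_4^2$, $C_{24}\times C_4$ and $C_{36}\times C_4$ versus excluded enlargements such as $C_8\times C_4\times C_3$ with extra factors. We would handle this by a finite case analysis on $\phi$, listing maximal admissible diagonal subgroups per graph, in the style of the proof of Lemma~\ref{lem:abelian-obstruc-7}.
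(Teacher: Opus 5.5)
Your sufficiency direction is fine and matches the paper: the sixteen forms you list have the stated projective diagonal groups. (The diagonal group of $K_\ell$ has order $|(-3)^\ell-1|=3^\ell-(-1)^\ell$; your numbers $8,28,80,244$ are right even though the formula you wrote is garbled.)

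The gap is the necessity direction. After $G\hookrightarrow D_\phi$, you defer exactly the step that carries the content to an unexecuted case analysis over functional graphs and to ``analogous diagonal arguments''. As written, nothing beyond $G\subset D_\phi$ is proved. The group $D_\phi$ is typically far from the answer because of its $3$-part: four leaves attached to a loop give $D_\phi\cong C_3^4$. Your target for the Sylow $2$-subgroup is also misstated. $C_{80}$ contributes only $C_{16}$, and $C_{16}\times C_4$ lies in none of the sixteen groups.

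The missing idea, which makes the case analysis collapse, is to split $\widetilde G=\widetilde G_{3'}\times\widetilde G_3$ and treat the two factors differently along the graph of $\phi$.

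\emph{The $3$-part.} If $x_i$ lies on a cycle of length $\ell$, then $\lambda_i^{(-3)^\ell-1}=1$ and $3\nmid(-3)^\ell-1$. So $\widetilde G_3$ acts trivially on the cycle coordinates $x_1,\dots,x_r$. Every other coordinate reaches a cycle in at most $5-r$ steps, so $\lambda_i^{3^{5-r}}=1$, and a cyclic $\widetilde G_3$ embeds in $C_{3^{5-r}}$.

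\emph{The $3'$-part.} An element of $\widetilde G_{3'}$ is determined by its entries on the cycle coordinates, since $\lambda_i$ is the unique cube root of $\lambda_{\phi(i)}^{-1}$ of order prime to $3$. Hence $\widetilde G_{3'}$ embeds into $G^{\rm d}_K$, where $K=K_{r_1}+\cdots+K_{r_s}$ is the cycle part. This disposes of all $2$-, $5$-, $7$- and $61$-parts at once, so no separate arguments for them are needed.

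\emph{Cyclic $\widetilde G_3$.} You get $G\hookrightarrow G^{\rm d}_K\times C_{3^{5-r}}$. A check over the partitions of $r\leq5$ shows this embeds in the list except for the partitions $(4)$ and $(3)$, where it is $C_{60}$ or $C_{63}$. In those two cases $G$ is cyclic and Theorem~\ref{thm:cyclic} applies.

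\emph{Noncyclic $\widetilde G_3$.} Lemma~\ref{lem:abelian-obstruc-7} forces $\widetilde G_3\cong C_3^2$ or $C_9\times C_3$ and excludes the prime $7$. Theorem~\ref{thm:cyclic} excludes $5$ and $61$, since neither $C_{15}$ nor $C_{183}$ occurs there. The lemma's exclusion of $C_2^3\times C_3^2$ and $C_8\times C_3^2$ puts the Sylow $2$-subgroup in $C_4^2$. When $\widetilde G_3\cong C_9\times C_3$, the further exclusion of $C_2^2\times C_9\times C_3$ puts it in $C_4$. Hence $G$ embeds in $C_{12}^2$ or $C_{36}\times C_3$.
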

\begin{proof}
	Suppose that a finite abelian group $G$ acts faithfully on $X=V(F)$ and let $\widetilde G$ be an $F$-lifting of $G$. We assume that $\widetilde G$ consists of diagonal matrices.
	Let $\widetilde G_3$ be the Sylow $3$-subgroup of $\widetilde G$ and $\widetilde G_{3'}$ the subgroup of elements of order prime to $3$, so that $\widetilde G=\widetilde G_{3'}\times\widetilde G_3$.

	By Lemma~\ref{lem:quartic-sm}(1), for each $i$ we can choose a monomial $x_i^3x_j\in F$, and then $\lambda_j=\lambda_i^{-3}$ for every $\Diag(\lambda_1,\dots,\lambda_5)\in\widetilde G$.
	Under repeated application of $x_i\mapsto x_j$, every coordinate eventually reaches a cycle. After renumbering, the coordinates on cycles are $x_1,\dots,x_r$, their chosen monomials form $K:=K_{r_1}+\cdots+K_{r_s}$ with $r_1+\cdots+r_s=r$, and each of $x_{r+1},\dots,x_5$ leads to $\{x_1,\dots,x_r\}$ in at most $5-r$ steps.
	
	{\it Case 1: $\widetilde G_3$ is cyclic.}
	Let $A=\Diag(\lambda_1,\dots,\lambda_5)\in\widetilde G_3$.
	If $x_i$ lies on a cycle of length $\ell$, then $\lambda_i^{(-3)^\ell}=\lambda_i$. Since $(-3)^\ell-1$ is prime to $3$ and the order of $\lambda_i$ is a power of $3$, we have $\lambda_i=1$.
	If $x_i$ does not lie on a cycle, then it reaches some $x_k$ with $k\leq r$ after $m\leq5-r$ steps, so $\lambda_i^{(-3)^m}=\lambda_k=1$.
	Thus $\lambda_i^{3^{5-r}}=1$ for all $i$, and the cyclic group $\widetilde G_3$ is isomorphic to a subgroup of $C_{3^{5-r}}$.
	Now let $A=\Diag(\lambda_1,\dots,\lambda_5)\in\widetilde G_{3'}$.
	If $x_i^3x_j$ is the chosen monomial, then $\lambda_i$ is the unique cube root of $\lambda_j^{-1}$ of order prime to $3$.
	Thus $A$ is determined by $\lambda_1,\dots,\lambda_r$, and $\Diag(\lambda_1,\dots,\lambda_r)\in\Aut(K)$.
	If $\lambda_1=\cdots=\lambda_r=c$, then $c^4=1$ and hence $\lambda_i=c$ for all $i$, i.e., $A=cI_5$. Thus $A=I_5$, since $\pi$ is injective on $\widetilde G$.
	Therefore, $A\mapsto[\Diag(\lambda_1,\dots,\lambda_r)]$ embeds $\widetilde G_{3'}$ into $G_K^{\rm d}$, and $G$ is isomorphic to a subgroup of $G_K^{\rm d}\times C_{3^{5-r}}$.
	
	The diagonal matrices in $\Aut(K_\ell)$ form a cyclic group of order $|(-3)^\ell-1|$, i.e., of order $4$, $8$, $28$, $80$, $244$ for $\ell=1,\dots,5$, containing $\xi_4I_\ell$.
	Hence $G_K^{\rm d}\cong(C_{n_1}\times\cdots\times C_{n_s})/C_4$, where $n_j=|(-3)^{r_j}-1|$ and $C_4$ is embedded diagonally.
	By direct computation for all partitions $(r_1,\dots,r_s)$, the group $G_K^{\rm d}\times C_{3^{5-r}}$ is isomorphic to a subgroup of one of the $16$ groups, except for the partitions $(4)$ and $(3)$, where it is isomorphic to $C_{60}$ and $C_{63}$, respectively.
	In these two cases, $G$ is cyclic, and by Theorem~\ref{thm:cyclic}, $G$ is isomorphic to a subgroup of $C_{20}$ or $C_{12}$ (resp.\ $C_{21}$ or $C_9$), each of which embeds into one of the $16$ groups.
	
{\it Case 2: $\widetilde G_3$ is noncyclic.}
Every noncyclic abelian $3$-group other than $C_3^2$ and $C_9\times C_3$ contains $C_3^3$, $C_9^2$, or $C_{27}\times C_3$, so $\widetilde G_3\cong C_3^2$ or $C_9\times C_3$ by Lemma~\ref{lem:abelian-obstruc-7}.
By Theorem~\ref{thm:cyclic}, every prime divisor of $|G|$ lies in $\{2,3,5,7,61\}$. Since $\widetilde G_3$ contains $C_3^2$, Lemma~\ref{lem:abelian-obstruc-7} excludes $7$, and Theorem~\ref{thm:cyclic} excludes $5$ and $61$, since neither $C_{15}$ nor $C_{183}$ is a subgroup of a group listed there.
Lemma~\ref{lem:abelian-obstruc-7} also excludes $C_2^3\times C_3^2$ and $C_8\times C_3^2$, so the Sylow $2$-subgroup of $G$ embeds into $C_4^2$. If $\widetilde G_3\cong C_9\times C_3$, it even embeds into $C_4$, since the lemma also excludes $C_2^2\times C_9\times C_3$. Hence $G$ embeds in $C_4^2\times C_3^2\cong C_{12}^2$ or in $C_4\times C_9\times C_3\cong C_{36}\times C_3$.

	Conversely, each of the $16$ groups is the projective diagonal group of a sum of the forms $K_\ell$ and $T_\ell$ in five variables in total, and such a sum is smooth because the forms $K_\ell$ and $T_\ell$ are.
	Finally, none of the $16$ groups is isomorphic to a subgroup of another.
\end{proof}

\begin{proposition}\label{prop:abelian-3-groups}
	Let $F$ be a smooth quartic form in five variables, and let $H\subset G_F$ be a nontrivial abelian $3$-group. Then $H$ has a unique $F$-lifting $\widetilde H$, which is conjugate in $\GL(5,\C)$ to the group generated by the matrices in one of the following cases. Moreover, $H$ is conjugate in $\PGL(5,\C)$ to a subgroup of one of $G_1$, $G_2$, $G_5$, $G_6$, $G_{10}$, $G_{13}$, $G_{15},\dots,G_{22}$, and $G_{24}$.
	\begin{enumerate}
		\item $H\cong C_3$:
		$\Diag(\xi_3,\xi_3^a,\xi_3^b,1,1)$,
		where $(a,b)\in\{(0,0),\allowbreak(1,0),\allowbreak(1,2),\allowbreak(2,0)\}$.
		
		\item $H\cong C_9$:
		$\Diag(\xi_9,\xi_3^2,\xi_9^a,\xi_3^b,1)$,
		where $(a,b)\in\{(0,0), (2,1), (3,0), (4,0), (4,1), (6,0), (8,1)\}$.
		
		\item $H\cong C_{27}$:
		$\Diag(\xi_{27},\xi_9^8,\xi_3,1,\xi_{27}^a)$,
		where $a\in\{0,6,15,18\}$.
		
		\item $H\cong C_{81}$:
		$\Diag(\xi_{81},\xi_{27}^{26},\xi_9,\xi_3^2,1)$.
		
		\item $H\cong C_3^2$:
		$\Diag(\xi_3,1,\xi_3^a,1,1)$ and $\Diag(1,\xi_3,\xi_3^b,1,1)$,
		where $(a,b)\in\{(0,0),\allowbreak(0,2),\allowbreak(1,1)\}$.
		
		\item $H\cong C_9\times C_3$:
		$\Diag(\xi_9,\xi_9^{8a},\xi_3^2,\xi_3^a,1)$ and $\Diag(1,\xi_3,1,1,1)$,
		where $a\in\{0,1\}$.
	\end{enumerate}
\end{proposition}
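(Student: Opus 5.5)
Uniqueness of the $F$-lifting comes first. By (2.3), $H$ has an $F$-lifting $\widetilde H$, and any two liftings differ by a homomorphism $H\to\langle\xi_4I_5\rangle$. Such a homomorphism is trivial because $|H|$ is a power of $3$ and $\gcd(3,4)=1$, so $\widetilde H$ is unique. Since $\widetilde H$ is a finite abelian group, we may conjugate it to consist of diagonal matrices, and then every monomial of $F$ is $\widetilde H$-invariant. By Theorem~\ref{thm:maximal-abelian} together with Lemma~\ref{lem:abelian-obstruc-7}, $H$ is isomorphic to one of $C_3,C_9,C_{27},C_{81},C_3^2,C_9\times C_3$. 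Indeed, in Case~2 of that proof the noncyclic abelian $3$-groups are exactly $C_3^2$ and $C_9\times C_3$, and by Theorem~\ref{thm:cyclic} the cyclic ones have order dividing $81$.

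Next we normalize using the functional graph from the proof of Theorem~\ref{thm:maximal-abelian}. For each $i$ choose $x_i^3x_j\in F$; then $\lambda_j=\lambda_i^{-3}$ for every element $\Diag(\lambda_1,\dots,\lambda_5)\in\widetilde H$. As shown in Case~1 of that proof, every coordinate lying on a cycle has trivial $3$-power eigenvalue. Hence each coordinate is fixed or lies on a chain $x_{i_1}\to x_{i_2}\to\cdots\to x_{i_m}\to(\text{fixed})$, and along the chain the eigenvalues are $\lambda,\lambda^{-3},\lambda^{9},\dots$. For cyclic $H$ of order $3^k$ we apply a Galois-type change of generator and permute coordinates so that some coordinate carries $\xi_{3^k}$. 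Its chain then gives the pattern $\xi_{3^k},\xi_{3^{k-1}}^{-1},\dots$, which is exactly the leading entries $\Diag(\xi_9,\xi_3^2,\dots)$, $\Diag(\xi_{27},\xi_9^8,\xi_3,1,\dots)$, and so on. The chain uses $k+1$ coordinates, and the remaining coordinates carry eigenvalues constrained by their own chains. We enumerate these remaining exponents, reducing modulo the residual symmetries: permutations of coordinates and replacing the generator by a prime-to-$3$ power that keeps the leading entry normalized. We then discard every tuple for which the invariant monomials force $F$ into one of the ideals of Lemma~\ref{lem:quartic-sm}(2) or (3). This is the same mechanism as in Lemma~\ref{lem:abelian-obstruc-7}, and it should leave exactly the listed parameter sets. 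For instance, for $C_{27}$ with fifth exponent $a$, the chain condition on $x_5$ forces $\xi_{27}^{a}$ to have order dividing $27$ with $x_5^3x_j$ invariant for some $j$, which gives the candidates $a\in\{0,6,15,18,\dots\}$, and the smoothness criteria then prune this list. The noncyclic cases are handled the same way, starting from a basis normalization as in the $C_9^2$ case of Lemma~\ref{lem:abelian-obstruc-7}.

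For the final assertion we match each normal form to an explicit group. Each listed diagonal group is contained, after possibly a coordinate permutation, in the projective diagonal group of one of $F_1,F_2,F_5,F_6,F_{10},F_{13},F_{15},\dots,F_{22},F_{24}$, or in the $3$-part of the corresponding $G_i$. Examples are $C_{81}\subset G_{16}$, the $C_{27}$ cases in $G_{20},G_{21},G_{22}$ or $G_{15}$, $C_9$ with $(a,b)=(4,1)$ in $G_{18}$, and $C_9\times C_3$ in $G_{10}$ or $G_{13}$. We exhibit each inclusion by comparing eigenvalue multisets on $x_1,\dots,x_5$ up to permutation and up to multiplication by $\xi_4I_5$.

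The main obstacle is the exhaustive but error-prone enumeration in the $C_9$ and $C_{27}$ cases. There the off-chain coordinates admit many exponent choices, and one must correctly identify both the residual normalizations and which tuples survive Lemma~\ref{lem:quartic-sm}. I would organize this by the shape of the functional graph restricted to nonfixed coordinates, check each shape's invariant monomials by hand, and confirm with a short computer enumeration.
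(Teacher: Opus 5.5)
Your proof follows the paper's own argument. You diagonalize the lifting, use the relations $\lambda_j=\lambda_k^{-3}$ coming from $x_k^3x_j\in F$, prune with Lemma~\ref{lem:quartic-sm}(2)--(3), and then match the normal forms against Examples~\ref{ex:17maximal} and~\ref{ex:seven-projective-actions}. You also spell out why $\widetilde H$ is unique, which the paper leaves implicit.

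The matching step has two errors that need fixing.

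\begin{enumerate}
\item $C_9\times C_3$ cannot lie in $G_{13}$, since $|G_{13}|=288$ is not divisible by $27$. Case~(6) with $a=0$ does lie in $G_{10}$. Case~(6) with $a=1$ does not conjugate into $G_{10}$: its five coordinate characters are distinct. The Sylow $3$-subgroup $\langle[\Diag(\xi_3,1,1,1,1)],[\Diag(1,1,\xi_9,\xi_3^2,1)]\rangle$ of $G_{10}$ has two coordinate lines with the same character, and twisting by a common character cannot change that. Case~(6) with $a=1$ is instead the Sylow $3$-subgroup of $G_{17}$.
\item Conjugacy in $\PGL(5,\C)$ means comparing eigenvalue multisets up to an arbitrary common scalar, not only up to $\xi_4I_5$. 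Here that scalar is a $3$-power root of unity. For example, $\xi_3^2\,\Diag(\xi_3,\xi_3,\xi_3^2,1,1)=\Diag(1,1,\xi_3,\xi_3^2,\xi_3^2)$, which is a listed generator of $G_{19}$.
\end{enumerate}
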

\begin{proof}
	By Theorem~\ref{thm:maximal-abelian}, $H$ is isomorphic to $C_3$, $C_9$, $C_{27}$, $C_{81}$, $C_3^2$, or $C_9\times C_3$. We may assume that $\widetilde H$ consists of diagonal matrices. By Lemma~\ref{lem:quartic-sm}(1), for each $k$ there is $j$ such that $x_k^3x_j\in F$, and then $\lambda_j=\lambda_k^{-3}$ for every $\Diag(\lambda_1,\dots,\lambda_5)\in\widetilde H$. Using these relations and Lemma~\ref{lem:quartic-sm}(2)--(3), a direct computation shows that $\widetilde H$ is conjugate to one of the listed groups. For the second assertion, one checks that, up to changes of generators and linear change of coordinates, the classes of the listed generators lie in one of these groups (Examples~\ref{ex:17maximal} and~\ref{ex:seven-projective-actions}).
\end{proof}

\section{The nonmonomial cases}\label{sec:nonmonomial}
Let $X$ be a smooth quartic threefold. Following \cite[Section~3]{YYZ25}, we fix a decomposition of $\C^5$ into blocks that are permuted by $\Aut(X)$ such that the stabilizer of each block acts primitively on it. The dimensions of these blocks, in nonincreasing order, form the subdegree sequence $l(\Aut(X))$ (see Definition~\ref{def:primitive-constituents}). If $l(\Aut(X))=(1,1,1,1,1)$, then $\Aut(X)$ is monomial; this case is treated in Section~\ref{sec:monomial}. In this section, we prove the following theorem, which covers the other cases.

\begin{theorem}
	\label{thm:nonmonomial}
	Let $X$ be a smooth quartic threefold with $l(\Aut(X))\neq(1,1,1,1,1)$.
	Then $\Aut(X)$ is conjugate to a subgroup of one of the groups $G_i$ of Examples~\ref{ex:17maximal} and~\ref{ex:seven-projective-actions} listed in the row of $l(\Aut(X))$ in the following table:
	$$
	\begin{array}{c|l}
		\text{subdegree sequence}&\text{possible ambient groups}\\ \hline
		(5)&\text{none}\\
		(4,1)&G_3,\ G_7,\ G_8,\ G_9\\
		(3,2)&G_2\\
		(3,1,1)&G_4\\
		(2,2,1)&G_6,\ G_{14},\ G_{24}\\
		(2,1,1,1)&G_2,\ G_5,\ G_6,\ G_{10}
	\end{array}
	$$
\end{theorem}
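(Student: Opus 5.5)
The plan is to work with an $F$-lifting $\widetilde G\subset\Aut(F)$ of $G=\Aut(X)$, which exists by (2.3), and with the block decomposition $\C^5=V_1\oplus\cdots\oplus V_k$ defining $l(G)$. The first step is the list of allowed primitive constituents, which the introduction attributes to Proposition~\ref{prop:possi-primitive-cons}. Let $H_j$ be the stabilizer of $V_j$, acting primitively on it. Restricting $F$ to $V_j$, or to suitable coordinate subspaces, and applying Lemma~\ref{lem:quartic-sm}, one shows that the primitive image of $H_j$ on $V_j$ preserves a nondegenerate quartic constraint there. For $\dim V_j=2$ this is a binary quartic with distinct roots, and the only primitive group that occurs is $A_4$, preserving $x_1^3x_2+x_2^4$ up to coordinates. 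For $\dim V_j=3$ the constraint is a smooth plane quartic, giving $\PSL(2,7)$ and the Klein quartic. For $\dim V_j=4$, the argument goes through quartic surfaces, using the classification of primitive subgroups of $\PGL(4,\C)$ (Blichfeldt) together with the invariant quartic surfaces of \cite{AOT24}. For $\dim V_j=5$, the primitive subgroups of $\PGL(5,\C)$ (Brauer) are $A_5,A_6,S_5,S_6,\PSL(2,11),\PSp(4,3)$ and the Heisenberg-type group $C_5^2\rtimes\SL(2,5)$. For each of these, one checks via Molien series or known invariants that it has no invariant smooth quartic; for instance $\PSL(2,11)$ preserves the Klein cubic but no smooth quartic, and the Heisenberg group has no invariant quartics at all. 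This settles the row $(5)$.

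The second step is to treat each subdegree sequence separately, using Lemma~\ref{lem:quartic-sm} to control the cross terms of $F$ between blocks.

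For $(4,1)$, the line $V_2$ is $\widetilde G$-stable, so by averaging we may write $F=h(x_1,\dots,x_4)+cx_5^4+(\text{terms mixing }x_5)$. The primitive group acting on $V_1$ has no invariant linear or quadratic forms in $x_1,\dots,x_4$ compatible with the character on $x_5$, so $F=h+x_5^4$ with $h$ smooth. Then $\Aut(X)\subset\{[\Diag(A,\lambda)]\}$, and $\Aut(X)$ is identified with an extension of $G_h$ with a primitive projective image. The classification of primitive quartic surfaces gives the four cases $h_3,h_7,h_8,h_9$.

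For $(3,2)$ and $(3,1,1)$, the $\PSL(2,7)$ constituent forces the $x_1,x_2,x_3$ part to be the Klein quartic with no mixed terms. This holds because the $3$-dimensional representation has no nonzero invariant quartic mixing it with other variables in degrees $1$, $2$ or $3$; in particular, the cubic and quadratic covariants are absent or incompatible with the characters. The remainder is then a binary form with an $A_4$ constituent, giving $G_2$, or a binary form in a monomial block, giving $G_4$.

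For $(2,1,1,1)$, the $A_4$ block contributes $x_1^3x_2+x_2^4$, and the remaining three coordinates carry a monomial group on a ternary part. Theorem~\ref{thm:maximal-abelian} and Lemma~\ref{lem:quartic-sm}(1) then reduce that ternary part to sums of $K_\ell$ and $T_\ell$, yielding $G_2,G_5,G_6,G_{10}$.

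The hardest case is $(2,2,1)$. Here $\widetilde G$ either swaps the two binary blocks or fixes each of them, and genuine coupling terms such as $(x_1x_2-x_3x_4)(x_1x_4+x_2x_3)$ can survive. I would decompose $\mathrm{Sym}^4$ of $V_1\oplus V_2$ under the lifted binary tetrahedral group $C_4.A_4$, acting possibly through different characters on the two blocks. I would then list the invariant quartics coupling the blocks and show that, after normalization, one obtains either the decoupled sum (giving $G_6$), the $C_8.S_4$ form $F_{14}$, or the form $F_{24}$ with a $C_9$ twist. This is the step I expect to be the main obstacle, because the invariant spaces are large and the smoothness check for the coupled forms depends on parameters. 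I would try to cut it down first by using the center and $2$-elements to force diagonal structure, and only then handle the residual coupled families explicitly.

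Finally, in every case the resulting explicit group is conjugated into the listed $G_i$ by an explicit verification.
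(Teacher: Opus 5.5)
Your overall structure (cite Proposition~\ref{prop:possi-primitive-cons}, then argue case by case on $l(\Aut(X))$) is the paper's. However, the step that makes each case work is missing: eliminating the terms of $F$ that mix a block with its complement. In the $(4,1)$ case your substitute for it is false. You claim that the constituent on $V_1$ has no invariant low-degree forms, so $F=h+x_5^4$. The standard $S_5$ on $\{t_1+\cdots+t_5=0\}$ preserves $p_2$ and $p_3$, so $x_5^4+ax_5^2p_2+bx_5p_3+cp_2^2+dp_4$ is invariant for all parameters. Concretely, a general symmetric quartic in five variables is smooth. Its automorphism group is the permutation $S_5$, acting as the standard representation plus a trivial line, so $l(\Aut(X))=(4,1)$. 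In adapted coordinates it has nonzero $x_5^2p_2$ and $x_5p_3$ terms, so the reduction to a quartic surface is unavailable there. Constituents containing the tensor-product $A_4\times A_4$ also preserve a nondegenerate quadric. You also never treat the term $x_5Q_3$ or explain why $h$ is smooth.

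The paper handles the standard $S_5$ and $A_5$ separately. All the relevant forms are $S_5$-invariant, so $\Aut(F)$ consists of matrices $\Diag(c\sigma,1)$, and $\langle iI_4,S_5\rangle$ embeds in $\Aut(h_3)$, giving $G_3$. For every other constituent it uses that $-I_4$ lies in the derived subgroup of $\pi^{-1}(H_1)$. Since the derived subgroup of $\widetilde G$ acts trivially on $\C e_5$, this gives $\Diag(-I_4,1)\in\Aut(F)$. That element kills $Q_3$ and makes $h$ smooth by Corollary~\ref{cor:involution-split}. Then $Q_2\neq0$ would force $A_4\times A_4\subset H_1$, contradicting \cite[Theorem~8.1]{AOT24}.

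The same mechanism is missing in $(2,1,1,1)$, where you simply assert that the $A_4$ block contributes $x_1^3x_2+x_2^4$ with no mixed terms. The paper argues as follows. Since $H_1/J_1$ is a quotient of $\Pi\subset S_3$, we get $C_2^2\subset J_1$. The commutator of lifts in $P$ of two nontrivial elements of $C_2^2$ is $\Diag(-I_2,I_3)$, and Corollary~\ref{cor:involution-split} then splits $F=T_2+\psi$. After that, Theorem~\ref{thm:maximal-abelian} and Lemma~\ref{lem:quartic-sm}(1) control only the diagonal part of the action on $\psi$. The paper instead uses the classification of automorphism groups of smooth plane quartics (Lemma~\ref{lem:3-var-4-forms}).

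In $(3,2)$ and $(3,1,1)$, excluding mixed terms requires a copy of $\PSL(2,7)$ acting as $\Diag(\gamma,I_2)$. The paper extracts this from the derived series of $\Aut(F)$, using that the action on the plane is solvable and $\Gamma$ is perfect.

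For $(2,2,1)$ you give only a plan. The actual argument needs the following ingredients:
\begin{itemize}
\item Goursat's lemma: $J$ is either a graph or contains $C_2^2\times C_2^2$. In the latter case $P'=\mathcal Q\times\mathcal Q$ kills all mixed terms, giving $G_6$.
\item The $13$ invariants of the diagonal $\mathcal Q$ (Lemma~\ref{lem:13-invs}).
\item The three possibilities for $N$.
\item The fact that no combination of $x_5^4,\Phi_{\varepsilon,4},\Phi_{\varepsilon,1},\Phi_{-\varepsilon,2}$ is smooth.
\item The bounds $|G|\leq144$ and $|G|\leq192$, which identify $G_{24}$ and $G_{14}$.
\end{itemize}

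Two smaller points. First, your sketch of Proposition~\ref{prop:possi-primitive-cons} presumes $F|_{V_j}$ is smooth, which is not available: for binary $S_4$ or $A_5$ it is forced to vanish, and the exclusion goes through $\mathcal C_F(U,W)$ and Lemma~\ref{lem:normal-derivatives}. Second, in row $(5)$, $A_5$ is not primitive in $\PGL(5,\C)$, since its $5$-dimensional irreducible representation is induced from a nontrivial character of $A_4$. Brauer's list also has five further groups $C_5^2\rtimes H$. All groups other than $S_5,A_6,S_6$ fall immediately to Theorem~\ref{thm:maximal-abelian}. For those three, one needs that every invariant quartic lies in the $S_6$-invariant pencil, all of whose members are singular \cite{CS16}.
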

We determine the possible primitive constituents in Section~\ref{sec:primitive-constituents} (Proposition~\ref{prop:possi-primitive-cons}) and prove Theorem~\ref{thm:nonmonomial} in Section~\ref{sec:proof-nonmonomial}.
\subsection{Primitive constituents}
\label{sec:primitive-constituents}
We recall some notions from \cite[Section~3.1]{YYZ25} (see also the references therein).
\begin{definition}
	Let $\rho\colon G\to\GL(r,\C)$ be an irreducible representation of a finite group $G$. We say that $\rho$ is {\it primitive} (resp.\ {\it imprimitive}) if $\C^r$ cannot (resp.\ can) be decomposed as a direct sum of at least two nonzero subspaces that are permuted by $G$. A finite subgroup of $\GL(r,\C)$ is {\it primitive} if the underlying representation is primitive, and {\it monomial} if it consists of monomial matrices in a suitable basis of $\C^r$. A finite subgroup $H\subset\PGL(r,\C)$ is {\it primitive} (resp.\ {\it monomial}) if $H=\pi(\widetilde H)$ for some finite primitive (resp.\ monomial) subgroup $\widetilde H\subset\GL(r,\C)$.
\end{definition}

Let $X=X_F$ be a smooth quartic threefold, let $G\subset\Aut(X)$, and let $\widetilde G\subset\Aut(F)$ be an $F$-lifting of $G$. As in \cite[Setup~3.2]{YYZ25}, we fix a decomposition
\begin{equation}\label{eq:blocks}
	\C^5=W_1\oplus\cdots\oplus W_s,
\end{equation}
obtained by decomposing $\C^5$ into irreducible $\widetilde G$-stable subspaces and each of these into subspaces permuted transitively by $\widetilde G$, such that the stabilizer ${\rm Stab}_{\widetilde G}(W_a)=\{g\in\widetilde G\mid gW_a=W_a\}$ acts primitively, though not necessarily faithfully, on the {\it block} $W_a$. Replacing $F$ by an isomorphic form, we assume that $W_1,\dots,W_s$ are spanned by $e_1,\dots,e_5$ in this order.

\begin{definition}\label{def:primitive-constituents}
	Let $r_a=\dim W_a$ and
	$$
	H_a=\{[g|_{W_a}]\mid g\in{\rm Stab}_{\widetilde G}(W_a)\}\subset\PGL(r_a,\C).
	$$
	We call $H_1,\dots,H_s$ the {\it primitive constituents} of $G$. The sequence of the $r_a$ in nonincreasing order is called the {\it subdegree sequence} $l(G)$ of $G$. The subgroup
	$$
	P=\bigcap_{a=1}^s{\rm Stab}_{\widetilde G}(W_a)
	$$
	is called the {\it principal subgroup} of $G$.
\end{definition}
Let $\Pi\subset S_s$ be the group of permutations of $W_1,\dots,W_s$ induced by $\widetilde G$. Let $J\subset\prod_{a=1}^sH_a$ be the image of $P$ under $g\mapsto([g|_{W_1}],\dots,[g|_{W_s}])$, and let $N$ be the kernel of this map. We have the exact sequences
\begin{align}
	&1\longrightarrow P\longrightarrow\widetilde G\longrightarrow \Pi\longrightarrow1,\label{eq:first-sequence} \\
	&1\longrightarrow N\longrightarrow P\longrightarrow J\longrightarrow1.
\end{align}
The projection $J_a$ of $J$ to $H_a$ is a normal subgroup of $H_a$. Every element of $N$ is a block-diagonal matrix $\Diag(\lambda_1I_{r_1},\dots,\lambda_sI_{r_s})$ with $\lambda_a\in\C^*$. In particular, $N$ is abelian.

By (2.3), any two $F$-liftings of $G$ differ by scalar matrices. Hence \eqref{eq:blocks} is a block decomposition for every $F$-lifting of $G$, and the groups $H_a$, $\Pi$, $J$, $J_a$, and $P'$ do not depend on the choice of $\widetilde G$; only $P$ and $N$ do. We choose $\widetilde G$ conveniently in each proof. If $G=\Aut(X)$, then $\Aut(F)=\widetilde G\times\langle\xi_4I_5\rangle$. In particular, $[A|_{W_a}]\in H_a$ for every $A\in\Aut(F)$ with $AW_a=W_a$.

\begin{example}\label{ex:221-blocks}
	For $G_6$, $G_{14}$, and $G_{24}$, the blocks are $\langle e_1,e_2\rangle$, $\langle e_3,e_4\rangle$, and $\langle e_5\rangle$, so the subdegree sequence is $(2,2,1)$, and both binary primitive constituents are isomorphic to $A_4$. For $G_6$, $J$ is their product, $N\cong C_4^2$, and $\Pi\cong C_2$. For $G_{14}$ and $G_{24}$, $J$ is the graph of an isomorphism between them, while $N\cong C_8$ and $C_{12}$, and $\Pi\cong C_2$ and $1$, respectively.
\end{example}

Let $\C^5=U\oplus W$, where $U$ and $W$ are each spanned by some of $e_1,\dots,e_5$ (for example, sums of blocks), and let $y_1,\dots,y_m$ be the coordinates on $W$. Write $F=\sum_{b=0}^4F_{4-b,b}$, where $F_{4-b,b}$ has degree $b$ in $y_1,\dots,y_m$, and $F_{3,1}=\sum_{j=1}^mC_jy_j$ with cubic forms $C_j$ on $U$. We set
$$
\mathcal{C}_F(U,W)={\rm Span}_\C\{C_1,\dots,C_m\},
$$
and note that $C_j$ is the restriction of $\partial F/\partial y_j$ to $U$.

The following lemma is a variant of \cite[Lemmas~5.1--5.2]{YYZ25}, without the assumption $\dim U\geq2$. For the reader's convenience, we include the short proof.

\begin{lemma}\label{lem:normal-derivatives}
	Let $\widetilde H\subset\GL(5,\C)$ be a subgroup preserving $U$, $W$, and the line $\C F$. Then $(A|_U)(\mathcal{C}_F(U,W))=\mathcal{C}_F(U,W)$ for all $A\in\widetilde H$, and $\dim \mathcal{C}_F(U,W)\leq\dim W$. If $F$ is smooth and $\mathcal{C}_F(U,W)=0$, then $F|_U$ is smooth.
\end{lemma}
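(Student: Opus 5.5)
The plan is to work directly with the decomposition $F=\sum_{b=0}^4F_{4-b,b}$ relative to $\C^5=U\oplus W$, and to treat the three assertions in turn.

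\emph{Invariance.} Let $A\in\widetilde H$. Then $A(F)=cF$ for some $c\in\C^*$. Since $A$ preserves $U$ and $W$, it is block-diagonal with blocks $A|_U$ and $A|_W$. Hence its action on polynomials preserves the bigrading by degree in the $U$-coordinates and degree in the $y$-coordinates. Comparing the $(3,1)$-parts gives $A(F_{3,1})=cF_{3,1}$. Write $A|_W$ in the $y$-coordinates as $A(y_j)=\sum_k b_{jk}y_k$. Then
$$
A(F_{3,1})=\sum_j (A|_U)(C_j)\sum_k b_{jk}y_k=\sum_k\Bigl(\sum_j b_{jk}(A|_U)(C_j)\Bigr)y_k .
$$
Equating the coefficient of $y_k$ with $cC_k$, and using that the matrix $(b_{jk})$ is invertible, we can express each $(A|_U)(C_j)$ as a linear combination of $C_1,\dots,C_m$. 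Thus $(A|_U)(\mathcal C_F(U,W))\subset\mathcal C_F(U,W)$, and equality holds because $A|_U$ acts invertibly on the finite-dimensional space of cubic forms. The inequality $\dim\mathcal C_F(U,W)\le m=\dim W$ is immediate, since the space is spanned by $m$ elements.

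\emph{Smoothness.} Assume $F$ is smooth and $\mathcal C_F(U,W)=0$, so $F_{3,1}=0$. Suppose $u\in U\setminus\{0\}$ is a common zero of the partial derivatives of $F|_U=F_{4,0}$. Consider the point $(u,0)\in\C^5$.
\begin{itemize}
\item For a $U$-coordinate $x$: only $F_{4,0}$ can contribute to $\partial F/\partial x$ at $y=0$, because every other $F_{4-b,b}$ with $b\ge1$ still has positive degree in $y$ after differentiation in $x$. Hence $\partial F/\partial x(u,0)=\partial F_{4,0}/\partial x(u)=0$.
\item For $\partial F/\partial y_j$: at $y=0$ the only surviving contribution comes from $F_{3,1}$, giving $C_j(u)=0$. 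Terms of $y$-degree at least $2$ still vanish after one $y$-derivative.
\end{itemize}
So $(u,0)$ is a nonzero singular point of $F$, a contradiction. If $U=0$ the statement is vacuous, or trivially true by convention. If $\dim U=1$, smoothness of $F|_U=ax^4$ means $a\ne0$, and the same argument covers this case. This is exactly why the assumption $\dim U\ge2$ is unnecessary.

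The only care needed is the bookkeeping of which bigraded pieces survive at $y=0$. There is no real obstacle.
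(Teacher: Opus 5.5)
Your proof is correct and follows essentially the same argument as the paper: compare the $(3,1)$-parts of $A(F)=cF$ to get the transformation rule for the $C_j$ and finish with a dimension count, then show that a nonzero singular point $u$ of $F|_U$ gives the nonzero singular point $(u,0)$ of $F$. The only cosmetic difference is that the paper reads off $\mathcal{C}_F(U,W)\subset(A|_U)(\mathcal{C}_F(U,W))$ directly using $c\neq0$, whereas you invert the matrix of $A|_W$ to get the opposite inclusion.
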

\begin{proof}
	Let $A\in\widetilde H$ with $A(F)=cF$, where $c\in\C^*$. Since $A$ preserves $U$ and $W$, it preserves the degree in $y_1,\dots,y_m$, so $A(F_{3,1})=cF_{3,1}$. Writing $A(y_j)=\sum_ka_{jk}y_k$, we get $\sum_ja_{jk}(A|_U)(C_j)=cC_k$ for all $k$. Hence $\mathcal{C}_F(U,W)\subset(A|_U)(\mathcal{C}_F(U,W))$, with equality since both spaces have the same dimension. The bound $\dim\mathcal{C}_F(U,W)\leq\dim W$ holds since $\mathcal{C}_F(U,W)$ is spanned by the $m=\dim W$ forms $C_1,\dots,C_m$. If $\mathcal{C}_F(U,W)=0$, then each $\partial F/\partial y_j$ vanishes on $U$, while the partial derivatives of $F$ with respect to the coordinates of $U$ restrict to those of $F|_U$. Hence a nonzero common zero of the partial derivatives of $F|_U$ would be a singular point of $F$.
\end{proof}
\begin{corollary}\label{cor:involution-split}
	Let $F$ be a smooth quartic form, and let $\tau\in\Aut(F)$ be an involution whose eigenspaces are $U$ and $W$. Then $F=F_{4,0}+F_{2,2}+F_{0,4}$, the forms $F|_U$ and $F|_W$ are smooth, and every element of $\Aut(F)$ commuting with $\tau$ lies in $\Aut(F|_U)\times\Aut(F|_W)$.
\end{corollary}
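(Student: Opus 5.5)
Let $y$ denote coordinates on $W$. Since $\tau$ acts as $+1$ on $U$ and $-1$ on $W$ (after possibly swapping names, as an involution in $\Aut(F)$ has eigenvalues $\pm1$), $\tau$ multiplies a monomial of degree $b$ in $y_1,\dots,y_m$ by $(-1)^b$. As $\tau(F)=F$, every component $F_{4-b,b}$ with $b$ odd vanishes. This gives $F=F_{4,0}+F_{2,2}+F_{0,4}$. In particular $F_{3,1}=0$, so $\mathcal{C}_F(U,W)=0$. Lemma~\ref{lem:normal-derivatives}, applied to $\widetilde H=\langle\tau\rangle$ (which preserves $U$, $W$, and $\C F$), then shows that $F|_U$ is smooth.

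The smoothness of $F|_W$ follows by the symmetric argument. We interchange the roles of $U$ and $W$, i.e. we apply the lemma with the decomposition $\C^5=W\oplus U$. The relevant space is spanned by the coefficients of the part of $F$ of degree one in the $U$-coordinates, namely $F_{1,3}$, which vanishes for the same parity reason. Hence $F|_W$ is smooth. If either $U$ or $W$ is zero, the statement is trivial, since then $F$ is its own restriction.

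For the last assertion, let $A\in\Aut(F)$ commute with $\tau$. Then $A$ preserves the eigenspaces of $\tau$, so $A=\Diag(A|_U,A|_W)$ in the block decomposition. Consequently, $A$ preserves the bidegree in the $U$- and $W$-coordinates, and $A(F)=F$ forces $A(F_{4,0})=F_{4,0}$ and $A(F_{0,4})=F_{0,4}$. Since $F|_U=F_{4,0}$ and $F|_W=F_{0,4}$, this gives $A|_U\in\Aut(F|_U)$ and $A|_W\in\Aut(F|_W)$, i.e. $A\in\Aut(F|_U)\times\Aut(F|_W)$. The only step requiring care is the bookkeeping of the variable convention $(F,A)\mapsto A(F)$. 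Bihomogeneous degree is preserved under block-diagonal substitution regardless of that convention, so no real obstacle arises.
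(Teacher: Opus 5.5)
Your proposal is correct and follows the paper's proof: the parity action of $\tau$ kills $F_{3,1}$ and $F_{1,3}$, so $\mathcal C_F(U,W)=\mathcal C_F(W,U)=0$ and Lemma~\ref{lem:normal-derivatives} gives smoothness of $F|_U$ and $F|_W$. An element commuting with $\tau$ preserves both eigenspaces, hence preserves the bidegree decomposition and therefore $F_{4,0}=F|_U$ and $F_{0,4}=F|_W$.
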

\begin{proof}
	Since $\tau(F_{4-b,b})=(-1)^bF_{4-b,b}$, we have $F_{3,1}=F_{1,3}=0$. Hence $\mathcal C_F(U,W)=\mathcal C_F(W,U)=0$, and $F|_U$ and $F|_W$ are smooth by Lemma~\ref{lem:normal-derivatives}. An element of $\Aut(F)$ commuting with $\tau$ preserves $U$ and $W$, hence also $F_{4,0}=F|_U$ and $F_{0,4}=F|_W$.
\end{proof}
\begin{lemma}\label{lem:primitive-5}
	No smooth quartic threefold $X$ satisfies $l(\Aut(X))=(5)$.
\end{lemma}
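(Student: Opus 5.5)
Suppose $l(\Aut(X))=(5)$, so that $G=\Aut(X)$ has an $F$-lifting $\widetilde G\subset\Aut(F)$ acting primitively and irreducibly on $\C^5$. The plan is to use the classification of finite primitive subgroups of $\SL(5,\C)$ (Brauer's list), applied to $\widetilde G$ modulo scalars. Up to scalars, the primitive groups in $\PGL(5,\C)$ fall into the following cases:
\begin{itemize}
\item groups with normal Heisenberg subgroup $C_5^2$, with quotient inside $\SL(2,5)$;
\item $A_5$, $S_5$, $A_6$, $S_6$ in their $5$-dimensional irreducible representations;
\item $\PSL(2,11)$ in its $5$-dimensional representation;
\item $\PSp(4,3)\cong U_4(2)$ in its $5$-dimensional representation;
\item $A_7$ is excluded since it has no $5$-dimensional representation.
\end{itemize}
Since $\Aut(F)$ is a central extension of $G$ by $C_4$, and $\gcd(4,5)=1$, we may take $\widetilde G\subset\SL(5,\C)$ up to the harmless factor $\langle\xi_4I_5\rangle$. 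It therefore suffices to show that no group on this list preserves a smooth quartic form, and we treat each case in turn.

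\emph{Heisenberg-type groups.} Here $\widetilde G$ contains a lift of $C_5^2$, generated by $\Diag(1,\xi_5,\xi_5^2,\xi_5^3,\xi_5^4)$ and $P_{(1\,2\,3\,4\,5)}$. Consider an element of order $5$ in this group. By Theorem~\ref{thm:maximal-abelian} (or Theorem~\ref{thm:cyclic}), $C_5^2$ does not act faithfully on a smooth quartic threefold: none of the $16$ listed groups contains $C_5^2$. This case is therefore immediately excluded.

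\emph{Almost simple groups.} For $A_5$, $S_5$, $A_6$, $S_6$, $\PSL(2,11)$ and $U_4(2)$, the key step is to compute the space of invariant quartics via Molien series or characters, and then show that every invariant quartic is singular. For $A_5$ and $S_5$ acting on the standard $5$-dimensional representation (or its twist), the invariants are generated in low degree by the power sums $p_2,p_3,p_4$ restricted to $\sum y_i=0$ in $6$ variables, or by symmetric-function analogues. Invariant quartics are then combinations $a p_2^2+b p_4$, and one must check that some point is singular for all $a,b$. Instead, I would bypass this computation with the abelian obstruction: each of these groups contains a subgroup excluded by Theorem~\ref{thm:maximal-abelian}.
\begin{itemize}
\item $\PSL(2,11)$ contains $C_{11}$, and $11\notin\{2,3,5,7,61\}$.
\item $U_4(2)$ contains $C_3^3$, which is ruled out by Lemma~\ref{lem:abelian-obstruc-7}.
\item $A_6$ and $S_6$ contain $C_3^2$. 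This is allowed, so these cases need separate treatment.
\item For $A_5$ and $S_5$, all abelian subgroups are allowed.
\end{itemize}

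\emph{Main obstacle.} The hardest step is the handling of $A_5$, $S_5$, $A_6$ and $S_6$. For these groups I would use an explicit invariant-theory check. Realize the $5$-dimensional representation inside $6$ permutation coordinates, modulo the trivial summand, possibly twisted by the sign character. The invariant quartics are then spanned by $p_2^2$ and $p_4$, together with $p_1$-type terms for twisted actions; for $A_6$ and $A_5$ there may be extra invariants from the other $5$-dimensional representations, which are obtained via outer automorphisms. Then produce a point where all partials vanish. For example, take a point in the hyperplane $\sum y_i=0$ with coordinates taking two values, and use the Lagrange multiplier condition on the hyperplane $\sum y_i=0$. Alternatively, use an element of order $5$ or $3$ and Lemma~\ref{lem:quartic-sm}(1): diagonalize an element of order $3$ of $A_5$ on $\C^5$, which has eigenvalues $(1,1,1,\xi_3,\xi_3^2)$, and check compatibility with invariance under the rest of the group. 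The first approach is more reliable, so I would carry it out, checking the finitely many one-parameter families $a p_2^2+b p_4$ by exhibiting singular points for each ratio $a:b$.
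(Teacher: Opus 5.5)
Your reduction is the paper's: Brauer's list; exclusion of $\PSL(2,11)\supset C_{11}$, $\PSp(4,3)\supset C_3^3$ and $C_5^2\rtimes H\supset C_5^2$ by Theorem~\ref{thm:maximal-abelian}; and reduction of $S_5$, $A_6$, $S_6$ to the pencil $ap_2^2+bp_4$ on $p_1=0$ in $\P^5$. The paper cites Cheltsov--Shramov both for this pencil and for the fact that all its members are singular. That last fact is exactly what your proposal leaves open, and the method you suggest for it does not work. At a point of $p_1=0$ with only two coordinate values $u\neq v$ (so $u/v$ is real and negative), the singularity condition reduces to the single nontrivial linear equation $ap_2+b(u^2+uv+v^2)=0$. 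Each such point therefore handles only one ratio: $(1,1,1,-1,-1,-1)$ gives only $a/b=-1/6$, and $(1,1,1,1,-2,-2)$ only $a/b=-1/4$. Up to scaling and permutation there are only finitely many such points. What is needed is a three-valued point, $x=(1,1,\xi_3,\xi_3,\xi_3^2,\xi_3^2)$. There $p_1(x)=p_2(x)=p_4(x)=0$ and $x_j^3=1$, so $\nabla(ap_2^2+bp_4)(x)=4b(1,\dots,1)$ is proportional to $\nabla p_1$, and every member of the pencil is singular at $x$. You should also replace your hedge about ``extra invariants'' by the Molien count. For a $5$-dimensional irreducible representation of $A_5$, $S_5$, $A_6$ or $S_6$, the quartic invariants span a $2$-dimensional space, and a sign twist does not change quartic invariants, so the pencil is everything.

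Two smaller corrections. First, $A_5$ is not primitive on $\C^5$: its $5$-dimensional irreducible representation is induced from a nontrivial character of $A_4$, as used in the proof of Theorem~\ref{thm:monomial}. So it does not belong on Brauer's list. Second, an element of order $3$ in $A_5$ acts there with eigenvalues $1,\xi_3,\xi_3,\xi_3^2,\xi_3^2$ (trace $-1$), not $1,1,1,\xi_3,\xi_3^2$. With the correct eigenvalues, the alternative you discarded is actually the shortest proof. Every $5$-dimensional irreducible character of $S_5$, $A_6$ and $S_6$ takes the value $-1$ on some class of elements of order $3$. Hence the $F$-lifting $\widetilde G\subset\Aut(F)$ contains, in a suitable basis, $g=\Diag(1,\xi_3,\xi_3,\xi_3^2,\xi_3^2)$. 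A $g$-invariant quartic monomial not divisible by $x_1$ must have bidegree $(2,2)$ in $(x_2,x_3)$ and $(x_4,x_5)$. So $F\in(x_1)+(x_4,x_5)^2$, contradicting Lemma~\ref{lem:quartic-sm}(3). This avoids both the invariant theory and the citation.
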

\begin{proof}
	Let $G=\Aut(X)$ with $l(G)=(5)$. Then $G\subset\PGL(5,\C)$ is primitive, so by \cite[Section~9, (9A)]{Bra67} (see also \cite[Section~5]{CMS26}), $G$ is isomorphic to one of the following $11$ groups:
	$$
		\PSL(2,11), S_5, A_6, S_6, \PSp(4,3), C_5^2\rtimes H \text{ with } H\in\{C_3, C_6, Q_8, C_3\rtimes C_4, \SL(2,3), \SL(2,5)\}.
	$$
	Since $\PSL(2,11)\supset C_{11}$, $\PSp(4,3)\supset C_3^3$, and $C_5^2\rtimes H\supset C_5^2$, Theorem~\ref{thm:maximal-abelian} excludes $8$ of these groups. If $G\cong S_5$, $A_6$, or $S_6$, then $X$ is projectively equivalent to a member of the pencil of $S_6$-invariant quartic threefolds \cite[Remark~4.5 and Lemma~4.6]{CS16}, all of which are singular \cite[pp.~573--574]{CS16}.
\end{proof}

Up to conjugation, $\PGL(4,\C)$ contains exactly two primitive subgroups isomorphic to $S_5$, and likewise two isomorphic to $A_5$ (see, e.g.,~\cite[Section~2]{CS19} and~\cite[Section~6]{AOT24}). The {\it standard} $S_5$ is the image in $\PGL(4,\C)$ of the representation of $S_5$ on the hyperplane $t_1+\cdots+t_5=0$ of $\C^5$, where $S_5$ permutes the coordinates $t_1,\dots,t_5$. The {\it spin} $S_5$ is the image of a faithful four-dimensional representation of a double cover of $S_5$ whose restriction to the preimage $\SL(2,5)$ of $A_5$ is irreducible. Their subgroups of index $2$ are the standard and spin $A_5$. Both $S_5$'s occur in Example~\ref{ex:17maximal}: $G_{h_3}\cong C_2^4\rtimes S_5$ contains a standard $S_5$ (Lemma~\ref{lem:primitive-surfaces}), and $G_{h_8}$ and $G_{h_9}$ are spin $S_5$'s, since the derived subgroups of $\Aut(h_8)$ and $\Aut(h_9)$ are isomorphic to $\SL(2,5)$ and act irreducibly on $\C^4$. Thus $G_8$ and $G_9$ are central extensions of a spin $S_5$ by $\langle[\Diag(\xi_4I_4,1)]\rangle\cong C_4$.

\begin{proposition}\label{prop:possi-primitive-cons}
	Let $X$ be a smooth quartic threefold, and let $H\subset\PGL(r,\C)$ be a primitive constituent of $\Aut(X)$ with $r\geq2$.
	Then, up to isomorphism of $H$, the pair $(r,H)$ is one of
	$$
	(4,S_5),\quad(4,A_5),\quad(4,\PSL(2,7)),\quad(4,C_2^4\rtimes S_5),\quad(3,\PSL(2,7)),\quad(2,A_4).
	$$
\end{proposition}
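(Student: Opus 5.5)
We split according to $r\in\{2,3,4\}$; the case $r=5$ is Lemma~\ref{lem:primitive-5}. For a block $W=W_a$ of dimension $r\le4$, let $U$ be the sum of the remaining blocks. The stabilizer $\widetilde S={\rm Stab}_{\widetilde G}(W_a)$ permutes the other blocks, so it preserves $U$ and $W$. By Lemma~\ref{lem:normal-derivatives}, applied with the roles of $U$ and $W$ interchanged, the space $\mathcal C_F(W,U)$ of cubic forms on $W$ is $\widetilde S$-stable and has dimension at most $\dim U=5-r$. The restriction $F|_W$ is also $\widetilde S$-invariant up to scalars. Smoothness of $F$ says that $F|_W$ together with $\mathcal C_F(W,U)$ cannot be too degenerate. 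Indeed, a point $w\in W$ at which all partials of $F|_W$ and all $C_j\in\mathcal C_F(W,U)$ vanish is a singular point of $V(F)$, because $\partial F/\partial y_j(w)=C_j(w)$ and the higher terms vanish at $w$. In addition, the abelian subgroups of $H$ lift and embed into the groups listed in Theorem~\ref{thm:maximal-abelian}. Their prime orders therefore lie in $\{2,3,5,7,61\}$, and $C_3^3$ and $C_5^2$ are excluded.

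For $r=2$, the primitive subgroups of $\PGL(2,\C)$ are $A_4$, $S_4$ and $A_5$. The group $A_5$ has no invariant binary quartic, and its binary cubic representations are irreducible of dimension $4>3=\dim U$. Thus $F|_W=0$ and $\mathcal C_F(W,U)=0$, so every point of $W$ is singular. For $S_4$, the invariant quartics of the binary octahedral group, taken up to character, are at most $x y(x^4-y^4)$-type forms of degree $6$ or $8$. In degree $4$ only the $A_4$-invariants $x^4\pm2\sqrt{-3}x^2y^2+y^4$ occur, and they are not $S_4$-semi-invariant. The cubic covariants live in irreducible representations of dimension $4$ or $2$, and a $2$-dimensional space of binary cubics spans a pencil with a base point or degenerates. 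A case check therefore gives a common zero on $W$, which rules out $S_4$ and leaves $A_4$.

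For $r=3$, the primitive subgroups of $\PGL(3,\C)$ are $A_5$, $A_6$, $\PSL(2,7)$ and the Hessian groups of orders $36$, $72$ and $216$. The Hessian groups contain $C_3^2\rtimes\cdots$, and we aim to exclude them using $C_3^2$ together with the normal form of Proposition~\ref{prop:abelian-3-groups}(5). Alternatively, they have no invariant ternary quartic and no small invariant space of cubics with empty base locus, since the invariant cubics are the Hesse pencil and dimension $\le2$ forces a common zero. For $A_6$, the abelian subgroup $C_3^2$ combined with $C_5$ and the previous count is not enough, so we check directly: $A_6$ has no invariant quartic (the invariants have degrees $6,12,\dots$), and its smallest nontrivial spaces of cubics have dimension greater than $2$. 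For $A_5$, the only invariant quadric is the conic $q$, so $F|_W$ is a multiple of $q^2$ and is singular along the conic. Meanwhile $\mathcal C_F(W,U)$ has dimension $\le2$, and ternary cubics under $A_5$ decompose as $3+3+4$, so $\mathcal C=0$ and every point of the conic is singular. Only $\PSL(2,7)$ survives.

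For $r=4$, we have $\dim U=1$, so $\mathcal C_F(W,U)$ is at most one semi-invariant cubic and $F|_W$ is a semi-invariant quartic. We go through the Blichfeldt list of primitive subgroups of $\PGL(4,\C)$. This is the main obstacle because the list is long, including the groups normalizing $C_2^4$ (orders $16\cdot m$ with $m\mid720$), $A_5$, $S_5$, $A_6$, $S_6$, $A_7$, $\PSL(2,7)$ and $\PSp(4,3)$. First we exclude every group containing $C_3^3$, $C_5^2$, $C_{11}$ or $C_{13}$, or a noncyclic abelian group outside Theorem~\ref{thm:maximal-abelian}; this removes $\PSp(4,3)$ and $A_7$, and we expect it to remove $A_6$ and $S_6$ through their $C_3^2$-and-$C_4$ combinations. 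For the remaining groups we compute the semi-invariant quartics and cubics. If $\mathcal C=0$, then $F|_W$ must define a smooth quartic surface. If $\mathcal C=\C C$, a common zero of $\nabla F|_W$ and $C$ gives a singularity. Among the extensions of $C_2^4$, the smooth Burnside-type quartic singles out $C_2^4\rtimes S_5$, which is maximal, and subgroups such as $C_2^4\rtimes A_5$ or $C_2^4\rtimes C_5$ fail to be primitive or are imprimitive enough to be excluded. One delicate point is to justify why the primitive subgroups of $C_2^4\rtimes S_5$ other than the full group do not occur. We expect to show that they preserve the same pencil of quartics, whose smooth members have automorphism group containing all of $C_2^4\rtimes S_5$; the constituent $H$ is then the full group when $G=\Aut(X)$.
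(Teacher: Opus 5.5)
\textbf{Cases $r=2$ and $r=3$.} These are essentially sound. For $r=3$ and $A_5$, your argument is a valid alternative to the paper's order-$5$ element argument: $F|_W=cq^2$, and $\mathrm{Sym}^3=3\oplus3'\oplus4$ forces $\mathcal C=0$, so $F|_W$ would have to be smooth. There are some local errors to fix.

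\begin{itemize}
\item For $S_4$ with $r=2$, the claim that a $2$-dimensional space of binary cubics has a base point is false, since $\langle x^3,y^3\rangle$ has none. You do not need it: $\mathrm{Sym}^3$ of the binary octahedral representation is irreducible of dimension $4$, so $\mathcal C=0$ directly.
\item For the Hessian groups and $A_6$, "no invariant quartic" must hold for an arbitrary lift $B$, which may have semi-invariants. The correct statement is $\xi_3I_3\in B'$.
\item Abelian subgroups of $H$ need not come from abelian subgroups of $\Aut(X)$. For example, $C_2^2\subset A_4\subset\PGL(2,\C)$ has no abelian lift. For $p=3,5$, use Proposition~\ref{prop:p-subgroups-abelian} instead: a Sylow $p$-subgroup of $H$ is then the image of an abelian, hence diagonalizable, group.
\end{itemize}

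\textbf{Case $r=4$: a genuine gap.} This part is a plan rather than a proof, and several of its claims are wrong.
\begin{itemize}
\item Abelian subgroups do not eliminate $A_6$, $S_6$ or $A_7$: all of their abelian subgroups embed into the groups of Theorem~\ref{thm:maximal-abelian}.
\item Your list of primitive subgroups of $\PGL(4,\C)$ omits the tensor-product groups such as $A_4\times A_4$, which preserve a nondegenerate quadric.
\item The standard $C_2^4\rtimes C_5$ and $C_2^4\rtimes A_5$ are primitive, so they cannot be discarded as imprimitive.
\end{itemize}

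More fundamentally, write $F=cx_5^4+x_5^3Q_1+x_5^2Q_2+x_5Q_3+f$. You never control $Q_1$, $Q_2$, $Q_3$, so you cannot conclude that $H=G_f$ for a smooth quartic surface $V(f)$. Your last step, that the constituent is then the full group, silently assumes $F=x_5^4+f$.

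This is where the paper's key idea is needed.
\begin{itemize}
\item $Q_1=0$ by irreducibility, and then $c\neq0$.
\item If $H$ is neither the standard $A_5$ nor the standard $S_5$, then by~\cite{CS19} it contains a spin $A_5$, the primitive $\PSL(2,7)$, the tensor-product $A_4\times A_4$, or the standard $C_2^4\rtimes C_5$. Each of these forces $-I_4\in B'$, hence $\Diag(-I_4,1)\in\Aut(F)$. This kills $Q_3$ and makes $f$ smooth.
\item A nonzero $Q_2$ would be a nondegenerate invariant quadric, forcing $A_4\times A_4\subset H$. This is excluded by~\cite[Theorem~8.1]{AOT24}.
\end{itemize}
Only then does $H=G_f$ hold, and the classification of primitive groups preserving smooth quartic surfaces in~\cite{AOT24} gives the list.

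The standard $A_5$ and $S_5$ need a separate argument, because there $Q_2$ and $Q_3$ can be nonzero multiples of $p_2$ and $p_3$. Molien's formula shows that every $A_5$-invariant form of degree at most $4$ on the standard representation is $S_5$-invariant. Hence the standard $A_5$ never occurs as a constituent, and the $A_5$ in the list is the spin one. Your plan does not distinguish these two $A_5$'s at all.
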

\begin{proof}
Let $X=V(F)$, and assume that $H=H_1$. Take $U=W_1$ and $W=W_2\oplus\cdots\oplus W_s$, so that $U$ and $W$ are spanned by $e_1,\dots,e_r$ and $e_{r+1},\dots,e_5$. Let $S={\rm Stab}_{\widetilde G}(U)$ and $B=\{g|_U\mid g\in S\}\subset\GL(U)$. Then $H=\pi(B)$ is primitive, so $B$ is irreducible, and $S$ preserves $W$. Let $F_U=F|_U$ and $\mathcal C=\mathcal{C}_F(U,W)$.
	
Since $S$ preserves $U$ and $W$, it preserves each $F_{4-b,b}$, $b\in\{0,\dots,4\}$. By Lemma~\ref{lem:normal-derivatives}, $\mathcal C$ is $B$-stable of dimension at most $5-r$, and $F_U$ is smooth if $\mathcal C=0$. If $r\geq3$, then $F_U\neq0$, since otherwise $F\in(x_4,x_5)$, contrary to Lemma~\ref{lem:quartic-sm}(2). Finally, since $\pi^{-1}(H)=\C^*B$ and scalars are central, $B$ and $\pi^{-1}(H)$ have the same derived subgroup $B'$, which is the image of $S'$ under restriction to $U$.
	
{\it Case $r=4$.} We write $F=cx_5^4+x_5^3Q_1+x_5^2Q_2+x_5Q_3+F_U$, where each $Q_i$ is a form of degree $i$ on $U$. Since $S$ acts on $\C x_5$ by scalars, each nonzero $Q_i$ spans a $B$-stable line. If $Q_1\neq0$, then $\ker Q_1$ is a $B$-stable hyperplane of $U$, contradicting irreducibility. Hence $Q_1=0$. Lemma~\ref{lem:quartic-sm}(1) then gives $c\neq0$. Thus we may assume $F=x_5^4+x_5^2Q_2+x_5Q_3+F_U$.
	
If $H$ is the standard $A_5$ or $S_5$, then $B'$ is the standard $A_5$, so $Q_2,Q_3,F_U$ are $A_5$-invariant. Every such $A_5$-invariant form is also $S_5$-invariant. Indeed, by Molien's formula, on the hyperplane $t_1+\cdots+t_5=0$, the $A_5$-invariant quadratic, cubic, and quartic forms are respectively spanned by the restrictions of $p_2$, $p_3$, and $p_2^2,p_4$, where $p_d=\sum_{i=1}^5t_i^d$. Thus $\Diag(\sigma,1)$ preserves $F$ for every standard $\sigma\in S_5$, and $H$ is the standard $S_5$.
	
Suppose that $H$ is neither of these standard groups. The inclusion diagrams in \cite[Appendix~A]{CS19} show that, after conjugation, $H$ contains a spin $A_5$, the primitive $\PSL(2,7)$, the tensor-product $A_4\times A_4$, or the standard $C_2^4\rtimes C_5$. In each case, $-I_4$ belongs to the derived subgroup of $\pi^{-1}(H)$. Indeed, for the first two groups, $\pi^{-1}(H)$ contains the perfect double cover $\SL(2,5)$ or $\SL(2,7)$, whose center is $\langle-I_4\rangle$. For $A_4\times A_4$, it contains $\SL(2,3)\otimes I_2$, whose derived subgroup $Q_8\otimes I_2$ contains $-I_4$. Finally, for $C_2^4\rtimes C_5$, the preimage of $C_2^4$ is nonabelian, since diagonal subgroups of $\PGL(4,\C)$ of exponent $2$ have order at most $8$, and its commutators are scalar matrices with square $I_4$, so one of them is $-I_4$. Hence $-I_4\in B'$. As $S$ acts on $\C x_5$ by scalars, $S'$ acts trivially on $\C x_5$, so the preimage of $-I_4$ in $S'$ is $\Diag(-I_4,1)$. Hence $\Diag(-I_4,1)\in\Aut(F)$. Consequently $Q_3=0$, and $F_U$ is smooth by Corollary~\ref{cor:involution-split}.
	
If $Q_2\neq0$, its radical is $B$-stable, so $Q_2$ is nondegenerate. By \cite[Appendix~A, diagram~(A.1)]{CS19}, $H$ must then contain the tensor-product group $A_4\times A_4$, contradicting \cite[Theorem~8.1]{AOT24} because $F_U$ is smooth. Thus $Q_2=0$ and $F=x_5^4+F_U$. Every automorphism of $F_U$ extends by fixing $x_5$, so $H=G_{F_U}$. The possibilities follow from \cite[Theorems~8.1--8.3 and Corollary~8.5]{AOT24}.
	
{\it Case $r=3$.} By Blichfeldt's classification \cite[Chapter~V, \S~79, 82--83]{Bli17} (see also \cite[Theorem~4.8]{DI09}), $H$ is isomorphic to $A_5$, $\PSL(2,7)$, $A_6$, or one of the Hessian groups of orders $36$, $72$, and $216$. If $H$ is a Hessian group or $A_6$, then the derived subgroup of $\pi^{-1}(H)$ contains $\xi_3I_3$. Hence $\xi_3I_3\in B$, which is impossible since $(\xi_3I_3)(F_U)=\xi_3F_U$. If $H\cong A_5$, then $B'\cong A_5$. Let $g\in B'$ have order $5$. As $g$ is conjugate to $g^{-1}$ in $A_5$, its eigenvalues are $1$ and $\xi_5^{\pm k}$ for some $k\not\equiv0\pmod5$. Replacing $g$ by a power and changing coordinates on $U$, we may assume that $g=\Diag(1,\xi_5,\xi_5^{-1})$. Then the $g$-invariant cubic and quartic forms lie in $\C x_1^3+\C x_1x_2x_3$ and $\C x_1^4+\C x_1^2x_2x_3+\C x_2^2x_3^2$, respectively. Moreover, $A_5$ has no nontrivial representation of dimension at most two. Hence $B'$ fixes $\mathcal C$ pointwise, so $\mathcal C\subset\C x_1^3+\C x_1x_2x_3$. Since $F_U$ is $g$-invariant as well, $F\in(x_1,x_3,x_4,x_5)^2$, contrary to Lemma~\ref{lem:quartic-sm}(1). Therefore $H\cong\PSL(2,7)$.

{\it Case $r=2$.} By \cite{Kle84}, $H$ is isomorphic to $A_4$, $S_4$, or $A_5$. Suppose that $H\cong S_4$ or $A_5$. By \cite[Lemma~6.4]{YYZ25}, $F_U=0$. Choose coordinates $x_1,x_2$ on $U$ diagonalizing some $g\in B$ whose image in $H$ has order $4$ if $H\cong S_4$ and order $5$ if $H\cong A_5$. Then $g=\Diag(\alpha,\beta)$ with $\alpha/\beta$ of order $4$ or $5$, so $g$ multiplies the cubic monomials $x_1^3$, $x_1^2x_2$, $x_1x_2^2$, $x_2^3$ by the distinct scalars $\alpha^3$, $\alpha^2\beta$, $\alpha\beta^2$, $\beta^3$. Thus $\mathcal C$ is spanned by some of them. As $F_U=0$, Lemma~\ref{lem:quartic-sm}(1) gives $x_1^3,x_2^3\in\mathcal C$. Since $B$ is primitive, some element of $B$ maps $x_1$ or $x_2$ to $ax_1+bx_2$ with $ab\neq0$, so $(ax_1+bx_2)^3\in\mathcal C$. All four coefficients of this cubic are nonzero, so $\mathcal C$ contains all four cubic monomials, contrary to $\dim\mathcal C\leq3$. Thus $H\cong A_4$.
\end{proof}

\subsection{Proof of Theorem~\ref{thm:nonmonomial}}
We first consider binary quartics and the case $(2,2,1)$, and then recall the facts we need on plane quartic curves and quartic surfaces. We start with the classical normal form of smooth binary quartic forms and their projective automorphism groups.
\label{sec:proof-nonmonomial}
\begin{lemma}\label{lem:binary-forms}
		Every smooth binary quartic form $\phi$ is isomorphic to $x_1^4+ax_1^2x_2^2+x_2^4$ for some $a\in\C$ with $a^2\neq4$. The group $G_\phi\subset\PGL(2,\C)$ is isomorphic to $A_4$ if $a^2=-12$, to $D_8$ if $a\in\{0,\pm6\}$, and to $C_2^2$ otherwise. In particular, $\phi$ is isomorphic to $x_1^3x_2+x_2^4$ if and only if $G_\phi\cong A_4$, and otherwise $\Aut(\phi)$ is conjugate to a subgroup of $\Aut(x_1^4+x_2^4)$.
	\end{lemma}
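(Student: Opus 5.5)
We treat a smooth binary quartic $\phi$ through its four distinct roots in $\P^1$ and reduce everything to the action of $\PGL(2,\C)$ on four-point sets. \textbf{Normal form.} The Klein four-group $V\subset\PGL(2,\C)$ of double transpositions preserving a fixed unordered four-point set lifts to $\GL(2,\C)$. Choose coordinates in which two of its involutions are $[\Diag(1,-1)]$ and $[\bigl(\begin{smallmatrix}0&1\\1&0\end{smallmatrix}\bigr)]$. In these coordinates $\phi$ is, up to sign, an eigenvector for both lifted involutions. Its $\Diag(1,-1)$-eigenspaces are $\langle x_1^4,x_1^2x_2^2,x_2^4\rangle$ and $\langle x_1^3x_2,x_1x_2^3\rangle$. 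The odd eigenspace is excluded, since such forms are divisible by $x_1x_2$ and their roots are then not $V$-invariant in the right way; we can avoid this case altogether by choosing the lift $i\,\Diag(1,-1)$ when necessary. Invariance under the swap then gives $\phi=\alpha(x_1^4+x_2^4)+\beta x_1^2x_2^2$. Smoothness forces $\alpha\neq0$, and after rescaling $a=\beta/\alpha$. The discriminant of $t^2+at+1$ together with the condition of no double root gives exactly $a^2\neq4$.

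\textbf{Automorphism group.} $G_\phi$ is finite, since it permutes four points with trivial pointwise stabilizer. It contains $V\cong C_2^2$, and it embeds into $S_4$ acting on the roots. Every subgroup of $S_4$ containing the normal $V$ is one of $V$, $D_8$, $A_4$, $S_4$. The image lies in $V\rtimes\mathrm{Stab}$, and the quotient $G_\phi/V$ acts on the cross-ratio $j$-invariant as the stabilizer in $S_3$. Hence $G_\phi\cong D_8$ exactly for the harmonic value $j=1728$, and $G_\phi\cong A_4$ exactly for the equianharmonic value $j=0$. Moreover $S_4$ never occurs, since it would need stabilizer all of $S_3$, which is impossible for a cross-ratio. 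To connect with $a$, compute the invariants $I=12+a^2$ and $J\propto a(36-a^2)$ of $x_1^4+ax_1^2x_2^2+x_2^4$. Then $I=0$ gives $a^2=-12$ (the $A_4$ case), and $J=0$ gives $a\in\{0,\pm6\}$ (the $D_8$ case). This is routine, and it is the main bookkeeping step.

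\textbf{Consequences.} The form $x_1^3x_2+x_2^4$ has roots $0$ and the three cube roots, which are equianharmonic, so it is $A_4$-symmetric by \eqref{eq:A4-generators}. Since $I=0$ determines the $\PGL$-orbit and scalars can be absorbed in $\GL$, every $\phi$ with $G_\phi\cong A_4$ is isomorphic to it. Otherwise $G_\phi\subset D_8$. For $a=0$ the form is $x_1^4+x_2^4$ itself. For $a=\pm6$ the roots are again harmonic, and a linear change maps the form to $x_1^4+x_2^4$ up to a scalar. In the generic case, $\Aut(\phi)$ consists of lifts of $V$, namely $\langle iI_2,\Diag(1,-1),P_{(1\,2)}\rangle$, all of which lie in $\Aut(x_1^4+x_2^4)$. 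In the $D_8$ cases we conjugate into $\Aut(x_1^4+x_2^4)$, which is the full preimage of $D_8$. The step needing care is the generic case, namely checking that $\Aut(\phi)$ equals exactly this group of order $16$ in the chosen coordinates, which follows since any $\GL$ element preserving $\phi$ maps to $V$ and its scalar ambiguity lies in $\langle iI_2\rangle$.
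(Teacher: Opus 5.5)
Your proof is correct in substance. Its core matches the paper's: $G_\phi$ is the preimage in $S_4$ of the stabilizer of the cross-ratio under $S_4/V\cong S_3$, which gives $V$, $D_8$ or $A_4$ and never $S_4$. (The paper phrases this as the stabilizer of one root acting on the other three.)

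The two ends of the argument differ from the paper.

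\emph{Normal form.} The paper parametrizes the roots as $\pm t,\pm t^{-1}$ and checks that $(t-t^{-1})^2/(t+t^{-1})^2$ takes every value other than $0$ and $1$. You instead conjugate the Klein four-group of double transpositions to $\langle[\Diag(1,-1)],[P_{(1\,2)}]\rangle$ and take simultaneous eigenvectors. This explains why the normal form is $V$-symmetric, but it uses the standard fact that all Klein four-subgroups of $\PGL(2,\C)$ are conjugate.

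\emph{Special values of $a$.} The paper computes the cross-ratio $\lambda=(a+2)/(a-2)$ and solves $\lambda^2-\lambda+1=0$ and $\lambda\in\{-1,2,1/2\}$ directly. For $a=\pm6$ it writes down the identity $(x_1+x_2)^4+(x_1-x_2)^4=2(x_1^4+6x_1^2x_2^2+x_2^4)$. You instead invoke the classical facts that $j$ classifies four-point sets and that $j=0\iff I=0$ and $j=1728\iff J=0$. This is quicker but less self-contained. Your values $I\propto12+a^2$ and $J\propto a(36-a^2)$ are correct.

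Three slips in the normal-form step need repair.

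\begin{enumerate}
\item $V$ does not lift isomorphically to $\GL(2,\C)$. A finite abelian subgroup of $\GL(2,\C)$ is diagonalizable, so its image in $\PGL(2,\C)$ is cyclic; indeed $\Diag(1,-1)$ and $P_{(1\,2)}$ anticommute. You only use lifts of individual involutions, so say that instead.
\item Passing to the lift $i\Diag(1,-1)$ does not change the eigenvalue on quartic forms, since $i^4=1$, so that escape route is empty. The odd eigenspace must be excluded by your other remark, made precise. An odd $\phi$ is divisible by $x_1x_2$, so the fixed points $[1:0]$ and $[0:1]$ of $[\Diag(1,-1)]$ would be roots. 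But a double transposition fixes no root.
\item The same argument is needed for the swap, whose eigenvalue on $\phi$ could a priori be $-1$. Then $\phi=\alpha(x_1^4-x_2^4)$, whose roots $[1:1]$ and $[1:-1]$ are fixed by $[P_{(1\,2)}]$, which is again impossible.
\end{enumerate}
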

	\begin{proof}
	 Since $\phi$ is smooth, its four roots in $\P^1$ are distinct. Four distinct points of $\P^1$ are determined up to $\PGL(2,\C)$ by their cross-ratio, and the cross-ratio $(t-t^{-1})^2/(t+t^{-1})^2$ of $t,-t,t^{-1},-t^{-1}$ takes every value other than $0$ and $1$. So after a linear change of coordinates the roots are $\pm t,\pm t^{-1}$ for some $t\in\C^*$ with $t^4\neq1$. Then $\phi$ is a multiple of $(x_1^2-t^2x_2^2)(x_1^2-t^{-2}x_2^2)=x_1^4+ax_1^2x_2^2+x_2^4$ with $a=-(t^2+t^{-2})$, so $a^2\neq4$. Thus, we may assume $\phi=x_1^4+ax_1^2x_2^2+x_2^4$. The group $G_\phi$ acts faithfully on the roots and contains $\langle[\Diag(1,-1)],[P_{(1\,2)}]\rangle\cong C_2^2$, which permutes them simply transitively. The stabilizer of a root consists of the permutations of the other three roots that preserve the cross-ratio $\lambda=(a+2)/(a-2)$ of the four roots. A transposition of the other three roots changes $\lambda$ into $1/\lambda$, $1-\lambda$, or $\lambda/(\lambda-1)$, and a $3$-cycle changes it into $1/(1-\lambda)$ or $(\lambda-1)/\lambda$. Hence the stabilizer is $C_3$ if $\lambda^2-\lambda+1=0$, that is, if $a^2=-12$; it is $C_2$ if $\lambda\in\{-1,2,\frac12\}$, that is, if $a\in\{0,\pm6\}$; and it is trivial otherwise. Accordingly, $G_\phi\subset S_4$ has order $12$, $8$, or $4$, so $G_\phi\cong A_4$, $D_8$, or $C_2^2$. Since $G_{x_1^3x_2+x_2^4}\cong A_4$ by \eqref{eq:A4-generators} and replacing $x_2$ by $ix_2$ changes $a$ into $-a$, the form $\phi$ is isomorphic to $x_1^3x_2+x_2^4$ if and only if $a^2=-12$. If $a\in\{0,\pm6\}$, then $\phi$ is isomorphic to $x_1^4+x_2^4$, since $(x_1+x_2)^4+(x_1-x_2)^4=2(x_1^4+6x_1^2x_2^2+x_2^4)$. For the remaining values of $a$, $G_\phi\cong C_2^2$ and we have $\Aut(\phi)=\langle\xi_4I_2,\Diag(1,-1),P_{(1\,2)}\rangle\subset\Aut(x_1^4+x_2^4)$.
	\end{proof}

	In the case $l(\Aut(X))=(2,2,1)$, we use the binary quartic forms
	$$
	\Phi_\pm=x_1^4\pm2\sqrt3\,i\,x_1^2x_2^2+x_2^4,
	$$
	also written $\Phi_\varepsilon$ with $\varepsilon=\pm1$, and the group $\mathcal T\subset\SL(2,\C)$ isomorphic to $\SL(2,3)$ generated by
	$$
	\Diag(i,-i),\qquad\begin{pmatrix}0&1\\-1&0\end{pmatrix},\qquad g=\frac{i-1}2\begin{pmatrix}1&-i\\1&i\end{pmatrix}.
	$$
	The first $2$ matrices generate a subgroup $\mathcal{Q}\cong Q_8$ and fix $\Phi_\pm$, and $g$ maps $\Phi_\pm$ to $\xi_3^{\mp1}\Phi_\pm$. Hence, by Lemma~\ref{lem:binary-forms}, $\mathcal T$ is the preimage of $G_{\Phi_+}=G_{\Phi_-}\cong A_4$ in $\SL(2,\C)$, and $\mathcal Q$ is the preimage of the normal subgroup $C_2^2$ of $A_4$. Note that $\mathcal Q$ fixes no nonzero binary quadratic form.

	\begin{lemma}\label{lem:13-invs}
		The quartic forms fixed by all $\Diag(h,h,1)$ with $h\in\mathcal{Q}$ have the basis $x_5^4$, $\theta x_5^2$, $\theta^2$, and $\Phi_{\varepsilon,k}$ with $\varepsilon=\pm1$ and $0\leq k\leq4$, where $\theta=x_1x_4-x_2x_3$, $\Phi_{\varepsilon,0}=\Phi_\varepsilon(x_1,x_2)$, $\Phi_{\varepsilon,4}=\Phi_\varepsilon(x_3,x_4)$,
		\begin{align*}
			\Phi_{\varepsilon,1}&=x_1^3x_3+x_2^3x_4+\varepsilon\sqrt3\,i\,x_1x_2(x_1x_4+x_2x_3),\\
			\Phi_{\varepsilon,2}&=x_1^2x_3^2+x_2^2x_4^2+\varepsilon\tfrac{i}{\sqrt3}(x_1^2x_4^2+4x_1x_2x_3x_4+x_2^2x_3^2),\\
			\Phi_{\varepsilon,3}&=x_1x_3^3+x_2x_4^3+\varepsilon\sqrt3\,i\,x_3x_4(x_1x_4+x_2x_3).
		\end{align*}
		Moreover, $\Diag(g,g,1)$ fixes $\theta$ and maps $\Phi_{\varepsilon,k}$ to $\xi_3^{-\varepsilon}\Phi_{\varepsilon,k}$.
	\end{lemma}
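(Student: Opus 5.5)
The plan is to decompose $\C^5=\langle e_1,e_2\rangle\oplus\langle e_3,e_4\rangle\oplus\langle e_5\rangle$ and work one summand at a time. The group $\mathcal Q^{(2)}:=\{\Diag(h,h,1)\mid h\in\mathcal Q\}$ preserves each summand. Write $u=(x_1,x_2)$ and $v=(x_3,x_4)$. A quartic form then splits as a sum of pieces $u^av^bx_5^c$ with $a+b+c=4$. Each piece lies in $\mathrm{Sym}^a(V^*)\otimes\mathrm{Sym}^b(V^*)$, where $V^*$ is the natural two-dimensional representation of $\mathcal Q$. The scalar $-I_2\in\mathcal Q$ acts on this piece by $(-1)^{a+b}$, so only $a+b\in\{0,2,4\}$ can contribute invariants.

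\begin{itemize}
\item $a+b=0$: we get $x_5^4$.
\item $a+b=2$: we need $(\mathrm{Sym}^2\oplus V^*\otimes V^*\oplus\mathrm{Sym}^2)^{\mathcal Q}$. The paper notes that $\mathcal Q$ fixes no nonzero binary quadratic form, so the two $\mathrm{Sym}^2$ summands contribute nothing. Since $\mathcal Q\subset\SL(2)$, we have $V^*\otimes V^*=\mathrm{Sym}^2\oplus\wedge^2$, and the invariant part is $\wedge^2$, spanned by $\theta$. This yields $\theta x_5^2$.
\item $a+b=4$: this piece is the space of $\mathcal Q$-invariants in the degree-$4$ part of $\C[x_1,\dots,x_4]$ under the diagonal action.
\end{itemize}

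To count the invariants in degree $4$, I would use characters. Of the eight elements of $Q_8$, two act on $V$ as $\pm I$ and six are elements of order $4$ with eigenvalues $\pm i$. For the diagonal action on $V\oplus V$, an order-$4$ element has eigenvalues $i,i,-i,-i$. The number of monomials of degree $4$ in two variables of weight $i$ and two of weight $-i$ with total weight $1$ is $\sum_{p+q=4,\ p\equiv q\ (4)}(p+1)(q+1)$. The admissible $(p,q)$ are $(4,0),(0,4),(2,2)$, giving $5+5+9=19$; with signs included, the trace is $\sum_{p}(p+1)(5-p)\,i^{p-q}$. For the elements $\pm I$ the trace is $\binom{7}{3}=35$. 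The dimension is then the average of the traces over $Q_8$, and I expect it to come out to $11=1+10$, namely $\theta^2$ together with the ten forms $\Phi_{\varepsilon,k}$.

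Rather than relying only on the count, I would verify the basis directly by bidegree $(a,b)$. In $\mathrm{Sym}^a\otimes\mathrm{Sym}^{4-a}$, the $\mathcal Q$-invariants are $\mathrm{Hom}_{\mathcal Q}(\mathrm{Sym}^a,\mathrm{Sym}^{4-a})$, using self-duality of representations of $\SL(2)$. As a $Q_8$-representation, $\mathrm{Sym}^4$ decomposes as the sum of the three nontrivial characters plus the trivial character twice, because the invariants are spanned by $\Phi_\pm$. The representation $\mathrm{Sym}^2$ is the sum of the three nontrivial characters, and $\mathrm{Sym}^3\cong V\oplus V$. This gives the following dimensions:

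\begin{itemize}
\item bidegree $(4,0)$: dimension $2$, spanned by $\Phi_{\pm,0}$;
\item bidegree $(3,1)$: dimension $2$, spanned by $\Phi_{\pm,1}$;
\item bidegree $(2,2)$: dimension $3$, spanned by $\Phi_{\pm,2}$ and $\theta^2$;
\item bidegree $(1,3)$: dimension $2$, spanned by $\Phi_{\pm,3}$;
\item bidegree $(0,4)$: dimension $2$, spanned by $\Phi_{\pm,4}$.
\end{itemize}

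The total is $11$, which agrees with the character count. It then remains to check that each listed form really is invariant. Each $\Phi_{\varepsilon,k}$ should be, up to a scalar, the $k$-th polarization of $\Phi_\varepsilon$, i.e. $\Phi_{\varepsilon,k}\propto(v\cdot\nabla_u)^k\Phi_\varepsilon(u)$. Polarization is equivariant for the diagonal action, so invariance follows from that of $\Phi_\varepsilon$, and I would confirm the scalars by a quick expansion. Linear independence also follows from polarization: within each bidegree the forms $\Phi_{+,k}$ and $\Phi_{-,k}$ come from distinct invariants. In bidegree $(2,2)$ one must additionally check that $\theta^2$ is not a combination of $\Phi_{\pm,2}$, which is clear from the coefficient of $x_1^2x_4^2$ versus $x_1^2x_3^2$.

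For the last claim, $g$ acts diagonally, so $\Diag(g,g,1)$ maps $\theta$ to $\det(g)\theta=\theta$, since $g\in\SL(2)$. Equivariance of polarization then carries the relation $g(\Phi_\varepsilon)=\xi_3^{-\varepsilon}\Phi_\varepsilon$ over to every $\Phi_{\varepsilon,k}$. The main obstacle is the explicit constants in $\Phi_{\varepsilon,2}$, namely the factor $\tfrac{i}{\sqrt3}$ and the coefficient $4$. I would check these by computing the second polarization of $x_1^4+2\varepsilon\sqrt3\,i\,x_1^2x_2^2+x_2^4$ directly and then normalizing.
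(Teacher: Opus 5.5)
Your proposal is correct, and it reaches the lemma by a different route from the paper.

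The paper obtains the dimension $13$ from Sage (\texttt{invariants\_of\_degree}). It then checks linear independence and the transformation rules of the listed forms by direct expansion.

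You compute the dimension by hand. You split by $x_5$-degree and bidegree and use the $Q_8$-decompositions $\mathrm{Sym}^2V=\chi_1\oplus\chi_2\oplus\chi_3$, $\mathrm{Sym}^3V\cong V^{\oplus2}$ and $\mathrm{Sym}^4V\cong\mathbf{1}^{\oplus2}\oplus\chi_1\oplus\chi_2\oplus\chi_3$. This gives $1+1+(2+2+3+2+2)=13$, and your character cross-check does yield $(2\cdot35+6\cdot3)/8=11$ in degree four.

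You then get invariance and the factor $\xi_3^{-\varepsilon}$ for all ten $\Phi_{\varepsilon,k}$ at once from the equivariance of polarization, and this works. With $D=x_3\partial_{x_1}+x_4\partial_{x_2}$ one has $D^k\Phi_\varepsilon=\tfrac{4!}{(4-k)!}\Phi_{\varepsilon,k}$ for $0\leq k\leq4$. In particular the constants $\tfrac{i}{\sqrt3}$ and $4$ in $\Phi_{\varepsilon,2}$ are right.

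Your route gives a computer-free proof and explains where the $\Phi_{\varepsilon,k}$ come from. The paper's proof is shorter to write but delegates the count to software.

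One small slip: comparing the coefficients of $x_1^2x_4^2$ and $x_1^2x_3^2$ does not show that $\theta^2\notin\mathrm{span}(\Phi_{+,2},\Phi_{-,2})$. The difference $\Phi_{+,2}-\Phi_{-,2}$ is a multiple of $x_1^2x_4^2+4x_1x_2x_3x_4+x_2^2x_3^2$, which has the same coefficients as $\theta^2$ on those two monomials. There are two easy fixes:
\begin{itemize}
\item Compare the coefficient of $x_1x_2x_3x_4$ instead, which is $-2$ versus $4$.
\item More cleanly, note that $\theta^2$, $\Phi_{+,k}$ and $\Phi_{-,k}$ are eigenvectors of $\Diag(g,g,1)$ with the distinct eigenvalues $1$, $\xi_3^{-1}$ and $\xi_3$. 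This gives all the linear independence you need in every bidegree at once, without appealing to injectivity of $D^k$.
\end{itemize}
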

	\begin{proof}
		The function \texttt{invariants\_of\_degree} in Sage~\cite{Sage} shows that these invariant forms span a space of dimension $13$. One checks directly that the listed forms are linearly independent and transform as stated.
	\end{proof}

\begin{lemma}\label{lem:221}
	Let $X$ be a smooth quartic threefold with $l(\Aut(X))=(2,2,1)$. Then $\Aut(X)$ is conjugate to a subgroup of $G_6$, $G_{14}$, or $G_{24}$.
\end{lemma}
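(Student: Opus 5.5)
Set $G=\Aut(X)$, take an $F$-lifting $\widetilde G$, and order the blocks as $W_1=\langle e_1,e_2\rangle$, $W_2=\langle e_3,e_4\rangle$, $W_3=\langle e_5\rangle$. By Proposition~\ref{prop:possi-primitive-cons}, both binary constituents $H_1,H_2$ are isomorphic to $A_4$. Every $g\in\widetilde G$ preserves $W_3$ (it is the unique one-dimensional block) and preserves $W_1\oplus W_2$. Also $\Pi\subset C_2$, according to whether the two binary blocks are swapped.

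\emph{Step 1: reduce $J_a$ to $C_2^2$.} The projections $J_a\trianglelefteq H_a\cong A_4$ are normal. I claim each $J_a$ contains the Klein four-group $V_4$. If $J_1$ were trivial, then $P$ would act on $W_1$ by scalars. Then ${\rm Stab}(W_1)/P$ would embed in $\Pi\subset C_2$, so $H_1$ would be abelian modulo something of order at most $2$, which contradicts $H_1\cong A_4$. Hence $J_1\supseteq V_4$, and the same holds for $J_2$.

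Choose coordinates on each binary block so that the preimage of $V_4$ in $\SL(2,\C)$ is $\mathcal Q$. By Lemma~\ref{lem:binary-forms}, the relevant preimages are then $\mathcal T$. The next claim, which I expect to require the most care, is that $\widetilde G$ contains, up to scalars on the blocks, the elements $\Diag(h,h',1)$ for $h,h'\in\mathcal Q$, with $h$ and $h'$ independent or linked. To prove it, I will take commutators in $P$: they land in $\SL$ on each block and kill the block scalars. After a further change of coordinates in $W_2$ (conjugating within the normalizer of $\mathcal Q$), I will arrange that $F$ is fixed by all $\Diag(h,h,1)$ with $h\in\mathcal Q$. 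Once this holds, Lemma~\ref{lem:13-invs} gives the expansion
$$F=c_0x_5^4+c_1\theta x_5^2+c_2\theta^2+\textstyle\sum_{\varepsilon,k}a_{\varepsilon,k}\Phi_{\varepsilon,k}.$$
No $x_5^3$ or $x_5$-linear terms appear, because $\mathcal Q$ fixes no nonzero linear or quadratic form; this matches the remark before Lemma~\ref{lem:13-invs}. Lemma~\ref{lem:quartic-sm}(1) then forces $c_0\ne0$.

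\emph{Step 2: use the order-$3$ elements.} Elements of $\widetilde G$ mapping to elements of order $3$ in $H_1$ and $H_2$ act, after multiplying by block scalars, as $\Diag(\alpha g^{\pm1},\beta g^{\pm1},\gamma)$. By Lemma~\ref{lem:13-invs}, $g$ scales $\Phi_{\varepsilon,k}$ by $\xi_3^{-\varepsilon}$ and fixes $\theta$. The invariance of $F$ therefore splits the possibilities according to which $\varepsilon$ occurs, to whether $\Pi$ is trivial, and to how the two $A_4$'s are correlated in $J$, i.e.\ whether $J$ is the full product $A_4\times A_4$ or the graph of an isomorphism.

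\emph{Step 3: case analysis.} In each case I will identify the ambient group.
\begin{itemize}
\item[(a)] If $J=A_4\times A_4$, independent order-$3$ elements on the two blocks kill all mixed terms $\Phi_{\varepsilon,1},\Phi_{\varepsilon,2},\Phi_{\varepsilon,3}$ and also $\theta$. Then $F=x_5^4+\phi_1(x_1,x_2)+\phi_2(x_3,x_4)$ with $\phi_a$ proportional to $\Phi_{\pm}$. Since each $\Phi_\pm$ is isomorphic to $x_1^3x_2+x_2^4$ by Lemma~\ref{lem:binary-forms}, $G$ sits in $G_6$.
\item[(b)] If $J$ is a graph and $\Pi=C_2$, the swap together with the diagonal $\mathcal T$ should force a form of type $F_{14}$ (terms in $\theta^2$ and $\Phi_{\varepsilon,k}$ with a single $\varepsilon$). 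This puts $G$ in $G_{14}$, with $N\cong C_8$.
\item[(c)] If $J$ is a graph and $\Pi=1$, the scalars $\gamma$ on $x_5$ allow the $\theta x_5^2$ term and produce the $C_{12}$ kernel. This should match $G_{24}$ after a coordinate change, as in Example~\ref{ex:221-blocks}.
\end{itemize}
In each case I will also compute $N$, which consists of block scalars $\Diag(\lambda_1I_2,\lambda_2I_2,\lambda_3)$, by checking which block scalings preserve the surviving monomials. This gives the bounds $C_4^2$, $C_8$, or $C_{12}$, and will confirm that $G$ embeds in the corresponding $G_i$.

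\emph{Expected difficulty.} The main obstacle is case (c) and the generic parameter values in case (b). There the invariant form still has several parameters, and I must show that the whole $G$ is conjugate into the specific $G_{24}$ or $G_{14}$ rather than merely isomorphic to a subgroup. I will handle this by computing $\widetilde G$ directly as the normalizer of $\mathcal T$ (diagonally embedded) intersected with block-scalar extensions, then comparing generators with those listed in Examples~\ref{ex:17maximal}(14) and~\ref{ex:seven-projective-actions}(24). Smoothness constraints from Lemma~\ref{lem:quartic-sm} will discard the degenerate parameter choices.
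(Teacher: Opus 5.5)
\textbf{There is a genuine gap in Step~3.} Your Steps~1--2 follow the paper's set-up: Goursat for $J$, commutators in $P$ giving the diagonal or independent copies of $\mathcal Q$, the expansion from Lemma~\ref{lem:13-invs}, and $x_5^4\in F$. Step~3, however, splits by $\Pi$, which is the wrong invariant, and the implications in (b) and (c) fail.

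A block swap together with a graph $J$ does not force the shape of $F_{14}$. Take the group generated by $\Diag(h,h,1)$ for $h\in\mathcal Q$, $\Diag(\xi_3g,\xi_3g,1)$, $\Diag(iI_2,\pm iI_2,1)$ and $P_{(1\,3)(2\,4)}$. It has $J$ equal to the diagonal $A_4$ and $\Pi=C_2$. Yet it preserves $x_5^4+\Phi_{1,0}+\Phi_{1,4}+c\,\Phi_{1,2}$, which is smooth for general $c$: $\Diag(\xi_3g,\xi_3g,1)$ multiplies every $\Phi_{1,k}$ by $\xi_3\cdot\xi_3^{-1}=1$, and the swap exchanges $\Phi_{1,0}$ and $\Phi_{1,4}$ and fixes $\Phi_{1,2}$. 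This group also preserves $x_5^4+\Phi_{1,0}+\Phi_{1,4}$, which is isomorphic to $F_6$, so it lies in a conjugate of $G_6$. Note also that $F_{14}=x_5^4+\Phi_{-1,1}+\Phi_{1,3}$ involves both signs $\varepsilon$ and no $\theta^2$, contrary to your description.

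In (c), $\Pi=1$ does not by itself produce an element of order $3$ in $N$. Your description of the $G_{24}$ case is also off:
\begin{enumerate}
\item once $\widetilde G$ is normalized to fix $x_5$, there is no scalar $\gamma$;
\item the relevant group is $N\subset\{\lambda^3\mu=\mu^4=1\}\cong C_{12}$, acting by different scalars on the two planes;
\item its element $\Diag(\xi_3I_2,I_2,1)$ multiplies $\theta x_5^2$ by $\xi_3$, so that term is excluded rather than allowed.
\end{enumerate}

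\textbf{The missing idea} is to split by $N$ and by the exponents of an order-$3$ element $p=\Diag(\xi_3^ag,\xi_3^bg,1)\in P$. Such a $p$ exists because ${\rm Stab}_{\widetilde G}(W_1)=P$, so $J_a=H_a\cong A_4$, not merely $J_a\supseteq C_2^2$.

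Lemma~\ref{lem:quartic-sm}(1) makes $F_{4,0}$ or $F_{3,1}$ nonzero, and likewise $F_{0,4}$ or $F_{1,3}$. So, after possibly swapping blocks, $N$ lies in $\{\lambda^4=\mu^4=1\}$, in $\{\lambda^3\mu=\mu^4=1\}\cong C_{12}$, or in $\{\lambda^3\mu=\lambda\mu^3=1\}\cong C_8$.

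You also need the explicit fact that no combination of $x_5^4,\Phi_{\varepsilon,4},\Phi_{\varepsilon,1},\Phi_{-\varepsilon,2}$ is smooth: for a linear factor $L$ of $\Phi_{\varepsilon,4}$, it lies in $(L)+(\ell_0,x_5)^2$. This gives $\varepsilon'=-\varepsilon$ when $3\mid|N|$. Together with Lemma~\ref{lem:quartic-sm}(1), it gives $a,b\in\{\pm1\}$ when $3\nmid|N|$.

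The cases then resolve as follows.
\begin{enumerate}
\item If $3\mid|N|$, you get the parameter-free form $x_5^4+\Phi_{1,4}+\Phi_{-1,1}$. There is no swap, $|G|\le144$, hence $G_{24}$.
\item If $3\nmid|N|$ and $N\subset\{\lambda^4=\mu^4=1\}$, you get $G_6$, with or without a swap, because $\Phi_{a,0}$ and $\Phi_{b,4}$ span the $P$-invariant quartics on the two planes.
\item If $N\cong C_8$, then $F=x_5^4+\Phi_{b,1}+\Phi_{a,3}$. When $a=b$ this is $G_6$ after the substitution $x_1\pm x_3$, $x_2\pm x_4$, even though $P_{(1\,3)(2\,4)}$ fixes $F$. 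When $a=-b$ it is $F_{14}$, with $|G|\le192$.
\end{enumerate}
The $G_{14}$ and $G_{24}$ forms have no free parameters, and $G=\Aut(X)$ already contains the corresponding conjugate. So the order bounds finish the identification, and the normalizer computation you anticipate is unnecessary.

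\textbf{Two smaller points.} In (a), one-sided order-$3$ elements do not act diagonally on the mixed $\Phi_{\varepsilon,k}$. The mixed terms vanish because $F$ is fixed by $\Diag(h_1,h_2,1)$ for independent $h_1,h_2\in\mathcal Q$. This also covers the Goursat case where $J$ is the fibre product over $A_4/C_2^2$, which your dichotomy omits. Second, the diagonal $\mathcal Q$ does fix the quadratic form $\theta$; the terms $x_5^3\ell$ and $x_5C$ vanish because $-I_4$ lies in the diagonal $\mathcal Q$.
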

\begin{proof}
	Write $X=X_F$ and $G=\Aut(X)$, and let $F_{d_1,d_2}$ be the part of $F$ of degree $d_1$ in $x_1,x_2$ and $d_2$ in $x_3,x_4$. An element of $\widetilde G$ preserving one of the two planes also preserves the other, so the principal subgroup $P$ is the stabilizer of each plane. Hence both projections of $J$ are binary primitive constituents, which are isomorphic to $A_4$ by Proposition~\ref{prop:possi-primitive-cons}. We may assume that they are $G_{\Phi_+}$. Every element of $P$ has the form $\Diag(\lambda t_1,\mu t_2,c)$ with $t_1,t_2\in\mathcal T$ and $\lambda,\mu,c\in\C^*$, so $P'\subset\{\Diag(h_1,h_2,1)\mid h_1,h_2\in\mathcal Q\}$, as $\mathcal T'=\mathcal Q$. Since the normal subgroups of $A_4$ are $1$, $C_2^2$, and $A_4$, Goursat's lemma shows that either $J$ is the graph of an automorphism of $A_4$, or $J$ contains $C_2^2\times C_2^2$. If $J$ is not a graph, preimages in $P$ of two distinct nontrivial elements of $C_2^2\times1$ (resp.\ $1\times C_2^2$) have commutator $\Diag(-I_2,I_2,1)$ (resp.\ $\Diag(I_2,-I_2,1)$), and $P'$ maps onto $J'=C_2^2\times C_2^2$, so $P'=\{\Diag(h_1,h_2,1)\mid h_1,h_2\in\mathcal Q\}$. If $J$ is a graph, then, since every automorphism of $A_4$ is induced by conjugation by its normalizer $S_4$ in $\PGL(2,\C)$, after changing coordinates on the plane of $x_3,x_4$, $J$ is the diagonal of $A_4\times A_4$, and similarly $P'=\{\Diag(h,h,1)\mid h\in\mathcal Q\}$. In both cases $\Diag(-I_4,1)\in P'$, so Lemma~\ref{lem:quartic-sm}(1) gives $x_5^4\in F$. We normalize its coefficient to $1$ and take the $F$-lifting $\widetilde G$ fixing $x_5$.
	
	We will use the following fact repeatedly. Let $\ell_1,\dots,\ell_4$ be linearly independent linear forms in $x_1,\dots,x_4$. If $\widetilde G$ permutes the spans of $\ell_1,\ell_2$ and of $\ell_3,\ell_4$, and preserves $x_5^4+\phi(\ell_1,\ell_2)+\psi(\ell_3,\ell_4)$ for some binary forms $\phi$ and $\psi$ isomorphic to $T_2$, then $G$ is conjugate to a subgroup of $G_6$. If $J$ is not a graph, then $F$ is invariant under $\Diag(h_1,h_2,1)$ for all $h_1,h_2\in\mathcal Q$. As $\mathcal Q$ fixes no nonzero binary form of degree $1$, $2$, or $3$, we get $F=x_5^4+\phi(x_1,x_2)+\psi(x_3,x_4)$. Here $\phi$ and $\psi$ are smooth with $G_\phi,G_\psi\supset A_4$, so they are isomorphic to $T_2$ by Lemma~\ref{lem:binary-forms}, and we obtain $G_6$.
	
	From now on, $J$ is a graph, so $P\subset\{\Diag(\lambda h,\mu h,1)\mid h\in\mathcal T,\ \lambda,\mu\in\C^*\}$, and $F$ is a linear combination of the forms in Lemma~\ref{lem:13-invs}. For $\varepsilon=\pm1$, no linear combination of $x_5^4$, $\Phi_{\varepsilon,4}$, $\Phi_{\varepsilon,1}$, and $\Phi_{-\varepsilon,2}$ is smooth. Indeed, let $L$ be a linear factor of $\Phi_{\varepsilon,4}$. A direct computation shows that $\Phi_{\varepsilon,1}$ and $\Phi_{-\varepsilon,2}$ are divisible by $\ell_0^2$ modulo $L$ for some linear form $\ell_0$ in $x_1,x_2$. Hence the combination lies in $(L)+(\ell_0,x_5)^2$, contrary to Lemma~\ref{lem:quartic-sm}(3).
	
	We identify each element $\Diag(\lambda I_2,\mu I_2,1)$ of $N$ with the pair $(\lambda,\mu)$; it multiplies $F_{d_1,d_2}$ by $\lambda^{d_1}\mu^{d_2}$. By Lemma~\ref{lem:quartic-sm}(1), $F_{4,0}$ or $F_{3,1}$ is nonzero, and so is $F_{0,4}$ or $F_{1,3}$. Hence, interchanging the blocks if necessary, we may assume that $N$ lies in one of the groups
	$$
	\{\lambda^4=\mu^4=1\}\cong C_4^2,\qquad\{\lambda^3\mu=\mu^4=1\}\cong C_{12},\qquad\{\lambda^3\mu=\lambda\mu^3=1\}\cong C_8.
	$$
	
	If $3\mid|N|$, then $\Diag(\xi_3I_2,I_2,1)\in N$, and Lemma~\ref{lem:13-invs}, applied to this element and to some $\Diag(\lambda g,\mu g,1)\in P$, gives $F=x_5^4+\alpha\Phi_{\varepsilon,4}+\beta\Phi_{\varepsilon',1}$, where $\alpha\beta\neq0$ and $\varepsilon'=-\varepsilon$ by smoothness and the above. Replacing $x_2,x_4$ by $ix_2,ix_4$ if necessary, which interchanges $\Phi_{1,k}$ and $\Phi_{-1,k}$ and normalizes $\mathcal T$, and rescaling, we get $F=x_5^4+\Phi_{1,4}+\Phi_{-1,1}$. As $F_{4,0}=0\neq F_{0,4}$, no element of $\widetilde G$ interchanges the blocks, so $|G|=12|N|\leq144$. For $G_{24}$, the group $J$ is also a graph and $3\mid|N|$ (Example~\ref{ex:221-blocks}), so the same argument transforms $F_{24}$ into this form. Since $|G_{24}|=144$, $G$ is conjugate to $G_{24}$.
	
	Now let $3\nmid|N|$, and let $p\in P$ be of $3$-power order with image $([g],[g])$ in $J$. As $p^3\in N$, we have $p=\Diag(\xi_3^ag,\xi_3^bg,1)$ with $a,b\in\{0,\pm1\}$, and $P=\langle P',N,p\rangle$. By Lemma~\ref{lem:13-invs}, $p$ multiplies $\Phi_{\varepsilon,k}$ by $\xi_3^{a+(b-a)k-\varepsilon}$ and $\theta$ by $\xi_3^{a+b}$. Hence $a,b\in\{\pm1\}$. Indeed, if $a=b=0$, then $F$ is a linear combination of $x_5^4$, $\theta x_5^2$, and $\theta^2$, contrary to Lemma~\ref{lem:quartic-sm}(1). If exactly one of $a$ and $b$ is $0$, say $a=0$ after interchanging the blocks, then $F$ is a linear combination of $x_5^4$, $\Phi_{b,4}$, $\Phi_{b,1}$, and $\Phi_{-b,2}$, which is singular.
	
	If $\lambda^4=\mu^4=1$ for all $(\lambda,\mu)\in N$, then $\Phi_{a,0}$ and $\Phi_{b,4}$ span the $P$-invariant quartic forms in $x_1,x_2$ and in $x_3,x_4$, respectively. If some $t\in\widetilde G$ interchanges the blocks, then $t$ normalizes $P$, so $t(\Phi_{a,0})=\nu\Phi_{b,4}$ for some $\nu\in\C^*$, and $t(\Phi_{b,4})=\nu^{-1}\Phi_{a,0}$ since $t^2\in P$; otherwise, let $\nu=1$. Hence $\widetilde G$ preserves $x_5^4+\Phi_{a,0}+\nu\Phi_{b,4}$, giving $G_6$.
	
	Otherwise, $N$ has an element of order not dividing $4$. Since $3\nmid|N|$ and the elements of order dividing $4$ in the three groups above lie in the first one, $N$ is the third group, generated by $\Diag(\xi_8I_2,\xi_8^5I_2,1)$. The invariants of $N$ and $p$, smoothness, and rescaling then give $F=x_5^4+\Phi_{b,1}+\Phi_{a,3}$. If $a=b$, then $F=x_5^4+(\Phi_a(x_1+x_3,x_2+x_4)-\Phi_a(x_1-x_3,x_2-x_4))/8$, the elements of $P$ have the form $\Diag(\lambda h,\pm\lambda h,1)$, and $P_{(1\,3)(2\,4)}$ fixes $F$. Since $[\widetilde G:P]\leq2$, $\widetilde G$ permutes the spans of $x_1+x_3,x_2+x_4$ and of $x_1-x_3,x_2-x_4$, giving $G_6$. If $a=-b$, then, interchanging the blocks if necessary, $F=x_5^4+\Phi_{-1,1}+\Phi_{1,3}=F_{14}$, and $G=G_{14}$, as $|G|\leq2|N||J|=192=|G_{14}|$.
\end{proof}

The automorphism groups of smooth plane quartic curves, together with their normal forms, are classified in \cite[Theorem~6.5.2 and Table~6.1]{Dol12}.
We need the following consequence for the linear automorphism groups.

\begin{lemma}\label{lem:3-var-4-forms}
	Let $F$ be a smooth quartic form in three variables. Then $\Aut(F)$ is conjugate to a subgroup of $\Aut(F')$ for some $F'\in\{3K_1,T_2+K_1,T_3,K_3\}$. In particular, let $\Aut(T_2)\times\Aut(F)$ act block-diagonally on $\C^5$. Then its image in $\PGL(5,\C)$ is conjugate to a subgroup of $G_5$, $G_6$, $G_{10}$, or $G_2$ if $F'=3K_1$, $T_2+K_1$, $T_3$, or $K_3$, respectively.
\end{lemma}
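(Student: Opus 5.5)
I would prove the first assertion by going through the classification of automorphism groups of smooth plane quartics in \cite[Theorem~6.5.2 and Table~6.1]{Dol12}. Since $\gcd(3,4)=1$, every subgroup of $G_F$ lifts to $\Aut(F)$. Moreover, $\Aut(F)=\widetilde{G_F}\times\langle iI_3\rangle$ for any $F$-lifting, so it suffices to show that $G_F$ is conjugate in $\PGL(3,\C)$ to a subgroup of $G_{F'}$ for some $F'$ in the list. The dependence on the lifting is then absorbed by the scalars $iI_3$, which preserve both $F$ and $F'$.

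Dolgachev's table gives, up to conjugacy, the maximal groups and their normal forms. These are $\PSL(2,7)$ (Klein quartic, $K_3$), $C_4^2\rtimes S_3$ (Fermat, $3K_1$), $C_4.A_4$ (the form $x_3^4+x_1^3x_2+x_2^4$, i.e. $T_2+K_1$), $C_9$ (the form $x_1^3x_2+x_2^3x_3+x_3^4$, i.e. $T_3$), and $C_7$ (contained in $\PSL(2,7)$). Every other group in the table is a subgroup of one of these, up to conjugacy, via the specializations of parameters recorded there. For instance, $C_6$, $S_3$, $D_8$, $C_2^2$ and $C_3$ lie in $G_{3K_1}$ or $G_{T_2+K_1}$, $C_8$ lies in $G_{3K_1}$, and $C_{4}\circ A_4$-type groups lie in $G_{T_2+K_1}$. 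The main obstacle is that the abstract inclusion of groups does not automatically give a conjugate projective action. I would handle this case by case via Dolgachev's normal forms. Each family of quartics with a given group has a normal form with explicit generators, usually diagonal or monomial. For each family I would exhibit the degeneration of parameters, or equivalently check directly, that the generators already lie in $G_{F'}$ for the appropriate $F'$. Concretely, the diagonal and monomial generators lie in the monomial groups $G_{3K_1}$ and $G_{T_2+K_1}$, since $\Aut(T_2)$ contains the needed binary automorphisms. The only nonmonomial groups in the table are those containing $A_4$ acting on a $2$-plane, which go to $T_2+K_1$, and $\PSL(2,7)$, which goes to $K_3$. For the cyclic groups I would alternatively use the abelian classification (Theorem~\ref{thm:maximal-abelian}) restricted to three variables, with the cycle argument from its proof, applied to the diagonalized generator and the chosen monomials $x_i^3x_j$. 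This yields $K$-structures $3K_1$, $K_2+K_1$, $K_3$, $T_2+K_1$, $T_3$, whose diagonal groups embed in those of the four listed forms.

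The second assertion then follows by inspection. $\Aut(T_2)\times\Aut(F)$ is conjugate to a subgroup of $\Aut(T_2)\times\Aut(F')$, which preserves $T_2+F'$. For $F'=3K_1,T_2+K_1,T_3,K_3$, the form $T_2+F'$ is, after renaming variables, $F_5=T_2+3K_1$, $F_6=T_2+T_2+K_1$, $F_{10}=T_2+T_3$, and $F_2=K_3+T_2$, respectively. The image of $\Aut(T_2)\times\Aut(F')$ consists of block-diagonal automorphisms generated by \eqref{eq:A4-generators}, \eqref{eq:klein-curve-generators}, and diagonal matrices. These are among the stated generators of $G_5,G_6,G_{10},G_2$ in Example~\ref{ex:17maximal}, so the image is contained in the corresponding $G_i$.
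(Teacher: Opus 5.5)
Your overall route is the paper's. You use \cite[Table~6.1]{Dol12}, note that the groups of orders $168$, $96$, $48$, $9$ come from forms isomorphic to $K_3$, $3K_1$, $T_2+K_1$, $T_3$, and place the remaining normal forms inside these. You then get the second assertion by recognizing $T_2+F'$ as $F_5$, $F_6$, $F_{10}$, $F_2$, and that last step is fine.

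The weak point is your opening reduction to $\PGL(3,\C)$: the reason you give for it is wrong. Suppose $gG_Fg^{-1}\subset G_{F'}$. Then each $A\in\Aut(F)$ satisfies $(gAg^{-1})(F')=\chi(A)F'$ for some character $\chi$ of $G_F$, and $g\Aut(F)g^{-1}\subset\Aut(F')$ holds exactly when $\chi=1$. The conjugate of an $F$-lifting and an $F'$-lifting of $gG_Fg^{-1}$ differ by some homomorphism $G_F\to\C^*$. The scalars $iI_3$ absorb it only if it takes values in $\langle i\rangle$; only two $F'$-liftings are guaranteed to differ by elements of $\langle iI_3\rangle$. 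For example, $\Diag(1,1,\xi_3)$ and $\xi_3^2\Diag(1,1,\xi_3)=\Diag(\xi_3^2,\xi_3^2,1)$ have the same image in $\PGL(3,\C)$. Only the first preserves $x_1^4+x_2^4+x_2x_3^3$; the second multiplies it by $\xi_3^2$. So projective containment alone does not tell you that the lift lying in $\Aut(F)$ lies in $\Aut(F')$.

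This matters precisely when $G_F$ has a character of order $3$ and $F\not\cong F'$, that is, for $G_F\cong C_3$ and $C_6$. For $C_9$ and $C_4.A_4$ one has $F\cong T_3$ and $F\cong T_2+K_1$. The other groups in the table are perfect or have only $\pm1$-valued characters, so for them your reduction is harmless. For $C_3$ and $C_6$ you must work with the actual lifts given by the normal forms, $\Aut(F)=\langle iI_3,\Diag(1,1,\xi_3)\rangle$ resp.\ $\langle iI_3,\Diag(-1,1,\xi_3)\rangle$. You then check that these preserve a form isomorphic to $T_2+K_1$, e.g.\ $x_1^4+x_2^4+x_2x_3^3$. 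This is exactly the check the paper makes, and it phrases the other cases linearly as well: monomial generators with entries in $\langle i\rangle$, and for $S_3$ the group $\langle P_{(1\,2)},\Diag(\xi_3,\xi_3^2,1)\rangle$ as the permutation representation.

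Your fallback via the cycle argument of Theorem~\ref{thm:maximal-abelian} does not substitute for this. That argument only embeds the group abstractly into a group of the form $G_K^{\rm d}\times C_{3^m}$ and does not conjugate it into $\Aut(F')$. Moreover, the sum of the chosen monomials need not be smooth, as for $x_1^3x_3+x_2^3x_3+x_3^4$, a configuration missing from your list.
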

\begin{proof}
	By \cite[Theorem~6.5.2 and Table~6.1]{Dol12}, the cases $|G_F|=96,48,9,168$ give the forms in the lemma, respectively.
	In the remaining cases, except when $G_F\cong C_3$, $C_6$, or $S_3$, the normal forms in \cite[Table~6.1]{Dol12} show that $\Aut(F)$ is generated by monomial matrices with entries in $\langle i\rangle$, and hence preserves $3K_1$. For $C_3$ and $C_6$, the generators $\Diag(1,1,\xi_3)$ and $\Diag(-1,1,\xi_3)$ preserve $x_1^4+x_2^4+x_2x_3^3$, which is isomorphic to $T_2+K_1$. For $S_3$, the generators $P_{(1\,2)}$ and $\Diag(\xi_3,\xi_3^2,1)$ give the permutation representation, so their group is conjugate to a group preserving $3K_1$. 
	The second assertion then follows from Example~\ref{ex:17maximal}.
\end{proof}
Recall that $G_j=\{[\Diag(A,1)]\mid A\in\Aut(h_j)\}$ for $j=3,7,8,9$ by Example~\ref{ex:17maximal}.

\begin{lemma}\label{lem:primitive-surfaces}
	Let $f\in\C[x_1,\dots,x_4]$ be a smooth quartic form such that $G_f$ is primitive. If $G_f$ is conjugate to the standard $S_5$ (resp.\ a spin $A_5$), then $\Aut(f)$ is conjugate to a subgroup of $\Aut(h_3)$ (resp.\ $\Aut(h_8)$). Otherwise, $\Aut(f)$ is conjugate to $\Aut(h_j)$ for some $j\in\{3,7,8,9\}$.
\end{lemma}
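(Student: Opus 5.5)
The plan is to use the classification in \cite[Theorems~8.1--8.3 and Corollary~8.5]{AOT24} of smooth quartic surfaces invariant under the finite primitive subgroups of $\PGL(4,\C)$. This was already used at the end of the case $r=4$ of Proposition~\ref{prop:possi-primitive-cons}, and it says that $G_f$ is conjugate to the standard $S_5$ or $A_5$, a spin $S_5$ or $A_5$, the primitive $\PSL(2,7)$, or $C_2^4\rtimes S_5$. Since $\Aut(f)=\pi^{-1}(G_f)\cap\Aut(f)$ is determined by $G_f$ together with $\langle\xi_4I_4\rangle$, it suffices to work projectively. We show in each case that $G_f$ is conjugate to a subgroup of $G_{h_j}$, and we choose the conjugating matrix so that it carries $f$ to a scalar multiple of $h_j$, or to a form with the same automorphism group. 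The linear statement then follows from the central extension in (2.3).

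\textbf{Standard case.} Suppose $G_f$ contains the standard $A_5$. The argument in the proof of Proposition~\ref{prop:possi-primitive-cons} (Molien) shows that $f$ lies in the span of $p_2^2|_V$ and $p_4|_V$ on $V=\{t_1+\cdots+t_5=0\}$, so it is automatically $S_5$-invariant. One checks that $h_3$ is a member of this pencil in suitable coordinates, by exhibiting a standard $S_5$ inside $\Aut(h_3)$ and computing its invariants. If $G_f$ is exactly the standard $S_5$, we choose coordinates in which this $S_5$ is the standard $S_5\subset G_{h_3}$. Then $\Aut(f)=\langle\xi_4I_4\rangle\cdot\widetilde{S_5}\subset\Aut(h_3)$, since the lift to $\Aut(f)$ is determined up to $\xi_4I_4$, and the lifts into $\Aut(f)$ and into $\Aut(h_3)$ coincide because both fix forms in the same pencil. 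If $G_f$ is strictly larger, then by the list it is $C_2^4\rtimes S_5$. This happens for the Burnside member, where $f$ is equivalent to $h_3$ and equality holds. The same reasoning handles the standard $A_5$, which is forced up to the standard $S_5$.

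\textbf{Spin and $\PSL(2,7)$ cases.} For a spin $A_5$, the invariant quartics of the binary-icosahedral group $\SL(2,5)$ acting irreducibly on $\C^4$ form a space of small dimension, which we compute by Molien, and $h_8$ lies in it. We then argue as before that $\Aut(f)$ lies in $\Aut(h_8)$ up to conjugacy, possibly after twisting by the outer automorphism, which relates the two $4$-dimensional spin representations. A spin $S_5$ is then either $G_{h_8}$ or $G_{h_9}$. We identify which one by checking whether the $S_5$-invariant quartic is unique up to scalar, and we use the known fact that there are exactly two spin $S_5$'s up to conjugacy. In that situation, the conjugacy class of the group determines $f$ up to scalars and coordinate change, so $\Aut(f)\sim\Aut(h_8)$ or $\Aut(h_9)$. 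For the primitive $\PSL(2,7)$, whose $4$-dimensional lift comes from $\SL(2,7)$, the invariant quartic is unique up to scalar, namely Edge's quartic $h_7$. Hence $f\sim h_7$, and by AOT's result $G_f=\PSL(2,7)$ exactly, giving equality. Finally, $C_2^4\rtimes S_5$ forces $f\sim h_3$.

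\textbf{Main obstacle.} The hard step is the bookkeeping of the spin cases. We must match the abstract spin representations with the explicit matrices of $\Aut(h_8)$ and $\Aut(h_9)$, and show that the one-parameter family of spin-$A_5$-invariant smooth quartics, if the invariant space has dimension two, all have $\Aut(f)$ conjugate into $\Aut(h_8)$ rather than $\Aut(h_9)$. I would first try to verify this by computing the derived subgroups, which are $\SL(2,5)$ as noted above, and then comparing characters via a GAP check on the explicit generators.
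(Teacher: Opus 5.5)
There is a genuine gap. It sits in the spin cases you flagged, but it comes from a false premise rather than from bookkeeping.

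\textbf{The reduction to projective groups fails.} You claim that $\Aut(f)$ is determined by $G_f$ together with $\langle\xi_4I_4\rangle$, so that it suffices to work projectively. This is false for quartic surfaces. Here $\gcd(d,r)=4$, so (2.3) gives no canonical lifting, and $\Aut(f)$ is the kernel of the character by which $\pi^{-1}(G_f)=\C^*\cdot\Aut(f)$ acts on the line $\C f$. That character depends on $f$, not only on $G_f$. The paper contains a counterexample: $G_{h_8}$ and $G_{h_9}$ are conjugate in $\PGL(4,\C)$ (proof of Theorem~\ref{thm:quartic-surfaces}), while $\Aut(h_8)\cong\GL(2,5)$ and $\Aut(h_9)\cong C_4.S_5$ are not isomorphic. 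Your standard $S_5$, spin $A_5$, and $\PSL(2,7)$ arguments survive. There you compare $f$ with $h_3$, $h_8$, or $h_7$ through invariance under one and the same linear group (the linear standard $S_5$, or the perfect groups $\SL(2,5)$ and $\SL(2,7)$), and that pins down the character. But the closing appeal to the central extension in (2.3) does not yield the linear statement in general.

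\textbf{The spin $S_5$ step breaks.} Up to conjugacy there is only \emph{one} spin $S_5$ in $\PGL(4,\C)$: the paper notes exactly two primitive $S_5$'s, the standard and the spin one. This single group preserves both $h_8$ and $h_9$, which are not projectively equivalent. So the assertion that the conjugacy class of $G_f$ determines $f$ is false. Likewise, testing whether the invariant quartic is unique cannot distinguish $h_8$ from $h_9$.

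\textbf{The missing idea is to track the linear lift.} After a change of coordinates, $G_f=G_{h_8}=G_{h_9'}$ for some form $h_9'$ equivalent to $h_9$. All three forms are invariant under the derived subgroup $\SL(2,5)$ of $\pi^{-1}(G_f)$. The $\SL(2,5)$-invariant quartics form a two-dimensional space, spanned by $h_8$ and $h_9'$. Take a lift $\sigma\in\pi^{-1}(G_f)$ of an element outside the spin $A_5$. Then $h_8$ and $h_9'$ are eigenvectors of $\sigma$ with distinct eigenvalues, since otherwise $\Aut(h_8)=\Aut(h_9')$. Because $f$ is also an eigenvector of $\sigma$, it is a multiple of $h_8$ or of $h_9'$.

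The paper obtains this, and the other isolated cases, by citing the quartic \emph{semi}-invariant computations in \cite[Section~7 and Theorems~8.1--8.3]{AOT24}. It treats the two families (standard $S_5$, spin $A_5$) by checking with the explicit generators that $\langle iI_4,S_5\rangle$ and $\langle iI_4,\SL(2,5)\rangle$ lie in $\Aut(h_3)$ and $\Aut(h_8)$, respectively. Finally, $\SL(2,5)$ has a unique faithful four-dimensional irreducible representation, so no outer twist is needed in your spin $A_5$ case.
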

\begin{proof}
	If $G_f$ is conjugate neither to the standard $S_5$ nor to a spin $A_5$, then $f$ is isomorphic to $h_j$ for some $j\in\{3,7,8,9\}$ by the quartic semi-invariant computations in \cite[Section~7 and Theorems~8.1--8.3]{AOT24}, so $\Aut(f)$ is conjugate to $\Aut(h_j)$.
	
	Otherwise, we may assume that $G_f$ is the standard $S_5$ (resp.\ a spin $A_5$). Then the same computations show that $\Aut(f)$ is generated by $iI_4$ and the standard $S_5$ (resp.\ the image of the faithful irreducible four-dimensional representation of $\SL(2,5)$), and the generators in Example~\ref{ex:17maximal} show that this group is conjugate to a subgroup of $\Aut(h_3)$ (resp.\ $\Aut(h_8)$).
\end{proof}

\begin{proof}[Proof of Theorem~\ref{thm:nonmonomial}]
	Let $G=\Aut(X)$, where $X=V(F)$, and use the decomposition \eqref{eq:blocks}. Lemma~\ref{lem:primitive-5} excludes $(5)$, and Lemma~\ref{lem:221} treats $(2,2,1)$. In the remaining cases, a unique block, say $U=W_1$, has maximal dimension, so $U$ and $W=W_2\oplus\cdots\oplus W_s$ are preserved by $\Aut(F)=\widetilde G\times\langle\xi_4I_5\rangle$.
	
	\textit{Case $l(G)=(4,1)$.} By the proof of Proposition~\ref{prop:possi-primitive-cons}, we may write $F=x_5^4+x_5^2Q_2+x_5Q_3+f$, where $f=F_U\neq0$, and we take the $F$-lifting $\widetilde G$ fixing $x_5$. If $H_1$ is the standard $S_5$, the same proof shows that $\Diag(\sigma,1)\in\Aut(F)$ for every standard $\sigma\in S_5$, so every element of $\widetilde G$ is $\Diag(c\sigma,1)$ with such $\sigma$ and $c^4=1$. By the proof of Lemma~\ref{lem:primitive-surfaces}, the group generated by $iI_4$ and the standard $S_5$ is conjugate to a subgroup of $\Aut(h_3)$, so $G$ is conjugate to a subgroup of $G_3$. Otherwise, $Q_2=Q_3=0$ and $f$ is smooth with $G_f=H_1$, so $G=\{[\Diag(A,1)]\mid A\in\Aut(f)\}$ is conjugate to a subgroup of $G_3$, $G_7$, $G_8$, or $G_9$ by Lemma~\ref{lem:primitive-surfaces}.
	
	\textit{Cases $l(G)=(3,2)$ and $(3,1,1)$.} By Proposition~\ref{prop:possi-primitive-cons}, after conjugation $H_1=G_{K_3}$. Then $\Gamma=\Aut(K_3)'\cong\PSL(2,7)$ is the derived subgroup of the image of $\Aut(F)$ in $\GL(U)$. The image of $\Aut(F)$ in $\GL(W)$ is solvable, since its image in $\PGL(2,\C)$ is monomial or $H_2\cong A_4$. As $\Gamma$ is perfect, some term of the derived series of $\Aut(F)$ is $\{\Diag(\gamma,I_2)\mid\gamma\in\Gamma\}$. Since $\Gamma$ has no invariant forms of degrees $1$, $2$, $3$, and its invariant quartics are the multiples of $K_3$~\cite{Kle79}, we may assume $F=K_3+\phi$, where $\phi=F|_W$ is smooth. Then $\Aut(F)=\Aut(K_3)\times\Aut(\phi)$, so by Lemma~\ref{lem:binary-forms}, $G$ is conjugate to $G_2$ if $\phi$ is isomorphic to $T_2$, that is, if $l(G)=(3,2)$, and to a subgroup of $G_4$ otherwise.
	
	\textit{Case $l(G)=(2,1,1,1)$.} By Proposition~\ref{prop:possi-primitive-cons}, $H_1\cong A_4$. Since $H_1/J_1$ is a quotient of $\Pi\subset S_3$, the normal subgroup $C_2^2$ of $H_1$ lies in $J_1$. Take liftings in the principal subgroup $P$ of two distinct nontrivial elements of $C_2^2$. Their commutator is $-I_2$ on $U$ and trivial on $W$, since $P$ acts on $W$ by diagonal matrices, so it is $\Diag(-I_2,I_3)$. As $\Aut(F)$ preserves $U$ and $W$, it commutes with $\Diag(-I_2,I_3)$, so by Corollary~\ref{cor:involution-split}, $F|_U$ and $F|_W$ are smooth and $\Aut(F)\subset\Aut(F|_U)\times\Aut(F|_W)$. As $G_{F|_U}\supset H_1\cong A_4$, we may assume $F|_U=T_2$ by Lemma~\ref{lem:binary-forms}, so $G$ is conjugate to a subgroup of $G_2$, $G_5$, $G_6$, or $G_{10}$ by Lemma~\ref{lem:3-var-4-forms}.
\end{proof}

\section{The monomial case}\label{sec:monomial}
In this section, we prove the following theorem.

\begin{theorem}\label{thm:monomial}
	Let $X$ be a smooth quartic threefold, and let $H\subset\Aut(X)$ be a monomial subgroup. Then $H$ is conjugate to a subgroup of one of
	$$
	G_1,\ G_2,\ G_4,\ G_5,\ G_6,\ G_{10},\ G_{11},\ \dots,\ G_{24}.
	$$
\end{theorem}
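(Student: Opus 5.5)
We take an $F$-lifting $\widetilde H\subset\Aut(F)$ of $H$ consisting of monomial matrices in the standard basis. This gives an exact sequence $1\to D\to\widetilde H\to\Pi\to1$, where $D$ is the diagonal subgroup and $\Pi\subset S_5$ is the induced permutation group. The abelian group $\pi(D)$ is controlled by Theorem~\ref{thm:maximal-abelian}, and its $3$-part by Proposition~\ref{prop:abelian-3-groups}. The plan is to split into cases according to the orbit structure of $\Pi$ and the isomorphism type of $D$, and in each case to identify an invariant decomposition of $F$ into sums of the forms $K_\ell$, $T_\ell$, and the small exceptional forms in Examples~\ref{ex:17maximal} and~\ref{ex:seven-projective-actions}.

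The first main step is the reduction through normal elementary abelian $2$-subgroups, which the introduction calls Lemma~\ref{lem:monomial-smooth-sum}. Suppose $D$ contains a nontrivial normal subgroup $E$ of exponent $2$ (modulo scalars). We decompose $\C^5$ into simultaneous eigenspaces of $E$; these are permuted by $\widetilde H$. Applying Corollary~\ref{cor:involution-split} to each involution in $E$ then shows that $F$ restricted to each eigenspace is smooth. We would aim to show that $F$ may be replaced by an $\widetilde H$-invariant smooth form that is a sum of forms in disjoint sets of variables, namely in the eigenspaces. The idea is to interpolate between $F$ and the sum of its restrictions, removing the mixed terms $F_{2,2}$ while keeping smoothness; taking the sum of the restrictions directly might not be invariant-generic enough. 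Once $F$ is a sum $\sum F|_{V_a}$, $H$ sits inside the wreath-type product of the groups $\Aut(F|_{V_a})$. The smaller pieces are classified: binary forms by Lemma~\ref{lem:binary-forms}, ternary forms by Lemma~\ref{lem:3-var-4-forms}, and quaternary monomial groups by a recursive use of the same argument. These land in $G_1,G_2,G_4,G_5,G_6,G_{10},G_{13}$.

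The second step treats the case where no such $E$ exists, which forces $D$ to have small $2$-part. We use the $p$-subgroup result (Proposition~\ref{prop:p-subgroups-abelian}): for $p\nmid 4$ and $5<2p$, that is for $p=3,5$, the Sylow $p$-subgroups of $H$ are abelian. Hence a Sylow $5$-subgroup is $C_5$ acting by a $5$-cycle or is diagonal, and Sylow $3$-subgroups are among the groups listed in Proposition~\ref{prop:abelian-3-groups}. For $\Pi$ transitive of degree $5$, Lemma~\ref{lem:quartic-sm}(1) forces the chosen monomials $x_i^3x_j$ to be permuted by $\Pi$. This gives the $K_5$ structure and $G_{12}$, or a $K_1$ structure landing in $G_1$. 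For $\Pi$ with a $4$-cycle orbit we get $K_4+K_1$ and $G_{11}$. For $\Pi$ a $2$-group without large $D$ we get $G_{14}$-type or $G_1$-type configurations. For $\Pi$ of order $3$ or $6$ combined with $3$-groups, we go through the cyclic and $C_3\times S_3$, $C_3\times D_{18}$ configurations, reaching $G_{15},\dots,G_{24}$. In each of these cases we compute the $D$-invariant monomials, use Lemma~\ref{lem:quartic-sm}(2)--(3) to prune, and match the survivors to the explicit examples via a coordinate change.

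The main obstacle we expect is the $3$-part with nontrivial permutation action: $D$ containing $C_9$ or $C_{27}$ together with $\Pi\ni$ a transposition or $3$-cycle. The families $F_{17}$–$F_{23}$ show that many inequivalent configurations survive here, and it is not obvious a priori that they all embed in the listed $G_i$. To handle this, we would first fix the $3$-Sylow via Proposition~\ref{prop:abelian-3-groups}, list all its conjugacy types, and compute for each the normalizer of its lifted monomial group within the monomial group of $\GL(5,\C)$ that also preserves some smooth invariant form. We then compare the result directly with $G_{15},\dots,G_{24}$.
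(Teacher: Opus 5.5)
Your proposal has two genuine gaps, both in how the splitting step is set up.

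First, you split only along normal elementary abelian $2$-subgroups of $D$. Your second step therefore has to cover every $H$ with $|D|$ odd, where $\Pi$ is essentially arbitrary, and its case-by-case claims do not hold.

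A $4$-orbit of $\Pi$ does not force $K_4+K_1$: the permutation matrices of $S_4$ on $x_1,\dots,x_4$ preserve $F_1$, but $3\nmid|G_{11}|$. For transitive $\Pi\supsetneq C_5$, e.g.\ $\Pi\cong D_{10}$, the monomials $x_k^3x_j$ do not form one invariant copy of $K_5$, and $H$ cannot lie in $G_{12}$, which has odd order. There one needs $D\subset C_{61}$, hence $D=1$ when $|D|$ is a $\{2,3\}$-number, and then an induced-character argument. The nonzero entries of $\widetilde H$ can be normalized into $\psi(S)\subset\langle i\rangle$, where $\psi$ is the character of the point stabilizer. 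The only exception is $\Pi\cong A_5$ with $\psi$ of order $3$, which is excluded because a $3$-cycle would then have eigenvalues $1,\xi_3,\xi_3,\xi_3^2,\xi_3^2$, contrary to Proposition~\ref{prop:abelian-3-groups}(1).

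The missing structural idea in the intransitive case is to use the largest normal $2$-subgroup $Q$ of $H$ itself and the involutions $\Omega$ of $Z(Q)$. These need not be diagonal; think of $P_{(1\,2)(3\,4)}$ and $P_{(1\,3)(2\,4)}$. Since $\pi$ is injective on $\widetilde H$, their lifts are genuine involutions, so Lemma~\ref{lem:monomial-smooth-sum} still splits $F$ along the eigenspaces of $\Omega$. No interpolation is needed: each restriction is smooth, hence so is their sum.

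When $Q=1$, $H$ is a solvable $\{2,3\}$-group with Fitting subgroup $O_3(H)$. By Proposition~\ref{prop:p-subgroups-abelian}, this is an abelian self-centralizing Sylow $3$-subgroup $S$, so $H=S\rtimes T$ with $T$ acting faithfully on the characters of $\widetilde S$. Inspecting the list in Proposition~\ref{prop:abelian-3-groups} then forces $T\cong C_2$ (Lemma~\ref{lem:monomial-o2-free}). This reduction is what replaces your open-ended normalizer computation for the $3$-part.

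Second, the split with eigenspaces of dimensions $4$ and $1$ cannot be done by ``a recursive use of the same argument''. After writing $F=h+x_5^4$, an element of order $2$ modulo scalars in the quaternary group $L$ may lift only to a non-involution. An example is $g=\Diag(\xi_8,\xi_8,\xi_8^5,\xi_8^5)$ with $g^2=iI_4$: $g$ has order $8$, Lemma~\ref{lem:monomial-smooth-sum} does not apply, and indeed $h_{14}$ contains the mixed monomial $x_1^3x_3$. Similarly, $H=G_{15}\cong C_{108}$ is diagonal and contains $[\Diag(-I_4,1)]$, but its quaternary group has no normal $2$-subgroup beyond scalars.

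So your landing list $G_1,G_2,G_4,G_5,G_6,G_{10},G_{13}$ for the splitting step is false: $G_{15}$ has an element of order $27$, which none of those groups has. The paper needs Proposition~\ref{prop:monomial-low-residue} here. It takes a maximal abelian normal subgroup $N=N_2\times S$ of $L$, and the cases go as follows:
\begin{itemize}
\item extra involutions in $N_2$ lead back to small blocks;
\item $N_2\cong C_{16}$ gives $K_4$ and $G_{11}$;
\item $N_2=\langle iI_4\rangle$ reduces to Proposition~\ref{prop:abelian-3-groups} and yields, among others, $G_{13}$, $G_{15}$, $G_{23}$, $G_{24}$;
\item $N_2\cong C_8$ gives $G_1$, $G_6$, or $G_{14}$.
\end{itemize}
Irreducible $L$, or $C_L(N)\neq N$, is handled by Lemmas~\ref{lem:monomial-irreducible-four} and~\ref{lem:monomial-binary-blocks}, the latter reusing the $(2,2,1)$ analysis of Lemma~\ref{lem:221}.
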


Throughout this section, unless otherwise stated, $X=X_F$ is a smooth quartic threefold and $H\subset\Aut(X)$ is a monomial subgroup consisting, after a linear change of coordinates, of classes of monomial matrices. We fix an $F$-lifting $\widetilde H$ of $H$, which consists of monomial matrices. Let $P$ and $\Pi$ be as in Section~\ref{sec:primitive-constituents}, defined with respect to the decomposition $\C^5=\C e_1\oplus\cdots\oplus\C e_5$ in place of \eqref{eq:blocks}, where we identify $S_5$ with the group of permutations of the lines $\C e_1,\dots,\C e_5$. Then $P$ is the subgroup of diagonal matrices in $\widetilde H$. We set $D:=\pi(P)$. Since $\pi$ is injective on $\widetilde H$, the exact sequence \eqref{eq:first-sequence} becomes
\begin{equation*}
	1\longrightarrow D\longrightarrow H\longrightarrow\Pi\longrightarrow1.
\end{equation*}

If a group of diagonal matrices acts on $x_1,\dots,x_5$ by characters $\chi_1,\dots,\chi_5$, then $x_1^{m_1}\cdots x_5^{m_5}$ is invariant if and only if $\chi_1^{m_1}\cdots\chi_5^{m_5}=1$. In particular, if such a group fixes $F$ and $x_i^3x_j\in F$, then $\chi_j=\chi_i^{-3}$. Since $\widetilde H$ consists of monomial matrices, it permutes the quartic monomials up to scalars. Hence, for every set of quartic monomials permuted by $\widetilde H$ up to scalars, the sum of the terms of $F$ with these monomials is $\widetilde H$-invariant. If this sum is smooth, then $H$ preserves it. For example, if $F=5K_1$ (resp.\ $K_4+K_1$, resp.\ $K_5$), then the classes of monomial matrices in $G_F$ form $G_1$ (resp.\ $G_{11}$, resp.\ $G_{12}$). Indeed, for this group, $D\cong C_4^4$ (resp.\ $C_{80}$, resp.\ $C_{61}$) by the proof of Theorem~\ref{thm:maximal-abelian}, and $\Pi=S_5$ (resp.\ $\langle(1\,2\,3\,4)\rangle$, resp.\ $\langle(1\,2\,3\,4\,5)\rangle$).

The proof of Theorem~\ref{thm:monomial}, at the end of this section, first considers the cases where $|D|$ is divisible by $5$, $7$, or $61$, or where $\Pi$ is transitive. In the remaining case, $|H|=2^a3^b$. If $H$ has a nontrivial normal $2$-subgroup, the eigenspaces of a suitable subgroup of it split $F$ into smooth forms in fewer variables (Lemmas~\ref{lem:monomial-smooth-sum} and~\ref{lem:monomial-small-blocks} and Proposition~\ref{prop:monomial-low-residue}); otherwise, Lemma~\ref{lem:monomial-o2-free} applies.

\subsection{Normal subgroups and smooth sums}
We first give a general criterion for $p$-subgroups to be abelian.
\begin{proposition}\label{prop:p-subgroups-abelian}
	Let $F\in\C[x_1,\dots,x_r]$ be a smooth form of degree $d\geq2$, and let $p$ be a prime such that $p\nmid d$ and $r<2p$. Then every finite $p$-subgroup of $G_F$ is abelian.
\end{proposition}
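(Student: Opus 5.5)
The plan is to reduce to a linear $p$-group preserving $F$. Then show that a nonabelian one would have to contain a central element acting as a nontrivial scalar on a $p$-dimensional block, and that such an element forces $F$ to be singular.

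\emph{Lifting.} Let $H\subset G_F$ be a finite $p$-subgroup and let $E=\pi^{-1}(H)\cap\Aut(F)$. By (2.3), $E$ is a central extension of $H$ by $\langle\xi_dI_r\rangle\cong C_d$. Since $p\nmid d$, a Sylow $p$-subgroup $\widetilde H$ of $E$ meets $\langle \xi_dI_r\rangle$ trivially and maps isomorphically onto $H$; this is also the Schur--Zassenhaus complement. So it suffices to show that a finite $p$-subgroup $\widetilde H\subset\Aut(F)$ is abelian.

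\emph{The block and the central scalar.} Decompose $\C^r$ into irreducible $\widetilde H$-modules. Irreducible representations of $p$-groups have $p$-power dimension, and $p^2\geq 2p>r$, so each constituent has dimension $1$ or $p$. At most one constituent has dimension $p$, since $r<2p$. If every constituent is one-dimensional, then $\widetilde H$ is diagonalizable, hence abelian. Otherwise, write $\C^r=U\oplus W$, where $U$ is the irreducible constituent of dimension $p$ and $W$ is a sum of $r-p<p$ lines, on which $\widetilde H$ acts by characters.

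The image $B$ of $\widetilde H$ in $\GL(U)$ is then a nonabelian $p$-group, because an abelian group has no faithful-on-$U$ irreducible representation of dimension $p>1$. Its derived subgroup $B'$ is a nontrivial normal subgroup of a $p$-group, so $B'$ meets $Z(B)$ nontrivially. By Schur's lemma, $Z(B)$ acts on $U$ by scalars. Hence there is $z\in\widetilde H'$ acting on $U$ as $\zeta I_p$ with $\zeta\neq1$ a $p$-power root of unity. Since $\widetilde H'$ acts trivially on the one-dimensional constituents, $z$ is trivial on $W$.

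\emph{Smoothness contradiction.} Write $F=\sum_bF_{d-b,b}$ by degree $b$ in the coordinates of $W$, as before Lemma~\ref{lem:normal-derivatives}. The element $z$ multiplies $F_{d,0}=F|_U$ by $\zeta^d$, and $\zeta^d\neq1$ because $p\nmid d$. Invariance under $z$ therefore forces $F_{d,0}=0$.

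The space $\mathcal C_F(U,W)$ is spanned by at most $r-p<p$ forms $C_j$ of degree $d-1$ on $U\cong\C^p$. These forms have a common nonzero zero $u\in U$. At the point $(u,0)$:
\begin{itemize}
\item every term $F_{d-b,b}$ with $b\geq2$ has vanishing first partial derivatives;
\item the $U$-partials of $F_{d-1,1}=\sum_jC_jy_j$ vanish because $y=0$;
\item the $W$-partials of $F_{d-1,1}$ equal $C_j(u)=0$.
\end{itemize}
So $(u,0)\neq0$ is a singular point of $F$. This contradicts smoothness, so $\widetilde H$, and hence $H$, is abelian.

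\emph{Main obstacle.} The delicate step is producing the central element $z$ that acts as a nontrivial scalar on $U$ and trivially on $W$. Everything else is the same derivative argument as in Lemma~\ref{lem:normal-derivatives}, combined with the dimension count $r-p<p$.
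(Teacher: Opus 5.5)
Your proof is correct and follows the paper's argument essentially step for step. The shared steps are the Sylow lifting using $p\nmid d$, the unique $p$-dimensional irreducible block $U$, and a nontrivial scalar in $B'\cap Z(B)$ pulled back to an element of $\widetilde H'$ that is trivial on the one-dimensional constituents, followed by $F|_U=0$ and the observation that fewer than $p$ forms $C_j$ of degree $d-1$ have a common nonzero zero on $U$. The only cosmetic difference is that the paper uses $\det=1$ on $B'$ to obtain the scalar $\xi_pI_p$, whereas you use only that $\zeta\neq1$ has $p$-power order, which equally gives $\zeta^d\neq1$.
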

\begin{proof}
	Let $G\subset G_F$ be a finite $p$-subgroup. Since the scalar matrices in $\Aut(F)$ form a group of order $d$, which is prime to $p$, a Sylow $p$-subgroup $\widetilde G$ of $\pi^{-1}(G)\cap\Aut(F)$ maps isomorphically onto $G$, that is, $\widetilde G$ is an $F$-lifting of $G$. Suppose that $G$ is nonabelian. Since the irreducible representations of a $p$-group have dimension a power of $p$ and $r<2p\leq p^2$, after a linear change of coordinates we have a $\widetilde G$-stable decomposition $\C^r=U\oplus\C e_{p+1}\oplus\cdots\oplus\C e_r$ with $U=\langle e_1,\dots,e_p\rangle$ irreducible. The image $\overline{G}$ of $\widetilde{G}$ in $\GL(U)$ is nonabelian, so its derived subgroup $\overline{G}'$ is a nontrivial normal subgroup of the $p$-group $\overline G$ and hence meets the center of $\overline{G}$ nontrivially. By Schur's lemma, this center consists of scalar matrices. Since the elements of $\overline{G}'$ have determinant $1$, we get $\xi_pI_p\in\overline{G}'$. As $\widetilde{G}$ acts on each $\C e_j$ with $j>p$ by a character, which is trivial on its derived subgroup, we obtain $\Diag(\xi_pI_p,I_{r-p})\in\widetilde G$. This element multiplies every monomial of degree $d$ in $x_1,\dots,x_p$ by $\xi_p^d\neq1$, so $F|_U=0$. Hence, on $U$, the partial derivatives of $F$ with respect to $x_1,\dots,x_p$ vanish, and those with respect to $x_{p+1},\dots,x_r$ are $r-p<p$ forms of degree $d-1$ in $p$ variables. These forms have a common nonzero zero, which is a singular point of $F$, a contradiction.
\end{proof}
Taking $d=4$, $r=5$, and $p=3$, we see that every $3$-subgroup of $\Aut(X)$ is abelian.

\begin{remark}
	The bound $r<2p$ is sharp. The smooth form $\sum_{j=1}^p(x_j^py_j+y_j^{p+1})$ of degree $p+1$ in $2p$ variables is preserved by the multiplying each $x_j$ by $\xi_p$ and by the cyclic permutation of the pairs $(x_j,y_j)$. These generate a nonabelian group $C_p^p\rtimes C_p$ containing no nontrivial scalar matrix, so its image in $G_F$ is a nonabelian $p$-group.
\end{remark}

\begin{lemma}\label{lem:monomial-smooth-sum}
	Let $F\in\C[x_1,\dots,x_r]$ be a smooth form of degree $d\geq2$, and let $H\subset\Aut(F)$ be a finite abelian subgroup generated by matrices whose orders divide $d$. Let $\chi_1,\dots,\chi_k$ be the distinct characters of $H$ on $\C^r$, so that
	$$
	\C^r=V_1\oplus\cdots\oplus V_k,\qquad V_j=\{v\in\C^r\mid hv=\chi_j(h)v\text{ for all }h\in H\}.
	$$
	Let $F^{(j)}$ be the sum of the terms of $F$ involving only coordinates on $V_j$. Then each $F^{(j)}$ is smooth, and the normalizer of $H$ in $\Aut(F)$ preserves $F^{(1)}+\cdots+F^{(k)}$.
\end{lemma}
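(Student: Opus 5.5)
The plan is to work in coordinates adapted to the eigenspace decomposition and to split $F$ by multidegree with respect to $\C^r=V_1\oplus\cdots\oplus V_k$. Since $H$ is finite abelian, we may choose a basis of $\C^r$ consisting of common eigenvectors, each lying in some $V_j$; the statement implicitly uses such coordinates. For a multi-index $m=(m_1,\dots,m_k)$ with $m_1+\cdots+m_k=d$, let $F_m$ be the sum of the terms of $F$ of degree $m_j$ in the coordinates on $V_j$ for each $j$. Every $h\in H$ multiplies $F_m$ by $\prod_j\chi_j(h)^{m_j}$. Since $F$ is $H$-invariant, $F_m\neq0$ forces $\prod_j\chi_j^{m_j}=1$. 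In this notation, $F^{(j)}$ is the component $F_m$ with $m=d\,\varepsilon_j$, where $\varepsilon_j$ is the $j$-th unit vector.

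\emph{Smoothness of $F^{(j)}$.} Because $H$ is abelian and generated by elements of order dividing $d$, its exponent divides $d$. Hence $\chi_j^d=1$, and so $\chi_j^{d-1}=\chi_j^{-1}$. Consider the component with $m=(d-1)\varepsilon_j+\varepsilon_l$ for $l\neq j$. It transforms by $\chi_j^{d-1}\chi_l=\chi_j^{-1}\chi_l$, which is nontrivial since the characters are distinct. Therefore this component vanishes. In the language of Lemma~\ref{lem:normal-derivatives}, this means $\mathcal C_F(V_j,\bigoplus_{l\neq j}V_l)=0$: for every coordinate $y$ on some $V_l$ with $l\neq j$, the restriction of $\partial F/\partial y$ to $V_j$ is zero. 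The argument of Lemma~\ref{lem:normal-derivatives} works verbatim in degree $d$ and in $r$ variables, and it gives that $F|_{V_j}=F^{(j)}$ is smooth. Concretely, a nonzero $v\in V_j$ at which all partials of $F^{(j)}$ vanish would be a common zero of all partials of $F$, contradicting the smoothness of $F$.

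\emph{Invariance under the normalizer.} Let $g\in\Aut(F)$ normalize $H$. For $v\in V_j$ and $h\in H$, we have $h(gv)=g(g^{-1}hg)v=\chi_j(g^{-1}hg)\,gv$. So $gV_j$ is a common eigenspace with character $\chi_j\circ c_{g^{-1}}$, where $c_{g^{-1}}(h)=g^{-1}hg$. Since $g$ is invertible and permutes the character spaces, $gV_j=V_{\sigma(j)}$ for a permutation $\sigma$ of $\{1,\dots,k\}$. Consequently, the action of $g$ on polynomials maps the space of forms of multidegree $m$ onto the space of forms of multidegree $m\circ\sigma^{\pm1}$. In particular, it maps the span of the pure multidegrees $d\varepsilon_j$ onto itself, and it maps the span of the mixed multidegrees onto itself. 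Writing $F=(F^{(1)}+\cdots+F^{(k)})+R$, with $R$ the sum of the mixed components, the identity $g(F)=F$ and the directness of the multidegree decomposition give $g(F^{(1)}+\cdots+F^{(k)})=F^{(1)}+\cdots+F^{(k)}$.

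The only delicate point is the vanishing of the $(d-1,1)$-components. This is exactly where the hypothesis that the generators have orders dividing $d$ enters, and I would check it first. Without that hypothesis, $\chi_j^{d-1}\chi_l$ could be trivial, and $F^{(j)}$ could fail to be smooth. The rest is bookkeeping about the coordinate-free nature of the multidegree decomposition.
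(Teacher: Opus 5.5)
Your proof is correct and takes the same route as the paper's. The components of multidegree $(d-1)\varepsilon_j+\varepsilon_l$ vanish because they transform by the nontrivial character $\chi_j^{-1}\chi_l$, so restricting the partial derivatives of $F$ to $V_j$ shows that $F^{(j)}$ is smooth, and any element of the normalizer permutes the $V_j$ and hence the $F^{(j)}$; your multidegree bookkeeping just spells out this last step, which the paper states in one line.
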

\begin{proof}
	Let $j\neq l$. Every monomial of degree $d-1$ in the coordinates on $V_j$ and degree $1$ in those on $V_l$ spans an $H$-stable line on which $H$ acts by the character $\chi_j^{d-1}\chi_l=\chi_j^{-1}\chi_l$, using $\chi_j^d=1$. This character is nontrivial, so no such monomial occurs in $F$. Thus, on $V_j$, the partial derivatives of $F$ with respect to the coordinates on $V_l$ vanish, while those with respect to the coordinates on $V_j$ are the partial derivatives of $F^{(j)}$. Hence a nonzero common zero of the partial derivatives of $F^{(j)}$ would be a singular point of $F$, so $F^{(j)}$ is smooth, and so is $F^{(1)}+\cdots+F^{(k)}$. Finally, every element of the normalizer of $H$ in $\Aut(F)$ permutes $V_1,\dots,V_k$ and fixes $F$, so it permutes $F^{(1)},\dots,F^{(k)}$ and preserves their sum.
\end{proof}
If $H=\langle\tau\rangle$ for an involution $\tau$, Lemma~\ref{lem:monomial-smooth-sum} reduces to Corollary~\ref{cor:involution-split}. Below, we also apply the lemma to elementary abelian $2$-groups of larger rank.

\begin{lemma}\label{lem:monomial-small-blocks}
	Let $F\in\C[x_1,\dots,x_5]$ be a smooth quartic form that is a sum of forms in disjoint sets of variables, each of size at most three. Then the subgroup of $G_F$ permuting the corresponding coordinate subspaces is conjugate to a subgroup of $G_1$, $G_2$, $G_4$, $G_5$, $G_6$, $G_{10}$, or $G_{13}$.
\end{lemma}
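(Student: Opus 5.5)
Write $F=\phi_1+\cdots+\phi_m$ with $\phi_a$ a form in the coordinates of a subspace $V_a$, where $\dim V_a\le 3$ and $\C^5=V_1\oplus\cdots\oplus V_m$. Let $\Gamma\subset G_F$ be the subgroup permuting the $V_a$. The plan is to normalize each summand and then read off the group.

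\textbf{Normalizing the summands.} Since $F$ is smooth, each $\phi_a$ is a smooth form in $\dim V_a$ variables. The reason is that the partial derivatives of $F$ restricted to $V_a$ are those of $\phi_a$ together with zeros, so a singular point of $\phi_a$ would be a singular point of $F$. We then apply the earlier normal forms summand by summand, working up to isomorphism of each summand in its own variables.
\begin{itemize}
\item A one-variable summand is a nonzero multiple of $K_1$.
\item A two-variable summand is, by Lemma~\ref{lem:binary-forms}, either isomorphic to $T_2$, or has $\Aut$ conjugate into $\Aut(x_1^4+x_2^4)=\Aut(2K_1)$.
\item A three-variable summand has, by Lemma~\ref{lem:3-var-4-forms}, $\Aut$ conjugate into $\Aut(F')$ with $F'\in\{3K_1,T_2+K_1,T_3,K_3\}$.
\end{itemize}

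\textbf{Reducing to a split model.} Let $\widetilde\Gamma$ be an $F$-lifting of $\Gamma$. It lies in the group generated by $\prod_a\Aut(\phi_a)$ and the block permutations that interchange isomorphic summands of the same dimension, with compatible identifications. A single coordinate change, chosen identically on blocks in the same $\widetilde\Gamma$-orbit, moves every $\Aut(\phi_a)$ into the automorphism group of the split model form just described. After this change, $\Gamma$ lies in the group $\mathcal G$ of the model $F_0$: products of the automorphism groups of the pieces $K_1,T_2,T_3,K_3$, together with permutations of identical pieces.

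There is one caveat. Two summands in the same orbit may be non-isomorphic to any model while their groups embed in $\Aut(2K_1)$. This is harmless: we replace each such summand by the model itself and use the same conjugating matrix on the whole orbit, which is possible because the permutation only needs to carry the model to itself.

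\textbf{Enumerating the models.} It then suffices to list the possible $F_0$ as sums of pieces $K_1,T_2,T_3,K_3$ whose sizes partition $5$:
\begin{itemize}
\item $5K_1$ gives $G_1$.
\item $T_2+3K_1$ gives $G_5$.
\item $2T_2+K_1$ gives $G_6$.
\item $T_2+T_3$ gives $G_{10}$.
\item $K_3+T_2$ gives $G_2$.
\item $K_3+2K_1$ gives $G_4$.
\item $T_3+2K_1$ gives $G_{13}$.
\end{itemize}
The mixed cases where a $3$-block normalizes to $T_2+K_1$ or $3K_1$ fall into earlier entries. Any $T_2$ or $K_1$ pieces produced by a $3$-block are not permuted with the others by $\Gamma$, so the block-preserving subgroup lies in the product group, which is still contained in the group of the refined model. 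For example, $T_2+K_1$ inside a $3$-block, combined with two $K_1$'s, sits in $G_5$. For each model we check from Example~\ref{ex:17maximal} that $\mathcal G$, meaning the product of automorphism groups extended by permutations of equal pieces, equals or is contained in the listed $G_i$.

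\textbf{Main obstacle.} The delicate point is the bookkeeping when $\Gamma$ permutes blocks of equal dimension whose summands are isomorphic but whose own automorphism groups are only conjugate into a model group. I would handle this by choosing the normalizing conjugation on one representative block per orbit and transporting it along fixed elements of $\widetilde\Gamma$. This guarantees that the permuting elements become the standard permutation matrices of the model times block automorphisms.
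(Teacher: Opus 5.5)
Your proposal is correct and follows essentially the paper's route: you normalize each summand with Lemmas~\ref{lem:binary-forms} and~\ref{lem:3-var-4-forms}, use one coordinate change transported along each orbit of interchanged isomorphic blocks, and compare the resulting split models with Example~\ref{ex:17maximal}. The paper shortens the bookkeeping by first merging all one-variable summands into a single block, so the only possible interchange is that of two planes in $\phi(x_1,x_2)+\psi(x_3,x_4)+x_5^4$; there it simply assumes $\phi=\psi$ and applies the same coordinate change to both planes.
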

\begin{proof}
	Let $H$ be this subgroup. Each of these forms is smooth. Combining the forms in one variable when necessary, we may assume that $F=5K_1$, which gives $G_1$, or $F=\phi(x_1,x_2)+\psi(x_3,x_4,x_5)$, or $F=\phi(x_1,x_2)+\psi(x_3,x_4)+x_5^4$.
	
	In the second case, $H=\pi(\Aut(\phi)\times\Aut(\psi))$. By Lemmas~\ref{lem:binary-forms} and~\ref{lem:3-var-4-forms}, it is conjugate to a subgroup of $\pi(\Aut(\phi_0)\times\Aut(\psi_0))$ for some $\phi_0\in\{2K_1,T_2\}$ and $\psi_0\in\{3K_1,T_2+K_1,T_3,K_3\}$. Comparing the $8$ forms $\phi_0+\psi_0$ with Example~\ref{ex:17maximal} gives the stated groups.
	
	In the third case, if an element of $H$ interchanges the two planes, then $\phi$ and $\psi$ are isomorphic, and we may assume $\phi=\psi$. Apply Lemma~\ref{lem:binary-forms} to $\phi$ and $\psi$, using the same change of coordinates when $\phi=\psi$. Then $H$ preserves the form obtained by replacing each of $\phi$ and $\psi$ by $T_2$ if it is isomorphic to $T_2$, and by $2K_1$ otherwise. This form is $5K_1$, $T_2+3K_1$, or $2T_2+K_1$, giving $G_1$, $G_5$, or $G_6$, respectively.
\end{proof}

\begin{lemma}\label{lem:monomial-o2-free}
	Let $H\subset\Aut(X)$ be a subgroup of order $2^a3^b$ with no nontrivial normal $2$-subgroup. Then $H$ is conjugate to a subgroup of one of the groups in Proposition~\ref{prop:abelian-3-groups}.
\end{lemma}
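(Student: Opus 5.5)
We will use the structure of $\{2,3\}$-groups to reduce the statement to the normalizers of the abelian $3$-groups classified in Proposition~\ref{prop:abelian-3-groups}, and then handle a short list of explicit cases.

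First, $H$ is solvable by Burnside's $p^aq^b$ theorem. If $H=1$ there is nothing to prove, so assume $H\neq1$. Since $O_2(H)=1$, the Fitting subgroup of $H$ equals $A:=O_3(H)$, which is therefore nontrivial. Solvability gives $C_H(A)\subset A$. By Proposition~\ref{prop:p-subgroups-abelian} with $d=4$, $r=5$, $p=3$, every Sylow $3$-subgroup $S$ of $H$ is abelian. Since $A\subset S$ and $S$ is abelian, $S\subset C_H(A)\subset A$, so $A=S$ is the normal Sylow $3$-subgroup. Moreover $C_H(A)=A$, so $H/A$ is a $2$-group acting faithfully on $A$.

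By Proposition~\ref{prop:abelian-3-groups}, $A$ has a unique $F$-lifting $\widetilde A$, conjugate to one of the explicit diagonal groups in cases (1)--(6). Uniqueness makes $\widetilde A$ normal in $\pi^{-1}(H)\cap\Aut(F)$. Hence the $F$-lifting $\widetilde H$ permutes the character eigenspaces of $\widetilde A$ on $\C^5$, and $H/A$ acts on the character group of $\widetilde A$ as a $2$-subgroup of $\Aut(A)$. This bounds $H/A$ sharply: it has order at most $2$ when $A$ is cyclic, is a $2$-subgroup of $\GL(2,3)$ (so $C_2$, $C_2^2$, $C_4$, $Q_8$, $D_8$ or $SD_{16}$) when $A\cong C_3^2$, and is a $2$-subgroup of the small group $\Aut(C_9\times C_3)$ otherwise.

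Next we go through the lifted groups $\widetilde A$ case by case. For each one we determine which permutations of the eigenspaces are compatible with a nontrivial action on characters. In most cyclic cases the eigenvalues of the generator are pairwise distinct, or inversion $\chi\mapsto\chi^{-1}$ does not preserve the multiset of characters. For instance, in (2)--(4) the exponents of $\xi_9,\xi_3^2,\dots$ must be closed under negation, which excludes most rows. In those cases $H=A$ and we are done by Proposition~\ref{prop:abelian-3-groups}.

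In the surviving cases an element $t\in\widetilde H$ inverts the characters of $\widetilde A$. We then list the $\widetilde A$-invariant quartic monomials and impose $t$-invariance. Using Lemma~\ref{lem:quartic-sm} as in the proof of Lemma~\ref{lem:abelian-obstruc-7}, we pin down the shape of $F$. We expect this to produce, after a change of coordinates, forms of type $F_{17}(\lambda,\mu)$, $F_{19}$, $F_{24}$ or a subform of $F_1$, $F_2$, $F_{13}$. This gives containment in $G_{17}$ ($C_3\times D_{18}$), $G_{19}$ ($C_3\times S_3$), $G_{24}$, $G_1$, $G_2$ or $G_{13}$. For example, $D_{18}$ or $S_3$ arises from $t$ swapping two eigenlines and inverting $\xi_9$ or $\xi_3$.

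The main obstacle is the case $A\cong C_3^2$ with $|H/A|\geq4$, where $H/A$ could be $Q_8$, $D_8$ or $SD_{16}$ acting on $\widetilde A$ of types (5). Here we first use that $\widetilde A$ has only three nontrivial eigenlines plus a $2$- or $3$-dimensional trivial eigenspace. The faithful action of $H/A$ on characters must permute at most three nontrivial characters, and this embeds $H/A$ in $S_3\cap(\text{$2$-group})=C_2$ acting on the lines, together with a $2$-group acting on the trivial eigenspace. Any kernel of that line action centralizes $A$, so it is trivial. We expect this to force $|H/A|\leq2$, and in the remaining equal-character subcases to force $|H/A|\leq4$. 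After that, the surviving configurations are matched against $G_{19}$, $G_{24}$ and $G_2$ directly.
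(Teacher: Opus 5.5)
Your reduction matches the paper's. Since $O_2(H)=1$, the group $O_3(H)$ is the Fitting subgroup, and Proposition~\ref{prop:p-subgroups-abelian} makes it the abelian Sylow $3$-subgroup $S$, so $H=S\rtimes T$. The group $T$ acts faithfully on the characters of the unique lifting $\widetilde S$, and the rest is an analysis of invariant monomials. The gap is in that final analysis, which carries the content of the lemma and which you only announce. For $S\cong C_3^2$ the announced outcome is wrong.

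\begin{itemize}
\item $G_{24}$ cannot be a target. We have $|G_{24}|=2^4\cdot3^2$ and $[\Diag(1,\xi_3,\xi_9^2,\xi_9^5,1)]$ has order $9$, so its Sylow $3$-subgroup is cyclic.
\item The case $(a,b)=(0,0)$ of Proposition~\ref{prop:abelian-3-groups}(5) is not covered by $G_2$ or $G_{19}$. Take $\widetilde H=\langle\Diag(\xi_3,1,1,1,1),\Diag(1,\xi_3,1,1,1),P_{(1\,2)(3\,4)}\rangle$. It preserves the smooth form $x_1^3x_3+x_3^4+x_2^3x_4+x_4^4+x_5^4$, and $H\cong C_3\wr C_2$ has no nontrivial normal $2$-subgroup. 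Its $C_3^2$ has only two nontrivial characters on $\C^5$. The Sylow $3$-subgroups of $G_2$ and $G_{19}$ have three (types $(0,2)$ and $(1,1)$), so $H$ is not conjugate into either. The paper puts this case into $G_6$; this particular example is also visibly a subgroup of $G_{17}$.
\item In cases $(0,0)$ and $(1,1)$ of (5), and in (6), the relevant automorphism swaps two characters rather than inverting them. So ``$t$ inverts the characters of $\widetilde A$'' is not the right picture there.
\end{itemize}

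Two things should replace the hedged parts of your sketch.

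First, your own kernel argument already gives $|H/A|\le2$ in every case, with no ``$\le4$'' subcase. The characters occurring on $\C^5$ generate the dual group of $\widetilde A$, because $\widetilde A$ acts faithfully. Hence an element acting trivially on the set of (at most three) nontrivial characters centralizes $A$. Checking the rows directly, as the paper does, shows more: a nontrivial character-permuting automorphism exists only for (1) with $(a,b)=(2,0)$, (2) with $(8,1)$, (6) with $a=1$, and all of (5), and it is unique in each. Hence $T\cong C_2$.

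Second, the step that determines the conjugacy class is using $F$ to fix the action of the involution $A$ on the $\widetilde S$-fixed coordinates.
\begin{itemize}
\item In cases $(0,0)$ and $(1,1)$ of (5), the terms divisible by $x_1^3$ or $x_2^3$ are $x_1^3\ell_1+x_2^3\ell_2$. Here $\ell_1,\ell_2$ are linearly independent linear forms in the $\widetilde S$-fixed coordinates (otherwise $F$ is singular on $\langle e_1,e_2\rangle$), and $A$ interchanges them. This gives $G_6$ and $G_{19}$.
\item In case $(0,2)$ the term is $x_1^3\ell$ with $A(\ell)=\pm\ell$, which gives $G_5$. Your $G_2$ may also work there, but you would have to check it.
\end{itemize}
Without this normalization, the action of $A$ on the fixed space is an arbitrary involution and no embedding follows. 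Your expectations for the other rows are consistent with the paper. The $C_3$ case goes to $G_1$, by conjugating to $\Diag(P_{(1\,2\,3)},1,1)$ and a signed transposition. The $C_9$ and $C_9\times C_3$ cases go to $G_{17}$ after normalizing $A=P_{(1\,2)(3\,4)}$.
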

\begin{proof}
	Assume $H\neq1$, and let $S$ be the largest normal $3$-subgroup of $H$, that is, the intersection of its Sylow $3$-subgroups. By Burnside's theorem, $H$ is solvable, and as $H$ has no nontrivial normal $2$-subgroup, $S$ is the Fitting subgroup of $H$, so $C_H(S)\subset S$. By Proposition~\ref{prop:p-subgroups-abelian}, every $3$-subgroup of $H$ is abelian. Hence $C_H(S)=S$, and $S$ is a Sylow $3$-subgroup of $H$, since a Sylow $3$-subgroup containing $S$ centralizes $S$. Thus $H=S\rtimes T$, where $T$ is a $2$-group acting faithfully on $S$. If $T=1$, we apply Proposition~\ref{prop:abelian-3-groups}.
	
	Otherwise, by Proposition~\ref{prop:abelian-3-groups}, we may assume that $\widetilde S$ is generated by the matrices in one of the cases listed there. Since $\widetilde S$ is normal in $\widetilde H$, conjugation by each $B\in\widetilde H$ is an automorphism of $\widetilde S$ that permutes the characters of $\widetilde S$ on the five coordinate lines. This automorphism is nontrivial for $B\notin\widetilde S$, since $T$ acts faithfully on $S$. By direct computation, such a nontrivial automorphism exists only in the cases $(a,b)=(2,0)$ of~(1), $(a,b)=(8,1)$ of~(2), $a=1$ of~(6), and all cases of~(5), and it is unique in each of them. Hence $T\cong C_2$, and we write $T=\langle[A]\rangle$ with $A\in\widetilde H$, so that $A^2=I_5$.
	
	For $S\cong C_3$, we may assume that $\widetilde H$ is generated by $A$ and $s=\Diag(\xi_3,\xi_3^2,1,1,1)$. Since $AsA^{-1}=s^{-1}$, the matrix $A$ interchanges $\C e_1$ and $\C e_2$ and preserves $\langle e_3,e_4,e_5\rangle$. As $A^2=I_5$, after rescaling $x_2$ and changing the coordinates $x_3,x_4,x_5$, we have $A=P_{(1\,2)}\Diag(1,1,\varepsilon_3,\varepsilon_4,\varepsilon_5)$ with $\varepsilon_k=\pm1$. The permutation matrix $P_{(1\,2\,3)}\in\GL(3,\C)$ has eigenvalues $\xi_3,\xi_3^2,1$, and $P_{(2\,3)}$ interchanges its eigenlines for $\xi_3$ and $\xi_3^2$ and fixes that for $1$. Hence $\widetilde H$ is conjugate to the group generated by $\Diag(P_{(1\,2\,3)},1,1)$ and $\Diag(\varepsilon_3P_{(2\,3)},\varepsilon_4,\varepsilon_5)$, which consists of signed permutation matrices and therefore preserves $F_1$. Thus $H$ is conjugate to a subgroup of $G_1$.
	
	For $S\cong C_9$ or $C_9\times C_3$, after interchanging $x_2$ and $x_3$ in the first case, we may assume that $\widetilde H$ is generated by $A$, $g_1=\Diag(\xi_9,\xi_9^8,\xi_3^2,\xi_3,1)$, and, in the second case, $g_2=\Diag(1,\xi_3,1,1,1)$. The characters of $S$ on the five coordinate lines are distinct, and the only nontrivial automorphism of $S$ permuting them interchanges those on $\C e_1$ and $\C e_2$ and those on $\C e_3$ and $\C e_4$. Hence $A$ interchanges $\C e_1$ and $\C e_2$, interchanges $\C e_3$ and $\C e_4$, and preserves $\C e_5$. As $A^2=I_5$, after replacing $A$ by $-A$ if necessary and rescaling $x_2$ and $x_4$, we have $A=P_{(1\,2)(3\,4)}$. Since $\langle g_1,g_2\rangle=\langle g_1,\Diag(\xi_3,\xi_3,1,1,1)\rangle$, the group $H$ is contained in $G_{17}$ by its displayed generators.
	
	Let $S\cong C_3^2$. For $(a,b)=(0,2)$, the matrix $A$ preserves $\C e_1$ and $\langle e_4,e_5\rangle$ and interchanges $\C e_2$ and $\C e_3$; after rescaling $x_3$, it interchanges $x_2$ and $x_3$. By invariance and Lemma~\ref{lem:quartic-sm}(1), the terms of $F$ divisible by $x_1^3$ are $x_1^3\ell$ for a nonzero linear form $\ell$ in $x_4,x_5$, and $A(\ell)=\pm\ell$ with the same sign as $A(x_1)=\pm x_1$. Changing $x_4,x_5$, we may assume that $\ell=x_4$ and $A(x_5)=\pm x_5$. Then $\widetilde H$ preserves $x_1^3x_4+x_4^4$ and acts on $\langle e_2,e_3,e_5\rangle$ through $\Diag(\xi_3,\xi_3^2,1)$ and $A$, so, as in the case $S\cong C_3$, it is conjugate to a group preserving $x_1^3x_4+x_4^4+x_2^4+x_3^4+x_5^4$, which gives $G_5$.
	
	For $(a,b)=(0,0)$ or $(1,1)$, the matrix $A$ interchanges $\C e_1$ and $\C e_2$; after rescaling $x_2$, it interchanges $x_1$ and $x_2$. By invariance, the terms of $F$ divisible by $x_1^3$ or $x_2^3$ are $x_1^3\ell_1+x_2^3\ell_2$, where $\ell_1$ and $\ell_2$ are linear forms in the coordinates fixed by $\widetilde S$, namely $x_3,x_4,x_5$ for $(0,0)$ and $x_4,x_5$ for $(1,1)$, and $A$ interchanges $\ell_1$ and $\ell_2$. They are linearly independent, since otherwise $F$ is singular at a point of $\langle e_1,e_2\rangle$. For $(a,b)=(0,0)$, changing $x_3,x_4,x_5$, we may assume that $\ell_1=x_3$, $\ell_2=x_4$, and $A(x_5)=\pm x_5$. Then $\widetilde H$ permutes $\langle e_1,e_3\rangle$ and $\langle e_2,e_4\rangle$ and preserves $x_1^3x_3+x_3^4+x_2^3x_4+x_4^4+x_5^4$, which becomes $F_6$ after interchanging $x_2$ and $x_3$, giving $G_6$. For $(a,b)=(1,1)$, changing $x_4,x_5$, we may assume that $\ell_1=x_4$ and $\ell_2=x_5$. As $A$ preserves $\C e_3$, replacing $A$ by $-A$ and $x_2,x_5$ by $-x_2,-x_5$ if necessary, we may assume that $A(x_3)=-x_3$. In the coordinates $(x_4+x_5,x_4-x_5,x_3,x_1,x_2)$, the generators of $\widetilde H$ become those of $G_{19}$, after changing the generators of $\widetilde S$.
\end{proof}

\subsection{The $(4,1)$ case}
We now consider the case where $\widetilde H$ contains a central involution with eigenspaces of dimensions $4$ and $1$ (Proposition~\ref{prop:monomial-low-residue}). We first need two lemmas.
\begin{lemma}\label{lem:monomial-irreducible-four}
	Let $L\subset\GL(4,\C)$ be a finite irreducible monomial group preserving a smooth quartic form $h$. Then either \textup{(1)} $L$ is conjugate to a group of monomial matrices preserving $4K_1$ or $K_4$, or \textup{(2)} $L$ permutes two planes $U$ and $W$ with $\C^4=U\oplus W$, and the stabilizer of $U$ in $L$ acts primitively on $U$ and on $W$.
\end{lemma}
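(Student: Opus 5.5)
We need to prove Lemma~\ref{lem:monomial-irreducible-four}. I would apply the block-decomposition framework of Section~\ref{sec:primitive-constituents} to $L$ acting on $\C^4$, refining the monomial decomposition as far as possible. Since $L$ is irreducible and monomial, it permutes the four coordinate lines transitively. Among all decompositions of $\C^4$ into subspaces permuted transitively by $L$, I would choose one whose blocks are as large as possible while keeping primitive block stabilizers. The resulting block dimension is $1$, $2$, or $4$ (transitivity forces the dimension to divide $4$), and dimension $4$ is excluded because $L$ is monomial. If some such decomposition has two planes $U,W$ whose stabilizer acts primitively on $U$ (and hence on $W=gU$ for any $g\in L$ interchanging them, since conjugation transports primitivity), we are in case~(2). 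It remains to treat the situation where every system of imprimitivity is refined by the coordinate lines and no plane block has a primitive stabilizer. In that case the block stabilizers on any plane system are monomial, and we work with the coordinate lines directly.

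In this monomial situation, $L=D\rtimes$-type extension: $D$ is the diagonal subgroup and the permutation image $\Pi\subset S_4$ is transitive, so $\Pi$ is one of $C_4$, $C_2^2$, $D_8$, $A_4$, $S_4$. By Lemma~\ref{lem:quartic-sm}(1) applied in four variables (the same argument works for four variables), each $x_i$ has a chosen monomial $x_i^3x_j\in h$. I would then consider the $L$-orbit of the set of terms $x_i^3x_j$. Since $L$ permutes monomials up to scalars, the orbit sum of these terms is $L$-invariant. The assignment $i\mapsto j$ is either the identity or, by transitivity, can be chosen $\Pi$-equivariantly as a fixed-point-free map.

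\emph{Case $j=i$.} If $x_i^4\in h$ for one $i$, then by transitivity for all $i$. The sum $\sum a_ix_i^4$ is $L$-invariant, and after rescaling the coordinates it becomes $4K_1$; the rescaling conjugates $L$ to a monomial group preserving $4K_1$.

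\emph{Case $j\neq i$.} Here I must show the chosen monomials can be organized as a single $4$-cycle $K_4$, up to scalars. I would argue with $D$: the relation $\chi_j=\chi_i^{-3}$ constrains the characters. If the map $i\mapsto j$ decomposes into two $2$-cycles, giving $K_2+K_2$, or into a structure with a repeated target, then $L$ stabilizes a pair of planes. The aim is then to check, using the smoothness lemmas, that either a plane stabilizer is primitive (case~(2)) or that the terms $x_i^4$ must also be present, returning to the first case. If the map is a $4$-cycle, the orbit sum is $\sum a_ix_i^3x_{\sigma(i)}$, which rescales to $K_4$. Either way we conclude by the earlier principle that the sum of a $\widetilde H$-stable set of terms is invariant, and that a smooth such sum is preserved.

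The main obstacle is the $K_2+K_2$ and non-cycle configurations. There, $L$ may mix $x_i^4$ and $x_i^3x_j$ terms, and no single orbit sum of chosen monomials need be smooth. I would handle this by passing to the index-$\le 2$ subgroup preserving the pair of planes $\langle e_1,e_2\rangle,\langle e_3,e_4\rangle$. Applying Lemma~\ref{lem:binary-forms} to the restriction of $h$ to each plane (smooth by Lemma~\ref{lem:normal-derivatives} once the mixed cubic parts vanish by the character argument), either the binary constituent is $A_4$, which gives case~(2), or it is conjugate into $\Aut(x_1^4+x_2^4)$. In the latter case a common change of coordinates puts $L$ in monomial form preserving $4K_1$.
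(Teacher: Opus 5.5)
\textbf{There is a genuine gap.} Two parts of your proposal are fine: the case $x_i^4\in h$, and the reduction to the situation in which no pair of planes has a primitive stabilizer. The problem is everything after that, starting with the claim that the assignment $i\mapsto j$ ``can be chosen $\Pi$-equivariantly''. Only the sets $\{j\mid x_i^3x_j\in h\}$ are equivariant. A choice is equivariant only when these sets are singletons. That is what distinct characters of the diagonal subgroup on the coordinate lines would give, since two targets $j\neq j'$ force $\chi_j=\chi_i^{-3}=\chi_{j'}$. In the monomial coordinates you start from, the diagonal subgroup may have repeated characters. Then several targets occur, and no subsum of $h$ need be both invariant and smooth.

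Your repair in the last paragraph does not cover this, for three reasons. First, for $\Pi\cong A_4$ or $S_4$, which permute the three pairings of $\{1,2,3,4\}$ transitively, no subgroup of index at most $2$ preserves $\{\langle e_1,e_2\rangle,\langle e_3,e_4\rangle\}$. Second, no character argument makes $\mathcal C_h(U,W)$ vanish once characters repeat. Third, $G_{h|_U}\cong A_4$ would not yield (2): (2) concerns the action of the stabilizer in $L$, and in the case you are treating that action is monomial by assumption. Your intermediate aim (either a primitive plane stabilizer, or $x_i^4\in h$) is also false. The monomial symmetry group of $K_2(x_1,x_2)+K_2(x_3,x_4)$ is an irreducible $2$-group, so it has no primitive plane stabilizer, yet no $x_i^4$ occurs. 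The right conclusion there is $2K_2\cong4K_1$.

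\textbf{The missing idea} is to use the lines adapted to a maximal abelian normal subgroup $N$ of $L$, as the paper does. By Clifford's theorem, $L$ permutes the isotypic components of $N$ transitively, so they are four lines, two planes, or $\C^4$.

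\emph{Four lines.} Work in a basis of these lines; they are permuted by $L$ but need not be your original coordinate lines. The characters of $N$ on them are distinct, so Lemma~\ref{lem:quartic-sm}(1), applied to $h+x_5^4$, gives for each $i$ exactly one $j$ with $x_i^3x_j\in h$. The map $i\mapsto j$ is then an $L$-equivariant self-map of a transitive $L$-set, hence a permutation with cycles of equal length. The corresponding terms form an invariant form isomorphic to $4K_1$, $2K_2\cong4K_1$, or $K_4$, which gives (1).

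\emph{Two planes or $\C^4$.} One gets two planes whose common stabilizer $L_0$ centralizes $N$ and acts irreducibly on both. If $N$ is scalar, $[L:N]\geq16$ forces $L/N\cong S_4$, and $L_0$ is the preimage of $A_4$. Primitivity comes from the maximality of $N$. An imprimitive action would embed $L/N$ into $D_8$. Then an element of $L_0$ mapping to a nontrivial central element of $L/N$ would generate, together with $N$, a larger abelian normal subgroup.

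This is the step that turns ``no plane system with primitive stabilizer'' into distinct characters, and your proposal has no substitute for it.
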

\begin{proof}
	Let $N$ be a maximal abelian normal subgroup of $L$, and for a character $\chi$ of $N$, let $V_\chi=\{v\in\C^4\mid nv=\chi(n)v\text{ for all }n\in N\}$. By Clifford's theorem, $L$ permutes the nonzero subspaces $V_\chi$ transitively, so they are four lines, two planes, or $\C^4$ itself.
	
	In the first case, choose coordinates $x_1,\dots,x_4$ such that these lines are the coordinate lines. Since the characters of $N$ on them are distinct, Lemma~\ref{lem:quartic-sm}(1), applied to $h+x_5^4$, gives for each $i$ a unique $j$ with $x_i^3x_j\in h$. The map $i\mapsto j$ commutes with the transitive action of $L$, so it is a permutation with cycles of equal length, and the corresponding terms of $h$ form an $L$-invariant smooth form isomorphic to $4K_1$, $2K_2$, or $K_4$. As $K_2$ is isomorphic to $2K_1$ and $\Aut(4K_1)$ consists of monomial matrices, \textup{(1)} follows.
	
	If the subspaces $V_\chi$ are two planes, their common stabilizer $L_0$ fixes the two characters, so $L_0=C_L(N)$, and $L_0$ acts irreducibly on both planes by Clifford's theorem. If $V_\chi=\C^4$, then $N$ is scalar, so $N$ is the diagonal subgroup of $L$ by maximality, and $L/N\subset S_4$. Since $\C^4$ is irreducible and $N$ is central, $[L:N]\geq16$, so $L/N\cong S_4$. The preimage $L_0$ of $A_4$ acts reducibly, as $[L_0:N]=12<16$, so by Clifford's theorem, $\C^4$ is the direct sum of two planes on which $L_0$ acts irreducibly and which are interchanged by $L$.
	
	In both cases, $L_0$ centralizes $N$ and is the stabilizer of each of the two planes. If $L_0$ acted imprimitively on one of them, then $L$ would permute four lines, two in each plane, preserving their partition into pairs. The subgroup of $L$ preserving each of these lines is abelian and normal and contains $N$, so it equals $N$, and $L/N$ embeds into $D_8$. Then the nontrivial normal subgroup $L_0/N$ of the $2$-group $L/N$ meets $Z(L/N)$ nontrivially, and for $g\in L_0$ whose class is a nontrivial central element, $\langle N,g\rangle$ is a larger abelian normal subgroup, a contradiction. Hence \textup{(2)} holds.
\end{proof}

The next lemma extends the arguments of Section~\ref{sec:proof-nonmonomial} for the subdegree sequences $(2,2,1)$ and $(2,1,1,1)$ to subgroups of $\Aut(X)$.

\begin{lemma}\label{lem:monomial-binary-blocks}
	Let $H\subset\Aut(X)$ be a subgroup such that $l(H)=(2,2,1)$ or $(2,1,1,1)$ for some decomposition \eqref{eq:blocks}. Then $H$ is conjugate to a subgroup of one of $G_2$, $G_5$, $G_6$, $G_{10}$, $G_{14}$, $G_{24}$.
\end{lemma}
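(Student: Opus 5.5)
The plan is to rerun the proofs of Lemma~\ref{lem:221} and of the case $(2,1,1,1)$ in the proof of Theorem~\ref{thm:nonmonomial}, now for an arbitrary subgroup $H$ rather than for $\Aut(X)$. The only place where those arguments used $G=\Aut(X)$ was to guarantee that $U$ and $W$ are preserved by all of $\Aut(F)$. We therefore work throughout with $\widetilde H$, its principal subgroup $P$ and $N$, and at the end we only claim that $\widetilde H$ preserves a suitable normalized form.

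First we need the primitive constituents. Proposition~\ref{prop:possi-primitive-cons} is stated for $\Aut(X)$, but its case $r=2$ uses only the stabilizer $S$ of $U$ inside the lifting, together with Lemma~\ref{lem:normal-derivatives}. That argument goes through verbatim for $\widetilde H$, so every binary primitive constituent of $H$ is $A_4$.

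For $(2,1,1,1)$, the block $U$ of dimension $2$ is unique, so $\widetilde H$ preserves $U$ and $W$. Since $\Pi\subset S_3$ and $H_1/J_1$ is a quotient of $\Pi$, the subgroup $C_2^2$ lies in $J_1$, and the commutator of its lifts gives $\tau=\Diag(-I_2,I_3)\in\widetilde H$. This $\tau$ is central in $\widetilde H$ because $\widetilde H$ preserves $U$ and $W$. Corollary~\ref{cor:involution-split} then gives $F=F|_U+F_{2,2}+F|_W$ with $F|_U$ and $F|_W$ smooth, and shows that $\widetilde H$ preserves $F|_U+F|_W$. Since $G_{F|_U}\supset A_4$, Lemma~\ref{lem:binary-forms} gives $F|_U\cong T_2$. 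Lemma~\ref{lem:3-var-4-forms} then places $H$ in $G_2$, $G_5$, $G_6$ or $G_{10}$.

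For $(2,2,1)$, the Goursat argument of Lemma~\ref{lem:221} is again purely about $P$: either $J$ contains $C_2^2\times C_2^2$, or $J$ is a graph, and in both cases $\Diag(-I_4,1)\in P'$, so $x_5^4\in F$.
\begin{itemize}
\item If $J$ is not a graph, the $\mathcal Q\times\mathcal Q$-invariance forces $F=x_5^4+\phi+\psi$, which gives $G_6$ exactly as before.
\item If $J$ is a graph, $F$ is a combination of the $13$ forms of Lemma~\ref{lem:13-invs}, and we split according to $N$ as in that proof.
\end{itemize}
In the graph case with $3\mid|N|$ we reach $F=x_5^4+\Phi_{1,4}+\Phi_{-1,1}$. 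Here we must replace the cardinality argument ``$|G|\le 144=|G_{24}|$, hence equality'' by an inclusion. No element interchanges the blocks, and $P\subset\{\Diag(\lambda h,\mu h,1)\}$ with $N$ contained in the $C_{12}$ of that proof. Hence $\widetilde H$ lies in the full group of such matrices fixing $F$, which is $\widetilde G_{24}$ after the coordinate change sending $F_{24}$ to this form. The same device handles $F=x_5^4+\Phi_{-1,1}+\Phi_{1,3}$: the group of all block-monomial elements preserving it is the lift of $G_{14}$, so $H$ is a subgroup of $G_{14}$. The case $N\subset C_4^2$ again gives the form $x_5^4+\Phi_{a,0}+\nu\Phi_{b,4}$ preserved by $\widetilde H$, hence $G_6$. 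The case $a=b$ with $N=C_8$ gives $G_6$ via the rotated coordinates.

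The main obstacle is this last step: checking that every ``$|G|\le|G_i|$ so $G=G_i$'' conclusion in Lemma~\ref{lem:221} upgrades to ``$H\subset G_i$''. I would do this by identifying $\Aut(F)\cap\{\text{matrices permuting the two planes and fixing }e_5\text{ up to scalar}\}$ with $\widetilde G_{24}$ (resp.\ $\widetilde G_{14}$) by a direct generator comparison. A secondary point is the normalization $x_5^4\in F$ with coefficient $1$ and the choice of lifting fixing $x_5$; this is fine since $\Diag(-I_4,1)\in\widetilde H$.
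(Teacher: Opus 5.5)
Your overall route matches the paper's. You rerun Lemma~\ref{lem:221} and the $(2,1,1,1)$ case of Theorem~\ref{thm:nonmonomial} with $\widetilde H$ in place of the full lifting. You then upgrade the two order-counting conclusions to inclusions by passing to the subgroup of $G_F$ permuting the blocks. That subgroup contains a conjugate of $G_{24}$ (resp.\ $G_{14}$) and has order at most $12\cdot12=144$ (resp.\ $2\cdot8\cdot12=192$). The paper makes this enlargement once, at the start: it replaces $H$ by the subgroup of $\Aut(X)$ permuting the blocks $W_a$, whose block stabilizers still act primitively, so the proof of Lemma~\ref{lem:221} then runs essentially verbatim.

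\textbf{Gap.} Your piecemeal version misses one further place where Lemma~\ref{lem:221} uses the full group, so your claim that only the two ``$|G|\le|G_i|$'' steps need upgrading is not correct. In the subcase $N\cong C_8$, $a=b$, the lemma argues: $P_{(1\,3)(2\,4)}$ fixes $F$ and $[\widetilde G:P]\le2$, so $\widetilde G$ permutes the spans of $x_1+x_3,x_2+x_4$ and of $x_1-x_3,x_2-x_4$. This needs $P_{(1\,3)(2\,4)}$ to lie in the group up to a scalar, so that $\widetilde G\subset P\cup P\,P_{(1\,3)(2\,4)}$. For a proper subgroup $\widetilde H$ this is not available. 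Hence your sentence ``the case $a=b$ with $N=C_8$ gives $G_6$ via the rotated coordinates'' is unjustified as it stands.

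\textbf{Repair.} Either use your enlargement device here too (or the paper's upfront replacement), or argue directly as follows.
\begin{itemize}
\item Let $t\in\widetilde H$ interchange the blocks. It normalizes the characteristic subgroup $P'=\{\Diag(h,h,1)\mid h\in\mathcal Q\}$ of $P$.
\item Write $t=\Diag\bigl(\bigl(\begin{smallmatrix}0&A\\B&0\end{smallmatrix}\bigr),c\bigr)$. Then $AhA^{-1}=BhB^{-1}$ for all $h\in\mathcal Q$, so $A=\kappa B$ with $\kappa\in\C^*$ by Schur's lemma.
\item Set $u=(x_1,x_2)$ and $v=(x_3,x_4)$, and note $\Phi_{a,3}(u,v)=\Phi_{a,1}(v,u)$. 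Comparing the parts of $t(F)=F$ of bidegrees $(3,1)$ and $(1,3)$ gives $\kappa\,\Phi_{a,1}(Bu,Bv)=\Phi_{a,1}(u,v)=\kappa^3\,\Phi_{a,1}(Bu,Bv)$.
\item Hence $\kappa=\pm1$, so $t$ permutes the two rotated planes. Together with $P\subset\{\Diag(\lambda h,\pm\lambda h,1)\}$, this gives the conclusion $G_6$.
\end{itemize}
With this addition, the rest of your argument, including the $(2,1,1,1)$ case, is fine.
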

\begin{proof}
	We may replace $H$ by the subgroup of $\Aut(X)$ permuting the blocks $W_a$, since the stabilizers of the blocks in this larger group still act primitively on them. The proof of Proposition~\ref{prop:possi-primitive-cons} for $r=2$ applies to any subgroup of $\Aut(X)$, so the binary primitive constituents of $H$ are isomorphic to $A_4$.
	
	For $(2,2,1)$, the proof of Lemma~\ref{lem:221} then applies to $H$, except where it uses that $G=\Aut(X)$ contains a conjugate of $G_{24}$ in the case $3\mid|N|$ and $G_{14}$ in the last case. These groups permute the blocks, so they lie in $H$, and the corresponding bounds $|H|\leq144$ and $|H|\leq192$ show that $H$ is conjugate to $G_{24}$ and $G_{14}$, respectively. In the other cases, $H$ is conjugate to a subgroup of $G_6$. For $(2,1,1,1)$, the argument for this case in the proof of Theorem~\ref{thm:nonmonomial}, with $\widetilde H$ in place of $\Aut(F)$, gives $G_2$, $G_5$, $G_6$, or $G_{10}$.
\end{proof}

\begin{proposition}\label{prop:monomial-low-residue}
	Let $H\subset\Aut(X)$ be a monomial subgroup of order $2^a3^b$ whose center contains an element conjugate to $[\Diag(I_4,-1)]$. Then $H$ is conjugate to a subgroup of one of the groups in Theorem~\ref{thm:monomial}.
\end{proposition}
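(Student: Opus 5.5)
Conjugating, we take $\tau=\Diag(I_4,-1)\in\widetilde H$ with $[\tau]$ central in $H$; this is legitimate because $F$-liftings are unique up to characters into $\langle\xi_4I_5\rangle$, and we may replace the lift by $\pm$ it. Since $[\tau]$ is central and $\pi$ is injective on $\widetilde H$, $\tau$ commutes with $\widetilde H$. By Corollary~\ref{cor:involution-split}, with $U=\langle e_1,\dots,e_4\rangle$ and $W=\C e_5$, we get $F=h+x_5^2Q+cx_5^4$. Here $h=F|_U$ is a smooth quartic form, $c\neq0$, and we normalize $c=1$. Every element of $\widetilde H$ preserves $U$ and $W$, hence preserves $h$, $Q$ and $x_5^4$ up to the character on $x_5$.

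Consider first the case where the group $\widetilde H|_U$ still preserves $h+\mu Q$-type data. The plan is to replace $F$ by the smooth form $h+x_5^4$. Since $H$ preserves $h$ and acts on $x_5$ by characters that are fourth roots of unity (after adjusting by $\xi_4$), $H$ preserves $h+x_5^4$. So $H$ is a subgroup of $G_{h+x_5^4}$ preserving $U$, and we reduce to studying $L:=\widetilde H|_U\cdot\langle iI_4\rangle\subset\Aut(h)$. This is a monomial subgroup of $\GL(4,\C)$ of order $2^a3^b$, up to the scalar $i$.

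We then split according to the structure of $L$ on $\C^4$.

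\textbf{$L$ reducible, or with subdegree sequence on $U$ containing a block of size $\le 3$ fixed.} Take a nontrivial decomposition of $U$ into $L$-stable pieces and look at the normal elementary abelian $2$-subgroup generated by $\tau$ and scalars on the pieces, or at its $\Diag(\pm1)$ elements of order dividing $4$. Lemma~\ref{lem:monomial-smooth-sum} then writes $h+x_5^4$ as a sum of smooth forms in blocks of size at most three. Lemma~\ref{lem:monomial-small-blocks} gives $G_1,G_2,G_4,G_5,G_6,G_{10},G_{13}$. If instead the orbit structure of $L$ on lines forces a transitive constituent of size $3$ and the abelian $3$-part is nontrivial, we argue via Proposition~\ref{prop:abelian-3-groups} together with the $x_i^3x_j$ graph from the proof of Theorem~\ref{thm:maximal-abelian}. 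This yields the forms $T_3+K_1$, $T_4+K_1$, etc., and thus the groups $G_{13},G_{15},G_{23}$.

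\textbf{$L$ irreducible.} Apply Lemma~\ref{lem:monomial-irreducible-four}. In alternative (1), $L$ is conjugate into the monomial part of $\Aut(4K_1)$ or $\Aut(K_4)$, so $H$ lies in $G_1$ (form $5K_1$) or $G_{11}$ (form $K_4+K_1$). In alternative (2), $H$ permutes two planes in $U$ with primitive binary constituents together with the line $\C e_5$, so $l(H)=(2,2,1)$ for this decomposition. Lemma~\ref{lem:monomial-binary-blocks} then gives $G_6$, $G_{14}$ or $G_{24}$.

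\textbf{Main obstacle: the terms $x_5^2Q$.} If $Q\neq0$, then $h+x_5^4$ need not be smooth in general, since only $F$ is. Here we have to show that $h$ alone is smooth, which is immediate from Corollary~\ref{cor:involution-split}, and that $H$ preserves $h+x_5^4$. The latter is where care is needed. An element $g\in\widetilde H$ acts as $\Diag(A,\lambda)$ with $A(h)=h$ and $\lambda^4=1$, so it fixes $x_5^4$; thus $h+x_5^4$ is indeed $\widetilde H$-invariant. The remaining work is therefore bookkeeping to ensure every case of $L$ lands in the list. I expect the reducible subcase with mixed $2$- and $3$-parts, such as $L$ acting on $U$ with a $3$-dimensional orbit of lines, to require the most explicit computation. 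There I would first use Proposition~\ref{prop:p-subgroups-abelian} and Lemma~\ref{lem:monomial-o2-free}, applied to the image of $L$, to pin down the $3$-part.
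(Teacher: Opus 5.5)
Your reduction to $F=h+x_5^4$ with $H=\{[\Diag(B,1)]\mid B\in L\}$ and $iI_4\in L\subset\Aut(h)$ matches the paper, as does your treatment of irreducible $L$ via Lemmas~\ref{lem:monomial-irreducible-four} and~\ref{lem:monomial-binary-blocks}. The $x_5^2Q$ term you call the main obstacle is no obstacle: Corollary~\ref{cor:involution-split} already gives $\widetilde H\subset\Aut(h)\times\Aut(x_5^4)$.

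There is one small omission at the start. In the monomial basis $\tau$ may interchange two coordinate lines, so ``conjugating'' must be done by passing to the eigenlines of $\tau$ inside that plane, as the paper does with $e_j\pm\lambda e_k$. $\widetilde H$ stays monomial because it commutes with $\tau$.

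The genuine gap is the reducible case, which is where nearly all the work lies. Lemma~\ref{lem:monomial-smooth-sum} applies only to subgroups of $\Aut(F)$. The ``scalars on the $L$-stable pieces'' that you adjoin to $\tau$ are in general not automorphisms of $h$, and $\tau$ alone gives only the $(4,1)$ splitting you started from. When a maximal abelian normal subgroup of $L$ has cyclic $2$-part, $-I_4$ may be the only involution available, and no splitting into blocks of size at most three exists.

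Here are two instances:
\begin{itemize}
\item $H=G_{15}\cong C_{108}$ is diagonal of order $2^2\cdot3^3$ and contains $[\Diag(-I_4,1)]$. Here $L\cong C_{108}$ has $-I_4$ as its only involution and $h=T_4$. The $2$-torsion of $\Aut(F_{15})\cong C_4\times C_{108}$ only separates $\C e_5$ from $\langle e_1,\dots,e_4\rangle$. By Lemma~\ref{lem:24-irredundant}, $G_{15}$ is not conjugate into any group produced by Lemma~\ref{lem:monomial-small-blocks}.
\item The reducible monomial group $L=\langle\Diag(\xi_8,\xi_8,\xi_8^5,\xi_8^5),\Diag(\xi_3,1,\xi_3^{-1},1),P_{(1\,3)(2\,4)}\rangle$ has $\langle-I_4\rangle$ as its only nontrivial normal elementary abelian $2$-subgroup. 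The paper's argument places it in $G_{14}$, which your reducible branch never reaches.
\end{itemize}
Your fallback about a ``transitive constituent of size $3$'' does not describe these cases (for $G_{15}$ every orbit on coordinate lines is a single line), and it is not an argument.

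What is missing is the paper's analysis of a maximal abelian normal subgroup $N=N_2\times S$ of $L$, which contains $iI_4$.
\begin{itemize}
\item If $N_2$ has an involution other than $-I_4$, then Lemmas~\ref{lem:monomial-smooth-sum} and~\ref{lem:monomial-small-blocks} do apply.
\item Otherwise $N_2\cong C_4$, $C_8$, or $C_{16}$ by Theorem~\ref{thm:cyclic}. One reduces to $C_L(N)=N$; if this fails, $A_4$ appears on an $L$-stable plane and Lemma~\ref{lem:monomial-binary-blocks} applies.
\item $N_2\cong C_{16}$ forces $K_4$ and gives $G_{11}$.
\item $N_2=\langle iI_4\rangle$ makes $L/N$ act faithfully on $S$. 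This is handled by Proposition~\ref{prop:abelian-3-groups} together with smoothness, giving $G_1$, $G_5$, $G_6$, $G_{13}$, $G_{15}$, $G_{23}$, $G_{24}$.
\item $N_2\cong C_8$ is handled through the two eigenplanes of a generator $g$ with $g^2=iI_4$, giving $G_1$, $G_6$, $G_{14}$.
\end{itemize}
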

\begin{proof}
	The preimage $\tau\in\widetilde H$ of the central element is a monomial central involution. Replacing two basis vectors $e_j,e_k$ by $e_j\pm\lambda e_k$ if necessary, we may assume that $\tau=\pm\Diag(I_4,-1)$ and $\widetilde H$ is still monomial. By Corollary~\ref{cor:involution-split}, we may assume that $F=h+x_5^4$. After adjoining $[\Diag(iI_4,1)]$ to $H$, we may also assume that $H=\{[\Diag(B,1)]\mid B\in L\}\cong L$ for a monomial group $L\subset\Aut(h)$ containing $iI_4$. If $L$ is irreducible, the assertion follows from Lemmas~\ref{lem:monomial-irreducible-four} and~\ref{lem:monomial-binary-blocks}, so we assume that $L$ is reducible.
	
	Let $N=N_2\times S$ be a maximal abelian normal subgroup of $L$, with $N_2$ a $2$-group and $S$ a $3$-group. Then $iI_4\in N$. If $N_2$ has an involution other than $-I_4$, Lemmas~\ref{lem:monomial-smooth-sum} and~\ref{lem:monomial-small-blocks} apply to the group of matrices $\Diag(n,1)$ with $n\in N_2$ and $n^2=I_4$. Indeed, each character of this group has multiplicity at most $3$, since $\Diag(-I_4,1)$ separates $\C e_5$ from $\C^4$, and such an involution has both eigenvalues $1$ and $-1$ on $\C^4$. Otherwise, $N_2$ is cyclic of order $4$, $8$, or $16$ by Theorem~\ref{thm:cyclic}. We may also assume that $C_L(N)=N$. If not, $C_L(N)/N$ is a nontrivial normal subgroup of the solvable group $L/N$, so it contains a minimal normal subgroup $M/N$ of $L/N$. By maximality and Proposition~\ref{prop:p-subgroups-abelian}, $M/N$ is a noncyclic elementary abelian $2$-group and the commutators in $M$ are $\pm I_4$. Hence, as $L$ is reducible, $\C^4=V_1\oplus V_2$ for $L$-stable planes on which $N$ acts by scalars, and $M/N$ embeds into the product of the images of $M$ in $\PGL(V_1)$ and $\PGL(V_2)$. By minimality, an element of order $3$ acts nontrivially on $M/N$, so the image of $L$ in $\PGL(V_j)$ contains $A_4$ for some $j$. Thus $l(H)=(2,2,1)$ or $(2,1,1,1)$ for a decomposition~\eqref{eq:blocks} with blocks $V_j$ and $\C e_5$, and Lemma~\ref{lem:monomial-binary-blocks} applies.
	
	If $N_2\cong C_{16}$, the eigenvalues of a generator $g$ with $g^4=iI_4$ are fourth roots of $i$, which form one cycle under $\lambda\mapsto\lambda^{-3}$. So by Lemma~\ref{lem:quartic-sm}(1), each occurs once, and in an eigenbasis of $g$, the terms $x_k^3x_j$ of $h$ form $K_4$ after rescaling. Since $L$ normalizes $\langle g\rangle$, it preserves $K_4$, giving $G_{11}$.
	
	Suppose $N_2=\langle iI_4\rangle$, so $L/N$ acts faithfully on $S$. If $S=1$, then $L=\langle iI_4\rangle$ and $H\subset G_1$. Otherwise, apply Proposition~\ref{prop:abelian-3-groups} to $[\Diag(S,1)]$, taking as $x_5$ a coordinate fixed by $S$. Since $F=h+x_5^4$, Lemma~\ref{lem:quartic-sm}(1)--(3), applied to the invariant monomials, leaves only~(1) with $(a,b)\neq(1,2)$, (2) with $(a,b)\in\{(0,0),(3,0),(4,0)\}$, (3) with $a=0$, and~(5) with $(a,b)=(0,0)$. If $L=N$, these cases together with $\Diag(iI_4,1)$ give subgroups of $G_5$, $G_6$, $G_{13}$, $G_{15}$, $G_{23}$, or $G_{24}$ by Examples~\ref{ex:17maximal} and~\ref{ex:seven-projective-actions}. Otherwise, only $(a,b)=(2,0)$ of~(1) and $(a,b)=(0,0)$ of~(5) admit a nontrivial automorphism of $S$ permuting its characters. In both cases, it interchanges the characters on $\C e_1$ and $\C e_2$, and $S$ acts trivially on $\langle e_3,e_4\rangle$. Thus $L=\langle N,t\rangle$ for some $t$ interchanging $\C e_1$ and $\C e_2$, and after normalization, $t(x_1)=\delta x_2$, $t(x_2)=x_1$, and $t^2=\delta I_4$ with $\delta\in\{1,i\}$. For $S\cong C_3$ and $\delta=1$, the argument of Lemma~\ref{lem:monomial-o2-free} gives $G_1$. Otherwise $x_1^2x_2^2\notin h$, and by invariance under $S$, the terms of $h$ divisible by $x_1^3$ or $x_2^3$ are $x_1^3\ell_1+x_2^3\ell_2$ for linear forms $\ell_1,\ell_2$ in $x_3,x_4$. Smoothness forces $\ell_1$ and $\ell_2$ to be linearly independent, so after changing $x_3,x_4$, we may assume that $\ell_1=x_3$ and $\ell_2=x_4$. Invariance under $t$ gives $t(x_3)=\delta x_4$ and $t(x_4)=x_3$. Hence $L$ preserves $x_1^3x_3+x_3^4+x_2^3x_4+x_4^4$ and permutes $\langle e_1,e_3\rangle$ and $\langle e_2,e_4\rangle$, giving $G_6$.
	
	Finally, let $N_2\cong C_8$, generated by $g$ with $g^2=iI_4$. Every $g$-invariant quartic monomial has odd degree in the coordinates of each eigenspace of $g$. Hence, by Lemma~\ref{lem:quartic-sm}(2), the eigenspaces $U$ and $W$ of $g$ for $\xi_8$ and $\xi_8^5$ are planes, and $L$ permutes them. By Theorem~\ref{thm:maximal-abelian}, $S$ is cyclic of order dividing $9$. Choose coordinates diagonalizing $S$ with $U=\langle e_1,e_2\rangle$ and $W=\langle e_3,e_4\rangle$. Then Lemma~\ref{lem:quartic-sm}(1) gives for each $k$ some $x_k^3x_j\in h$, where $e_j$ lies in the other plane and the character of $S$ on $\C e_j$ is the $(-3)$rd power of that on $\C e_k$. So each plane contains a coordinate fixed by $S$, and Lemma~\ref{lem:quartic-sm}(3) excludes characters of order $9$ and a nontrivial action on only one plane. Thus $g=\Diag(\xi_8,\xi_8,\xi_8^5,\xi_8^5)$, and either $S=1$ or $S=\langle s\rangle$ with $s=\Diag(\xi_3,1,\xi_3^{\varepsilon},1)$ for some $\varepsilon=\pm1$. The stabilizer of $U$ in $L$ centralizes $g$ and, since inverting $s$ changes its characters on $U$, also $S$, so it equals $N$. Moreover, multiplying an element that interchanges $U$ and $W$ by a suitable element of $N$, we obtain an involution. Hence, after choosing suitable bases of $U$ and $W$, $L\subset\langle g,S,P_{(1\,3)(2\,4)}\rangle$. If $S=1$, this group preserves $K_2(x_1,x_3)+K_2(x_2,x_4)$, which is isomorphic to $4K_1$, giving $G_1$. If $S=\langle s\rangle$ with $\varepsilon=1$, it preserves $T_2(x_1+x_3,x_2+x_4)-T_2(x_1-x_3,x_2-x_4)$, giving $G_6$. If $\varepsilon=-1$, let $A_1$, $A_2$, $A_3$ be as in Example~\ref{ex:17maximal}(14). Then $A_2=\Diag(g,1)$, and a matrix commuting with $A_2$ conjugates $\Diag(s,1)$ and $\Diag(P_{(1\,3)(2\,4)},1)$ to $A_1^8$ and $A_3A_1^5A_3^2$, giving $G_{14}$.
\end{proof}
\subsection{Proof of Theorem~\ref{thm:monomial}}
\begin{proof}[Proof of Theorem~\ref{thm:monomial}]
	Suppose that $p\in\{5,7,61\}$ divides $|D|$. Then the Sylow $p$-subgroup of $D$ is cyclic of order $p$ and normal in $H$. By Lemma~\ref{lem:quartic-sm}(1)--(3), after changing the generator and reordering the coordinates, its $F$-lifting is generated by
	$\Diag(\xi_5,\xi_5^2,\xi_5^4,\xi_5^3,1)$, $\Diag(\xi_7,\xi_7^4,\xi_7^2,1,1)$ or  $\Diag(\xi_{61},\xi_{61}^{58},\xi_{61}^9,\xi_{61}^{34},\xi_{61}^{20})$.
	Hence $\widetilde H$ preserves a form isomorphic to $K_4+K_1$, to $K_3+\phi$ with $\phi$ a smooth binary quartic, or to $K_5$, respectively, and $H$ is conjugate to a subgroup of $G_{11}$, of $G_2$ or $G_4$ (by Lemma~\ref{lem:binary-forms}), or of $G_{12}$.
	
	From now on, we assume that $|D|$ has only prime divisors $2$ and $3$. First suppose that $\Pi$ is transitive. If $x_i^4\in F$ for some $i$, then $x_1^4,\dots,x_5^4\in F$, and these terms form an $\widetilde H$-invariant form isomorphic to $5K_1$, so $H$ is conjugate to a subgroup of $G_1$. Otherwise, by Lemma~\ref{lem:quartic-sm}(1), $x_i^3x_j\in F$ for some $i\neq j$. As $\Pi$ is transitive, it contains a $5$-cycle $\sigma$ with $\sigma(i)=j$, so $F$ contains $x_k^3x_{\sigma(k)}$ for all $k$. Thus the diagonal matrices in $\widetilde H$ preserve a form isomorphic to $K_5$, so $D\subset C_{61}$ by the proof of Theorem~\ref{thm:maximal-abelian}. Hence $D=1$ and $\widetilde H\cong\Pi$.
	
	Let $S={\rm Stab}_{\widetilde H}(\C e_5)$, and let $\psi$ be the character by which $S$ acts on $\C e_5$. Choose $g_j\in\widetilde H$ with $g_j(\C e_5)=\C e_j$, and replace $e_j$ by $g_je_5$. If $g\in\widetilde H$ maps $\C e_j$ to $\C e_k$, then $gg_j=g_ks$ for some $s\in S$, so $g(g_je_5)=\psi(s)\,g_ke_5$. Thus all nonzero entries of the matrices in $\widetilde H$ lie in $\psi(S)$. The transitive subgroups of $S_5$ are $C_5$, $D_{10}$, $C_5\rtimes C_4$, $A_5$, and $S_5$ (see e.g. the library of transitive groups in GAP~\cite{GAP}), so $S$ is isomorphic to $1$, $C_2$, $C_4$, $A_4$, or $S_4$. Since $\psi$ is trivial on $S'$, its order divides $|S/S'|$, which is $1$, $2$, $4$, $3$, or $2$, respectively. Hence, unless $\Pi\cong A_5$ and $\psi$ has order $3$, these entries are fourth roots of unity, so $\widetilde H$ preserves $x_1^4+\cdots+x_5^4$, and $H$ is conjugate to a subgroup of $G_1$. In the remaining case, $\C^5$ is the representation of $A_5$ induced from $\psi\neq1$, so it has no trivial constituent by Frobenius reciprocity, and it is irreducible, as the nontrivial irreducible representations of $A_5$ have dimensions $3$, $3$, $4$, and $5$. Hence an element $A\in\widetilde H$ whose image in $\Pi$ is a $3$-cycle has trace $-1$ and eigenvalues $1,\xi_3,\xi_3,\xi_3^2,\xi_3^2$. Since $\langle A\rangle$ is the $F$-lifting of $\langle[A]\rangle$, this contradicts Proposition~\ref{prop:abelian-3-groups}(1), where every nontrivial matrix has eigenvalue $1$ with multiplicity at least two.

	Finally, let $\Pi$ be intransitive. Then $|H|$ has only prime divisors $2$ and $3$. Let $Q$ be the largest normal $2$-subgroup of $H$. If $Q=1$, we may apply Lemma~\ref{lem:monomial-o2-free}.
	Otherwise, the group $\Omega:=\{g\in Z(Q)\mid g^2=1\}$ is a nontrivial normal elementary abelian subgroup. The $F$-lifting of $\Omega$ acts on $\C^5$ by at least two distinct characters. If each has multiplicity at most $3$, apply Lemmas~\ref{lem:monomial-smooth-sum} and~\ref{lem:monomial-small-blocks}.
	Otherwise, there are exactly two, of multiplicities $4$ and $1$. The elements of the $F$-lifting of $\Omega$ act by $\pm1$ on the corresponding subspaces, and such elements form a group of order $4$ containing $-I_5$, whose image under $\pi$ has order $2$. Hence $\Omega\cong C_2$, so $\Omega$ is central, and Proposition~\ref{prop:monomial-low-residue} applies.
\end{proof}

\section{Proofs of the main results}\label{sec:maximal-projective-actions}
In this section, we prove Theorems~\ref{thm:main} and~\ref{thm:24-projective-actions} and Corollary~\ref{cor:jordan}, and we deduce from Theorem~\ref{thm:24-projective-actions} the classification of finite groups of projective automorphisms of smooth quartic surfaces (Theorem~\ref{thm:quartic-surfaces}).

\subsection{Smooth quartic threefolds}
We first show that none of the groups $G_1,\dots,G_{24}$ of Examples~\ref{ex:17maximal} and~\ref{ex:seven-projective-actions} can be omitted from the list.

\begin{lemma}\label{lem:24-irredundant}
	None of the groups $G_1,\dots,G_{24}$ is conjugate to a subgroup of another. Moreover, none of $G_1,\dots,G_{17}$ is isomorphic to a subgroup of another.
\end{lemma}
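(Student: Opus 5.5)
To prove Lemma~\ref{lem:24-irredundant}, I would compare the $24$ groups through a small set of invariants that are preserved under conjugation in $\PGL(5,\C)$ and monotone under inclusion. Two pieces of data are available for every $G_i$ in Examples~\ref{ex:17maximal} and~\ref{ex:seven-projective-actions}: the order $|G_i|$, and, for each element $[A]$, the multiset of eigenvalues of its unique $F$-lifting $A$. Uniqueness of the lifting holds because every subgroup is $F$-liftable and $\gcd(4,5)=1$, so we may normalize $\det A=1$. If $G_j$ is conjugate to a subgroup of $G_i$, then $|G_j|$ divides $|G_i|$. Moreover, every conjugacy class of elements of $G_j$, recorded as its order together with the spectrum of its $\SL(5,\C)$-lifting, must occur in $G_i$. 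The comparison is then a finite case check. I would first discard all pairs with $|G_j|\nmid|G_i|$ or $|G_j|\ge|G_i|$ (equal orders with $i\ne j$ would force conjugacy). For the surviving pairs I would compare the eigenvalue-spectrum data, which GAP can compute from the explicit generators.

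For the second assertion, about abstract groups, the approach is analogous with purely group-theoretic invariants. These are the order, the isomorphism types of Sylow subgroups, the exponent, and the element-order statistics, and for solvable versus nonsolvable groups the composition factors such as $\PSL(2,7)$, $A_5$ and $C_{61}$. Many pairs are settled at once. For example, $C_{61}\rtimes C_5$, $C_{81}$ and $C_{108}$ contain elements of order $61$, $81$ and $108$ respectively, and by Theorem~\ref{thm:cyclic} the only other list members with large cyclic subgroups are few. The groups $G_2$, $G_4$ and $G_7$ are the only ones containing $\PSL(2,7)$, and $G_8$ and $G_9$ are the small ones containing $\SL(2,5)$ or $A_5$-sections. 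The remaining pairs, mostly among the large $2$-groups-by-something such as $G_1,G_3,G_5,G_6$ with orders $30720,7680,4608,4608$, I would settle by a direct GAP check (\texttt{IsomorphicSubgroups} or a comparison of normal structure) on the explicit matrix generators. Here I would use, for instance, that $G_1$ has a normal abelian subgroup $C_4^4$ with quotient $S_5$, whereas $G_3$ has a different center and chief series.

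For the first assertion I must also handle $G_{18},\dots,G_{24}$, which are abstractly isomorphic to subgroups of $G_{10}$, $G_{15}$ and $G_{17}$. For these, the projective representation carries the information. I would use that a conjugation $G_j\subset gG_ig^{-1}$ sends the $F_j$-lifting into the $F_i$-lifting up to a character into $\langle\xi_4\rangle$, which the determinant normalization removes. Consequently the spectra of $\SL$-liftings of generators must match. For the cyclic groups $G_{20}$, $G_{21}$, $G_{22}$ (all $C_{27}$), $G_{15}$, $G_{16}$ and $G_{23}$, this is a direct comparison of the exponent vectors in Proposition~\ref{prop:abelian-3-groups}(3). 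The three $C_{27}$'s correspond to distinct values $a\in\{0,6,15,18\}$ there, so they are pairwise nonconjugate. None of them embeds into the $C_{81}=G_{16}$, the $C_{108}=G_{15}$, or $G_{10}$, since their generator spectra differ from those of all order-$27$ elements in those groups, which can be listed explicitly.

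I expect the main obstacle to be the nonabelian small cases $G_{19}$ and $G_{24}$ against $G_{10}$, $G_{17}$ and $G_5$, where abstract embeddings exist. For these I would use the subdegree sequence and the primitive constituents, which are conjugation invariants of the subgroup's action. $G_{24}$ has $l=(2,2,1)$ with $J$ a graph of $A_4$ and $N\cong C_{12}$ (Example~\ref{ex:221-blocks}). Any conjugate of it inside another $G_i$ would force the same block structure, and the proof of Lemma~\ref{lem:221} already shows that such a group has order at most $144$. For $G_{19}$, I would check that its order-$3$ elements $[\Diag(1,1,\xi_3,\xi_3^2,\xi_3^2)]$ realize spectrum case (1) with $(a,b)=(1,2)$ of Proposition~\ref{prop:abelian-3-groups}, and then verify with GAP that no $G_i$ containing such an element contains a $C_3\times S_3$ with matching spectra.
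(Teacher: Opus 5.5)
Your overall plan matches the paper's proof, which is likewise a GAP verification from the explicit generators. It tests abstract embeddings for $i\le17$ and conjugacy in $\PGL(5,\C)$ for $i\ge18$. For the latter the paper uses an exact criterion: two finite subgroups of $\PGL(5,\C)$ are conjugate if and only if their full preimages in $\SL(5,\C)$ admit a trace-preserving isomorphism.

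Your elementwise spectrum comparison is only a necessary condition, so it proves nonconjugacy only when some class actually fails to occur. It is exact for the cyclic groups $G_{18},G_{20},G_{21},G_{22},G_{23}$. Your check for $G_{19}$, matching spectra along an embedding of $C_3\times S_3$, is in effect the paper's test.

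The invariant itself, however, is set up incorrectly. $F$-liftings are not unique: two liftings of $H$ differ by a homomorphism $H\to\langle\xi_4I_5\rangle$. Also, $\gcd(4,5)=1$ does not let you normalize $\det A=1$ inside $\Aut(F)$. The generator $A=\Diag(\xi_{108},\xi_{36}^{35},\xi_{12},-i,1)$ of $G_{15}$ lies in $\Aut(F_{15})$ with $\det A=\xi_{27}^{22}$, so no preimage of $[A]$ in $\Aut(F_{15})$ has determinant $1$. If you normalize in $\SL(5,\C)$ instead, there are five lifts differing by powers of $\xi_5$. Their spectra differ for elements of order divisible by $5$, which occur in $G_1,G_3,G_8,G_9,G_{11},G_{12}$. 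The well-defined invariant is the spectrum up to a common scalar factor, or better, the character of the full preimage in $\SL(5,\C)$.

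The genuine gap is your argument for $G_{24}$. The subdegree sequence, $J$ and $N$ are invariants of $G_{24}$ itself, not of a group containing it. The bound $|H|\le144$ from the proof of Lemma~\ref{lem:221} (or Lemma~\ref{lem:monomial-binary-blocks}) applies only to a group that permutes the two planes of $G_{24}$. If $G_{24}\subset G_j$, it only shows that the stabilizer of these planes in $G_j$ is $G_{24}$. That is no contradiction, because $G_j$ need not preserve the planes; for instance $l(G_{10})=(2,1,1,1)$.

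Your own method repairs this case:
\begin{itemize}
\item $144$ divides $|G_j|$ only for $j\in\{2,5,6,10,13\}$.
\item $G_{24}$ has elements of order $9$, so it does not embed in $G_2,G_5,G_6$, whose Sylow $3$-subgroups are $C_3^2$.
\item $G_{24}$ has the quotient $A_4$, so it is not nilpotent and does not embed in the nilpotent group $G_{13}$.
\item $[\Diag(-iI_4,1)]=[\Diag(I_4,i)]\in G_{24}$ has no conjugate in $G_{10}$, the image of $\Aut(T_2)\times\Aut(T_3)$. For an element of $G_{10}$ with a four-dimensional eigenspace, the ratio of its two eigenvalues is $-1$ or $\xi_3^{\pm1}$, never $\pm i$.
\end{itemize}

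Two smaller points. Pairs of equal order ($G_5,G_6$; $G_8,G_9$; $G_{20},G_{21},G_{22}$) must be shown nonconjugate rather than discarded. Also, Proposition~\ref{prop:abelian-3-groups} does not assert that its list is irredundant. So distinct values of $a$ do not by themselves separate the three $C_{27}$'s; you still have to compare the exponent multisets up to units and a common shift.
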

\begin{proof}
	Let $i\neq j$ be such that $|G_i|$ divides $|G_j|$. Using the generators in Examples~\ref{ex:17maximal} and~\ref{ex:seven-projective-actions}, we verify in GAP~\cite{GAP} that $G_i$ is not isomorphic to a subgroup of $G_j$ if $i\leq17$, and that it is not conjugate to a subgroup of $G_j$ if $i\geq18$. For the latter, we use that two finite subgroups of $\PGL(5,\C)$ are conjugate if and only if their full preimages in $\SL(5,\C)$ admit a trace-preserving isomorphism. GAP shows that no such isomorphism exists.
\end{proof}

\begin{proof}[Proof of Theorem~\ref{thm:24-projective-actions}]
	The theorem follows from Theorems~\ref{thm:nonmonomial} and~\ref{thm:monomial}, Lemma~\ref{lem:24-irredundant}, and the smoothness of $X_i$ (for suitable parameters in (17), (18), and (20)--(23)).
\end{proof}

\begin{proof}[Proof of Theorem~\ref{thm:main}]
	By Lemma~\ref{lem:24-irredundant}, it suffices to show that $G_{18},\dots,G_{24}$ are isomorphic to subgroups of $G_1,\dots,G_{17}$. The groups $G_{18}\cong C_9$ and $G_{23}\cong C_{36}$ embed into $G_{10}\cong(C_4.A_4)\times C_9$, the group $G_{19}\cong C_3\times S_3$ embeds into $G_{17}\cong C_3\times D_{18}$, and the groups $G_{20}\cong G_{21}\cong G_{22}\cong C_{27}$ embed into $G_{15}\cong C_{108}$. Finally, a computation in GAP (\cite{GAP}) shows that $G_{24}\cong\id{144}{36}$ is isomorphic to a subgroup of $G_{10}\cong\id{432}{329}$.
\end{proof}

\begin{proof}[Proof of Corollary~\ref{cor:jordan}]
	By Theorems~\ref{thm:main} and~\ref{thm:24-projective-actions}, $\Aut(X)$ is isomorphic to a subgroup of some $G_i$ with $i\leq17$, and $G_i=\Aut(X_i)$. Hence it suffices to show that $J(G_i)\leq2016$ and $\overline J(G_i)\leq120$ for all $i\leq17$, and that $J(G_2)=2016$ and $\overline J(G_1)=120$. This is a direct computation in GAP~(\cite{GAP}).
\end{proof}

\subsection{Smooth quartic surfaces}\label{sec:quartic-surfaces}
We deduce Theorem~\ref{thm:quartic-surfaces} from Theorem~\ref{thm:24-projective-actions} by the $C_d$-covering construction of~\cite[Definition~6.3]{YYZ24} with $d=4$. Since $F_i=h_i+x_5^4$, the forms $h_i$ defined before Theorem~\ref{thm:quartic-surfaces} are smooth. The groups $G_{h_3}$, $G_{h_5}$, $G_{h_6}$, $G_{h_{11}}$, and $G_{h_{24}}$ of Theorem~\ref{thm:quartic-surfaces} are $\id{1920}{240993}$, $\id{384}{5816}$, $\id{1152}{157515}$, $\id{80}{30}$, and $\id{36}{3}$, respectively.

\begin{proof}[Proof of Theorem~\ref{thm:quartic-surfaces}]
	Let $f\in\C[x_1,x_2,x_3,x_4]$ be a smooth quartic form. Let $F=f+x_5^4$ and let $\delta=[\Diag(I_4,\xi_4)]$. Since the eigenvalues of $\Diag(I_4,\xi_4)$ have multiplicities $4$ and $1$, every element of $\PGL(5,\C)$ commuting with $\delta$ preserves the two eigenspaces, and hence has the form $[\Diag(A,1)]$ with $A\in\GL(4,\C)$. Such an element lies in $G_F$ if and only if $A\in\Aut(f)$. Thus $A\mapsto[\Diag(A,1)]$ is an isomorphism from $\Aut(f)$ to $C_{G_F}(\delta)$, and restricting to the four-dimensional eigenspace gives an exact sequence
	$$
	1\longrightarrow\langle\delta\rangle\longrightarrow C_{G_F}(\delta)\longrightarrow G_f\longrightarrow1.
	$$
	By Theorem~\ref{thm:24-projective-actions}, $G_F$ is conjugate to a subgroup of some $G_j$. Let $\delta_j\in G_j$ be the image of $\delta$ under this conjugation. Then $G_f$ is conjugate to a subgroup of $C_{G_j}(\delta_j)/\langle\delta_j\rangle$, which acts on the four-dimensional eigenspace of $\delta_j$. In a basis in which $\delta_j=[\Diag(I_4,\xi_4)]$, the form $F_j$ becomes $h+ax_5^4$ with $a\in\C^*$ and $h$ smooth. Indeed, $\Diag(I_4,\xi_4)$ acts on each monomial of degree $k$ in $x_5$ by $\xi_4^k$ and maps $F_j$ to a multiple of itself. As $F_j$ is not divisible by $x_5$, this multiple is $1$, so only $k=0$ and $k=4$ occur. Since $G_j=\Aut(X_j)$ by Theorem~\ref{thm:24-projective-actions}, we have $C_{G_j}(\delta_j)/\langle\delta_j\rangle\cong G_h$.
	
	Using the generators in Examples~\ref{ex:17maximal} and~\ref{ex:seven-projective-actions}, we determined in GAP~\cite{GAP} the elements of $G_1,\dots,G_{24}$ that are conjugate to $\delta$ or $\delta^{-1}$ in $\PGL(5,\C)$. Such elements exist in $G_j$ only for $j\leq15$ with $j\neq2,10,12$ and for $j=23,24$. Up to conjugacy in $G_j$, they are $[\Diag(I_4,\xi_4^{\pm1})]$. Hence, after conjugating $\delta_j$ within $G_j$, we may assume that $\delta_j=[\Diag(I_4,\xi_4^{\pm1})]$, so that $C_{G_j}(\delta_j)/\langle\delta_j\rangle=G_{h_j}$. So $G_f$ is conjugate to a subgroup of $G_{h_j}$ for one of these $14$ indices $j$. We then compare these $14$ groups in GAP. It turns out that $G_{h_9}$ is conjugate to $G_{h_8}$ and $G_{h_{14}}$ is conjugate to a subgroup of $G_{h_6}$, while no other of these groups is conjugate to a subgroup of another. Since each $G_{h_i}$ preserves the smooth quartic surface $V(h_i)$, this proves~(1). Finally, $G_{h_7}$, $G_{h_8}$, and $G_{h_{23}}$ are isomorphic to subgroups of $G_{h_4}$, $G_{h_3}$, and $G_{h_{15}}$, respectively, and none of the remaining $9$ groups is isomorphic to a subgroup of another, which proves~(2).
\end{proof}

Among the $12$ groups in Theorem~\ref{thm:quartic-surfaces}(1), only $G_{h_3}$, $G_{h_7}$, and $G_{h_8}$ are primitive, in agreement with \cite[Section~8]{AOT24}. Although $G_{h_8}$ and $G_{h_9}$ are conjugate, the surfaces $V(h_8)$ and $V(h_9)$ are not projectively equivalent, since $\Aut(h_8)\cong G_8$ and $\Aut(h_9)\cong G_9$ are not isomorphic.

\begin{remark}\label{rem:quartic-surface-liftings}
	Among the $12$ groups in Theorem~\ref{thm:quartic-surfaces}(1), $G_{h_i}$ is $h_i$-liftable if and only if $i\in\{4,13,15,23\}$. The other $8$ projective actions admit no lifting. In fact, for each of them, $-I_4$ lies in the derived subgroup of $\Aut(h_i)$, and since multiplying matrices by scalars does not change their commutators, every lifting of $G_{h_i}$ contains this derived subgroup, and hence $-I_4$.
\end{remark}

\begin{remark}\label{rem:quartic-double-solids}
	The same construction classifies the finite groups acting faithfully on smooth quartic double solids. For a smooth quartic form $f\in\C[x_1,\dots,x_4]$, let $Y_f=\{z^2+f(x_1,\dots,x_4)=0\}\subset\P(1,1,1,1,2)$. The double cover $Y_f\to\P^3$ is given by the ample generator of $\Pic(Y_f)$, so every automorphism of $Y_f$ induces an element of $G_f$, and every element of $G_f$ lifts to $Y_f$. Hence $A\mapsto([x:z]\mapsto[Ax:z])$ induces an isomorphism $\Aut(f)/\langle-I_4\rangle\cong\Aut(Y_f)$, under which $iI_4$ gives the covering involution, and $|\Aut(Y_f)|=2|G_f|$. In the proof of Theorem~\ref{thm:quartic-surfaces}, $A\mapsto[\Diag(A,1)]$ maps $\Aut(f)$ onto $C_{G_F}(\delta)$ and $-I_4$ to $\delta^2$, and the conjugations used there preserve $\delta^2$. Hence a finite group acts faithfully on a smooth quartic double solid if and only if it is isomorphic to a subgroup of $\Aut(h_j)/\langle-I_4\rangle\cong\Aut(Y_{h_j})$ for some $j\in\{1,3,4,5,6,7,8,9,11,13,14,15,23,24\}$. In particular, $|\Aut(Y_f)|\leq2|G_{h_3}|=3840$, with equality for the double solid branched over Burnside's quartic. This bound also follows from \cite[Theorem~2.3 and Example~2.8]{EL25}.
\end{remark}

\section{Full automorphism groups}\label{sec:full-automorphism-groups}
In this section, we determine which finite groups occur as the {\it full} automorphism group of a smooth quartic threefold, and how this group varies in moduli. We follow the approach of Fu--Wang--Zheng for cubic threefolds and fourfolds~\cite{FWZ26}.

Let $U\subset\P(\C[x_1,\dots,x_5]_4)\cong\P^{69}$ be the open subset of smooth quartic forms, and let $\mathcal M=U/\PGL(5,\C)$ be the coarse moduli space of smooth quartic threefolds, of dimension $69-24=45$.

\begin{definition}[{\cite{FWZ26}}]\label{def:saturated}
	Let $H\subset\GL(5,\C)$ be a finite subgroup containing $\xi_4I_5$, let $W(H)\subset\C[x_1,\dots,x_5]_4$ be the space of $H$-invariant quartic forms, let $C(H):=C_{\GL(5,\C)}(H)$, and set $m(H)=\dim W(H)-\dim C(H)$. We say that $H$ is {\it saturated} if $W(H)$ contains a smooth form and $\Aut(F)=H$ for a general smooth form $F\in W(H)$. We denote by $\mathcal M(H)\subset\mathcal M$ the set of classes $[V(F)]$ with $F\in W(H)$ smooth, and by $\mathcal M_H\subset\mathcal M$ the set of classes $[V(F)]$ with $F$ smooth and $\Aut(F)$ conjugate to $H$.
\end{definition}

By Schur's lemma, $\dim C(H)$ is the sum of the squares of the multiplicities of the irreducible constituents of $\C^5$. Since $[U/\PGL(5,\C)]$ is a separated Deligne--Mumford stack \cite{Ben13}, the stabilizer group scheme over $U$, with fibers $G_F$, is finite and unramified, so $|\Aut(F)|=4|G_F|$ is upper semicontinuous on $U$. Hence $\Aut(F)$ is saturated for every smooth $F$, and $\mathcal M$ is the disjoint union of the sets $\mathcal M_H$ over the conjugacy classes of saturated groups $H$. The threefolds with $\Aut(X)=1$ correspond to $H=\langle\xi_4I_5\rangle$, with $m(H)=70-25=45$. Following~\cite{FWZ26}, we have:

\begin{proposition}[{\cite[(2.2) and Proposition~2.2]{FWZ26}}]\label{prop:saturated-strata}
	Let $H\subset\GL(5,\C)$ be a finite subgroup containing $\xi_4I_5$ such that $W(H)$ contains a smooth form.
	\begin{enumerate}
		\item $\dim\mathcal M(H)=m(H)$.
		\item The group $H$ is saturated if and only if $m(K)<m(H)$ for every finite subgroup $K\supsetneq H$ of $\GL(5,\C)$ such that $W(K)$ contains a smooth form.
	\end{enumerate}
\end{proposition}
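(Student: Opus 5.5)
We can view this as a standard deformation-theoretic dimension count for the stratum of forms with prescribed symmetry, following \cite{FWZ26}. For (1), consider the incidence map $\Phi\colon\GL(5,\C)\times W(H)^{\rm sm}\to U$, $(A,F)\mapsto[A(F)]$, where $W(H)^{\rm sm}$ is the open subset of smooth forms. The image in $\mathcal M$ is $\mathcal M(H)$. We will show that the general fiber of $W(H)^{\rm sm}\to\mathcal M(H)$ has dimension $\dim C(H)$; then $\dim\mathcal M(H)=m(H)$.

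Fix a general $F\in W(H)^{\rm sm}$ and let $Z_F=\{A\in\GL(5,\C)\mid A(F)\in W(H)\}$. The fiber of $W(H)^{\rm sm}\to\mathcal M(H)$ through $F$ is $Z_F(F)$, up to scalars, which are already in $C(H)$. Its dimension equals $\dim Z_F$, because the stabilizer $\Aut(F)$ is finite by \cite{MM63}. If $A(F)\in W(H)$, then $H\subset\Aut(A(F))=A^{-1}\Aut(F)A$, so $AHA^{-1}\subset\Aut(F)$. Since $\Aut(F)$ is finite, $Z_F$ is a finite union of sets of the form $A_0\,N$, where $N$ ranges over cosets of the normalizer $N_{\GL}(H)$ mapping into $\Aut(F)$. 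The identity component of $N_{\GL}(H)$ is the identity component of $C(H)$, since the automorphism group of the finite group $H$ is finite. Hence $\dim Z_F=\dim C(H)$, which gives (1). The main obstacle here is genericity: we must check that the conjugates of $H$ inside $\Aut(F)$ are finite in number and that $C(H)$ really acts on $W(H)$ with finite stabilizers. Both follow from the finiteness of $\Aut(F)$ for smooth $F$.

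For (2), first suppose $H$ is saturated and $K\supsetneq H$ has a smooth form in $W(K)$. Then $W(K)\subset W(H)$, so $\mathcal M(K)\subset\mathcal M(H)$. If $m(K)=m(H)$, then by (1) and the irreducibility of $\mathcal M(H)$ (it is the image of an irreducible variety), $\mathcal M(K)$ would be dense in $\mathcal M(H)$. A general $F\in W(H)$ would then be isomorphic to a form with automorphism group containing a conjugate of $K$, so $|\Aut(F)|\ge|K|>|H|$, contradicting saturation. Since $\mathcal M(K)\subset\mathcal M(H)$, we also have $m(K)\le m(H)$, and therefore $m(K)<m(H)$.

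Conversely, suppose $H$ is not saturated. By upper semicontinuity of $|\Aut|$ (finite unramified stabilizers, \cite{Ben13}), the general $F\in W(H)^{\rm sm}$ has $\Aut(F)=K\supsetneq H$. Up to conjugacy there are only finitely many finite groups containing $H$ that occur as $\Aut$ of a smooth quartic, since their orders are bounded by Theorem~\ref{thm:main}. So one such $K$ occurs for a dense set of $F$, and this set lies in a finite union of the sets $\bigcup_{A\in\GL}A(W(K'))\cap W(H)$, where $K'$ runs over conjugates of $K$ with $H\subset K'$. Modulo conjugation by the normalizer of $H$, there are finitely many such $K'$. One of them, still denoted $K$, satisfies $\mathcal M(K)\supset$ a dense subset of $\mathcal M(H)$, so $m(K)\ge m(H)$ by (1), contradicting the hypothesis. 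The delicate step is this finiteness of relevant overgroups $K$ up to the action of $C(H)$. I expect this to be handled via rigidity of finite subgroups: there are finitely many conjugacy classes of finite subgroups of bounded order, and conjugates containing $H$ form finitely many $N_{\GL}(H)$-orbits.
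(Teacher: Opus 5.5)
Your argument is correct. The paper gives no proof of its own here: it quotes the statement from \cite{FWZ26}. The facts it records around the statement are exactly the ingredients you use:
\begin{itemize}
\item finiteness of $\Aut(F)$;
\item upper semicontinuity of $|\Aut(F)|$, from \cite{Ben13};
\item in Remark~\ref{rem:nontrivial-locus}, closedness of the locus of forms whose automorphism group contains a conjugate of a given group.
\end{itemize}
Your proof of (1) is the standard dimension count. You write $Z_F$ as a finite union of cosets $A_jN_{\GL}(H)$ and use $[N_{\GL}(H):C(H)]<\infty$. This even shows that every fiber of $W(H)^{\rm sm}\to\mathcal M(H)$, not just the general one, has dimension $\dim C(H)$.

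Three small points on (2).

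First, in the forward direction you pass from ``$\mathcal M(K)$ is dense in $\mathcal M(H)$'' to a statement about a general $F\in W(H)$. This needs $\mathcal M(K)$ to be constructible (Chevalley) or closed, and you should say so.

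Second, in the converse, the finiteness of overgroups of $H$ modulo $N_{\GL}(H)$ is superfluous. Since $W(AKA^{-1})=A^{-1}(W(K))$, the set $\mathcal M(K)$ depends only on the conjugacy class of $K$. So you may take $K=\Aut(F_0)$ for any $F_0\in W(H)$ in your dense set. This $K$ automatically satisfies $K\supsetneq H$: non-saturation together with semicontinuity forces $|\Aut(F)|>|H|$ on all of $W(H)^{\rm sm}$, because the minimum of $|\Aut|$ is attained on a dense open subset.

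Third, your appeal to Theorem~\ref{thm:main} for finiteness is not circular, since Section~\ref{sec:full-automorphism-groups} comes after its proof. But it makes a general lemma depend on the classification. It suffices that $\GL(5,\C)$ has only countably many conjugacy classes of finite subgroups, because an irreducible complex variety is not a countable union of proper closed subsets. Apply this to the closed sets of $F\in W(H)^{\rm sm}$ such that $\Aut(F)$ contains a conjugate of a fixed $K$. One of them is all of $W(H)^{\rm sm}$, which gives $\mathcal M(H)\subset\mathcal M(K)$ and hence $m(K)\geq m(H)$ by (1). This keeps the criterion valid in the generality in which \cite{FWZ26} uses it.
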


For $1\leq i\leq24$, let $F_i$ be the form of Examples~\ref{ex:17maximal} and~\ref{ex:seven-projective-actions}, with general parameters for $i=17,18,20,\dots,23$. Since $G_{F_i}=G_i$ by Theorem~\ref{thm:24-projective-actions}, (2.3) gives $\Aut(F_i)=\langle\widetilde G_i,\xi_4I_5\rangle\cong C_4\times G_i$ for every $F_i$-lifting $\widetilde G_i$ of $G_i$. 

\begin{theorem}\label{thm:full-groups}
	\begin{enumerate}
    	\item Up to isomorphism, exactly $194$ finite groups occur as $\Aut(X)$ for a smooth quartic threefold $X$, namely $G_1,\dots,G_{24}$ (with $G_{20}\cong G_{21}\cong G_{22}$) and the $172$ groups in Table~\ref{tab:full-groups}.
		\item Exactly $311$ conjugacy classes of finite subgroups of $\PGL(5,\C)$ occur as $\Aut(X)$, and $H\mapsto\pi(H)$ is a bijection from the conjugacy classes of saturated groups onto them.
		\item Every saturated group is conjugate to a subgroup of $\Aut(F_i)$ for some $1\leq i\leq24$.
	\end{enumerate}
\end{theorem}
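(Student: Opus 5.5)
The plan is to organize the enumeration by the maximal groups. By Theorem~\ref{thm:24-projective-actions}, for every smooth quartic threefold $X=V(F)$ the group $\Aut(X)$ is conjugate in $\PGL(5,\C)$ to a subgroup of some $G_i$. Hence $\Aut(F)=\pi^{-1}(\Aut(X))\cap\Aut(F)$ is conjugate to a subgroup of $\langle\widetilde G_i,\xi_4I_5\rangle=\Aut(F_i)$, for any $F$-lifting, since all liftings are contained in $\langle\widetilde G_i,\xi_4I_5\rangle$ by (2.3). Applying this to a general smooth $F\in W(H)$ with $\Aut(F)=H$ proves~(3) directly.

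For (2) and (1), I would proceed computationally, following \cite{FWZ26}. For each $i\leq24$, enumerate in GAP the conjugacy classes of subgroups $H\subset\Aut(F_i)$ containing $\xi_4I_5$. Then identify those classes that become conjugate in $\GL(5,\C)$, using the trace-preserving-isomorphism criterion of Lemma~\ref{lem:24-irredundant}, applied here to character-preserving isomorphisms of linear groups. For each surviving $H$, compute $W(H)$ by Molien-type linear algebra on the $70$ quartic monomials, and test whether a random element of $W(H)$ is smooth. Compute $\dim C(H)$ from the multiplicities of irreducible constituents, and hence $m(H)$.

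Saturation would then be decided by Proposition~\ref{prop:saturated-strata}(2). Any overgroup $K\supsetneq H$ with $W(K)$ containing a smooth form is itself contained, up to conjugacy, in some $\Aut(F_j)$ by the argument above, so it suffices to range over the finite list already enumerated. The check is that $m(K)<m(H)$ for all listed $K$ that contain a $\GL(5,\C)$-conjugate of $H$.

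The map $H\mapsto\pi(H)$ is a bijection from saturated classes onto the classes of full groups $G_F$. Indeed, every $G_F$ arises from the saturated group $\Aut(F)$. Conversely, $\Aut(F)=\pi^{-1}(G_F)\cap\Aut(F)$ is recovered from $\pi(\Aut(F))$ together with the scalars $\xi_4I_5$, and two saturated groups with conjugate images are conjugate because both contain $\langle\xi_4 I_5\rangle$. Counting the surviving classes gives $311$. Sorting the images $\pi(H)$ by isomorphism type (via GAP's IdGroup or, for large orders, isomorphism tests) gives the $194$ abstract groups. This includes the $24$ groups $G_i$, which are saturated by Theorem~\ref{thm:24-projective-actions}, with $G_{20}\cong G_{21}\cong G_{22}$.

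The main obstacle is the sheer size of the enumeration. $\Aut(F_1)$ has order $4\cdot30720$, and its subgroup lattice is enormous, while conjugacy must be taken in $\GL(5,\C)$ rather than within each $\Aut(F_i)$, across different $i$. I would first prune heavily: discard subgroups $H$ whose invariant space $W(H)$ fails Lemma~\ref{lem:quartic-sm}(1)--(3) on monomials, which is cheap when $H$ contains large diagonal parts. I would also process subgroups from the top down, so that once $m(K)=m(H)$ for some overgroup $K$, the class of $H$ is discarded. Only afterwards would I merge across different $i$ via character comparison. A secondary difficulty is certifying smoothness of a general member of $W(H)$ rigorously, for which exhibiting one explicit smooth rational member suffices.
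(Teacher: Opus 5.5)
\textbf{There is a genuine gap: twisted liftings.} Your argument for (3) rests on the claim that all liftings of $G_F\subset G_i$ lie in $\langle\widetilde G_i,\xi_4I_5\rangle$ ``by (2.3)''. That claim is false. (2.3) only says that two $F$-liftings \emph{for the same form} $F$ differ by a homomorphism into $\langle\xi_4I_5\rangle$.

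After conjugating $G_F$ into $G_i$, the $F$-lifting of $G_F$ is $\lambda\widetilde G$, where $\widetilde G$ is its $F_i$-lifting and $\lambda\colon G_F\to\C^*$ is an arbitrary homomorphism. Its values may lie in $\mu_3$, $\mu_8$, and so on. Then $\Aut(F)=\langle\lambda\widetilde G,\xi_4I_5\rangle$. An element $\lambda([A])A$ with $\lambda([A])^4\neq1$ maps $F_i$ to $\lambda([A])^4F_i\neq F_i$, so $\Aut(F)$ need not lie in $\Aut(F_i)$.

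Your proof uses this false claim twice:
\begin{itemize}
\item You enumerate only subgroups of the $\Aut(F_i)$, i.e.\ only the untwisted groups $\pi^{-1}(G)\cap\Aut(F_i)$.
\item Your saturation test asserts that every overgroup $K$ with a smooth invariant form already appears in your list.
\end{itemize}
So the completeness of your list presupposes (3), which is what you set out to prove.

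The same confusion breaks your injectivity argument for $H\mapsto\pi(H)$ in (2). Containing $\langle\xi_4I_5\rangle$ does not determine a group from its image. For example, $\langle\Diag(1,1,1,1,\xi_3),\xi_4I_5\rangle$ and $\langle\xi_3\Diag(1,1,1,1,\xi_3),\xi_4I_5\rangle$ have the same image in $\PGL(5,\C)$. They are not conjugate in $\GL(5,\C)$: every nonscalar element of the second group has eigenvalues $\zeta\{\xi_3,\xi_3,\xi_3,\xi_3,\xi_3^{2}\}$ or $\zeta\{\xi_3^{2},\xi_3^{2},\xi_3^{2},\xi_3^{2},\xi_3\}$ with $\zeta^4=1$. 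Neither multiset equals $\{1,1,1,1,\xi_3\}$. So two saturated groups that are different twists of the same $G$ would a priori violate injectivity.

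\textbf{How the paper handles this.} For each subgroup $G\subset G_i$ up to conjugacy, it forms the twisted groups $H_\lambda=\langle\lambda\widetilde G,\xi_4I_5\rangle$ for all homomorphisms $\lambda\colon G\to\C^*$; these depend only on $\lambda^4$, so the family is finite. It then observes that every $\Aut(F)$ is conjugate to some $H_\lambda$, which is correct. The smoothness test, the computation of $m$, conjugacy merging, and the saturation test are all run over this larger family. In the saturation test, an overgroup $K$ is replaced by $\Aut(F)$ for general smooth $F\in W(K)$; this lies in the family and has the same $m$.

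In the paper, (3) and the injectivity in (2) are \emph{outputs} of the computation, not inputs. The maximal saturated classes turn out to be exactly those of $\Aut(F_1),\dots,\Aut(F_{24})$, and the images of the $311$ saturated classes are checked to be pairwise nonconjugate.

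With this modification, the rest of your plan matches the paper: character computations of $\dim W$ and $\dim C$, explicit smooth members, and Proposition~\ref{prop:saturated-strata}(2). One further point: the negative cases must be certified, not settled by a failed random smoothness test. The paper does this by applying Lemma~\ref{lem:quartic-sm}, in suitable coordinates, to every form in $W(H_\lambda)$.
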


\begin{table}[htbp]
	\caption{The $172$ groups of Theorem~\ref{thm:full-groups}(1) other than $G_1,\dots,G_{24}$. In the row of order $n$, $[k]$ denotes $\id{n}{k}$; no identifier is available for $n=512$, $1536$, $2688$. Here $E=\id{16}{13}$, the factor $C_2$ of $E^2\rtimes C_2$ interchanges the two copies of $E$, and $S_m$ in $C_4^{m-1}\rtimes S_m$ permutes the coordinates of $\{z\in C_4^m\mid z_1\cdots z_m=1\}$.}
	\label{tab:full-groups}
	\centering
	\begingroup
	\normalfont\fontsize{9}{10.5}\selectfont
	\setlength{\tabcolsep}{0pt}
	\renewcommand{\arraystretch}{1.02}
	\begin{minipage}[t]{0.487\linewidth}
		\begin{tabular}[t]{@{}>{\raggedleft\arraybackslash}p{20pt}@{\hspace{6pt}}p{\dimexpr\linewidth-26pt\relax}@{}}
			\toprule $n$ & Groups $[k]$\\\midrule
			$1$ & \mbox{$1$\,[$1$]}\\
			\addlinespace[1.2pt]
			$2$ & \mbox{$C_{2}$\,[$1$]}\\
			\addlinespace[1.2pt]
			$3$ & \mbox{$C_{3}$\,[$1$]}\\
			\addlinespace[1.2pt]
			$4$ & \mbox{$C_{4}$\,[$1$]}\hspace{6pt}\mbox{$C_{2}^{2}$\,[$2$]}\\
			\addlinespace[1.2pt]
			$5$ & \mbox{$C_{5}$\,[$1$]}\\
			\addlinespace[1.2pt]
			$6$ & \mbox{$S_{3}$\,[$1$]}\hspace{6pt}\mbox{$C_{6}$\,[$2$]}\\
			\addlinespace[1.2pt]
			$8$ & \mbox{$C_{8}$\,[$1$]}\hspace{6pt}\mbox{$C_{4}\times C_{2}$\,[$2$]}\hspace{6pt}\mbox{$D_{8}$\,[$3$]}\hspace{6pt}\mbox{$Q_{8}$\,[$4$]}\hspace{6pt}\mbox{$C_{2}^{3}$\,[$5$]}\\
			\addlinespace[1.2pt]
			$9$ & \mbox{$C_{3}^{2}$\,[$2$]}\\
			\addlinespace[1.2pt]
			$10$ & \mbox{$D_{10}$\,[$1$]}\\
			\addlinespace[1.2pt]
			$12$ & \mbox{$C_{3}\rtimes C_{4}$\,[$1$]}\hspace{6pt}\mbox{$C_{12}$\,[$2$]}\hspace{6pt}\mbox{$A_{4}$\,[$3$]}\hspace{6pt}\mbox{$D_{12}$\,[$4$]}\hspace{6pt}\mbox{$C_{6}\times C_{2}$\,[$5$]}\\
			\addlinespace[1.2pt]
			$16$ & \mbox{$C_{16}$\,[$1$]}\hspace{6pt}\mbox{$C_{4}^{2}$\,[$2$]}\hspace{6pt}\mbox{$(C_{4}\times C_{2})\rtimes C_{2}$\,[$3$]}\hspace{6pt}\mbox{$C_{8}\times C_{2}$\,[$5$]}\\
			& \mbox{$C_{8}\rtimes C_{2}$\,[$6$]}\hspace{6pt}\mbox{$D_{16}$\,[$7$]}\hspace{6pt}\mbox{$C_8\rtimes C_2$\,[$8$]}\hspace{6pt}\mbox{$C_{4}\times C_{2}^{2}$\,[$10$]}\\
			& \mbox{$C_{2}\times D_{8}$\,[$11$]}\hspace{6pt}\mbox{$(C_4\times C_2)\rtimes C_2$\,[$13$]}\hspace{6pt}\mbox{$C_{2}^{4}$\,[$14$]}\\
			\addlinespace[1.2pt]
			$18$ & \mbox{$D_{18}$\,[$1$]}\hspace{6pt}\mbox{$C_{18}$\,[$2$]}\hspace{6pt}\mbox{$C_{6}\times C_{3}$\,[$5$]}\\
			\addlinespace[1.2pt]
			$20$ & \mbox{$C_{5}\rtimes C_{4}$\,[$3$]}\hspace{6pt}\mbox{$D_{20}$\,[$4$]}\\
			\addlinespace[1.2pt]
			$21$ & \mbox{$C_{7}\rtimes C_{3}$\,[$1$]}\\
			\addlinespace[1.2pt]
			$24$ & \mbox{$C_{3}\rtimes C_{8}$\,[$1$]}\hspace{6pt}\mbox{$C_{24}$\,[$2$]}\hspace{6pt}\mbox{$\SL(2,3)$\,[$3$]}\hspace{6pt}\mbox{$C_{4}\times S_{3}$\,[$5$]}\\
			& \mbox{$D_{24}$\,[$6$]}\hspace{6pt}\mbox{$(C_{6}\times C_{2})\rtimes C_{2}$\,[$8$]}\hspace{6pt}\mbox{$C_{12}\times C_{2}$\,[$9$]}\\
			& \mbox{$C_{3}\times D_{8}$\,[$10$]}\hspace{6pt}\mbox{$S_{4}$\,[$12$]}\hspace{6pt}\mbox{$C_{2}\times A_{4}$\,[$13$]}\hspace{6pt}\mbox{$C_{2}^{2}\times S_{3}$\,[$14$]}\\
			& \mbox{$C_{6}\times C_{2}^{2}$\,[$15$]}\\
			\addlinespace[1.2pt]
			$27$ & \mbox{$C_{9}\times C_{3}$\,[$2$]}\\
			\addlinespace[1.2pt]
			$32$ & \mbox{$(C_{4}\times C_{2})\rtimes C_{4}$\,[$2$]}\hspace{6pt}\mbox{$C_{8}\rtimes C_{4}$\,[$4$]}\\
			& \mbox{$(C_{8}\times C_{2})\rtimes C_{2}$\,[$5$]}\hspace{6pt}\mbox{$C_{4}^{2}\rtimes C_{2}$\,[$11$]}\hspace{6pt}\mbox{$C_{16}\rtimes C_{2}$\,[$17$]}\\
			& \mbox{$C_{4}^{2}\times C_{2}$\,[$21$]}\hspace{6pt}\mbox{$C_{2}\times ((C_{4}\times C_{2})\rtimes C_{2})$\,[$22$]}\\
			& \mbox{$C_{4}\times D_{8}$\,[$25$]}\hspace{6pt}\mbox{$C_{2}^{4}\rtimes C_{2}$\,[$27$]}\hspace{6pt}\mbox{$(C_{4}\times C_{2}^{2})\rtimes C_{2}$\,[$28$]}\\
			& \mbox{$C_{2}\times (C_{8}\rtimes C_{2})$\,[$37$]}\hspace{6pt}\mbox{$(C_{8}\times C_{2})\rtimes C_{2}$\,[$38$]}\\
			& \mbox{$(C_{8}\times C_{2})\rtimes C_{2}$\,[$42$]}\hspace{6pt}\mbox{$C_{8}\rtimes C_{2}^{2}$\,[$43$]}\hspace{6pt}\mbox{$C_{4}\times C_{2}^{3}$\,[$45$]}\\
			& \mbox{$C_{2}^{2}\times D_{8}$\,[$46$]}\hspace{6pt}\mbox{$C_2\times E$\,[$48$]}\hspace{6pt}\mbox{$C_{2}^{3}\rtimes C_{2}^{2}$\,[$49$]}\\
			\addlinespace[1.2pt]
			$36$ & \mbox{$C_{18}\times C_{2}$\,[$5$]}\hspace{6pt}\mbox{$C_{3}\times (C_{3}\rtimes C_{4})$\,[$6$]}\hspace{6pt}\mbox{$C_{12}\times C_{3}$\,[$8$]}\\
			& \mbox{$C_{6}\times S_{3}$\,[$12$]}\hspace{6pt}\mbox{$C_{6}^{2}$\,[$14$]}\\
			\addlinespace[1.2pt]
			$40$ & \mbox{$C_{4}\times D_{10}$\,[$5$]}\hspace{6pt}\mbox{$C_{2}\times (C_{5}\rtimes C_{4})$\,[$12$]}\\
			\addlinespace[1.2pt]
			$42$ & \mbox{$C_{2}\times (C_{7}\rtimes C_{3})$\,[$2$]}\\
			\addlinespace[1.2pt]
			$48$ & \mbox{$C_{24}\rtimes C_{2}$\,[$5$]}\hspace{6pt}\mbox{$C_{12}\times C_{4}$\,[$20$]}\\
			& \mbox{$C_{3}\times ((C_{4}\times C_{2})\rtimes C_{2})$\,[$21$]}\hspace{6pt}\mbox{$C_{3}\times (C_{8}\rtimes C_{2})$\,[$24$]}\\
			& \mbox{$\GL(2,3)$\,[$29$]}\hspace{6pt}\mbox{$C_4.A_4$\,[$33$]}\hspace{6pt}\mbox{$C_{2}\times C_{4}\times S_{3}$\,[$35$]}\\
			& \mbox{$(C_{12}\times C_{2})\rtimes C_{2}$\,[$37$]}\hspace{6pt}\mbox{$D_{8}\times S_{3}$\,[$38$]}\\
			& \mbox{$C_{12}\times C_{2}^{2}$\,[$44$]}\hspace{6pt}\mbox{$C_{6}\times D_{8}$\,[$45$]}\hspace{6pt}\mbox{$C_3\times E$\,[$47$]}\\
			& \mbox{$C_{2}\times S_{4}$\,[$48$]}\\
			\addlinespace[1.2pt]
			$54$ & \mbox{$C_{54}$\,[$2$]}\hspace{6pt}\mbox{$C_{18}\times C_{3}$\,[$9$]}\\
			\addlinespace[1.2pt]
			$63$ & \mbox{$C_{3}\times (C_{7}\rtimes C_{3})$\,[$3$]}\\
			\addlinespace[1.2pt]
			$64$ & \mbox{$(C_{8}\times C_{4})\rtimes C_{2}$\,[$6$]}\hspace{6pt}\mbox{$C_{16}\rtimes C_{4}$\,[$28$]}\\
			& \mbox{$C_{4}\times ((C_{4}\times C_{2})\rtimes C_{2})$\,[$58$]}\\
			& \mbox{$(C_4^2\rtimes C_2)\times C_{2}$\,[$101$]}\hspace{6pt}\mbox{$(C_{8}\times C_{2}).C_{2}^{2}$\,[$125$]}\\
			\addlinespace[1.2pt]
			\bottomrule
		\end{tabular}
	\end{minipage}\hfill
	\begin{minipage}[t]{0.487\linewidth}
		\begin{tabular}[t]{@{}>{\raggedleft\arraybackslash}p{20pt}@{\hspace{6pt}}p{\dimexpr\linewidth-26pt\relax}@{}}
			\toprule $n$ & Groups $[k]$\\\midrule
			$64$ & \mbox{$(C_{2}^{4}\rtimes C_{2})\rtimes C_{2}$\,[$138$]}\hspace{6pt}\mbox{$C_{2}\times C_{4}\times D_{8}$\,[$196$]}\\
			& \mbox{$(C_{4}\times C_{2}^{3}).C_{2}$\,[$206$]}\hspace{6pt}\mbox{$D_{8}^{2}$\,[$226$]}\hspace{6pt}\mbox{$(C_{4}\times C_{2}^{2}).C_{2}^{2}$\,[$227$]}\\
			& \mbox{$(C_{8}\times C_{2}).C_{2}^{2}$\,[$256$]}\hspace{6pt}\mbox{$C_2^2\times E$\,[$263$]}\\
			& \mbox{$(C_{4}\times C_{2}^{2}).C_{2}^{2}$\,[$266$]}\\
			\addlinespace[1.2pt]
			$72$ & \mbox{$C_{36}\times C_{2}$\,[$9$]}\hspace{6pt}\mbox{$C_{9}\times D_{8}$\,[$10$]}\hspace{6pt}\mbox{$C_{3}\times (C_{3}\rtimes C_{8})$\,[$12$]}\\
			& \mbox{$C_{12}\times S_{3}$\,[$27$]}\hspace{6pt}\mbox{$C_{3}\times D_{24}$\,[$28$]}\\
			& \mbox{$C_{3}\times ((C_{6}\times C_{2})\rtimes C_{2})$\,[$30$]}\hspace{6pt}\mbox{$C_{3}\times S_{4}$\,[$42$]}\\
			\addlinespace[1.2pt]
			$80$ & \mbox{$C_{5}\rtimes (C_{8}\times C_{2})$\,[$28$]}\hspace{6pt}\mbox{$C_{4}\times (C_{5}\rtimes C_{4})$\,[$30$]}\\
			\addlinespace[1.2pt]
			$84$ & \mbox{$C_{4}\times (C_{7}\rtimes C_{3})$\,[$2$]}\\
			\addlinespace[1.2pt]
			$96$ & \mbox{$((C_{4}\times C_{2})\rtimes C_{4})\rtimes C_{3}$\,[$3$]}\hspace{6pt}\mbox{$(C_4^2\rtimes C_2)\times C_{3}$\,[$54$]}\\
			& \mbox{$C_4^2\rtimes S_3$\,[$64$]}\hspace{6pt}\mbox{$\SL(2,3)\rtimes C_{4}$\,[$67$]}\hspace{6pt}\mbox{$C_{12}\times D_{8}$\,[$165$]}\\
			& \mbox{$C_{4}\times S_{4}$\,[$186$]}\hspace{6pt}\mbox{$C_{4}.S_{4}$\,[$192$]}\hspace{6pt}\mbox{$C_{4}.S_{4}$\,[$193$]}\\
			& \mbox{$(C_4.A_4)\times C_{2}$\,[$200$]}\hspace{6pt}\mbox{$S_3\times E$\,[$215$]}\\
			& \mbox{$C_6\times E$\,[$223$]}\hspace{6pt}\mbox{$C_{2}^{2}\times S_{4}$\,[$226$]}\\
			\addlinespace[1.2pt]
			$120$ & \mbox{$S_{5}$\,[$34$]}\\
			\addlinespace[1.2pt]
			$128$ & \mbox{$(C_{4}^{2}\times C_{2}).C_{2}^{2}$\,[$222$]}\hspace{6pt}\mbox{$(C_{4}\times C_{2}^{3}).C_{2}^{2}$\,[$742$]}\\
			& \mbox{$C_{4}^{2}.D_{8}$\,[$928$]}\hspace{6pt}\mbox{$(C_4^2\rtimes C_2)\times C_2^2$\,[$1631$]}\\
			& \mbox{$(C_{4}\times C_{2}^{3}).C_{2}^{2}$\,[$1757$]}\hspace{6pt}\mbox{$D_8\times E$\,[$2200$]}\\
			& \mbox{$(C_{4}^{2}\times C_{2}).C_{2}^{2}$\,[$2213$]}\\
			\addlinespace[1.2pt]
			$144$ & \mbox{$C_9\times E$\,[$50$]}\hspace{6pt}\mbox{$(C_4.A_4)\times C_{3}$\,[$157$]}\\
			& \mbox{$C_{3}\times ((C_{12}\times C_{2})\rtimes C_{2})$\,[$161$]}\hspace{6pt}\mbox{$C_{6}\times S_{4}$\,[$188$]}\\
			\addlinespace[1.2pt]
			$160$ & \mbox{$(C_{2}^{4}\rtimes C_{5})\rtimes C_{2}$\,[$234$]}\\
			\addlinespace[1.2pt]
			$192$ & \mbox{$(C_4^2\rtimes C_2)\times S_3$\,[$379$]}\hspace{6pt}\mbox{$(C_4^2\rtimes C_2)\times C_{6}$\,[$853$]}\\
			& \mbox{$C_2\times(C_4^2\rtimes S_3)$\,[$944$]}\hspace{6pt}\mbox{$C_{2}\times C_{4}\times S_{4}$\,[$1469$]}\\
			& \mbox{$D_{8}\times S_{4}$\,[$1472$]}\hspace{6pt}\mbox{$(C_{4}\times C_{2}).S_{4}$\,[$1482$]}\\
			& \mbox{$C_{2}^{3}.S_{4}$\,[$1493$]}\hspace{6pt}\mbox{$(C_4.A_4)\times C_2^2$\,[$1500$]}\\
			& \mbox{$(C_{4}\times C_{2}).(C_{2}\times A_{4})$\,[$1504$]}\\
			\addlinespace[1.2pt]
			$240$ & \mbox{$\SL(2,5)\rtimes C_{2}$\,[$93$]}\hspace{6pt}\mbox{$C_{2}\times S_{5}$\,[$189$]}\\
			\addlinespace[1.2pt]
			$256$ & \mbox{$(C_{4}\times C_{2}^{3}).D_{8}$\,[$384$]}\hspace{6pt}\mbox{$(C_4^2\rtimes C_2)\times D_8$\,[$15421$]}\\
			& \mbox{$(C_{4}\times C_{2}^{3}).D_{8}$\,[$26534$]}\hspace{6pt}\mbox{$E^2$\,[$55705$]}\\
			\addlinespace[1.2pt]
			$288$ & \mbox{$C_3\times(C_4^2\rtimes S_3)$\,[$397$]}\hspace{6pt}\mbox{$(C_4.A_4)\times S_3$\,[$925$]}\\
			& \mbox{$(C_4.A_4)\times C_{6}$\,[$983$]}\\
			\addlinespace[1.2pt]
			$384$ & \mbox{$C_{2}^{4}.S_{4}$\,[$5602$]}\hspace{6pt}\mbox{$(C_4.A_4)\times D_8$\,[$18173$]}\\
			& \mbox{$S_4\times E$\,[$20063$]}\hspace{6pt}\mbox{$(C_{4}\times C_{2}^{2}).S_{4}$\,[$20095$]}\\
			\addlinespace[1.2pt]
			$480$ & \mbox{$C_{4}\times S_{5}$\,[$943$]}\\
			\addlinespace[1.2pt]
			$512$ & \mbox{$E^2\rtimes C_2$}\\
			\addlinespace[1.2pt]
			$768$ & \mbox{$(C_4.A_4)\times E$\,[$1085077$]}\\
			& \mbox{$(C_4^2\rtimes C_2)\times S_4$\,[$1087299$]}\\
			& \mbox{$(C_{4}\times C_{2}^{3}).S_{4}$\,[$1088543$]}\\
			\addlinespace[1.2pt]
			$1152$ & \mbox{$(C_4.A_4)\times S_4$\,[$157495$]}\\
			\addlinespace[1.2pt]
			$1536$ & \mbox{$E\times(C_4^2\rtimes S_3)$}\hspace{6pt}\mbox{$C_4^3\rtimes S_4$}\\
			\addlinespace[1.2pt]
			$1920$ & \mbox{$C_{2}^{4}\rtimes S_{5}$\,[$240996$]}\\
			\addlinespace[1.2pt]
			$2688$ & \mbox{$\PSL(2,7)\times E$}\\
			\addlinespace[1.2pt]
			\bottomrule
		\end{tabular}
	\end{minipage}
	\endgroup
\end{table}

\begin{proof}[Proof of Theorem~\ref{thm:full-groups}]
For a subgroup $G\subset G_i$, fix an $F_i$-lifting $\widetilde G\subset\Aut(F_i)$ of $G$. For a homomorphism $\lambda\colon G\to\C^*$, let $\lambda\widetilde G:=\{\lambda([A])A\mid A\in\widetilde G\}$ and $H_\lambda:=\langle\lambda\widetilde G,\xi_4I_5\rangle$. Every lifting of $G$ has the form $\lambda\widetilde G$, the group $H_\lambda$ depends only on $\lambda^4$, and $H_\lambda=\pi^{-1}(G)\cap\Aut(F_i)$ for trivial $\lambda$. Let $F$ be a smooth quartic form. By Theorem~\ref{thm:24-projective-actions}, we may assume that $G_F\subset G_i$ for some $i$, and by (2.3), $\Aut(F)=\langle\widetilde H,\xi_4I_5\rangle$ for an $F$-lifting $\widetilde H$ of $G_F$. Hence $\Aut(F)=H_\lambda$ with $G=G_F$ for some $\lambda$, and every saturated group is conjugate to some $H_\lambda$.
	
		The ancillary file \texttt{SaturatedFamilies-code.txt}, run in GAP~\cite{GAP} with input $G_1,\dots,G_{24}$, enumerates these groups in the following five steps, numbered as in its comments, and writes \texttt{SaturatedFamilies-results.txt}.
	\begin{enumerate}
		\item Applying \texttt{MaximalSubgroupClassReps} repeatedly gives $1398$ subgroups $G$ of the $G_i$ up to conjugacy in $\PGL(5,\C)$. As in the proof of Lemma~\ref{lem:24-irredundant}, conjugacy is tested by trace-preserving isomorphisms between the full preimages $\widehat G$ in $\SL(5,\C)$.
		\item For each $G$ and $\lambda$, $\dim W(H_\lambda)$ and $\dim C(H_\lambda)$ are computed from characters. Keeping the pairs with $m(H_\lambda)\geq0$ (Proposition~\ref{prop:saturated-strata}(1)) leaves $1788$ pairs.
		\item For $1444$ of these pairs, we exhibit a smooth form in $W(H_\lambda)$. For the remaining $344$ pairs, Lemma~\ref{lem:quartic-sm} applies, in suitable coordinates, to every form in $W(H_\lambda)$.
		\item The group $H_\lambda$ is determined by the character $\theta$ of $\widehat G$ on $W(H_\lambda)$. Two such groups are conjugate if and only if their characters differ by a trace-preserving automorphism of $\widehat G$. This gives $1404$ conjugacy classes, and restricting $\theta$ along the inclusions from~(1) gives all inclusions among them.
	 \item By Proposition~\ref{prop:saturated-strata}(2), a class $H$ from~(4) is saturated if and only if $m(K)<m(H)$ for every class $K$ from~(4) that strictly contains a conjugate of $H$. Indeed, for any finite group $K\supsetneq H$ such that $W(K)$ contains a smooth form, the group $\Aut(F)$ of a general smooth $F\in W(K)$ is such a class and has the same $m$ as $K$. This leaves $311$ classes.
	\end{enumerate}
	The program checks that the images of these $311$ classes in $\PGL(5,\C)$ are pairwise nonconjugate and have $194$ isomorphism types, namely those of $G_1,\dots,G_{24}$ and of Table~\ref{tab:full-groups}, and that the maximal classes under containment up to conjugacy are those of $\Aut(F_1),\dots,\Aut(F_{24})$. Since $\mathcal M$ is the disjoint union of the sets $\mathcal M_H$ and $\Aut(X_F)=\pi(\Aut(F))$, this proves~(1)--(3). The results file lists the numbers in (1)--(5) and, for each class, generators, a basis of $W(H)$, a smooth member, and the covering relations.
\end{proof}
	\begin{remark}\label{rem:nontrivial-locus}
	Let $H$ be saturated. Then $\mathcal M(H)$, the image of $\P(W(H))\cap U$, is irreducible. It is closed, since by the finiteness above, the forms $F\in U$ such that $\Aut(F)$ contains a conjugate of $H$ form a closed subset. By upper semicontinuity, $\mathcal M_H$ is open and dense in $\mathcal M(H)$. For saturated $K$, $\mathcal M_K\subset\mathcal M(H)$ if and only if $K$ contains a conjugate of $H$.
	
	Up to conjugacy, the $310$ saturated classes other than $\langle\xi_4I_5\rangle$ have $907$ covering relations (see \texttt{SaturatedFamilies-results.txt}). The $7$ minimal ones are $\langle\xi_4I_5,A\rangle$ with
	$$
	\begin{gathered}
		A=\Diag(1,1,1,1,-1),\quad\Diag(1,1,1,-1,-1),\quad\Diag(1,1,1,1,\xi_3),\quad\Diag(1,1,1,\xi_3,\xi_3^2),\\
		\Diag(1,1,\xi_3,\xi_3,\xi_3^2),\quad\Diag(1,1,1,\xi_3,\xi_3),\quad\Diag(1,\xi_5,\xi_5^2,\xi_5^3,\xi_5^4).
	\end{gathered}
	$$
	Hence the locus where $\Aut(X)\neq1$, or equivalently $\Bir(X)\neq1$, has exactly $7$ irreducible components $\mathcal M(\langle\xi_4I_5,A\rangle)$, of dimensions $29$, $25$, $22$, $17$, $15$, $14$, and $9$, respectively. The $24$ maximal classes are those of $\Aut(F_i)$. Their strata are points, except for $i=17,\dots,23$, where they have dimensions $2$, $4$, $3$, $2$, $1$, $1$, and $2$.
	
	Let $1\neq g\in\Aut(X)$. By Luna's \'etale slice theorem, $\mathcal M$ is \'etale locally at $[X]$ the quotient of $H^1(X,T_X)\cong\C^{45}$ by $\Aut(X)$. The fixed space of $g$ maps finitely into the locus above, so it has dimension at most $29$. Hence $\Aut(X)$ contains no pseudoreflections, and by the Chevalley--Shephard--Todd theorem, this locus is the singular locus of $\mathcal M$, of codimension $16$. For smooth cubic threefolds and fourfolds, there are $40$ and $156$ equivalence classes of saturated families~\cite{FWZ26}.
\end{remark}
\begin{remark}\label{rem:terminal-moduli}
	For every smooth quartic threefold $X$, each nontrivial element of $\Aut(X)$ acts on $H^1(X,T_X)$ with age at least $8$, and this bound is attained by the involution $[\Diag(1,1,1,1,-1)]$ of the Fermat quartic threefold. This can be checked by a direct computation over the cyclic groups in Appendix~\ref{app:abelian-representations}. Since $\mathcal M$ is \'etale locally at $[X]$ the quotient of $H^1(X,T_X)$ by $\Aut(X)$, and $\Aut(X)$ contains no pseudoreflections (Remark~\ref{rem:nontrivial-locus}), the Reid--Tai criterion~\cite[(4.11)]{Rei87} shows that $\mathcal M$ has only terminal quotient singularities.
	\end{remark}

\begingroup
\appendix
\section{Representations of abelian groups}
\label{app:abelian-representations}
We list, up to conjugacy, the nontrivial abelian subgroups of $\PGL(5,\C)$ that preserve a smooth quartic threefold. By Theorem~\ref{thm:24-projective-actions}, every such subgroup is conjugate to a subgroup of one of $G_1,\dots,G_{24}$. We therefore obtain the list in GAP~\cite{GAP} from the generators in Examples~\ref{ex:17maximal} and~\ref{ex:seven-projective-actions}: for each $i$, we compute the nontrivial abelian subgroups of $G_i$ up to conjugacy in $G_i$, and we identify two of them when they are conjugate in $\PGL(5,\C)$. Up to isomorphism, they are the $71$ nontrivial groups that embed into one of the groups in Theorem~\ref{thm:maximal-abelian}, listed below by nonincreasing order. For each group, the table gives diagonal matrices whose classes generate a representative of each conjugacy class. In total, there are $276$ conjugacy classes.

\small
\setlength{\tabcolsep}{3pt}
\begin{longtable}{p{0.5cm}p{1.80cm}p{13.05cm}}
	\hline
	No. & Group & Diagonal matrix generators\\
	\hline
	\endfirsthead
	\hline
	No. & Group & Diagonal matrix generators\\
	\hline
	\endhead
	\hline
	\endfoot
	1 & $C_{4}^{4}$ &
	$\langle \Diag(\xi_4,1,1,1,1),$
	$\Diag(1,\xi_4,1,1,1),$
	$\Diag(1,1,\xi_4,1,1),$
	$\Diag(1,1,1,\xi_4,1)\rangle$
	\\
	\hline
	2 & $C_{12}\times C_{4}^{2}$ &
	$\langle \Diag(\xi_3,\xi_4,1,1,1),$
	$\Diag(1,1,\xi_4,1,1),$
	$\Diag(\xi_{4},1,1,\xi_4,1)\rangle$
	\\
	\hline
	3 & $C_{12}^{2}$ &
	$\langle \Diag(\xi_{12},\xi_4^{3},1,1,1),$
	$\Diag(1,1,\xi_{12},\xi_4^{3},1)\rangle$
	\\
	\hline
	4 & $C_{36}\times C_{4}$ &
	$\langle \Diag(\xi_{9},\xi_{3}^{2},\xi_4,1,1)$, $\Diag(1,1,1,\xi_4,1)\rangle$
	\\
	\hline
	5 & $C_{8}\times C_{4}^{2}$ &
	$\langle \Diag(\xi_{8},\xi_{8}^{5},1,1,1)$,
	$\Diag(1,1,\xi_4,1,1),$
	$\Diag(1,1,1,\xi_4,1)\rangle$
	\\
	\hline
	6 & $C_{4}^{3}\times C_2$ &
	$\langle\Diag(\xi_4,1,1,1,1),\Diag(1,\xi_4,1,\xi_4^a,1),\Diag(1,1,\xi_4,\xi_4^a,1),\Diag(1,1,1,-1,1)\rangle (a=0,1)$
	\\
	\hline
	7 & $C_{28}\times C_{4}$ &
	$\langle \Diag(\xi_{7},\xi_{7}^{4},\xi_{7}^{2},\xi_4,1),$
	$\Diag(1,1,1,1,\xi_4)\rangle$
	\\
	\hline
	8 & $C_{108}$ &
	$\langle \Diag(\xi_{108},\xi_{36}^{35},\xi_{12},\xi_4^{3},1)\rangle$
	\\
	\hline
	9 & $C_{36}\times C_{3}$ &
	$\langle \Diag(\xi_{9},\xi_{3}^{2},1,\xi_4,\xi_4),$
	$\Diag(1,1,1,\xi_{3},1)\rangle$
	\\
	\hline
	10 & $C_{24}\times C_{4}$ &
	$\langle \Diag(\xi_{3},1,\xi_{8},\xi_{8}^{5},1),$
	$\Diag(\xi_4,\xi_4,1,1,1)\rangle$
	\\
	\hline
	11 & $C_{12}{\small\times} C_{4}{\small\times} C_{2}$ &
	$\langle \Diag(\xi_{3},1,\xi_4,\xi_4^{a},1),\Diag(\xi_4,\xi_4,1,\xi_4^{b},1),\Diag(1,1,1,-1,1)\rangle(a,b){\small\in}\{(0,0),(0,1),(1,1)\}$
	\\
	\hline
	12 & $C_{84}$ &
	$\langle \Diag(\xi_{7},\xi_{7}^{4},\xi_{7}^{2},\xi_{12},\xi_4^{3})\rangle$
	\\
	\hline
	13 & $C_{81}$ &
	$\langle \Diag(\xi_{81},\xi_{27}^{26},\xi_{9},\xi_{3}^{2},1)\rangle$
	\\
	\hline
	14 & $C_{80}$ &
	$\langle \Diag(\xi_{80},\xi_{80}^{77},\xi_{80}^{9},\xi_{80}^{53},1)\rangle$
	\\
	\hline
	15 & $C_{72}$ &
	$\langle \Diag(\xi_{9},\xi_{3}^{2},1,\xi_{8},\xi_{8}^{5})\rangle$
	\\
	\hline
	16 & $C_{12}\times C_{6}$ &
	$\langle\Diag(\xi_{12},\xi_4^{3},\xi_4^a,\xi_4^a,1),$
	$\Diag(1,1,\xi_{6},-1,1)\rangle$
	$(a\in\{0,1\})$
	\\
	\hline
	17 & $C_{36}\times C_{2}$ &
	$\langle \Diag(\xi_{36},\xi_{12}^{11},\xi_4,\xi_4^{a},1),$
	$\Diag(1,1,1,-1,1)\rangle$
	$(a\in\{0,1\})$
	\\
	\hline
	18 & $C_{8}^{2}$ &
	$\langle \Diag(\xi_{8},\xi_{8}^{5},1,1,1),$
	$\Diag(1,1,\xi_{8},\xi_{8}^{5},1)\rangle$
	\\
	\hline
	19 & $C_{4}^{3}$ &
	$\langle \Diag(\xi_4,1,1,\xi_4^{a},1),$
	$\Diag(1,\xi_4,1,\xi_4^{b},1),$
	$\Diag(1,1,\xi_4,\xi_4^{c},1)\rangle$
	\newline
	$(a,b,c)\in
	\{(0,0,0),(0,0,2),(0,1,1),(0,1,2),
	(0,3,3),(1,1,1),(2,2,2)\}$
	\\
	\hline
	20 & $C_{8}{\small\times} C_{4}{\small\times} C_{2}$ &
	$\langle \Diag(\xi_{8},\xi_{8}^{5},1,\xi_4^{a},1),$
	$\Diag(1,1,\xi_4,1,1),$
	$\Diag(1,1,1,-1,1)\rangle$
	$(a\in\{0,1\})$
	\\
	\hline
	21 & $C_{4}^{2}\times C_{2}^{2}$ &
	$\langle \Diag(\xi_4,1,1,\xi_4^{a},1),$
	$\Diag(1,\xi_4,1,\xi_4^{b},1),$
	$\Diag(1,1,-1,1,1),$
	$\Diag(1,1,1,-1,1)\rangle$
	\newline
	$(a,b)\in\{(0,0),(0,1),(1,1)\}$
	\\
	\hline
	22 & $C_{61}$ &
	$\langle \Diag(\xi_{61},\xi_{61}^{58},\xi_{61}^{9},\xi_{61}^{34},\xi_{61}^{20})\rangle$
	\\
	\hline
	23 & $C_{56}$ &
	$\langle \Diag(\xi_{7},\xi_{7}^{4},\xi_{7}^{2},\xi_{8},\xi_{8}^{5})\rangle$
	\\
	\hline
	24 & $C_{28}\times C_{2}$ &
	$\langle \Diag(\xi_{28},\xi_{28}^{25},\xi_{28}^{9},\xi_4^{a},1),$
	$\Diag(1,1,1,-1,1)\rangle$
	$(a\in\{0,1\})$
	\\
	\hline
	25 & $C_{54}$ &
	$\langle \Diag(\xi_{54},\xi_{18}^{17},\xi_{6},-1,1)\rangle$
	\\
	\hline
	26 & $C_{18}\times C_{3}$ &
	$\langle \Diag(\xi_{9},\xi_{3}^{2},1,-1,-1),$
	$\Diag(1,1,1,\xi_{3},1)\rangle$
	\\
	\hline
	27 & $C_{12}\times C_{4}$ &
	$\langle \Diag(\xi_{3},1,\xi_4,1,\xi_{12}^{a}),$
	$\Diag(1,1,1,\xi_4,\xi_4^{b})\rangle$
	\newline
	$(b=0,\ a\in\{0,7,8,11\})$ or
	$(b\in\{1,2\},\ a\in\{0,6,9\})$ or
	$(b=3,\ a\in\{0,9\})$
	\\
	\hline
	28 & $C_{24}\times C_{2}$ &
	$\langle \Diag(\xi_{12}^{a},\xi_4^{b},\xi_{8},\xi_{8}^{5},1),$
	$\Diag(-1,-1,1,1,1)\rangle$
	$(a,b)\in\{(4,0),(1,3)\}$
	\\
	\hline
	29 & $C_{12}\times C_{2}^{2}$ &
	$\langle \Diag(\xi_{12},\xi_4^{3},\xi_4^{a},\xi_4^{b},1),\Diag(1,1,-1,1,1),\Diag(1,1,1,-1,1)\rangle(a,b){\small\in}\{(0,0),(0,1),(1,1)\}$
	\\
	\hline
	30 & $C_{42}$ &
	$\langle \Diag(\xi_{7},\xi_{7}^{4},\xi_{7}^{2},\xi_{6},-1)\rangle$
	\\
	\hline
	31 & $C_{40}$ &
	$\langle \Diag(\xi_{40},\xi_{40}^{37},\xi_{40}^{9},\xi_{40}^{13},1)\rangle$
	\\
	\hline
	32 & $C_{36}$ &
	$\langle \Diag(\xi_{36},\xi_{12}^{11},\xi_4,\xi_{36}^{a},1)\rangle$
	$(a\in\{0,9,12,13,18,21,24,27\})$
	\\
	\hline
	33 & $C_{6}^{2}$ &
	$\langle \Diag(\xi_{6},-1,1,1,1),$
	$\Diag(1,1,\xi_{6},-1,1)\rangle$
	\\
	\hline
	34 & $C_{12}\times C_{3}$ &
	$\langle \Diag(\xi_{12},\xi_4^{3},\xi_4^{a},\xi_4^{a},1),$
	$\Diag(1,1,\xi_{3},\xi_{3}^{b},1)\rangle$
	$(a,b)\in\{(0,0),(0,2),(1,0),(2,0),(3,0)\}$
	\\
	\hline
	35 & $C_{18}\times C_{2}$ &
	$\langle \Diag(\xi_{9},\xi_{3}^{2},1,-1,1),$
	$\Diag(1,1,-1,1,1)\rangle$
	\\
	\hline
	36 & $C_{8}\times C_{4}$ &
	$\langle \Diag(\xi_{8},\xi_{8}^{5},\xi_{8}^{a},\xi_{8}^{b},1),$
	$\Diag(1,1,\xi_4,\xi_4^{a},1)\rangle$
	\newline
	$(a,b)\in\{(0,0),(0,2),(0,4),(0,6),(1,5),(2,0),(2,2),(2,4)\}$
	\\
	\hline
	37 & $C_{4}^{2}\times C_{2}$ &
	$\langle \Diag(\xi_4,1,\xi_4^{a},\xi_4,1),$
	$\Diag(1,\xi_4,(-1)^{b},\xi_4^{c},1),$
	$\Diag(1,1,1,-1,1)\rangle$
	\newline
	$(b\in\{0,1\},\
	(a,c)\in\{(0,0),(1,0),(2,0),(0,1)\})$
	\newline
	$\langle \Diag(\xi_4,1,\xi_4^{a},1,1),\Diag(1,\xi_4,\xi_4^{b},1,1),\Diag(1,1,-1,-1,1)\rangle\;(a\in\{0,3\},\ b\in\{1,2,3\})$
	\\
	\hline
	38 & $C_{8}\times C_{2}^{2}$ &
	$\langle \Diag(\xi_{8},\xi_{8}^{5},1,\xi_4^{a},1),$
	$\Diag(1,1,-1,1,1),$
	$\Diag(1,1,1,-1,1)\rangle$
	$(a\in\{0,1\})$
	\\
	\hline
	39 & $C_{4}\times C_{2}^{3}$ &
	$\langle \Diag(\xi_4,1,1,\xi_4^{a},1),\Diag(1,-1,1,1,1),\Diag(1,1,-1,1,1),\Diag(1,1,1,-1,1)\rangle(a=0,1)$
	\\
	\hline
	40 & $C_{28}$ &
	$\langle \Diag(\xi_{28},\xi_{28}^{25},\xi_{28}^{9},\xi_4^{a},1)\rangle$
	$(a\in\{0,1,2,3\})$
	\\
	\hline
	41 & $C_{14}\times C_{2}$ &
	$\langle \Diag(\xi_{7},\xi_{7}^{4},\xi_{7}^{2},-1,1),$
	$\Diag(1,1,1,1,-1)\rangle$
	\\
	\hline
	42 & $C_{27}$ &
	$\langle \Diag(\xi_{27},\xi_{9}^{8},\xi_{3},1,1)\rangle$,
	$\langle \Diag(\xi_{27},\xi_{9}^{8},\xi_{9}^{a},\xi_{3},1)\rangle$
	$(a\in\{5,6\})$,
	$\langle \Diag(\xi_{27},\xi_{9}^{2},\xi_{9}^{8},\xi_{3},1)\rangle$
	\\
	\hline
	43 & $C_{9}\times C_{3}$ &
	$\langle \Diag(\xi_{9},\xi_{3}^{2},
	\xi_{3}^{a},\xi_{9}^{2a},1),$
	$\Diag(1,1,1,\xi_{3},1)\rangle$
	$(a\in\{0,1\})$
	\\
	\hline
	44 & $C_{24}$ &
	$\langle \Diag(\xi_{8},\xi_{8}^{5},
	\xi_{24}^{a},\xi_{24}^{b},1)\rangle$
	$(a,b)\in\{(0,8),(2,18),(4,12),(6,8),(7,11),(7,19),(8,16)\}$
	\\
	\hline
	45 & $C_{12}\times C_{2}$ &
	$\langle \Diag(\xi_{3},1,\xi_4,\xi_{12}^{a},\xi_{12}^{b}),$
	$\Diag(1,1,1,-1,(-1)^{c})\rangle$
	$(c=0,\ (a,b)\in\{(0,0),(0,3), (0,6)$, $(0,7), (0,8),(0,9),(0,11),(3,0),(3,3),(3,6),(3,8),(3,9)\})$
	\newline
	or $(c=1,\ a=0,\ b\in\{0,3,4,6,8,9\})$
	\\
	\hline
	46 & $C_{6}\times C_{2}^{2}$ &
	$\langle \Diag(\xi_{6},-1,1,1,1),$
	$\Diag(1,1,-1,1,1),$
	$\Diag(1,1,1,-1,1)\rangle$
	\\
	\hline
	47 & $C_{21}$ &
	$\langle \Diag(\xi_{7},\xi_{7}^{4},\xi_{7}^{2},\xi_{3},1)\rangle$
	\\
	\hline
	48 & $C_{20}$ &
	$\langle \Diag(\xi_{20},\xi_{20}^{17},\xi_{20}^{9},\xi_{20}^{13},1)\rangle$
	\\
	\hline
	49 & $C_{18}$ &
	$\langle \Diag(\xi_{9},\xi_{3}^{2},
	\xi_{18}^{a},-1,1)\rangle$
	$(a\in\{0,3,6,8,9,15\})$
	\\
	\hline
	50 & $C_{6}\times C_{3}$ &
	$\langle \Diag(\xi_{6},-1,(-1)^{a},(-1)^{a},1),$
	$\Diag(1,1,\xi_{3},\xi_{3}^{b},1)\rangle$
	$(a,b)\in\{(0,0),(0,2),(1,0)\}$
	\\
	\hline
	51 & $C_{16}$ &
	$\langle \Diag(\xi_{16},\xi_{16}^{13},\xi_{16}^{9},\xi_{16}^{5},1)\rangle$
	\\
	\hline
	52 & $C_{4}^{2}$ &
	$\langle \Diag(\xi_4,1,\xi_4^{a},\xi_4,1),$
	$\Diag(1,\xi_4,\xi_4^{b},(-1)^{c},1)\rangle$
	$(a\in\{0,1,3\},\ b\in\{0,3\},\ c\in\{0,1\})$
	\newline
	$\langle \Diag(\xi_4,1,\xi_4^{a},-1,1),$
	$\Diag(1,\xi_4,-1,\xi_4,1)\rangle$
	$(a\in\{0,1\})$
	\newline
	$\langle \Diag(\xi_4,1,1,\xi_4^{a},1),$
	$\Diag(1,\xi_4,\xi_4^{3},(-1)^{b},1)\rangle$
	$(a\in\{0,3\},\ b\in\{0,1\})$
	\\
	\hline
	53 & $C_{8}\times C_{2}$ &
	$\langle \Diag(\xi_{8},\xi_{8}^{5},\xi_{8}^{a},\xi_{8}^{b},1),$
	$\Diag(1,1,(-1)^{a},-1,1)\rangle$
	\newline
	$(a,b)\in\{(0,0),(0,2),(1,5),(2,0),(3,7),(4,0),(4,2),(6,0)\}$
	\\
	\hline
	54 & $C_{4}\times C_{2}^{2}$ &
	$\langle \Diag(\xi_4,\xi_4^{a},1,\xi_4^{b},1),$
	$\Diag(1,-1,1,(-1)^{c},1),$
	$\Diag(1,1,-1,(-1)^{c},1)\rangle$
	\newline
	$(c=0,\ (a,b)\in\{(0,0),(0,1),(0,2),(0,3),(1,0),(1,2)\})$ or
	\newline
	$(c=1,\ a=0,\ b\in\{0,1,2,3\})$
	\\
	\hline
	55 & $C_{2}^{4}$ &
	$\langle \Diag(-1,1,1,1,1),$
	$\Diag(1,-1,1,1,1),$
	$\Diag(1,1,-1,1,1),$
	$\Diag(1,1,1,-1,1)\rangle$
	\\
	\hline
	56 & $C_{14}$ &
	$\langle \Diag(\xi_{7},\xi_{7}^{4},\xi_{7}^{2},-1,(-1)^{a})\rangle$
	$(a\in\{0,1\})$
	\\
	\hline
	57 & $C_{12}$ &
	$\langle \Diag(\xi_{12},\xi_4^{3},
	\xi_{12}^{a},\xi_{12}^{b},1)\rangle$
	$(a\in\{0,5\},\ b\in\{0,6,8,9\})$ or
	\newline
	$(a=3,\ b\in\{0,3,4,5,6,7,8,9,11\})$ or
	$(a\in\{1,4,6,9\},\ b=9)$
	\\
	\hline
	58 & $C_{6}\times C_{2}$ &
	$\langle \Diag(\xi_{3},1,-1,1,\xi_{6}^{a}),\Diag(1,1,1,-1,(-1)^{b})\rangle$
	\newline
	$(a,b)\in\{(0,0),(0,1),(1,0),(2,1),(3,1),(4,0)\}$
	\\
	\hline
	59 & $C_{10}$ &
	$\langle \Diag(\xi_{10},\xi_{10}^{7},\xi_{10}^{9},\xi_{10}^{3},1)\rangle$
	\\
	\hline
	60 & $C_{9}$ &
	$\langle \Diag(\xi_{9},\xi_{3}^{2},
	\xi_{9}^{a},\xi_{9}^{b},1)\rangle$
	$(a,b)\in\{(0,0),(0,3),(0,4),(0,6),(2,3),(3,4)\}$
	\\
	\hline
	61 & $C_{3}^{2}$ &
	$\langle \Diag(\xi_{3},1,\xi_3^a,1,1),$
	$\Diag(1,\xi_{3},\xi_3^b,1,1)\rangle$
	$(a,b)\in\{(0,0),(0,2),(1,1)\}$
	\\
	\hline
	62 & $C_{8}$ &
	$\langle \Diag(\xi_{8},\xi_{8}^{5},\xi_{8}^{a},\xi_{8}^{b},1)\rangle$
	$(a,b)\in\{(0,0),(0,2),(0,4),(0,6),(1,5),(2,4),(3,7)\}$
	\\
	\hline
	63 & $C_{4}\times C_{2}$ &
	$\langle \Diag(\xi_4,1,\xi_4^{a},\xi_4^{b},1),$
	$\Diag(1,-1,1,(-1)^{c},1)\rangle$
	\newline
	$(c=0,\ (a,b)\in
	\{(0,0),(0,1),(0,2),(0,3),(1,1),(1,2),(1,3),(2,3)\})$
	\newline
	$(c=1,\ (a,b)\in
	\{(0,0),(0,1),(0,2),(0,3),(2,1),(2,2)\})$
	\\
	\hline
	64 & $C_{2}^{3}$ &
	$\langle \Diag(-1,1,1,1,1),$
	$\Diag(1,-1,1,(-1)^{a},1),$
	$\Diag(1,1,-1,(-1)^{a},1)\rangle$
	$(a\in\{0,1\})$
	\\
	\hline
	65 & $C_{7}$ &
	$\langle \Diag(\xi_{7},\xi_{7}^{4},\xi_{7}^{2},1,1)\rangle$
	\\
	\hline
	66 & $C_{6}$ &
	$\langle \Diag(\xi_{6},-1,\xi_{6}^{a},\xi_{6}^{b},1)\rangle\;(a,b){\small\in}\{(0,0),(0,2),(0,3),(0,4),(0,5),(1,3),(2,4),(3,3),(3,5)\}$
	\\
	\hline
	67 & $C_{5}$ &
	$\langle \Diag(\xi_{5},\xi_{5}^{2},\xi_{5}^{4},\xi_{5}^{3},1)\rangle$
	\\
	\hline
	68 & $C_{4}$ &
	$\langle \Diag(\xi_4,\xi_4^{a},1,1,1)\rangle$
	$(a\in\{0,1,2,3\})$,\;\;\;\;\;\;
	$\langle \Diag(\xi_4,-1,\xi_4^a,1,1)\rangle$
	$(a\in\{1,2,3\})$
	\\
	\hline
	69 & $C_{2}^{2}$ &
	$\langle \Diag(-1,1,(-1)^{a},1,1),$
	$\Diag(1,-1,(-1)^{b},1,1)\rangle$
	$(a,b)\in\{(0,0),(0,1),(1,1)\}$
	\\
	\hline
	70 & $C_{3}$ &
	$\langle \Diag(\xi_{3},\xi_{3}^{a},1,1,1)\rangle$
	$(a\in\{0,1,2\})$, \;\;\;\;\;
	$\langle \Diag(\xi_{3},\xi_{3},\xi_{3}^{2},1,1)\rangle$
	\\
	\hline
	71 & $C_{2}$ &
	$\langle \Diag(1,1,1,-1,(-1)^{a})\rangle$
	$(a\in\{0,1\})$
	\\
	\hline
\end{longtable}

\endgroup


\begin{thebibliography}{GLMV24}
	\bibitem[AM72]{AM72}
	M. Artin and D. Mumford,
	\emph{Some elementary examples of unirational varieties which are not
		rational}, Proc. London Math. Soc. (3) \textbf{25} (1972), 75--95.
	
	\bibitem[AOT24]{AOT24}
	J. Avila, G. Ortiz and S. Troncoso,
	\emph{Invariant smooth quartic surfaces by all finite primitive subgroups of
		$\PGL_4(\C)$}, J. Pure Appl. Algebra \textbf{228} (2024), Paper No. 107534.
	
	\bibitem[Ben13]{Ben13}
	O. Benoist,
	\emph{S\'eparation et propri\'et\'e de Deligne--Mumford des champs de modules d'intersections compl\`etes lisses},
	J. Lond. Math. Soc. (2) \textbf{87} (2013), no.~1, 138--156.
	
	\bibitem[Bli17]{Bli17}
	H. Blichfeldt, \emph{Finite collineation groups}, University of Chicago
	Press, Chicago, 1917.
	
	\bibitem[BH23]{BH23}
	S. Brandhorst and T. Hofmann,
	\emph{Finite subgroups of automorphisms of K3 surfaces},
	Forum Math. Sigma \textbf{11} (2023), Paper No.~e54.
	
	\bibitem[Bra67]{Bra67}
	R. Brauer, \emph{\"Uber endliche lineare Gruppen von Primzahlgrad},
	Math. Ann. \textbf{169} (1967), 73--96.
	
	\bibitem[Bur11]{Bur11}
	W. Burnside,
	\emph{Theory of Groups of Finite Order},
	second edition, Cambridge University Press, Cambridge, 1911.
	
	\bibitem[Cha78]{Cha78}
	H. C. Chang,
	\emph{On plane algebraic curves},
	Chinese J. Math. \textbf{6} (1978), 185--189.
	
	\bibitem[Che05]{Che05}
	I. Cheltsov,
	\emph{Birationally rigid Fano varieties},
	Russian Math. Surveys \textbf{60} (2005), no.~5, 875--965.
	
	\bibitem[CMS26]{CMS26}
	I. Cheltsov, F. Mangolte and C. Shramov,
	\emph{On $G$-birational rigidity of projective spaces},
	arXiv:2604.20427 (2026).
	
	\bibitem[CS16]{CS16}
	I. Cheltsov and C. Shramov,
	\emph{Two rational nodal quartic $3$-folds},
	Q. J. Math. \textbf{67} (2016), no.~4, 573--601.
	
	\bibitem[CS19]{CS19}
	I. Cheltsov and C. Shramov,
	\emph{Finite collineation groups and birational rigidity},
	Selecta Math. (N.S.) \textbf{25} (2019), no.~5, Paper No.~71, 68 pp.
	
	\bibitem[CG72]{CG72}
	C. H. Clemens and P. A. Griffiths,
	\emph{The intermediate Jacobian of the cubic threefold},
	Ann. of Math. (2) \textbf{95} (1972), 281--356.
	
	\bibitem[dF13]{dF13}
	T. de Fernex,
	\emph{Birationally rigid hypersurfaces},
	Invent. Math. \textbf{192} (2013), 533--566.
	
	\bibitem[dF16]{dF16}
	T. de Fernex,
	\emph{Erratum to: Birationally rigid hypersurfaces},
	Invent. Math. \textbf{203} (2016), no.~2, 675--680.
	
	\bibitem[Dol12]{Dol12}
	I. V. Dolgachev,
	\emph{Classical algebraic geometry. A modern view},
	Cambridge University Press, Cambridge, 2012.
	
	\bibitem[Dol18]{Dol18}
	I. V. Dolgachev, \emph{Quartic surfaces with icosahedral symmetry},
	Adv. Geom. \textbf{18} (2018), 119--132.
	
	\bibitem[Dol19]{Dol19}
	I. V. Dolgachev, \emph{A brief introduction to automorphisms of algebraic varieties},
	lecture notes, Talca, 2019.
	
	\bibitem[DI09]{DI09}
	I. V. Dolgachev and V. A. Iskovskikh,
	\emph{Finite subgroups of the plane Cremona group}, in Algebra, Arithmetic,
	and Geometry, Vol. I, Progr. Math. \textbf{269}, Birkh\"auser, 2009,
	443--548.
	
	\bibitem[Edge47]{Edge47}
	W. L. Edge,
	\emph{The Klein group in three dimensions},
	Acta Math. \textbf{79} (1947), 153--223.
	
	\bibitem[EL25]{EL25}
	L. Esser and J. Li,
	\emph{Hypersurfaces with large automorphism groups},
	Trans. Amer. Math. Soc. \textbf{378} (2025), no.~5, 3667--3698.
	
	\bibitem[FMPK16]{FMPK16}
	G. Faina, S. Marcugini, F. Pambianco and H. Kaneta,
	\emph{$S_5$-invariant nonsingular quartic surfaces},
	arXiv:1605.08720, 2016.
	
	\bibitem[FWZ26a]{FWZ26}
	J. Fu, S. Wang and Z. Zheng,
	\emph{Classification of automorphism groups of smooth cubic threefolds and fourfolds},
	arXiv:2609.20403, 2026.
	
	\bibitem[FWZ26b]{FWZ26b}
	J. Fu, S. Wang and Z. Zheng,
	\emph{Automorphism groups of quartic K3 surfaces},
	to appear, 2026.

	\bibitem[GAP]{GAP}
	The GAP Group,
	\emph{GAP -- Groups, Algorithms, and Programming, Version 4.15.1},
	2025, \url{https://www.gap-system.org}.
	
	\bibitem[GAL13]{GAL13}
	V. Gonz\'alez-Aguilera and A. Liendo,
	\emph{On the order of an automorphism of a smooth hypersurface},
	Israel J. Math. \textbf{197} (2013), no.~1, 29--49.
	
	\bibitem[GLM23]{GLM23}
	V. Gonz\'alez-Aguilera, A. Liendo and P. Montero,
	\emph{On the liftability of the automorphism group of smooth hypersurfaces
		of the projective space}, Israel J. Math. \textbf{255} (2023), 283--310.
	
	\bibitem[GLMV24]{GLMV24}
	V. Gonz\'alez-Aguilera, A. Liendo, P. Montero and R. Villaflor Loyola,
	\emph{On a Torelli principle for automorphisms of Klein hypersurfaces},
	Trans. Amer. Math. Soc. \textbf{377} (2024), no.~8, 5483--5511.
	
	\bibitem[Hos97]{Hos97}
	T. Hosoh,
	\emph{Automorphism groups of cubic surfaces},
	J. Algebra \textbf{192} (1997), 651--677.
	
	\bibitem[HS80]{HS80}
	A. Howard and A. Sommese,
	\emph{On the orders of the automorphism groups of certain projective
		manifolds}, in Manifolds and Lie groups (Notre Dame, Ind., 1980), 145--158.
	
	\bibitem[IM71]{IM71}
	V. A. Iskovskikh and Yu. I. Manin,
	\emph{Three-dimensional quartics and counterexamples to the L\"uroth
		problem}, Math. USSR-Sb. \textbf{15} (1971), 141--166.
	
	\bibitem[IP99]{IP99}
	V. A. Iskovskikh and Yu. G. Prokhorov, \emph{Fano varieties}, in
	Algebraic Geometry V, Encyclopaedia Math. Sci. \textbf{47}, Springer, 1999,
	1--247.
	
	\bibitem[Jor78]{Jor78}
	C. Jordan,
	\emph{M\'emoire sur les \'equations diff\'erentielles lin\'eaires \`a
		int\'egrale alg\'ebrique},
	J. Reine Angew. Math. \textbf{84} (1878), 89--215.
	
	\bibitem[Kle79]{Kle79}
	F. Klein,
	\emph{Ueber die Transformation siebenter Ordnung der elliptischen Functionen},
	Math. Ann. \textbf{14} (1879), 428--471.
	
	\bibitem[Kle84]{Kle84}
	F. Klein,
	\emph{Vorlesungen \"uber das Ikosaeder und die Aufl\"osung der Gleichungen
		vom f\"unften Grade}, B. G. Teubner, Leipzig, 1884.
	
	\bibitem[Kol19]{Kol19}
	J. Koll\'ar,
	\emph{The rigidity theorem of
		Fano--Segre--Iskovskikh--Manin--Pukhlikov--Corti--Cheltsov--de
		Fernex--Ein--Musta\c{t}\u{a}--Zhuang},
	in Birational Geometry of Hypersurfaces,
	Lect. Notes Unione Mat. Ital. \textbf{26}, Springer, Cham, 2019,
	129--164.
	
	\bibitem[KOY26]{KOY26}
	J. Keum, K. Oguiso and X. Yu,
	\emph{Sixteen generators of the automorphism group of the Fermat quartic surface},
	arXiv:2608.18535 (2026).
	
	\bibitem[LZ22]{LZ22}
	R. Laza and Z. Zheng,
	\emph{Automorphisms and periods of cubic fourfolds},
	Math. Z. \textbf{300} (2022), 1455--1507.
	
	\bibitem[MPK16]{MPK16}
	S. Marcugini, F. Pambianco and H. Kaneta,
	\emph{Projective automorphism groups of nonsingular quartic surfaces},
	arXiv:1611.10101 (2016).
	
	\bibitem[MM63]{MM63}
	H. Matsumura and P. Monsky, \emph{On the automorphisms of hypersurfaces},
	J. Math. Kyoto Univ. \textbf{3} (1963), 347--361.
	
	\bibitem[Muk88]{Muk88}
	S. Mukai,
	\emph{Finite groups of automorphisms of K3 surfaces and the Mathieu
		group}, Invent. Math. \textbf{94} (1988), 183--221.
	
	\bibitem[Ogu05]{Ogu05}
	K. Oguiso,
	\emph{A characterization of the Fermat quartic K3 surface by means of finite symmetries},
	Compos. Math. \textbf{141} (2005), 404--424.
	
	\bibitem[OY19]{OY19}
	K. Oguiso and X. Yu, \emph{Automorphism groups of smooth quintic
		threefolds}, Asian J. Math. \textbf{23} (2019), 201--256.
	
	\bibitem[PZ21]{PZ21}
	T. Peng and Z. Zheng,
	\emph{Abelian automorphism groups of quartic surfaces and cubic fourfolds},
	arXiv:2106.14214 (2021).
	
	\bibitem[Pro12]{Pro12}
	Yu. Prokhorov,
	\emph{Simple finite subgroups of the Cremona group of rank 3},
	J. Algebraic Geom. \textbf{21} (2012), 563--600.
	
	\bibitem[PS14]{PS14}
	Yu. Prokhorov and C. Shramov,
	\emph{Jordan property for groups of birational selfmaps},
	Compos. Math. \textbf{150} (2014), no.~12, 2054--2072.
	
	\bibitem[PS17]{PS17}
	Yu. Prokhorov and C. Shramov,
	\emph{Jordan constant for Cremona group of rank 3},
	Mosc. Math. J. \textbf{17} (2017), no.~3, 457--509.
	
	\bibitem[Puk98]{Puk98}
	A. V. Pukhlikov,
	\emph{Birational automorphisms of Fano hypersurfaces},
	Invent. Math. \textbf{134} (1998), no.~2, 401--426.
    
    \bibitem[Rei87]{Rei87}
    M. Reid,
    \emph{Young person's guide to canonical singularities},
    in Algebraic Geometry, Bowdoin, 1985, Proc. Sympos. Pure Math. \textbf{46}, Part~1, Amer. Math. Soc., Providence, RI, 1987, 345--414.
    
    \bibitem[Sage]{Sage}
    The Sage Developers,
    \emph{SageMath, the Sage Mathematics Software System (Version 10.0)},
    2023, \url{https://www.sagemath.org}.

	\bibitem[Sch82]{Sch82}
	F. Schur,
	\emph{Ueber eine besondre Classe von Fl\"achen vierter Ordnung},
	Math. Ann. \textbf{20} (1882), 254--296.
	
	\bibitem[Seg42]{Seg42}
	B. Segre,
	\emph{The non-singular cubic surfaces},
	Oxford University Press, Oxford, 1942.
	
	\bibitem[Shi88]{Shi88}
	T. Shioda,
	\emph{Arithmetic and geometry of Fermat curves},
	in Algebraic Geometry Seminar (Singapore, 1987), World Sci. Publishing,
	Singapore, 1988, 95--102.
	
	\bibitem[WY20]{WY20}
	L. Wei and X. Yu, \emph{Automorphism groups of smooth cubic threefolds},
	J. Math. Soc. Japan \textbf{72} (2020), 1327--1343.
	
	\bibitem[Wo]{Wo}
	Wolfram Research, Inc.,
	\emph{Mathematica, Version 10.0}, Champaign, IL, 2014.
	
	\bibitem[YYZ24]{YYZ24}
	S. Yang, X. Yu and Z. Zhu,
	\emph{Automorphism groups of cubic fivefolds and fourfolds},
	J. Lond. Math. Soc. (2) \textbf{110} (2024), Paper No.~e12997, 35 pp.
	
	\bibitem[YYZ25]{YYZ25}
	S. Yang, X. Yu and Z. Zhu,
	\emph{On automorphism groups of smooth hypersurfaces},
	J. Algebraic Geom. \textbf{34} (2025), 579--611.
	
	\bibitem[Zhe22]{Zhe22}
	Z. Zheng, \emph{On abelian automorphism groups of hypersurfaces},
	Israel J. Math. \textbf{247} (2022), 479--498.
	
\end{thebibliography}
\end{document}